\pdfoutput=1

\documentclass[12pt,a4paper,reqno]{amsart}
\usepackage{microtype}
\usepackage[utf8]{inputenc}

\usepackage{todonotes}

\usepackage[style=ieee-alphabetic, url=false, backref=true, doi=false, isbn=false, eprint=false]{biblatex}

\addbibresource{ModuliOfHypersurfaces.bib}

\usepackage{amsmath,amstext,amssymb,mathrsfs,amscd,amsthm,indentfirst, bbm}
\usepackage{amsfonts}
\usepackage{stmaryrd} %for double slash homotopy orbits

%For notes in the margin: \mnote{write something}

\usepackage{float}
\usepackage{adjustbox}
\usepackage{booktabs}
\usepackage{verbatim}
\usepackage{enumerate}
\usepackage{graphicx}
\usepackage{textcomp}
\usepackage{tikz,tikz-cd}
\newcommand*\circlednum[1]{\tikz[baseline=(char.base)]{
            \node[shape=circle,draw,inner sep=2pt] (char) {#1};}}
\tikzset{circled/.style={draw,circle,font=\normalsize,inner sep=2pt}} 
\usetikzlibrary{patterns}
\usepackage{ifthen}
\usepackage{url}
\usepackage{hyperref}
\PassOptionsToPackage{hyphens}{url}\usepackage{hyperref}
\hypersetup{
    colorlinks  = true,
    linkcolor   = [rgb]{.20,.60,.22},
    citecolor   = [rgb]{0,.40,.80},
    urlcolor    = [rgb]{0,.40,.80}
}
\usepackage[nameinlink,capitalize]{cleveref}

%Fancyfication
\usepackage{libertine}
\makeatletter
\renewcommand\th@plain{\slshape}
\makeatother
%End of fancyfication

%Margins
\addtolength{\textwidth}{4cm}
\addtolength{\oddsidemargin}{-2cm}
\addtolength{\evensidemargin}{-2cm}

\newcommand{\bC}{\mathbb C}

\newcommand{\bN}{\mathbb N}
\newcommand{\bP}{\mathbb P}
\newcommand{\bQ}{\mathbb Q}
\newcommand{\bR}{\mathbb R}

\newcommand{\bV}{\mathbb V}
\newcommand{\bZ}{\mathbb Z}

\newcommand{\cA}{\mathcal A}
\newcommand{\cB}{\mathcal B}
\newcommand{\cC}{\mathcal C}

\newcommand{\cE}{\mathcal E}
\newcommand{\cF}{\mathcal F}

\newcommand{\cI}{\mathcal I}

\newcommand{\cK}{\mathcal K}
\newcommand{\cL}{\mathcal L}
\newcommand{\cM}{\mathcal M}
\newcommand{\cN}{\mathcal N}
\newcommand{\cO}{\mathcal O}
\newcommand{\cP}{\mathcal P}
\newcommand{\cQ}{\mathcal Q}

\newcommand{\cU}{\mathcal U}

\newcommand{\cZ}{\mathcal Z}

\newcommand{\fm}{\mathfrak m}
\newcommand{\fM}{\mathfrak M}

\newcommand{\lra}{\longrightarrow}

\newcommand{\Gammans}{\Gamma_{\mathrm{ns}}}

\newcommand{\Gammacon}{\Gamma_{\cC^0}}

\newcommand{\Gammaconfib}{\Gamma_{\cC^0, \mathrm{fib}}}

\newcommand{\Mhyp}{\mathcal{M}_\mathrm{hyp}}
\newcommand{\Pic}{\mathrm{Pic}}

\newcommand{\Map}{\mathrm{Map}}

\newcommand{\NS}{\mathrm{NS}}
\newcommand{\Hilb}{\mathrm{Hilb}}
\newcommand{\Grass}{\mathrm{Gr}}
\newcommand{\UConf}{\mathrm{UConf}}

\newcommand{\BDiff}{B\mathrm{Diff}}

\DeclareMathOperator{\Spec}{Spec}
\DeclareMathOperator{\Proj}{Proj}
\DeclareMathOperator{\Sym}{Sym}

\newcommand{\catSch}{\mathsf{Sch}}
\newcommand{\catSet}{\mathsf{Set}}
\newcommand{\opp}{\mathrm{op}}

\newcommand{\PL}{\mathcal{P}_{[\mathcal{L}]}}

\newcommand{\stacks}[1]{\cite[\href{https://stacks.math.columbia.edu/tag/#1}{Tag #1}]{stacks-project}}

\newtheorem{theorem}{Theorem}[section]
\newtheorem*{theorem*}{Theorem}
\newtheorem{theorem-in-progress}[theorem]{Theorem-in-progress}

\newtheorem*{conjecture*}{Conjecture}

\newtheorem{definition}[theorem]{Definition}
\newtheorem{lemma}[theorem]{Lemma}
\newtheorem{remark}[theorem]{Remark}
\newtheorem{proposition}[theorem]{Proposition}
\newtheorem{example}[theorem]{Example}
\newtheorem{conjecture}[theorem]{Conjecture}
\newtheorem{corollary}[theorem]{Corollary}

\title{Scanning the moduli of smooth hypersurfaces}

\author{Alexis Aumonier}
\address{Center for Mathematical Sciences, Wilberforce Road, Cambridge CB3 0WB, UK}
\email{aa2099@cam.ac.uk}

\begin{document}

\maketitle

\begin{abstract}
We study the locus of smooth hypersurfaces inside the Hilbert scheme of a smooth projective complex variety. In the spirit of scanning, we construct a map to a continuous section space of a projective bundle, and show that it induces an isomorphism in integral homology in a range of degrees growing with the ampleness of the hypersurfaces. When the ambient variety is a curve, this recovers a result of McDuff about configuration spaces. We compute the rational cohomology of the section space and exhibit a phenomenon of homological stability for hypersurfaces with first Chern class going to infinity. For simply connected varieties, the rational cohomology is shown to agree with the stable cohomology of a moduli space of hypersurfaces, with a peculiar tangential structure, as studied by Galatius and Randal-Williams.
\end{abstract}

\section{Introduction}

A hypersurface, also known as an effective Cartier divisor, in a smooth projective complex variety $X$ is the zero locus $V(s)$ of a non-zero global section $s \in H^0(X, \cL)$ of an algebraic line bundle $\cL$:
\[
    V(s) := \left\{ x \in X \mid s(x) = 0 \right\} \subset X.
\]
Such a hypersurface is said to be smooth when the derivative $ds(x) \neq 0$ for all $x \in V(s)$. Smooth hypersurfaces obtained from sections of $\cL$ are parameterised by the complement of the discriminant inside the complete linear system $|\cL| = \bP\big( H^0(X, \cL) \big)$, a classical object dating back to the early days of complex geometry. To allow variations of the line bundle, Grothendieck introduced in \cite{grothendieck_technique_1962} the functor of relative effective Cartier divisors. We follow his footsteps and, for a given polynomial $P \in \bQ[x]$, consider the functor on the category of complex schemes
\begin{align*}
    \fM^{\mathrm{sm},P} \colon \catSch_\bC^\opp &\lra \catSet \\
    T &\longmapsto \left\{ Z \subset X \times T\ \middle\vert \begin{array}{l}
    Z \to T \text{ is flat and proper and for all } t \in T \\
    Z_t \subset X_t \text{ is a smooth effective Cartier divisor} \\
    \text{with Hilbert polynomial } \chi\big(X_t, \cI_{Z_t}^{-1}(n)\big) = P(n)
  \end{array}\right\}
\end{align*}
As already discovered by Grothendieck, this functor is representable by an open subscheme $\cM^{\mathrm{sm},P} \subset \Hilb(X)$ of the Hilbert scheme of $X$. In fact, under conditions on the polynomial $P$, it can be explicitly constructed using projective bundles \cite[Section~8.2]{bosch_neron_1990}, and it decomposes further into a disjoint union of connected components
\begin{equation}\label{equation:disjointunion}
    \cM^{\mathrm{sm},P} = \bigsqcup_\alpha \Mhyp^\alpha    
\end{equation}
indexed by certain cohomology classes $\alpha \in \NS(X) \subset H^2(X;\bZ)$ in the Néron--Severi group of $X$. More precisely, recording the isomorphism class of a Cartier divisor produces the \emph{Abel--Jacobi morphism}
\[
    \cM^{\mathrm{sm},P} \lra \Pic^P(X), \quad Z \longmapsto [\cI_Z^{-1}]
\]
to the Picard scheme, which parameterises isomorphism classes of line bundles on $X$ with Hilbert polynomial $P$. By the Hirzebruch--Riemann--Roch theorem, the first Chern class of $\cI_Z$ determines the polynomial $P(n) = \chi(X, \cI_Z^{-1}(n))$. We therefore have a finer decomposition of the Picard scheme into a finite\footnote{Given a polynomial $P$, the Hirzebruch--Riemann--Roch theorem imposes polynomial conditions on the possible first Chern classes $\alpha$ of hypersurfaces with Hilbert polynomial $P$. One observes that only finitely many $\alpha$ can satisfy the conditions.} disjoint union of tori
\[
    \Pic^P(X) = \bigsqcup_\alpha \Pic^\alpha(X),
\]
each parameterising line bundles with first Chern class $\alpha$. Its preimage under the Abel--Jacobi map is the disjoint union~\eqref{equation:disjointunion}, and our main objects of interest in this article are its connected components. Accordingly, we call such a component the \emph{moduli space of smooth hypersurfaces} with fixed Chern class
\[
    \Mhyp^\alpha \cong \left\{ Z \subset X \text{ smooth hypersurface with } c_1(\cO_X(Z)) = \alpha \right\}.
\]
It is a parameter space whose points are smooth complex hypersurfaces embedded inside $X$. In particular, the fibre of the Abel--Jacobi map $\Mhyp^\alpha \to \Pic^\alpha(X)$ above an isomorphism class of a line bundle $[\cL] \in \Pic^\alpha(X)$ is the complement of the discriminant inside the linear system $|\cL|$.

\bigskip

In the present work, we investigate the topology of $\Mhyp^\alpha$ using tools from algebraic topology. To state our results, we write $d(\alpha)$ for the largest integer $d$ such that all line bundles with first Chern class $\alpha$ are $d$-jet ample, and $\bP(J^1\cO_X)$ for the projectivisation of the first jet bundle of the structure sheaf $\cO_X$ (we review this notion in \cref{subsection:preliminaries-jet-bundle}).
\begin{theorem}[See \cref{theorem:main-theorem} for a precise version]
Let $X$ be a smooth projective complex variety and let $\alpha \in \NS(X)$ be ample enough. Taking the first jet yields a map
\[
    \Mhyp^\alpha \lra \Gammacon \big( \bP(J^1\cO_X) \big)
\]
which induces an isomorphism in integral homology onto the path component that it hits, in degrees $* < \frac{d(\alpha) - 3}{2}$.
\end{theorem}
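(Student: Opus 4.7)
My plan is a scanning argument in the style of Segal--McDuff and Vassiliev, carried out fibrewise over the Picard scheme. Given $Z = V(s)$ with $s \in H^0(X, \cL)$, smoothness is precisely the condition that the first jet $j^1 s \in H^0(X, J^1\cL)$ be nowhere vanishing, so projectivising gives a section of $\bP(J^1\cL)$; after identifying this with $\bP(J^1\cO_X)$ (topologically, in a way that is functorial in $[\cL] \in \Pic^\alpha(X)$), one obtains the scanning map. Both $\Mhyp^\alpha$ and the section space $\Gammacon(\bP(J^1\cO_X))$ fibre over $\Pic^\alpha(X)$, with fibres the discriminant complement $|\cL| \setminus \Sigma_\cL$ and a component of $\Gammacon(\bP(J^1\cL))$ respectively. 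Since $\Pic^\alpha(X)$ is a torus of a fixed dimension (independent of $\alpha$), a Leray--Serre spectral sequence comparison reduces the theorem to a fibrewise homology isomorphism in a range larger by a fixed constant.

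For the fibrewise statement, I would apply Vassiliev's method on the hypersurface side: resolve $\Sigma_\cL$ by a semi-simplicial variety whose $k$-th stratum parameterises unordered configurations of $k$ points of $X$ at which a section acquires a non-transverse zero, and dualise via Alexander duality inside the finite-dimensional $|\cL|$, whose dimension grows with $\alpha$ by Hirzebruch--Riemann--Roch. The crucial input is $d(\alpha)$-jet ampleness, which gives a lower bound on the codimension of each stratum in terms of the configuration size, hence an upper bound on the cohomological degree in which it contributes. A parallel configuration-space / labelled-particle description in the spirit of McDuff gives a matching spectral sequence for the target $\Gammacon(\bP(J^1\cL))$, and the scanning map induces a morphism between the two. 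Agreement on each stratum should reduce to a local calculation on a polydisc with $\cL$ trivial, where the scanning map is essentially a tautology.

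The main obstacle will be pinning down the precise range $(d(\alpha)-3)/2$: this demands sharp codimension estimates for the Vassiliev strata coming from the exact definition of $d$-jet ampleness, together with careful bookkeeping of how they combine with the fixed $\Pic^\alpha(X)$ contribution through the Leray--Serre spectral sequence, so that the fibrewise range translates cleanly to the stated total range. A secondary technical issue is making the identification $\bP(J^1\cL) \simeq \bP(J^1\cO_X)$ sufficiently functorial in $[\cL]$ for the global scanning map to be genuinely continuous rather than only a fibrewise family; I expect this can be handled by choosing a Poincaré line bundle on $X \times \Pic^\alpha(X)$ together with a Hermitian metric and checking that the resulting ambiguities are killed after projectivising.
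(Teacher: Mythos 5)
Your overall plan — construct the jet map, compare the two spaces fibrewise over $\Pic^\alpha(X)$, and feed in a fibrewise discriminant-complement result whose range is governed by $d(\alpha)$-jet ampleness — matches the architecture of the paper's proof. But there are two genuine gaps in the reduction step, and the first is serious.

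\textbf{The Leray--Serre comparison does not apply.} You assert that ``a Leray--Serre spectral sequence comparison reduces the theorem to a fibrewise homology isomorphism.'' The map $\Mhyp^\alpha \to \Pic^\alpha(X)$ is \emph{not} a fibration: it is the restriction of the projective bundle $\bP(p_*\cP_\alpha) \to \Pic^\alpha(X)$ to the open locus of non-singular sections, which makes it only a Serre \emph{microfibration}. The fibres $|\cL|\setminus\Delta$ may have different homotopy types as $[\cL]$ ranges over $\Pic^\alpha(X)$, so there is no local system of fibre homology and no Leray--Serre spectral sequence. The paper's actual engine here is the Weiss/Raptis microfibration comparison theorem: one compares the microfibration $\Mhyp^\alpha \to \Pic^\alpha(X)$ to an honest fibre bundle over the same base, but since the input is a \emph{homology} isomorphism on fibres (not a homotopy one), the argument first performs a fibrewise double suspension to make fibres simply connected, applies the homology Whitehead theorem fibrewise, invokes Raptis, then desuspends by a Mayer--Vietoris comparison. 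This is where the range degrades from $(d-1)/2$ on the fibre to $(d-3)/2$ globally. Without some substitute for this mechanism your reduction does not go through, and no amount of care with the strata will rescue it.

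\textbf{The target does not fibre over $\Pic^\alpha(X)$.} You write that $\Gammacon(\bP(J^1\cO_X))$ fibres over $\Pic^\alpha(X)$ with fibre a component of $\Gammacon(\bP(J^1\cL))$, but there is no such projection: a continuous section of $\bP(J^1\cO_X)$ carries no preferred class in $\Pic^\alpha(X)$. The paper resolves this by introducing an intermediate space, the quotient $\Gammaconfib(J^1_p\cP_\alpha\setminus 0)/\bC^\times$ of fibrewise nowhere-vanishing sections of the jet bundle of a Poincar\'e sheaf, which genuinely does fibre over $\Pic^\alpha(X)$. One then needs a \emph{separate} argument (the content of \cref{proposition:main-homotopy-equivalence}) that this intermediate space is weakly equivalent to $\Gammacon^\alpha(\bP(J^1\cO_X))$, proved by comparing the fibration over $\Pic^\alpha(X)$ with the principal $\Map(X,\bC^\times)/\bC^\times$-fibration over $\Gammacon^\alpha(\bP(J^1\cO_X))$ and chasing path components of loop spaces. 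Your remark about the Poincar\'e line bundle gestures toward this intermediate object, but you then silently identify it with the true target, skipping a nontrivial homotopy equivalence.

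\textbf{The fibrewise statement.} You propose to re-prove the discriminant-complement isomorphism by a Vassiliev resolution and Alexander duality. The paper does not do this: it cites the author's earlier h-principle result (\cref{theorem:mon-theoreme-h-principe}) as a black box. A Vassiliev/cubical-complex approach in the spirit of Tommasi, Vakil--Wood, and Das is a known alternative route to such fibrewise statements, and it is plausible the codimension estimates coming from $d$-jet ampleness would give a comparable range; but it is a substantial independent project and not a drop-in replacement. In any case, fixing the fibrewise input is orthogonal to the real issue, which is that the global reduction as you stated it is not valid.
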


\begin{remark}
In fact, $\pi_0\big(\Gammacon( \bP(J^1\cO_X))\big) \cong H^2(X;\bZ)$ and the jet map hits the component corresponding to $\alpha \in \NS(X) \subset H^2(X;\bZ)$.
\end{remark}

We recall in \cref{section:preliminaries} the notion of jet ampleness, and explain in \cref{section:range-estimates} how to estimate the number $d(\alpha)$. Among other things, we show that given any integer $M \geq 0$ and classes $\alpha, \beta \in \NS(X)$ with $\beta$ ample, we have that $d(\alpha + k\beta) \geq M$ for large enough $k \gg 0$. (See \cref{proposition:arbitrarily-big-jet-ampleness}.) In particular, the degree range of our main theorem can be arbitrarily big. In the remainder of this introduction, we describe applications of our main theorem and connect our results to the existing literature on moduli spaces of manifolds. 

\subsection{Rational computations and stability} A main advantage of our main theorem resides in the fact that the homotopy type of spaces of continuous sections can be approached by purely homotopical methods. This is particularly effective if one is willing to look at the rational information only. Using tools from rational homotopy theory, we show:
\begin{theorem}[See \cref{theorem:rational-cohomology-section-space-projective-bundle} for a precise version]
Let $n$ be the complex dimension of $X$. Let $\alpha \in H^2(X;\bZ)$ and denote by $\Gammacon^\alpha(\bP(J^1\cO_X))$ the component hit by the jet map from $\Mhyp^\alpha$. The rational cohomology ring of that section space is computed by the cohomology of the following commutative differential graded algebra:
\begin{gather*}
    \mathrm{Sym}^*_\mathrm{gr}\big( \bQ z \oplus H_1(X;\bQ) \oplus H_*(X;\bQ)[1] \big), \\
    \text{with } d(z) = 0, \ d(H_1(X;\bQ)) = 0, \text{ and } d(x) = \varphi(x) \text{ for } x \in H_*(X;\bQ)[1].
\end{gather*}
Here $\mathrm{Sym}^*_\mathrm{gr}$ denotes the free graded commutative algebra on a graded vector space, $[1]$ increases the grading by one, and $\bQ z$ is a one-dimensional vector space generated by $z$ in degree $2$. The differential is encoded by a morphism $\varphi \colon H_*(X;\bQ) \to \mathrm{Sym}^*_\mathrm{gr}( \bQ z \oplus H_1(X;\bQ))$ which can be computed explicitly in terms of the Chern classes of $J^1\cO_X$ and $\alpha$.
\end{theorem}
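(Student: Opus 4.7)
The plan is to combine the standard Sullivan model of a projective bundle with Haefliger-type rational models for section spaces, and leverage the formality of smooth projective complex varieties to reduce to a purely algebraic computation.

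First, since $X$ is smooth projective over $\bC$, the Deligne--Griffiths--Morgan--Sullivan formality theorem gives $\APL(X) \simeq (H^*(X;\bQ),0)$, so I may work with $H^*(X;\bQ)$ as the Sullivan model of $X$. The projective bundle $\bP(J^1\cO_X) \to X$ then carries the classical relative Sullivan model
\[
    \bigl(H^*(X;\bQ) \otimes \Lambda(z,y),\, d\bigr), \quad |z|=2,\ |y|=2n+1,
\]
with $dz=0$ and $dy = z^{n+1} + \sum_{i=1}^{n+1} c_i(J^1\cO_X)\, z^{n+1-i}$, the Chern polynomial of the jet bundle.

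Next, I would apply Haefliger's rational model for spaces of sections of a fibration with simply connected fibre (or a modern reformulation, for instance by M{\o}ller--Raussen or Buijs--F{\'e}lix--Murillo--Tanr{\'e}) to produce a CDGA whose cohomology computes $H^*(\Gammacon(\bP(J^1\cO_X));\bQ)$. Its generators are pairs $v \otimes x^*$ with $v \in \{z,y\}$ and $x$ ranging over a basis of $H^*(X;\bQ)$, placed in degree $|v|-|x|$ (retaining only non-negative degrees). The differential is determined by the universal evaluation $\mathrm{ev}^*$: specifically, $d(v \otimes x^*)$ is read off as the coefficient of $x$ in the expansion of $\mathrm{ev}^*(dv)$ living in $H^*(X;\bQ)$ tensored with the CDGA.

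Then I would restrict to the component corresponding to $\alpha$. The degree-$0$ generators $z \otimes x^*$ for $x \in H^2(X;\bQ)$ parameterise the path components; setting them equal to the pairings $\langle \alpha, x\rangle$ picks out the $\alpha$-component. The surviving $z$-generators are precisely $\bQ z$ (degree $2$, from $H^0$) and a copy of $H^1(X;\bQ) \cong H_1(X;\bQ)$ in degree $1$, both with vanishing differential. For the $y$-generators, originally indexed by $x \in H^j(X;\bQ)$ in degree $2n+1-j$, Poincaré duality on $X$ relabels them by $\delta \in H_{2n-j}(X;\bQ)$ in degree $|\delta|+1$, matching exactly the $H_*(X;\bQ)[1]$ summand. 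The map $\varphi$ is then the universal-evaluation recipe: $\varphi(\delta)$ is the coefficient of the cohomology class Poincaré-dual to $\delta$ in
\[
    (\mathrm{ev}^* z)^{n+1} + \sum_{i=1}^{n+1} c_i(J^1\cO_X)\,(\mathrm{ev}^* z)^{n+1-i},
\]
where $\mathrm{ev}^* z = \alpha + \xi + z$ and $\xi \in H^1(X;\bQ) \otimes H_1(X;\bQ)$ is the canonical element encoding the degree-$1$ generators. Expanding yields $\varphi$ explicitly as a polynomial in $z$ and in the $H_1$-generators with coefficients built from $\alpha$ and the Chern classes of $J^1\cO_X$, as claimed.

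The main obstacle is technical rather than conceptual: justifying that the Haefliger-type section model applies to a possibly non-simply-connected base $X$ with arbitrary Picard class $\alpha$ (the simple, simply-connected fibre $\bP^n$ helps, but one must still control the chosen component and, rationally, the action of $\pi_1(X)$ on the model), and keeping careful track of Poincaré-duality and sign conventions throughout the universal-evaluation computation of $\varphi$.
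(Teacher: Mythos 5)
Your proposal is correct and lands in the same conceptual framework as the paper (Haefliger-style rational models for section spaces), but the computational route is genuinely different, so a comparison is in order. The paper does not invoke formality of $X$ or work with a relative Sullivan model of $\bP(J^1\cO_X) \to X$ directly. Instead it fibrewise-rationalises the bundle, takes the Moore--Postnikov decomposition of $\bP(J^1\cO_X)_{f\bQ} \to X$ with $k$-invariants computed explicitly by M{\o}ller (\cref{lemma:moller-k-invariants}, \cref{lemma:moller-pullback-generator}), forms Haefliger's tower of section spaces (\cref{proposition:haefliger-fibration-of-section-spaces}), and then applies the Eilenberg--Moore theorem to a homotopy pullback square whose corners are products of Eilenberg--MacLane spaces. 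That set-up only needs $H^*(X;\bQ)$ through the identification $K(\bQ,m)^X \simeq \prod_i K(H^{m-i}(X;\bQ),i)$, so formality never enters explicitly; everything is a map between formal (EM-space) targets. Your route starts from the DGMS formality of $X$ to replace $\APL(X)$ by $H^*(X;\bQ)$, writes the Grothendieck relation $dy = \sum (-1)^i c_i(J^1\cO_X) z^{n+1-i}$ as the relative Sullivan model, and then invokes a Brown--Szczarba/Haefliger function-space model with generators $v \otimes x^*$; the ``coefficient of $x$ in $\mathrm{ev}^*(dv)$'' recipe and the Poincar\'e-duality reindexing $H^j(X)^\vee \cong H_{2n-j}(X)$ reproduce exactly the $\varphi_i^*$ of \cref{lemma:haefliger-adjoint-map-in-cohomology} and the $H_*(X;\bQ)[1]$ bookkeeping. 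The trade-off: your version is shorter and makes the role of the Chern polynomial of $J^1\cO_X$ maximally transparent, but puts the full weight on two nontrivial inputs (formality of $X$, and the section-space model for a non-simply-connected base with a chosen component); the paper's route trades those for more elementary ingredients, and the $\pi_1(X)$ issue you flag is handled there by M{\o}ller's theorem on nilpotent spaces of sections for fibrations with simply connected fibre, which is exactly the citation you would want to add. One small bookkeeping caveat: be careful with the signs in the Grothendieck relation (the paper's convention has $(-1)^i c_i$) so that $\varphi$ matches \cref{lemma:moller-pullback-generator} on the nose.
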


As to emphasise the words ``explicitly computable" in the statement above, we have worked out completely the formulas in \cref{example:torus-rational-cdga} in the case of $X$ being a torus. Let us also mention another application of our main theorem in the form of a rational homological stability phenomenon:
\begin{theorem}[See \cref{corollary:rational-stability}]
Let $X$ be a smooth projective complex variety whose tangent bundle is a topologically trivial complex vector bundle. Let $\alpha \in \NS(X)$ be ample and assume that $d(\alpha) \geq 1$. Then, for any integer $k \geq 1$, there is map
\[
    \Mhyp^{k\alpha} \lra \Map_{\alpha}(X,\bP^n_\bQ)
\]
inducing an isomorphism in rational homology in the range of degrees $* < \frac{k \cdot d(\alpha) - 3}{2}$. In particular, the rational homology stabilises as $k \to \infty$.
\end{theorem}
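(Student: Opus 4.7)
The plan is to chain the main theorem with two further identifications: one topological, exploiting the triviality of $TX$, and one from rational homotopy theory.

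First I would apply the main theorem with class $k\alpha$ to obtain a map
\[
\Mhyp^{k\alpha} \lra \Gammacon\bigl(\bP(J^1\cO_X)\bigr)
\]
that induces an integral homology isomorphism onto a path component in degrees $* < (d(k\alpha) - 3)/2$. To convert this into the range appearing in the statement, I would invoke a standard additivity property of jet ampleness---the tensor product of a $d_1$-jet ample and a $d_2$-jet ample line bundle is $(d_1 + d_2)$-jet ample---to deduce $d(k\alpha) \geq k \cdot d(\alpha)$.

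Second, I would use the hypothesis that $TX$ is topologically trivial to simplify the target section space. From the short exact sequence $0 \to \Omega^1_X \to J^1\cO_X \to \cO_X \to 0$ and the topological triviality of $\Omega^1_X \cong (TX)^\vee$, it follows that $J^1\cO_X$ is a topologically trivial complex vector bundle of rank $n+1$. Therefore $\bP(J^1\cO_X) \cong X \times \bP^n$ as a topological fibre bundle, and $\Gammacon\bigl(\bP(J^1\cO_X)\bigr) \simeq \Map(X,\bP^n)$.

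Third, I would rationalise the target via $\bP^n \to \bP^n_\bQ$, producing a map $\Map(X,\bP^n) \to \Map(X,\bP^n_\bQ)$ which on each path component is a rationalisation, and in particular a rational homology equivalence. This is a classical consequence of Sullivan-style rational homotopy theory for mapping spaces from a finite CW complex into a simply connected target of finite $\bQ$-type. Composing the three maps gives the desired map, a rational homology isomorphism in the asserted range; the stability statement then falls out because the target is independent of $k$ while the range grows linearly in $k$. The main obstacle is this last step: one must be careful with the identification of path components (tracking how the first Chern class of $\cO_X(Z)$ transports through the chain of maps, and in particular how the scaling by $k$ is reabsorbed into the indexing $\Map_\alpha$ after rationalisation), and one must verify that the finite-type hypotheses needed for the Sullivan-style identification genuinely hold in this setting.
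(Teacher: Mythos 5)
Your proposal follows the same route as the paper: main theorem plus additivity of jet ampleness to get the range, topological triviality of $TX$ to turn the section space into a mapping space, and rationalisation of the target. The one place you stopped short is precisely the step you flagged as the ``main obstacle,'' and it is worth pinning down. After applying the main theorem to $k\alpha$ and using triviality of $J^1\cO_X$, you land in $\Map_{k\alpha}(X,\bP^n)$, and rationalising gives $\Map_{k\alpha}(X,\bP^n_\bQ)$, which is not the stated target $\Map_\alpha(X,\bP^n_\bQ)$. The paper's resolution (the proposition immediately preceding \cref{corollary:rational-stability}) is that $\bP^n_\bQ$ admits a self-map of degree $1/k$ for any $k \in \bQ^\times$ --- something that only exists after rationalisation --- and post-composing with it identifies $\Map_{k\alpha}(X,\bP^n_\bQ) \simeq \Map_\alpha(X,\bP^n_\bQ)$. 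That is the whole content of the re-indexing step; it is not merely a bookkeeping concern but the substantive reason the target can be made independent of $k$. With that filled in, your proof is essentially identical to the paper's (which uses M\o ller's fibrewise rationalisation rather than rationalising the fibre $\bP^n$ directly, but for a trivial bundle these coincide).
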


\begin{remark}
By a theorem of Wang \cite{wang_complex_1954}, if the tangent bundle of $X$ is holomorphically trivial then $X$ is an abelian variety. If we only require topological triviality as in the theorem above, other examples exist such as products of abelian varieties with curves \cite{schemer_what_2015}.
\end{remark}

\subsection{Configuration spaces on curves}
On an algebraic curve, the first Chern class of a line bundle is simply its degree under the identification $H^2(X;\bZ) \cong \bZ$. The vanishing locus of a non-zero global section of a line bundle of degree $d > 0$ is a set of $d$ points counted with multiplicity. In fact, such a section is non-singular precisely when these $d$ points are distinct. In that case, the moduli space of hypersurfaces is the classically studied configuration space and our main theorem recovers parts of a result of McDuff \cite[Theorem~1.1]{mcduff_configuration_1975}, who studied more generally configuration spaces on any smooth manifold via scanning methods:
\begin{theorem}[See \cref{theorem:scanning-for-curves}]
Let $X$ be a smooth projective complex curve of genus $g$, and denote by $\dot{TX}$ the fibrewise one-point compactification of the tangent bundle of $X$. Let $\alpha \in H^2(X;\bZ) \cong \bZ$ be such that $\alpha > 2g-2$. The jet map
\[
    \UConf_\alpha(X) \cong \Mhyp^\alpha \lra \Gammacon^\alpha(\bP(J^1\cO_X)) \cong \Gammacon^\alpha(\dot{TX})
\]
induces an isomorphism in integral homology in the range of degrees $* < \frac{\alpha - 2g - 3}{2}$.
\end{theorem}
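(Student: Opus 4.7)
The plan is to deduce this statement directly from the main theorem by making three identifications specific to the case where $X$ is a smooth projective complex curve of genus $g$.

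\emph{Identifying the source.} On a curve, a smooth effective Cartier divisor is a set of reduced points, and its number equals the degree of $\cO_X(Z)$, which by Poincaré duality equals the class $\alpha \in H^2(X;\bZ) \cong \bZ$. So $\Mhyp^\alpha$ is canonically the unordered configuration space $\UConf_\alpha(X)$.

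\emph{Computing $d(\alpha)$.} A line bundle $L$ on a curve is $k$-jet ample iff the restriction $H^0(L) \to H^0(L|_D)$ is surjective for every effective divisor $D$ of degree $k+1$, equivalently $H^1(L(-D)) = 0$ for all such $D$. By Serre duality, $H^1(L(-D)) \cong H^0(K_X \otimes L^{-1}(D))^\vee$, which vanishes whenever $\deg L \geq 2g + k$. Taking $L$ with $c_1(L) = \alpha$, one has $d(\alpha) = \alpha - 2g$ whenever $\alpha \geq 2g$; for the range $2g-2 < \alpha < 2g$, the claimed interval $* < (\alpha - 2g - 3)/2$ is vacuous, so the statement is trivial there.

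\emph{Identifying the target.} The first jet exact sequence on $X$ reads
\[
    0 \lra \Omega^1_X \lra J^1\cO_X \lra \cO_X \lra 0.
\]
This splits as a sequence of topological complex vector bundles: the relevant $\mathrm{Ext}^1$ vanishes because the sheaf of continuous homomorphisms from $\cO_X$ to $\Omega^1_X$ is fine, or more concretely because rank-$2$ topological complex bundles on a surface are classified by their first Chern class. Hence $J^1\cO_X \cong \Omega^1_X \oplus \cO_X$ topologically, and using $\bP(V) \cong \bP(V \otimes TX)$ we get
\[
    \bP(J^1\cO_X) \;\cong\; \bP(\Omega^1_X \oplus \cO_X) \;\cong\; \bP(\cO_X \oplus TX).
\]
Viewed as a $\bP^1$-bundle with distinguished section at infinity coming from the $\cO_X$ summand, this is fibrewise homeomorphic to the one-point compactification $\dot{TX}$. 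Continuous sections of $\bP(J^1\cO_X) \to X$ landing in the component $\alpha$ are exactly sections of $\dot{TX} \to X$ of the same component, giving the claimed identification of section spaces.

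\emph{Conclusion and main obstacle.} Plugging $d(\alpha) = \alpha - 2g$ into \cref{theorem:main-theorem} yields a homology isomorphism in the advertised range after applying the three identifications above. The genuinely nontrivial input is the main theorem itself; at the level of this corollary, the only mildly delicate step is checking that the algebraic $\bP^1$-bundle $\bP(J^1\cO_X)$ is topologically the fibrewise sphere bundle $\dot{TX}$, which is handled by the topological splitting of the jet sequence together with the line-bundle twist $\bP(V) \cong \bP(V \otimes L)$.
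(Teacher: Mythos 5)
Your proposal is correct and follows essentially the same route as the paper: identify $\Mhyp^\alpha$ with $\UConf_\alpha(X)$, compute $d(X,\alpha) \geq \alpha - 2g$ via Serre duality (the paper's \cref{lemma:riemann-roch-jet-ampleness}), identify $\bP(J^1\cO_X) \cong \dot{TX}$ using the topological splitting of the jet sequence, and plug into \cref{theorem:main-theorem}. The only thing worth noting is that you omit the explicit check of the hypothesis of the main theorem that $\alpha - c_1(K_X)$ be ample, which here is exactly the condition $\alpha > 2g - 2$; this is immediate, but the paper does spell it out.
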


\subsection{Characteristic classes and moduli spaces of manifolds}

Let $H = V(s) \subset X$ be a hypersurface defined by a non-singular section of a line bundle $\cL$ on $X$. In the series of papers \cite{galatius_stable_2014,galatius_homological_2017,galatius_homological_2018}, Galatius and Randal-Williams have investigated the homology of classifying spaces of diffeomorphisms groups of manifolds. In this article, we have tried to compare their results to ours in the case where the manifold under investigation is $H$. Deferring the technical details to \cref{section:comparison-with-diffeomorphisms}, we describe here the main contents.

Let $n$ be the complex dimension of $X$, so that $H$ is of real dimension $2n-2$. A tangential structure for $(2n-2)$-manifolds is a fibration $\theta \colon B \to BO(2n-2)$ which classifies a vector bundle $\theta^*\gamma_{2n-2}$ of real rank $2n-2$, obtained by pulling back the universal bundle $\gamma_{2n-2}$ on $BO(2n-2)$. A $\theta$-structure on $H$ is then defined as bundle map $\hat\ell \colon TH \to \theta^*\gamma_{2n-2}$. Informally, such a structure amounts to choosing a lift of the map $\tau_H$ classifying the tangent bundle of $H$
\begin{center}
% https://tikzcd.yichuanshen.de/#N4Igdg9gJgpgziAXAbVABwnAlgFyxMJZABgBoBGAXVJADcBDAGwFcYkQAJEAX1PU1z5CKchWp0mrdgCEA8gAoATGAC0igJQ8+IDNjwEio4uIYs2iENJ7iYUAObwioAGYAnCAFskZEDghJRCTN2AB0QnHpmAH0uGkZ6ACMYRgAFAX1hEFcsOwALHC0Xdy9ERRo-AJpTKQswnFyYCMKQN09vcv9SuKwwcxAoejgGqBAqyT6w5MZrbiA
\begin{tikzcd}
                                                  & B \arrow[d, "\theta"] \\
H \arrow[r, "\tau_H"'] \arrow[ru, "\ell", dashed] & BO(2n-2)             
\end{tikzcd}
\end{center}
where we have written $\ell$ for the map of spaces underlying $\hat\ell$. For instance, if $\theta \colon BSO(2n-2) \to BO(2n-2)$ is the natural map, a $\theta$-structure amounts to the choice of an orientation. Writing $\mathrm{Bun}^\theta(H)$ for the space of bundle maps $TH \to \theta^*\gamma_{2n-2}$, the moduli space classifying smooth $H$-bundles with $\theta$-structure is defined as the homotopy orbit construction
\[
    \cM^\theta(H) = \mathrm{Bun}^\theta(H) \sslash \mathrm{Diff}(H)
\]
and we denote by $\cM^\theta(H, \hat\ell) \subset \cM^\theta(H)$ the path component of $\hat\ell$.

One could wonder which tangential structure is the most natural on $H$ (eg. $H$ is at least canonically oriented as a complex manifold). In the last section of this article, we find a reasonable candidate and show:
\begin{theorem}[See \cref{theorem:comparison-of-characteristic-classes}, \cref{corollary:simply-connected-gammans-bdiff} and \cref{corollary:without-mu-quotient} for precise versions]
Let $X$ be a simply connected smooth complex projective variety of dimension $n \geq 4$ and $\cL$ be a very ample line bundle on it with $\alpha = c_1(\cL)$. There is a map $\theta \colon B \to BO(2n-2)$ depending solely on $\cL$ such that a hypersurface $H$ defined by a non-singular section of $\cL$ naturally inherits a $\theta$-structure $\hat\ell$. For this tangential structure, the map classifying the universal bundle
\[
    \Mhyp^\alpha \lra \cM^\theta(H,\hat\ell)
\]
induces an isomorphism in rational cohomology in the stable ranges.
\end{theorem}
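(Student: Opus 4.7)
The plan is to construct, on both sides, a natural map to a common rationally computable target and then identify characteristic classes. First, I would construct the tangential structure $\theta$. For a smooth hypersurface $H = V(s)$ defined by a section of $\cL$, the short exact sequence
\[
    0 \lra TH \lra TX|_H \lra \cL|_H \lra 0
\]
expresses $TH$ as the orthogonal complement of a complex line inside $TX|_H$. I would therefore take $B$ to be the real space underlying $\bP(J^1\cO_X)$, fibred over $X$ with fibres $\bP^n$, equipped with the rank-$(2n-2)$ tautological subbundle orthogonal to the tautological line; this defines $\theta \colon B \to BO(2n-2)$. By construction, every such hypersurface $H$ acquires a canonical $\theta$-structure $\hat\ell$ by sending a point $x \in H$ to the tangent hyperplane $T_xH \subset T_xX$.

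Since this assignment is functorial in families, the universal smooth hypersurface over $\Mhyp^\alpha$ yields a classifying map $\Mhyp^\alpha \to \cM^\theta(H, \hat\ell)$ landing in the specified component. Rationally, both sides admit closed-form descriptions. By the main theorem paired with \cref{theorem:rational-cohomology-section-space-projective-bundle}, the stable rational cohomology of $\Mhyp^\alpha$ is computed by an explicit CDGA built from $H_*(X;\bQ)$ and the Chern classes of $J^1\cO_X$. By the Madsen--Weiss-type theorem of Galatius and Randal-Williams, applicable because $n \geq 4$ makes $\dim_\bR H = 2n-2 \geq 6$, the stable rational cohomology of $\cM^\theta(H, \hat\ell)$ is a polynomial algebra on Miller--Morita--Mumford classes indexed by monomials in the characteristic classes of $\theta^*\gamma_{2n-2}$.

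To complete the comparison, I would show that the classifying map sends generators to generators. Both rational cohomology rings arise as characteristic classes of the universal hypersurface bundle, so the pullback is automatic; integration over the $\bP^n$-fibre of $B \to X$ identifies each MMM-class with a polynomial in the CDGA generators. The simply connected hypothesis on $X$ ensures that $B$ is simply connected, that no $\pi_1$-obstructions arise, and that both the section-space and mapping-space descriptions of the stable targets coincide rationally.

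The main obstacle will be this final matching: verifying that, after integration over the $\bP^n$-fibre of $B \to X$, the MMM-classes coming from Galatius--Randal-Williams recover exactly the polynomial generators of the rational CDGA of \cref{theorem:rational-cohomology-section-space-projective-bundle}, via formulas involving the Chern classes of $J^1\cO_X$ and the class $\alpha$. This is a direct but delicate characteristic-class computation; the hypotheses on $X$ are precisely what make both descriptions pure characteristic-class data and allow the two stable-range spectral sequences to collapse onto the same rational $E_\infty$-page.
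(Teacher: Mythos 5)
Your proposal diverges from the paper's approach at the very first step, and the divergence is not cosmetic. The paper's tangential structure is built from the space $B \simeq \Omega^1_X \otimes \cL \setminus 0$, the complement of the zero section in a rank-$n$ vector bundle over $X$, so that the fibre is $\bC^n \setminus 0 \simeq S^{2n-1}$; this is what makes the classifying map $\ell(x) = ds(x)$ of a non-singular section into a genuinely $(n-1)$-connected map (\cref{lemma:high-connectivity-lefschetz}), the key hypothesis for the Galatius--Randal-Williams machinery. Your $B = \bP(J^1\cO_X)$ has $\bP^n$-fibres, which has different cohomology and cannot be the Moore--Postnikov stage that the theory requires. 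Moreover, $J^1\cO_X$ has rank $n+1$, so the tautological orthogonal complement to the tautological line is rank $n$ (real rank $2n$), not $2n-2$; your proposed $\theta$-structure does not even restrict to the right dimension. Finally, $\bP(J^1\cO_X)$ does not depend on $\cL$, contradicting the statement that $\theta$ depends on $\cL$; the $\cL$-dependence enters precisely because the normal bundle of $H$ in $X$ is $\cL|_H$.

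The second and more conceptual gap is that the classifying map $\Gammans(\cL) \to \cM^{\theta}(H,\hat\ell)$ is \emph{not} a rational cohomology isomorphism in general, so the comparison cannot be "automatic." \Cref{theorem:comparison-of-characteristic-classes} shows the induced map on cohomology is injective on the subalgebra $\Lambda\big(H^{\bullet > 0}(X)[\bullet +1]\big)$ but is \emph{zero} on $\Lambda\big(H^{2n-1}(X)[1]\big)$, because for $w \in H^{2n-1}(X;\bQ)$ the fibre integration produces $c_1(\cL)\cup w$, which vanishes for degree reasons. Simple connectedness of $X$ is used exactly to force $H^{2n-1}(X;\bQ)=0$ (Poincar\'e duality), and only then does the map become an isomorphism. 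You mention simple connectedness in passing, but as a way to avoid "$\pi_1$-obstructions"; the actual role of the hypothesis is to kill an entire polynomial factor in the cohomology of the target. A third missing ingredient is the $\bC^\times$-quotient on the source: $\Mhyp^\alpha = \Gammans(\cL)/\bC^\times$, and the paper matches this on the target by passing to the global Milnor fibre $\Delta^{-1}(1)/\mu_{\deg\Delta}$ and constructing the modified tangential structure $\theta'$ on $B/\mu_{\deg\Delta}$ (\cref{proposition:weird-cyclic-quotient}, \cref{corollary:without-mu-quotient}). Without these three ingredients the comparison you sketch cannot close.
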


\subsection{Outline} In \cref{section:preliminaries} we recall known properties about the Picard scheme, jet bundles, jet ampleness, and the topology of smooth hypersurfaces. In \cref{section:moduli-of-hypersurfaces} we define precisely the moduli space $\Mhyp^\alpha$. In \cref{section:statement-main-theorem,,section:proof-main-theorem} we state and prove our main theorem. The rest of the paper is dedicated to applications. We make rational computations in \cref{section:rational-homotopy} and describe the relation to scanning and characteristic classes of manifold bundles in \cref{section:scanning-configuration-spaces,,section:characteristic-classes}. Finally, we have assembled in \cref{section:range-estimates} various results concerning jet ampleness.

\subsection{The proof strategy for the main theorem} As explained in the introduction, we have a sequence of spaces
\begin{equation}\label{equation:kind-of-fibre-sequence}
    |\cL| \setminus \Delta \lra \Mhyp^\alpha \lra \Pic^\alpha(X)
\end{equation}
where $\Delta \subset |\cL|$ is the discriminant hypersurface. It turns out not to be a fibration, but only a microfibration. On the other hand, we have an actual fibration
\begin{equation}\label{equation:an-actual-fibre-sequence}
    \Gammacon(J^1\cL \setminus 0) / \bC^\times \lra \Gammacon^\alpha(\bP(J^1\cO_X)) \lra B\big(\Map(X,\bC^\times)/\bC^\times\big)
\end{equation}
obtained by modding out by the constant functions $\bC^\times \subset \Map(X,\bC^\times)$ and delooping the $\Map(X,\bC^\times)$-principal fibration
\[
    \Map(X,\bC^\times) \lra \Gammacon(J^1\cL \setminus 0) \lra \Gammacon^0(\bP(J^1\cL)) \cong \Gammacon^\alpha(\bP(J^1\cO_X))
\]
sending a section of $J^1\cL \setminus 0$ to its projectivisation. We observe the weak homotopy equivalences
\[
    B\big(\Map(X,\bC^\times)/\bC^\times\big) \simeq K(H^1(X;\bZ),1) \simeq \Pic^\alpha(X)
\]
and have proved in the earlier work \cite{aumonier_h-principle_2022} that the jet map
\[
    |\cL| \setminus \Delta \lra \Gammacon(J^1\cL \setminus 0) / \bC^\times
\]
induces an isomorphism in homology in a range of degrees. In essence, the proof consists in comparing with two (micro)fibrations~\eqref{equation:kind-of-fibre-sequence} and~\eqref{equation:an-actual-fibre-sequence}: we will leverage the homotopy (resp. homology) equivalence of their bases (resp. fibres) to obtain a homology equivalence between their total spaces.

\setcounter{tocdepth}{1}
\hypersetup{bookmarksdepth=3}
\tableofcontents

\subsection*{Acknowledgements} This work is part of my PhD thesis and I would like to thank my advisor Søren Galatius for many useful discussions, suggestions, and encouragements. Thanks to Ronno Das as well, for comments on this project and joint work on another where we heavily use microfibrations. I was supported by the Danish National Research Foundation through the Copenhagen Centre for Geometry and Topology (DNRF151) as well as the European Research Council (ERC) under the European Union's Horizon 2020 research and innovation programme (grant agreement No. 682922). 

\section{The Picard scheme, jet bundles, and hypersurfaces}\label{section:preliminaries}

\subsection{The Picard scheme}

We begin with recollections on line bundles on smooth projective complex varieties and their moduli. A more precise, and much more general, account of this standard material can be found in Kleiman's notes \cite{kleiman_picard_2005}. In what follows, $X$ is a connected smooth projective complex variety.

\begin{definition}
The \emph{absolute Picard group} of a complex scheme $T$ is the set $\Pic_\mathrm{abs}(T)$ of isomorphism classes of algebraic line bundles on $T$ equipped with the group law given by the tensor product.
\end{definition}

\begin{definition}\label{definition:picard-scheme}
The Picard functor of $X$
\[
    T \lra \Pic_\mathrm{abs}(X \times T) / \Pic_\mathrm{abs}(T)
\]
from complex schemes to abelian groups is represented by a scheme $\Pic(X)$ called the \emph{Picard scheme} of $X$ (relative to $\Spec(\bC)$).
\end{definition}

\begin{remark}
The analytification of $\Pic(X)$, sometimes called the \emph{Picard space} in this article, is the group of isomorphism classes of holomorphic line bundles on $X$.
\end{remark}

\begin{lemma}[{Compare \cite[Exercise 4.3]{kleiman_picard_2005}}]\label{lemma:existence-poincare-bundle}
There exists a (non-unique) algebraic line bundle $\cP$ on $\Pic(X) \times X$ satisfying the following property: given any complex scheme $T$ and a line bundle $\cL$ on $X \times T$, there exists a unique morphism $h \colon T \to \Pic(X)$ such that
\[
    \cL \cong (1 \times h)^*\cP \otimes f^*\cN
\]
for $f \colon X \times T \to T$ the projection map and $\cN$ some line bundle on $T$. \qed
\end{lemma}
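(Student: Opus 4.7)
The plan is to extract $\cP$ directly as a representative of the universal element guaranteed by representability, and then to deduce the stated factorisation as an instance of the Yoneda lemma.

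Concretely, I would first apply \cref{definition:picard-scheme} to the scheme $T = \Pic(X)$ together with the identity morphism $\mathrm{id}_{\Pic(X)} \in \Hom(\Pic(X), \Pic(X))$. Under the natural isomorphism provided by representability, the identity corresponds to a distinguished class in the quotient $\Pic_\mathrm{abs}(X \times \Pic(X)) / \Pic_\mathrm{abs}(\Pic(X))$. I would then choose any lift $\cP$ of this class to an honest algebraic line bundle on $\Pic(X) \times X$ (identifying $\Pic(X) \times X$ with $X \times \Pic(X)$ via the symmetry of the product). Two such choices differ by the pullback along the projection $\pi \colon \Pic(X) \times X \to \Pic(X)$ of a line bundle on $\Pic(X)$, which accounts for the asserted non-uniqueness.

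Next, for a given scheme $T$ and a line bundle $\cL$ on $X \times T$, the class of $\cL$ inside $\Pic_\mathrm{abs}(X \times T) / \Pic_\mathrm{abs}(T)$ corresponds, again by representability, to a unique morphism $h \colon T \to \Pic(X)$; this already yields the uniqueness of $h$. To upgrade the existence of $h$ into the stated isomorphism, I would invoke the naturality of the representing bijection along $h$: the pulled back class $[(1 \times h)^*\cP]$ must coincide with $[\cL]$ in $\Pic_\mathrm{abs}(X \times T) / \Pic_\mathrm{abs}(T)$. Unwinding the definition of this quotient, there exists a line bundle $\cN$ on $T$ such that $\cL \otimes \big((1 \times h)^*\cP\big)^{-1} \cong f^*\cN$, which is exactly the desired factorisation.

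I do not anticipate a genuine obstacle: the representability of the Picard functor, assumed as input from \cref{definition:picard-scheme}, essentially does all the work, and the argument is a formal Yoneda-type unwinding. The only mild point of care is to match the two different conventions present in the statement, namely $X \times T$ used in the definition of the Picard functor versus $\Pic(X) \times X$ used for the domain of $\cP$, which is handled transparently by the symmetry of the product. The concrete bookkeeping of the isomorphism, and the construction of $\cN$ as the line bundle witnessing the vanishing of the class $[\cL] - [(1 \times h)^*\cP]$ in the quotient, is the substantive content but is purely formal.
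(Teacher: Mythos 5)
Your argument is correct and is the standard one: it is precisely the Yoneda unwinding that Kleiman's Exercise 4.3 asks for, and the paper itself omits the proof with a citation rather than spelling it out. A couple of small points worth noting for completeness: the representability of the \emph{naive} Picard functor $T\mapsto \Pic_\mathrm{abs}(X\times T)/\Pic_\mathrm{abs}(T)$ (rather than its fppf-sheafification) is not automatic; it relies on $X\to\Spec(\bC)$ admitting a section, which holds here because $X$ is a variety over the algebraically closed field $\bC$. The paper's \cref{definition:picard-scheme} silently packages this in, and you correctly treat it as a black box. Your remark that two choices of $\cP$ differ by the pullback of a line bundle on $\Pic(X)$ along the projection is exactly the content of the quotient in the Picard functor, so that reading of the non-uniqueness is also right.
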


\begin{definition}\label{definition:poincare-line-bundle}
Any choice of a line bundle $\cP$ as in \cref{lemma:existence-poincare-bundle} will be called a \emph{Poincaré line bundle} on $X$.
\end{definition}

The Picard scheme $\Pic(X)$ only parameterises \emph{isomorphism classes} of line bundles on $X$. One should think of the choice of a Poincaré line bundle as making compatible choices of \emph{representatives} of those isomorphisms classes.

We will need a further decomposition of the Picard scheme into components. To introduce it, we let $\cO_X(1)$ be a very ample line bundle on $X$ and write $\cF(n) = \cF \otimes \cO_X(1)^{\otimes n}$ for any sheaf of $\cO_X$-modules $\cF$ and $n \in \bZ$.
\begin{definition}\label{definition:hilbert-polynomial}
Let $\bC \subset k$ be a field extension, and write $X_k = X \times_{\Spec(\bC)} \Spec(k)$ for the base change. The \emph{Hilbert polynomial} of a closed subscheme $Z \subset X_k$ is the function
\[
    P_Z \colon \bN \lra \bZ, \quad n \longmapsto \chi(Z, \cO_Z(n))
\]
where $\cO_Z$ is the structure sheaf of $Z$.
\end{definition}
\noindent Given a polynomial $P \in \bQ[x]$, let $\Pic^P(X)(-) \subset \Pic(X)(-)$ be the subfunctor of the Picard functor whose $T$-points are represented by the line bundles $\cL$ on $X \times T$ such that
\[
    \chi(X_t, \cL_t^{-1}(n)) = P(n) \quad \text{ for all } t \in T
\]
where $X_t$ and $\cL_t$ denote the base change to $t$.

\begin{proposition}[Compare {\cite[Theorems~4.8~and~6.20]{kleiman_picard_2005}}]\label{proposition:picard-scheme-hilbert}
The Picard functor $\Pic^P(X)(-)$ is represented by a complex quasi-projective scheme denoted $\Pic^P(X)$. The Picard scheme is the disjoint union of the $\Pic^P(X)$ when $P$ runs over all polynomials $P$. \qed
\end{proposition}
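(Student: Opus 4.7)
The plan is to build \(\Pic^P(X)\) as an open and closed subscheme of the already-known Picard scheme \(\Pic(X)\) from \cref{definition:picard-scheme}, and then harvest quasi-projectivity from a boundedness argument.

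First I would establish that the Hilbert polynomial is locally constant in flat families. Given any connected complex scheme \(T\) and any line bundle \(\cL\) on \(X \times T\), the sheaf \(\cL^{-1}(n)\) is flat over \(T\) for each \(n\), and \(X \times T \to T\) is smooth and proper. A standard semicontinuity-cum-cohomology-and-base-change argument then shows that \(t \mapsto \chi(X_t, \cL_t^{-1}(n))\) is locally constant on \(T\) for each \(n\). Since two polynomials in \(\bQ[x]\) agree as soon as they agree at infinitely many integers, the stronger condition \(\chi(X_t, \cL_t^{-1}(n)) = P(n)\) for all \(n\) is also locally constant in \(t\). Consequently, for fixed \(P\), the locus in \(T\) where this holds is open and closed.

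This immediately makes \(\Pic^P(X)(-) \subset \Pic(X)(-)\) an open and closed subfunctor, and therefore representable by an open and closed subscheme of \(\Pic(X)\), which I take as the definition of \(\Pic^P(X)\). The disjoint union decomposition is then automatic: every \(T\)-point of \(\Pic(X)\) locally comes from a line bundle on \(X \times T\), and the previous paragraph partitions \(T\) according to its Hilbert polynomial, so over a connected base the point lies in exactly one \(\Pic^P(X)\); summing over all polynomials \(P\) gives the whole Picard scheme.

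The main obstacle, and the step where I would lean on Kleiman's notes rather than try to redo the argument, is the quasi-projectivity of each \(\Pic^P(X)\). The strategy there is a boundedness argument: using a very ample \(\cO_X(1)\), one shows that the set of line bundles on \(X\) with a given Hilbert polynomial forms a bounded family, so that after a sufficiently positive twist they become globally generated with controlled cohomology. Representing them as quotients of a fixed free sheaf realises \(\Pic^P(X)\) as a geometric quotient of a locally closed subscheme of an appropriate Quot or Grassmannian scheme, from which quasi-projectivity follows. This is precisely the content of \cite[Theorems~4.8~and~6.20]{kleiman_picard_2005}, which I would cite rather than reproduce.
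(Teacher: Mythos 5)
Your proposal is correct and matches the paper's approach, since the paper itself offers no proof beyond the citation to Kleiman's Theorems 4.8 and 6.20. You add a useful elaboration of the local-constancy argument (that the Hilbert polynomial of $\cL_t^{-1}(n)$ is locally constant in $t$, so $\Pic^P(X)(-)$ is an open and closed subfunctor) and then, like the paper, defer the genuinely hard step --- the boundedness argument yielding quasi-projectivity --- to the cited source; this is exactly the right place to lean on the reference.
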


Passing to complex points, the picture is vastly simplified by Hodge theory as recalled in the following two results.

\begin{definition}\label{definition:neron-severi-group}
The \emph{Néron--Severi} group of $X$, denoted $\NS(X)$, is the image of the morphism $c_1 \colon \Pic(X) \to H^2(X;\bZ)$ sending an isomorphism class of a line bundle to its first Chern class.
\end{definition}

\begin{proposition}\label{proposition:description-complex-picard}
The Picard space $\Pic(X)$ a disjoint union of connected components indexed by the Néron--Severi group
\[
    \Pic(X) = \bigsqcup_{\alpha \in \NS(X)} \Pic^\alpha(X).
\]
Each space $\Pic^\alpha(X)$ is a torus, non-canonically isomorphic to $H^1(X; \cO_X) / H^1(X; \bZ)$, and parameterises isomorphism classes of holomorphic line bundles on $X$ with Chern class $\alpha$. \qed
\end{proposition}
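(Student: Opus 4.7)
The plan is to use the exponential exact sequence of analytic sheaves on $X$ together with GAGA. Since $X$ is smooth projective, GAGA identifies the algebraic Picard group with the analytic one, $\Pic(X) \cong H^1(X^{\mathrm{an}}, \cO_X^{\mathrm{an},\times})$. I would start from the short exact sequence of sheaves on $X^{\mathrm{an}}$
\[
    0 \lra \bZ \lra \cO_X^{\mathrm{an}} \xrightarrow{\exp(2\pi i \cdot)} \cO_X^{\mathrm{an},\times} \lra 0
\]
and take the associated long exact sequence. The relevant piece reads
\[
    H^1(X;\bZ) \lra H^1(X;\cO_X) \lra \Pic(X) \xrightarrow{c_1} H^2(X;\bZ) \lra H^2(X;\cO_X).
\]
A standard identification shows that the connecting homomorphism agrees with the first Chern class, so by definition its image is exactly $\NS(X)$. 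Thus the components of $\Pic(X)$ are indexed by $\NS(X)$, giving the decomposition in the statement.

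Next I would identify $\Pic^0(X)$ with the cokernel $H^1(X;\cO_X)/H^1(X;\bZ)$ from the exact sequence. To see this is a complex torus I have to check that $H^1(X;\bZ)$ maps to $H^1(X;\cO_X)$ as a full rank lattice. For this I would invoke the Hodge decomposition $H^1(X;\bC) \cong H^1(X;\cO_X) \oplus \overline{H^1(X;\cO_X)}$: composed with projection to the $(0,1)$-part, the map $H^1(X;\bZ) \hookrightarrow H^1(X;\bC) \to H^1(X;\cO_X)$ becomes an $\bR$-linear isomorphism after tensoring with $\bR$, because the conjugate Hodge component is the complex conjugate of the first, so $H^1(X;\bR) \to H^1(X;\cO_X)$ is an $\bR$-linear isomorphism. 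Hence $H^1(X;\bZ)$ is a lattice of full rank in the finite-dimensional $\bC$-vector space $H^1(X;\cO_X)$, and the quotient is a compact complex torus of complex dimension $h^{0,1}(X)$.

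For any $\alpha \in \NS(X)$, pick a line bundle $\cL_\alpha$ with $c_1(\cL_\alpha) = \alpha$; tensoring by $\cL_\alpha$ defines a bijection $\Pic^0(X) \to \Pic^\alpha(X)$, making $\Pic^\alpha(X)$ a torsor over the complex torus $\Pic^0(X)$ and thus non-canonically isomorphic to $H^1(X;\cO_X)/H^1(X;\bZ)$. Finally, by construction of $c_1$ through the exponential sequence, the fibre $\Pic^\alpha(X)$ consists precisely of isomorphism classes of holomorphic line bundles with first Chern class $\alpha$.

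The only delicate point is really the lattice assertion, i.e.\ checking that $H^1(X;\bZ) \to H^1(X;\cO_X)$ is injective with discrete cocompact image; everything else is bookkeeping with the long exact sequence and GAGA. I would handle this by explicitly invoking the Hodge decomposition (or the Dolbeault isomorphism plus Hodge symmetry), which is available because $X$ is a compact Kähler manifold. The compatibility of $\Pic^\alpha$ as analytic and algebraic schemes from \cref{proposition:picard-scheme-hilbert} then identifies the scheme-theoretic connected components with the topological ones appearing here.
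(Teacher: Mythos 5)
Your proof is correct and is the standard argument via the exponential exact sequence, GAGA, and the Hodge decomposition of $H^1(X;\bC)$; the paper states this proposition without proof as well-known background material (hence the terminal box in the statement), so your argument fills in exactly the content being cited.
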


\subsection{Jet bundles}\label{subsection:preliminaries-jet-bundle}

We recall the definition of jet bundles in algebraic geometry and explain the construction of the jet evaluation map which will be used throughout this article. The main reference for this section is \cite[Section 16.7]{grothendieck_elements_1967}. In this section only, the full generality offered by schemes will be convenient, so we momentarily work in this setting.

Let $f \colon Y \to S$ be a morphism of schemes, $\Delta \colon Y \to Y \times_S Y$ be the diagonal and $\cI$ be its ideal sheaf. We let $p_i \colon Y \times_S Y \to Y$ be the two projections for $i=1,2$.
\begin{definition}\label{definition:relative-jet-bundle}
Let $\cF$ be an $\cO_Y$-module. Its \emph{relative $r$-jet bundle} is defined by
\[
    J^r_{Y/S} \cF := (p_1)_* \big( \cO_{Y \times_S Y}/\cI^{r+1} \otimes p_2^*\cF\big).
\]
The two projections $p_i$ give two morphisms of sheaves of rings $\cO_Y \to J^r_{Y/S} \cO_Y$. We choose the one given by $p_1$ to define an $\cO_Y$-module structure. The other one, induced by $p_2$, is denoted by 
\[
    d^r_{Y/S} \colon \cO_Y \to J^r_{Y/S} \cO_Y
\]
and is called the \emph{jet map}. In particular $J^r_{Y/S} \cF = J^r_{Y/S} \cO_Y \otimes_{\cO_Y} \cF$ where $J^r_{Y/S} \cO_Y$ is seen as a right $\cO_Y$-module via $d^r_{Y/S}$. We will also write $d^r_{Y/S} \colon \cF \to J^r_{Y/S} \cF$ for the tensor of the jet map with $\cF$.
\end{definition}

The fibre of the jet bundle at a closed point $y \in Y$ with maximal ideal sheaf $\fm$ is $(J^r_{Y/S}\cF)|_y \cong \cF/ \fm^{r+1}\cF$. Intuitively, the jet map should be thought of as taking the $r$-th Taylor expansion of a function. In particular, as the Leibniz rule for differentation shows, it is not a morphism of $\cO_X$-modules when $r > 0$. On the contrary, taking the derivative of a function commutes with multiplication by a constant. At the level of the relative jet bundles, the functions on $S$ play the role of the scalars, and this fact is expressed by the following:
\begin{lemma}\label{lemma:pushforward-jet-OSmodule}
The pushforward of the jet map
\[
    f_* d^r_{Y/S} \colon f_*\cF \lra f_*J^r_{Y/S} \cF
\]
is a morphism of $\cO_S$-modules.
\end{lemma}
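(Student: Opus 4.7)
The plan is to unwind the definitions and exploit the single non-trivial fact that $f \circ p_1 = f \circ p_2$ as morphisms $Y \times_S Y \to S$, so that the two projections become indistinguishable when one restricts to elements pulled back from the base.

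First I would reduce the statement. The $\cO_S$-module structure on any pushforward $f_*\cG$ is obtained from the $\cO_Y$-module structure on $\cG$ through the morphism $f^\sharp \colon f^{-1}\cO_S \to \cO_Y$. Consequently, to show that $f_* d^r_{Y/S}$ is $\cO_S$-linear, it is enough to prove that $d^r_{Y/S} \colon \cF \to J^r_{Y/S}\cF$ is $f^{-1}\cO_S$-linear, where the action on the target uses the $\cO_Y$-module structure coming from $p_1$, as prescribed in \cref{definition:relative-jet-bundle}.

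Next I would verify this linearity locally. Pick an open $U \subset S$, a section $s \in \cO_S(U)$, and a section $\sigma \in \cF(f^{-1}U)$. By construction, $d^r_{Y/S}(\sigma)$ is represented on $Y \times_S Y$ by the class of $p_2^* \sigma$ modulo $\cI^{r+1}$, viewed as a sheaf on $Y$ via the pushforward along $p_1$. Therefore $d^r_{Y/S}(f^\sharp(s) \cdot \sigma)$ is represented by the class of $p_2^*(f^\sharp(s)) \cdot p_2^*(\sigma)$ modulo $\cI^{r+1}$. The key step is then to use $f \circ p_1 = f \circ p_2$, which yields the equality $p_2^*(f^\sharp s) = p_1^*(f^\sharp s)$ in $\cO_{Y \times_S Y}$. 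Substituting, the class becomes $p_1^*(f^\sharp s) \cdot p_2^*(\sigma) \pmod{\cI^{r+1}}$, which is exactly $f^\sharp(s) \cdot d^r_{Y/S}(\sigma)$ under the $p_1$-structure on $J^r_{Y/S}\cF$.

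The only real obstacle is bookkeeping: the same sheaf $(p_1)_*(\cO_{Y \times_S Y}/\cI^{r+1} \otimes p_2^*\cF)$ carries two \emph{a priori} different $\cO_Y$-module structures, and one must keep careful track of which structure is being used at each step and how it transports through $f_*$. Once the conventions are pinned down, the argument is a direct computation and the only non-formal input is the equality of the two compositions to the base.
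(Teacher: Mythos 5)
Your proposal is correct and follows essentially the same route as the paper's proof, which also verifies the claim locally and reduces it to a direct computation. The paper works in affine coordinates ($f \colon \Spec B \to \Spec A$, $I = \ker(B \otimes_A B \to B)$) and identifies the jet map as $m \mapsto (1\otimes 1)\otimes((1\otimes 1)\otimes m)$, declaring it ``visibly $A$-linear.'' Your version makes explicit \emph{why} it is $A$-linear: the identity $a\otimes 1 = 1\otimes a$ in $B\otimes_A B$, which is exactly your observation that $p_1^*(f^\sharp s) = p_2^*(f^\sharp s)$ since $f\circ p_1 = f\circ p_2$. So the two arguments are the same computation in slightly different notation, with yours being a bit more forthcoming about the key step that the paper leaves implicit.
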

\begin{proof}
The claim can be checked locally on an affine cover. We can thus assume that $f \colon \Spec B \to \Spec A$ is a morphism between affine schemes, $\cF = \widetilde{M}$ is a $B$-module, and $I$ is the kernel of the multiplication map $B \otimes_A B \to B$. Then $f_*J^r_{X/S} \cF$ corresponds to
\[
    (B \otimes_A B)/I^{r+1} \otimes_{B \otimes_A B} ((B \otimes_A B) \otimes_B M)
\]
and $f_*\cF$ is $M$, both seen as $A$-modules via $f$. In these coordinates, the jet map is
\[
    m \longmapsto (1\otimes 1) \otimes ( (1\otimes 1) \otimes m)
\]
which is visibly $A$-linear.
\end{proof}

\begin{definition}\label{definition:jet-evaluation-morphism}
Let $\cF$ be an $\cO_Y$-module. The \emph{fibrewise jet evaluation map} is the composition of the pushforward of the jet map followed by the counit:
\[
    f^*f_*\cF \lra f^*f_* J^r_{Y/S}\cF \lra J^r_{Y/S}\cF.
\]
\end{definition}

To explain the definition above, we assume that $Y$ and $S$ are complex varieties for the rest of this section. As we will alternate between two points of view on vector bundles (as sheaves or as spaces over the base), it will sometimes be convenient to be explicit about which viewpoint is adopted:
\begin{definition}\label{definition:geometric-vector-bundle}
Let $\cF$ be a vector bundle (i.e. a locally free sheaf of $\cO_Y$-modules of finite rank) on a complex variety $Y$. The total space of the associated \emph{geometric vector bundle} is
\[
    \bV(\cF) := \underline{\Spec}_Y \big( \Sym^* (\cF^\vee) \big)^\mathrm{an}
\]
where $(-)^\mathrm{an}$ denotes the analytification functor.
\end{definition}

Suppose now that $\cF$ is a vector bundle on $Y$ such that $f_*\cF$ is also a vector bundle on $S$. As sets, we have an identification
\[
    \bV(f_*\cF) = \{ (s, \sigma) \mid s \in S, \ \sigma \in H^0(Y_s; \cF|_{Y_s}) \}
\]
where $Y_s = f^{-1}(s) \subset Y$ is the fibre above $s$. In particular, when $S = \Spec \bC$ is a point, this is the space of global sections $H^0(Y;\cF)$. In general, it should be thought of as a space of fibrewise sections.

\begin{lemma}\label{lemma:counit-is-evaluation}
Under the above assumptions, the counit map $f^*f_* \cF \to \cF$ induces the evaluation map 
\begin{align*}
    \bV(f^*f_*\cF) \cong \bV(f_*\cF) \times_S Y &\lra \bV(\cF) \\
    ((s,\sigma), y) &\longmapsto \sigma(y)
\end{align*}
on geometric realisation.
\end{lemma}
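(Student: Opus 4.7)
The plan is to reduce the statement to a fibrewise calculation at closed points. First I would observe that the isomorphism $\bV(f^*f_*\cF) \cong \bV(f_*\cF) \times_S Y$ is formal: since the functor $\bV(-) = \underline{\Spec}(\Sym(-)^\vee)^{\mathrm{an}}$ converts pullbacks of $\cO$-modules into fibre products of the resulting schemes, this is a direct consequence of the definition $f^*f_*\cF = \cO_Y \otimes_{f^{-1}\cO_S} f^{-1}f_*\cF$. To describe the closed points of $\bV(f_*\cF)$ as pairs $(s,\sigma)$ with $\sigma \in H^0(Y_s, \cF|_{Y_s})$, I would appeal to cohomology and base change (Grauert's theorem in the analytic setting): the hypothesis that $f_*\cF$ is a vector bundle on $S$ implies that its fibre rank is locally constant, so its fibre at $s \in S$ is canonically identified with $H^0(Y_s, \cF|_{Y_s})$.

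Next, I would verify that the counit $\epsilon \colon f^*f_*\cF \to \cF$ induces evaluation on geometric realisations. By functoriality of $\bV$, and since a morphism of vector bundles on $Y$ is determined by its effect on fibres at closed points, it suffices to show that for each closed point $y \in Y$ with image $s = f(y)$, the induced linear map
\[
    H^0(Y_s, \cF|_{Y_s}) \cong (f^*f_*\cF)|_y \xrightarrow{\ \epsilon|_y\ } \cF|_y
\]
is evaluation $\sigma \mapsto \sigma(y)$. This is local on $S$ and $Y$, so I would work in affine charts $\Spec A \subset S$ and $\Spec B \subset f^{-1}(\Spec A) \subset Y$ over which $\cF$ trivialises to a free $B$-module $N$. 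In these coordinates, $f_*\cF$ corresponds to $N$ regarded as an $A$-module via $f^\#$, the counit is the multiplication map $B \otimes_A N \to N$, $b \otimes n \mapsto bn$, and passing to the quotient by the maximal ideal $\fm_y \subset B$ yields $(B/\fm_y) \otimes_A N \to N/\fm_y N$. Under the base-change identification $(B/\fm_y) \otimes_A N \cong H^0(Y_s, \cF|_{Y_s})$, this is exactly the evaluation at $y$.

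The main technical subtlety lies in the base-change identification of fibres, which is the content of the locally-free hypothesis on $f_*\cF$; everything else is an unwinding of the definitions of the counit and the functor $\bV$ in local coordinates.
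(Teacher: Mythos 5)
Your overall strategy — check the claim on fibres at closed points, and use cohomology and base change to identify $(f_*\cF)|_s$ with $H^0(Y_s,\cF|_{Y_s})$ — is sound and is essentially what the lemma is about. But the local computation in the second paragraph contains a genuine error. You write that over affines $\Spec A \subset S$ and $\Spec B \subset f^{-1}(\Spec A) \subset Y$, ``$f_*\cF$ corresponds to $N$ regarded as an $A$-module,'' where $N = \cF(\Spec B)$. This is only true when $f^{-1}(\Spec A) = \Spec B$, i.e.\ when $f$ is an affine morphism. In the situation of the paper, $f$ is a projection $\Pic(X) \times X \to \Pic(X)$ from a product, which is proper but very much not affine, and $\Spec B$ is a strict subset of $f^{-1}(\Spec A)$. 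The module representing $f_*\cF$ over $\Spec A$ is $M := \cF\bigl(f^{-1}(\Spec A)\bigr)$, not $N$; the two are unrelated in general. Consequently your ``multiplication map $B \otimes_A N \to N$'' is not the counit, and the displayed ``base-change identification'' $(B/\fm_y)\otimes_A N \cong H^0(Y_s,\cF|_{Y_s})$ is not what cohomology and base change gives (the latter identifies $M\otimes_A \kappa(s)$, not anything built from $N$, with $H^0(Y_s,\cF|_{Y_s})$). Note also that if the affine identification $M = N$ were valid, the whole lemma would be a triviality and you would not have needed cohomology and base change at all — a warning sign.

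To repair the argument, keep $M = \cF(f^{-1}(\Spec A))$. Then $f^*f_*\cF$ over $\Spec B$ is $B\otimes_A M$, the counit is $b\otimes m \mapsto b\cdot m|_{\Spec B}$, and tensoring with $\kappa(y)$ gives $M\otimes_A\kappa(s) \to \cF|_y$, which under $M\otimes_A\kappa(s)\cong H^0(Y_s,\cF|_{Y_s})$ is evaluation $\sigma\mapsto\sigma(y)$. This is what the paper's proof does, but it dodges the affine-chart pitfall entirely by describing $f^*f_*\cF$ as the sheafification of the presheaf $U\mapsto\cF\bigl(f^{-1}f(U)\bigr)\otimes_{\cO_S(f(U))}\cO_Y(U)$ — crucially keeping the full preimage $f^{-1}f(U)$ in the first factor — and reading off the counit as $a\otimes r\mapsto r\cdot a|_U$ on this presheaf.
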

\begin{proof}
Recall first that $f^*f_*\cF$ is the sheafification of $U \longmapsto \cF(f^{-1}f(U)) \otimes_{\cO_S(f(U))} \cO_Y(U)$. (When $f(U)$ is not open, we mean taking the colimit over all open subsets of $S$ containing it.) Chasing through the adjunction, the counit map is then seen to be the sheafification of the map
\begin{align*}
    \cF(f^{-1}f(U)) \otimes_{\cO_S(f(U))} \cO_Y(U) &\lra \cF(U) \\
    a \otimes r &\longmapsto r \cdot a|_U
\end{align*}
and the claim follows.
\end{proof}

Summarising the situation, we see that the fibrewise jet evaluation map
\[
    \bV(f_*\cF) \times_S Y \lra \bV(J^r_{Y/S}\cF)
\]
is given above a point $s \in S$ by
\begin{align*}
    H^0(Y_s;\cF|_{Y_s}) \times Y &\lra \bV(J^r \cF|_{Y_s}) \\
    (\sigma, y) &\longmapsto j^r\sigma(y) \in \cF|_{Y_s} \big/ \fm^{r+1}\cF|_{Y_s}
\end{align*}
where $\fm$ is the maximal ideal sheaf of $y \in Y_s$. In other words, it takes the Taylor expansion of $\sigma$ at $y$ up to order $r$.

\subsection{Jet ampleness}

Having now defined jet bundles, we state the crucial definition of jet ampleness of a line bundle on a smooth projective complex variety $X$.
\begin{definition}[Compare \cite{beltrametti_generation_1999}]
Let $k \geq 0$ be an integer. Let $x_1, \ldots, x_t$ be $t$ distinct points in $X$ and $(k_1,\ldots,k_t)$ be a $t$-uple of positive integers such that $\sum_i k_i = k+1$. Denote by $\cO_X$ the structure sheaf of $X$ and by $\fm_i$ the maximal ideal sheaf corresponding to $x_i$. We regard the tensor product $\otimes_{i=1}^t \fm_i^{k_i}$ as a subsheaf of $\cO_X$ under the multiplication map $\otimes_{i=1}^t \fm_i^{k_i} \to \cO_X$. We say that a line bundle $\cL$ is \emph{$k$-jet ample} if the evaluation map
\[
    \Gamma\left(\cL\right) \lra \Gamma\left(\cL \otimes \left(\cO_X / \otimes_{i=1}^t \fm_i^{k_i}\right)\right) \cong \bigoplus_{i=1}^t \Gamma\left(\cL \otimes \left(\cO_X / \fm_i^{k_i}\right)\right)
\]
is surjective for any $x_1,\ldots,x_t$ and $k_1,\ldots,k_t$ as above.
\end{definition}

\begin{example}
Being $0$-jet ample corresponds to being spanned by global sections. Furthemore, $1$-jet ampleness is the usual notion of very ampleness. On a curve, a line bundle is $k$-jet ample whenever it is $k$-very ample. However, on higher dimensional varieties, a $k$-jet ample line bundle is also $k$-very ample but the converse is not true in general.
\end{example}

The following proposition is the main tool to produce line bundles having a very high degree of jet ampleness.
\begin{proposition}[{See~\cite[Proposition 2.3]{beltrametti_generation_1999}}]
If $\cA$ and $\cB$ are respectively $a$- and $b$-jet ample line bundles, then their tensor product $\cA \otimes \cB$ is $(a+b)$-jet ample.
\end{proposition}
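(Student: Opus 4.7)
The plan is to realize any prescribed jet of $\cA \otimes \cB$ by producing a suitable section as a \emph{product} $s \otimes t$ of sections $s \in \Gamma(\cA)$, $t \in \Gamma(\cB)$ whose jets multiply to give the target. Fix distinct points $x_1, \ldots, x_t \in X$ and positive integers $k_1, \ldots, k_t$ with $\sum_i k_i = a+b+1$. By linearity of evaluation, it suffices to show that for each index $i_0$ and each monomial $z^\gamma$ with $|\gamma| \leq m := k_{i_0}-1$ (in local coordinates and a local frame of $\cA \otimes \cB$ near $x_{i_0}$), there exists a global section of $\cA \otimes \cB$ whose jet at $x_{i_0}$ equals $z^\gamma$ in $\cA \otimes \cB \otimes \cO_X / \fm_{i_0}^{k_{i_0}}$ and which vanishes to order $\geq k_j$ at $x_j$ for every $j \neq i_0$.

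We proceed by descending induction on $d = |\gamma|$, from $d = m$ down to $d = 0$. Split $\gamma = \alpha + \beta$ with $|\alpha| \leq a$ and $|\beta| \leq b$; this is possible since $d \leq m \leq a+b$. Next, choose non-negative integers $\{a_j, b_j\}_{j \neq i_0}$ with $a_j + b_j = k_j$, $\sum_j a_j \leq a - |\alpha|$ and $\sum_j b_j \leq b - |\beta|$, which is a routine integer-partition feasibility check using $d \leq m$ and $\sum_{j\neq i_0} k_j = a+b+1 - k_{i_0}$. Applying $a$-jet ampleness of $\cA$ to the jet data consisting of $z^\alpha$ at $x_{i_0}$ modulo $\fm_{i_0}^{|\alpha|+1}$ and prescribed vanishing of order $\geq a_j$ at each $x_j$ produces a section $s \in \Gamma(\cA)$; analogously, $b$-jet ampleness of $\cB$ produces $t \in \Gamma(\cB)$.

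In local coordinates near $x_{i_0}$, write $s = z^\alpha + \varepsilon_s$ and $t = z^\beta + \varepsilon_t$ with $\varepsilon_s \in \fm_{i_0}^{|\alpha|+1}$ and $\varepsilon_t \in \fm_{i_0}^{|\beta|+1}$. Then
\[
    s \otimes t \;=\; z^\gamma + z^\alpha \varepsilon_t + z^\beta \varepsilon_s + \varepsilon_s \varepsilon_t,
\]
with each of the three error terms in $\fm_{i_0}^{d+1}$. Hence the jet of $s \otimes t$ at $x_{i_0}$ is $z^\gamma$ plus a correction concentrated in strictly higher degrees $d+1, \ldots, m$, while $s \otimes t$ vanishes to order $\geq a_j + b_j = k_j$ at every other $x_j$. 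By the descending induction hypothesis the correction is itself in the image of evaluation, so subtracting it yields a section realizing $z^\gamma$ exactly.

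The principal subtlety is the choice to prescribe the jets of $s$ and $t$ only up to orders $|\alpha|$ and $|\beta|$, rather than the maximal orders $a$ and $b$ allowed by jet ampleness. This is what ensures that the cross terms $z^\alpha \varepsilon_t$ and $z^\beta \varepsilon_s$ land in $\fm_{i_0}^{d+1}$, so that the product pollutes strictly higher degrees only; this, in turn, is precisely what makes the descending induction close. All remaining work is bookkeeping on multiplicity budgets, governed by the single identity $\sum_i k_i = a+b+1$.
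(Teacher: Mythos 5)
Your proof is correct. The paper does not supply an argument for this proposition — it is cited to Beltrametti, Di Rocco, and Sommese — and your approach (reduce to a single monomial $z^\gamma$ at one point with prescribed vanishing elsewhere, split $\gamma = \alpha + \beta$ with $|\alpha| \le a$, $|\beta| \le b$, multiply sections realizing these to the \emph{minimal} orders $|\alpha|$ and $|\beta|$, and run a descending induction on $|\gamma|$ to absorb the order-$\ge d+1$ cross terms) is essentially the standard argument from that reference, with the budget-splitting $a_j + b_j = k_j$ carried out correctly.
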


\begin{definition}\label{definition:constant-minimal-jet-ampleness}
Let $X$ be a smooth projective complex variety and $\alpha \in H^2(X;\bZ)$. We write $d(X, \alpha)$ for the largest integer $d \geq -1$ such that all line bundles on $X$ with first Chern class equal to $\alpha$ are $d$-jet ample. (By convention, we declare that being $(-1)$-jet ample is an empty condition.)
\end{definition}

We refer to \cref{section:range-estimates} for how to compute $d(X, \alpha)$ in some special cases. Given an integer $d$, we also explain in \cref{proposition:arbitrarily-big-jet-ampleness} how to find an $\alpha$ such that $d(X, \alpha) \geq d$.

\subsection{The topology of hypersurfaces}

It is well known that all smooth degree $d$ complex hypersurfaces in $\bP^n$ are diffeomorphic. As a way of justifying the study of the moduli space of hypersurfaces of a given Chern class, we observe that such hypersurfaces are also all diffeomorphic, provided the Chern class is ample enough. First, recall that ampleness is a numerical property:
\begin{theorem}[Nakai--Moishezon criterion]
A line bundle $\cL$ on a proper scheme over a field is ample if and only if $\int_Y c_1(\cL)^{\dim Y} > 0$ for every integral subscheme $Y \subset X$.
\end{theorem}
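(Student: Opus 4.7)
The plan is to handle the two implications separately and by different techniques. For the easy direction, suppose $\cL$ is ample. Then for any integral closed subscheme $Y \subset X$, the restriction $\cL|_Y$ remains ample (ampleness passes to closed subschemes), so by asymptotic Riemann--Roch on $Y$ the Euler characteristic satisfies $\chi(Y, \cL|_Y^{\otimes m}) = \frac{c_1(\cL|_Y)^{\dim Y}}{(\dim Y)!} m^{\dim Y} + O(m^{\dim Y -1})$, and for large $m$ this equals $h^0(Y, \cL|_Y^{\otimes m}) > 0$. Hence the leading coefficient $c_1(\cL)^{\dim Y} \cdot Y$ is positive.

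For the converse, I would first reduce to the case where $X$ itself is integral: a line bundle on a proper scheme is ample iff its restriction to each irreducible component, given the reduced structure, is ample, so this reduction is harmless. Then I would induct on $n = \dim X$. The base case $n = 1$ is the classical fact that on a projective curve a line bundle is ample iff it has positive degree. For the inductive step, the hypothesis combined with induction implies that $\cL|_Y$ is ample on every proper integral closed subscheme $Y \subsetneq X$, so we must leverage this to promote $\cL$ itself to ampleness on $X$.

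The strategy I would follow for the inductive step is to produce an effective divisor $D \in |\cL^{\otimes m}|$ for some $m \gg 0$, and then bootstrap using short exact sequences
\[
0 \lra \cL^{\otimes (k-m)} \lra \cL^{\otimes k} \lra \cL^{\otimes k}|_D \lra 0.
\]
Since $\cL|_D$ is ample by the inductive hypothesis (after decomposing $D$ into its integral components), the right-hand term has vanishing higher cohomology for $k$ large, and feeding this back through the long exact sequences yields Serre's cohomological criterion for ampleness of $\cL$ on $X$.

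The main obstacle is the construction of such a section of some $\cL^{\otimes m}$. Asymptotic Riemann--Roch gives $\chi(X, \cL^{\otimes m}) \sim \frac{c_1(\cL)^n}{n!} m^n \to +\infty$ by hypothesis, but one must exclude the possibility that the Euler characteristic is carried by higher cohomology. The standard trick is to write $\cL \cong \cA \otimes \cB^{-1}$ as a difference of two very ample line bundles, pick effective divisors $A \in |\cA|$ and $B \in |\cB|$, and use the two exact sequences relating $\cL^{\otimes m}$, $\cL^{\otimes m}(A)$, $\cL^{\otimes m}|_A$ and $\cL^{\otimes m}|_B$. The inductive hypothesis controls cohomology on $A$ and $B$, and a diagram chase extracts the positivity of $h^0(X, \cL^{\otimes m})$ from the positivity of $\chi(X, \cL^{\otimes m})$. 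Once this is in place, the exact sequence argument outlined above closes the induction.
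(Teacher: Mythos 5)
The paper states the Nakai--Moishezon criterion as a classical reference and supplies no proof of its own, so there is nothing internal to compare against. Your outline is the standard textbook proof (as in Kleiman, Hartshorne's appendix, or Lazarsfeld's \emph{Positivity in Algebraic Geometry}, Theorem~1.2.23): the easy direction via asymptotic Riemann--Roch, reduction to the integral case, the $\cA \otimes \cB^{-1}$ trick to produce a section of $\cL^{\otimes m}$, and bootstrapping along a divisor $D \in |\cL^{\otimes m}|$. Up through the construction of the section this is all correct and well-chosen.

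The last step, however, has a real gap. From the sequence $0 \to \cL^{\otimes(k-m)} \to \cL^{\otimes k} \to \cL^{\otimes k}|_D \to 0$ and the inductive ampleness of $\cL|_D$, what you actually extract for $k \gg 0$ and $i \geq 1$ is a surjection $H^i(X,\cL^{\otimes(k-m)}) \twoheadrightarrow H^i(X,\cL^{\otimes k})$. This only says the dimensions $h^i$ are eventually nonincreasing, hence eventually constant --- not that they vanish. Moreover, Serre's cohomological criterion requires vanishing of $H^i(X,\cF\otimes\cL^{\otimes k})$ for \emph{every} coherent sheaf $\cF$, not merely $\cF=\cO_X$, and the argument as sketched says nothing about arbitrary twists. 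The standard way to close the induction is different: use the stabilization of $h^1$ to get that $H^0(X,\cL^{\otimes k}) \to H^0(D,\cL^{\otimes k}|_D)$ is eventually surjective, combine with global generation of $\cL^{\otimes k}|_D$ to conclude $\cL^{\otimes m}$ is globally generated for $m \gg 0$; then show the morphism $\phi_m \colon X \to \bP^N$ defined by $|\cL^{\otimes m}|$ has finite fibres, since a positive-dimensional fibre would contain an integral curve $C$ with $(c_1(\cL)\cdot C)=0$, contradicting the hypothesis; a proper morphism with finite fibres is finite, and the pullback of the ample $\cO(1)$ along a finite morphism is ample. Replacing your appeal to Serre's criterion with this global-generation/finiteness argument makes the proof complete.
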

\begin{definition}
A Chern class $\alpha \in \NS(X)$ is called \emph{ample} if it satisfies the Nakai--Moishezon criterion.
\end{definition}

We recall the following classical definition which is central in our work:
\begin{definition}
A global section $s \in \Gamma(X,\cL)$ of a line bundle $\cL$ on a smooth projective complex variety $X$ is called \emph{non-singular} if for all $x \in X$ we have $(s(x), ds(x)) \neq 0$.
\end{definition}

\begin{remark}
Given a non-singular section $s \in \Gamma(X,\cL)$, its vanishing locus
\[
    V(s) := \{ x \in X \mid s(x) = 0 \} \subset X
\]
is a smooth hypersurface.
\end{remark}

Any hypersurface $H$ can be seen as a Weil divisor, hence a Cartier divisor ($X$ is smooth), and therefore has an attached line bundle $\cO_X(H)$. If $H = V(s)$ with $s \in \Gamma(X,\cL)$, then $\cO_X(H) \cong \cL$. The following bit of language will be convenient:
\begin{definition}
The \emph{Chern class of a hypersurface} $H$ is the first Chern class of its associated line bundle $c_1(\cO_X(H))$.
\end{definition}

\begin{proposition}\label{proposition:diffeomorphic-hypersurfaces}
Let $X$ be a smooth projective complex variety with canonical sheaf $K_X$. Let $\alpha \in \NS(X)$ be a Chern class ample enough such that:
\begin{enumerate}
    \item the class $\alpha - c_1(K_X)$ is ample;
    \item for any line bundle $\cL$ of Chern class $\alpha$, the subspace $\Gammans(\cL) \subset \Gamma(\cL)$ consisting of the non-singular global sections is non empty.
\end{enumerate}
Then all the smooth hypersurfaces of Chern class $\alpha$ are diffeomorphic to one another.
\end{proposition}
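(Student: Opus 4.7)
The plan is to combine Kodaira vanishing with Ehresmann's fibration theorem: I will show that $\Mhyp^\alpha$ is connected and that the universal family of hypersurfaces over it is a proper submersion, so that all fibres are diffeomorphic.

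First I would exploit condition (1) — the ampleness of $\alpha - c_1(K_X)$ — to invoke Kodaira's vanishing theorem, which yields $H^i(X,\cL) = 0$ for all $i > 0$ and every line bundle $\cL$ with $c_1(\cL) = \alpha$. Hirzebruch--Riemann--Roch then implies that $h^0(X,\cL)$ equals a constant $N+1$ depending only on $\alpha$. Letting $\cP$ denote a Poincaré line bundle on $\Pic^\alpha(X) \times X$ and $p$ be the projection to $\Pic^\alpha(X)$, the pushforward $p_*\cP$ is a vector bundle of rank $N+1$, and its projectivisation $\bP(p_*\cP) \to \Pic^\alpha(X)$ is a smooth projective bundle whose fibre over $[\cL]$ is the linear system $|\cL|$. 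The moduli space $\Mhyp^\alpha$ embeds into $\bP(p_*\cP)$ as the Zariski-open complement of the universal discriminant.

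Next I would establish that $\Mhyp^\alpha$ is connected. The Picard torus $\Pic^\alpha(X)$ is connected, and condition (2) guarantees that the Abel--Jacobi map $\Mhyp^\alpha \to \Pic^\alpha(X)$ is surjective. Each fibre is $|\cL| \setminus \Delta$, the complement of a proper Zariski-closed subset of a projective space, which is path-connected. Since the Abel--Jacobi map is a smooth morphism, hence open, any clopen decomposition of the source would descend to one of the target, forcing $\Mhyp^\alpha$ itself to be connected.

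I would then analytify the universal family $\cU^\alpha \to \Mhyp^\alpha$ pulled back from the Hilbert scheme. This morphism is proper, as the restriction of the second projection on $\Mhyp^\alpha \times X$, and it is smooth: it is flat by definition of the moduli functor, and all its fibres are smooth hypersurfaces of $X$ by construction, so flatness with smooth fibres yields smoothness of the morphism. Ehresmann's fibration theorem then identifies this proper submersion with a locally trivial smooth fibre bundle over the connected base $\Mhyp^\alpha$, whence all fibres are pairwise diffeomorphic, which is the desired conclusion.

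The conceptual core of the argument — and the one place where the numerical hypotheses bite — is the first step: condition (1) is precisely what makes Kodaira vanishing applicable and hence forces $h^0(X,\cL)$ to be locally constant on $\Pic^\alpha(X)$. Once the projective-bundle description of $\Mhyp^\alpha$ is in place, the remaining steps are essentially formal consequences of the construction and of Ehresmann's theorem.
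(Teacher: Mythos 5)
Your proposal is correct and follows essentially the same route as the paper: Kodaira vanishing (via condition (1)) to show $p_*\cP$ is a vector bundle and obtain the projective-bundle picture, connectedness of the parameter space from condition (2) and the fact that the discriminant has positive codimension, and Ehresmann's theorem applied to the universal incidence variety. The only cosmetic difference is that the paper runs Ehresmann over the affine space of non-singular sections $\bV(p_*\cP)^{\mathrm{ns}}$ rather than over its $\bC^\times$-quotient $\Mhyp^\alpha$, and records connectedness of the base more tersely; your more explicit argument via openness of the Abel--Jacobi map and connectedness of the fibres $|\cL| \setminus \Delta$ is a fine substitute.
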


\begin{remark}\label{remark:arrange-ampleness}
Let us make some remarks on the two assumptions of the proposition above. Let $\cL$ be a very ample line bundle on $X$. Then $K_X^{-1} \otimes \cL^{\otimes k}$ is very ample for $k \gg 0$ big enough, and $\alpha = c_1(\cL^{\otimes k})$ satisfies the first assumption. Furthermore, by Bertini theorem, the subspace $\Gammans(\cL) \subset \Gamma(\cL)$ is dense. The second assumption is thus satisfied as soon as all line bundles of Chern class $\alpha$ are very ample. We explain how to arrange this in \cref{section:range-estimates}.
\end{remark}

\begin{proof}
We first briefly recall the classical proof in the case of a single linear system. Let $\cL$ be a line bundle on $X$ with Chern class $\alpha$ and denote by $\Gammans(\cL) \subset \Gamma(\cL)$ the subset of those global sections that are non-singular. The projection from the incidence variety
\[
    \{ (s,x) \in \Gammans(\cL) \times X \mid s(x)=0 \} \lra \Gammans(\cL).
\]
is a proper surjective submersion between smooth manifolds, with fibres the smooth hypersurfaces. By Ehresmann's lemma, it is a fibre bundle and therefore all the fibres over a connected component are diffeomorphic. Finally, $\Gammans(\cL) \subset \Gamma(\cL)$ is the complement of the discriminant which has codimension at least 1, hence it is connected. 

\bigskip

Now we adapt the proof in families. Let $\Pic^\alpha(X)$ be the connected component of the Picard space classifying isomorphism classes of line bundles of Chern class $\alpha$, and let $\cP$ be a Poincaré line bundle on $\Pic^\alpha(X) \times X$. For $[\cL] \in \Pic^\alpha(X)$, we write $\cP_{[\cL]}$ for the line bundle on $X$ which represents the isomorphism class $[\cL]$. Let $p \colon \Pic^\alpha(X) \times X \to \Pic^\alpha(X)$ be the projection. By cohomology and base change \cite[Theorem~III.12.11]{hartshorne_algebraic_1977}, the sheaf $p_*\cP$ is a vector bundle provided that $H^1(X, \cP_{[\cL]}) = 0$ for all $[\cL] \in \Pic^\alpha(X)$. This follows by the Kodaira vanishing theorem and the assumption that $\alpha - c_1(K_X)$ is ample. Let
\[
    \bV(p_*\cP)^\mathrm{ns} \subset \bV(p_*\cP) = \{ ([\cL], s) \mid [\cL] \in \Pic^\alpha(X), \ s \in \Gamma(\cP_{[\cL]}) \}
\]
be the subset of those sections that are non-singular. The incidence variety
\[
    \{ ([\cL],s,x) \in \bV(p_*\cP)^\mathrm{ns} \times X \mid s(x) = 0 \} \lra \bV(p_*\cP)^\mathrm{ns}
\]
is a smooth fibre bundle by Ehresmann's lemma. The base is connected by our second assumption on the ampleness of $\alpha$, therefore all the fibres are diffeomorphic.
\end{proof}

\section{The moduli of hypersurfaces}\label{section:moduli-of-hypersurfaces}

In this section, we precisely define our main object of interest in this paper: the moduli of smooth hypersurfaces. From now on, we adopt the following conventions:
\begin{itemize}
    \item $X$ is a connected smooth complex projective variety;
    \item $\Pic(X)$ is its associated Picard scheme or space (see \cref{definition:picard-scheme});
    \item $p \colon \Pic(X) \times X \to \Pic(X)$ is the first projection;
    \item $\cP$ is a choice, once and for all, of a Poincaré line bundle (see \cref{definition:poincare-line-bundle});
    \item if $\cL$ is a line bundle on $X$, we write $\Gammans(\cL) \subset \Gamma(\cL)$ for the subspace of non-singular sections.
\end{itemize}

\subsection{Moduli functors and the Hilbert scheme}

In this subsection, we define the moduli functor of smooth hypersurfaces in $X$ and show that it is representable by an open subscheme of the Hilbert scheme of $X$. We give an explicit description of the representing moduli scheme using the theory of linear systems of divisors. Although providing motivation and context, this subsection is logically independent of the rest of the article. We must also say that the results presented here are well known to algebraic geometers, but we have chosen to recall them in detail to be self-contained. The reader unfamiliar with the algebro-geometric language used here is invited to jump to the \hyperref[subsection:point-set-model]{following subsection} where we provide a point-set model for the analytification of the moduli scheme, which will be thereafter used throughout the article. 

\bigskip

Given a polynomial $P \in \bQ[x]$, we may consider the Hilbert functor $\Hilb^P(X)(-)$ parameterising flat proper families of closed subschemes in $X$ with given Hilbert polynomial $P$ (recall \cref{definition:hilbert-polynomial}). In other words
\begin{align*}
    \Hilb^P(X)(-) \colon \catSch^\opp_\bC &\lra \catSet \\
    T & \longmapsto \left\{ Z \subset X \times T\ \middle\vert \begin{array}{l}
    Z \to T \text{ is flat and proper} \\
    \text{and } \forall t \in T, \ P_{Z_t}(n) = P(n)
  \end{array}\right\}
\end{align*}
where $\catSch_\bC$ is the category of schemes over $\Spec(\bC)$ and $Z_t \subset X_t$ is the fibre\footnote{If $k$ is the residue field of $t \in T$, then $Z_t = Z \times_T \Spec(k) \subset X_t = X_k$.} of $Z \to T$ above $t\in T$. More generally, the Hilbert functor $\Hilb(X)(-)$ of $X$ is the disjoint union of the $\Hilb^P(X)(-)$ as $P$ runs over all polynomials.
\begin{theorem}[Grothendieck {\cite{grothendieck_techniques_1961}}]
The Hilbert functor $\Hilb^P(X)(-)$ is represented by the \emph{Hilbert scheme} $\Hilb^P(X)$ which is projective over $\Spec(\bC)$.
\end{theorem}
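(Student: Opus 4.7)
The plan is to follow Grothendieck's original strategy via Castelnuovo--Mumford regularity, embedding the Hilbert functor into a Grassmannian. First, I would choose a very ample line bundle $\cO_X(1)$ together with an embedding $X \hookrightarrow \bP^N$ for some $N$. A $T$-family $Z \subset X \times T$ in $\Hilb^P(X)(T)$ is in particular a $T$-family of closed subschemes of $\bP^N \times T$, and conversely such a family factors through $X \times T$ precisely when it is set-theoretically contained in $X \times T$, which is a closed condition on $T$ (using that $X \subset \bP^N$ is closed and $Z \to T$ is proper). This reduces the problem to representing $\Hilb^P(\bP^N)$ and then exhibiting $\Hilb^P(X)$ as a closed subscheme.

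The key technical input for $\bP^N$ is a uniform regularity bound: Mumford's theorem produces an integer $m = m(P,N)$ such that for every closed subscheme $Z \subset \bP^N_k$ with Hilbert polynomial $P$ the ideal sheaf $\cI_Z$ is $m$-regular. This has three consequences used throughout: $H^i(\bP^N, \cI_Z(m)) = 0$ for $i>0$, the dimension $r := \binom{N+m}{m} - P(m)$ of $H^0(\bP^N, \cI_Z(m))$ is constant, and $\cI_Z$ is recovered from its degree-$m$ piece by the saturation of the ideal it generates in $\Sym^* H^0(\bP^N, \cO(1))$. In families, cohomology and base change then turn $p_*\cI_Z(m)$ into a subbundle of $p_* \cO_{\bP^N \times T}(m) \cong H^0(\bP^N, \cO(m)) \otimes \cO_T$ of rank $r$.

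This construction defines a natural transformation
\[
    \Hilb^P(\bP^N)(-) \lra \Grass_r\big(H^0(\bP^N, \cO(m))\big)(-),
\]
and I would show it is a closed immersion of functors. One direction (well-definedness) uses the regularity bound together with flatness to ensure the rank is locally constant; the other (reconstructing $Z$ from a subbundle $V \subset H^0(\bP^N, \cO(m)) \otimes \cO_T$) uses that $V$ generates an ideal sheaf on $\bP^N \times T$ whose saturation defines a candidate closed subscheme. The hard part is cutting out the image by explicit closed conditions: one must express, as polynomial equations on the Grassmannian, both that the candidate family is flat over $T$ and that its fibrewise Hilbert polynomial equals $P$. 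This is done via the flattening stratification together with semicontinuity of cohomology: the locus where the relevant graded pieces have their expected dimensions is closed, and on this locus the construction produces a genuine element of $\Hilb^P(\bP^N)(T)$ inverse to the Grassmannian map.

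Since the Grassmannian is projective, this realises $\Hilb^P(\bP^N)$ as a closed subscheme of a projective scheme, hence projective. Composing with the closed embedding $\Hilb^P(X) \hookrightarrow \Hilb^P(\bP^N)$ from the first step gives the theorem. The main obstacle in this plan is the uniform regularity bound and the subsequent verification that flatness plus prescribed Hilbert polynomial is indeed a closed condition on the Grassmannian; every other step is essentially formal once this is in place.
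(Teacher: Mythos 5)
The paper states this theorem as a known result, citing Grothendieck's FGA with no proof supplied, so there is nothing internal to compare against; your job here was really to reproduce the classical argument, and in outline you have done so correctly. The reduction to $\bP^N$, the uniform Castelnuovo--Mumford regularity bound of Mumford, the embedding of the Hilbert functor into a Grassmannian via $Z \mapsto H^0(\cI_Z(m))$, and cutting out the image by flattening-stratification / semicontinuity arguments is exactly Grothendieck's strategy as streamlined by Mumford (\emph{Lectures on curves on an algebraic surface}, Lecture 8, or Fantechi et al., FGA Explained, Chapter 5).

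One step as written is wrong, though. You claim that a flat proper family $Z \subset \bP^N \times T$ factors through $X \times T$ \emph{precisely when it is set-theoretically contained} in $X \times T$. This is false: take $T = \Spec \bC$, $X$ a hyperplane $V(x_0) \subset \bP^N$, and $Z = V(x_0^2)$. Then $Z$ is set-theoretically contained in $X$ but does not factor through $X$ as a closed subscheme, and $Z$ has a different Hilbert polynomial from any subscheme of $X$. The correct statement is that there is a \emph{closed subscheme} $T' \subset T$ universal with the property that $Z \times_T T' \hookrightarrow X \times T'$ scheme-theoretically; equivalently, the closed condition is the vanishing of the composite $\cI_{X \times T} \to \cO_{\bP^N \times T} \to \cO_Z$. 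To see this is a closed condition on $T$ one twists by $\cO(m)$ for $m \gg 0$, pushes forward to $T$, and uses that $p_*\cO_Z(m)$ is locally free (by flatness and regularity) together with the fact that the vanishing locus of a morphism of a coherent sheaf into a locally free sheaf is a closed subscheme. Once this is fixed, the rest of your argument — Mumford's uniform regularity, local freeness of $p_*\cI_Z(m)$ by cohomology and base change, the map to the Grassmannian, and the use of the flattening stratification to cut out the image — goes through as you sketched.
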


Families of hypersurfaces are more commonly known as \emph{relative effective Cartier divisors}. (See \stacks{056P}.) Following \cite[Part 3]{kleiman_picard_2005}, we recall the definition of their moduli functor:

\begin{definition}
Let $P \in \bQ[x]$ be a polynomial. The \emph{moduli functor of effective divisors} with Hilbert polynomial $P$ is the functor
\begin{align*}
    \fM^P \colon \catSch_\bC^\opp &\lra \catSet \\
    T &\longmapsto \left\{ Z \subset X \times T\ \middle\vert \begin{array}{l}
    Z \to T \text{ is flat and proper and for all } t \in T \\
    \text{the ideal sheaf  } \cI_{Z_t} \text{ is a line bundle and is } \\
    \text{such that } \chi(X_t, \cI_{Z_t}^{-1}(n)) = P(n)
  \end{array}\right\}
\end{align*}
We define $\fM^{\mathrm{sm},P} \subset \fM^P$ to be the subfunctor\footnote{Recall that smoothness is preserved under base change.} where $Z_t$ is furthermore required to be smooth over the residue field $\Spec(\kappa(t))$ at $t \in T$.
\end{definition}

\begin{remark}
The moduli functor $\fM^P$ is visibly a subfunctor of the Hilbert functor. Though, because of our conventions, the indexing Hilbert polynomials are different. If $Z \subset X$ is an effective Cartier divisor and $P(n) = \chi(X, \cI_Z^{-1}(n))$, we let $P'$ be the associated polynomial $P'(n) = \chi(Z, \cO_Z(n))$. The subfunctor inclusion then reads $\fM^P \subset \Hilb^{P'}(X)$. 
\end{remark}

We are here working with a general projective variety $X$ with non-necessarily discrete Picard space, which can be confusing. To counteract that feeling, we remind the reader of the more classical, and enlightening, situation of linear systems of divisors:
\begin{example}[Compare {\cite[Definition~3.12 and Theorem~3.13]{kleiman_picard_2005}}]\label{example:moduli-linear-system}
Let $X = \bP^n$ and let $P' \in \bQ[x]$ be the Hilbert polynomial of a hypersurface of degree $d \geq 1$. One can show that the Hilbert scheme is in this case the complete linear system
\[
    \Hilb^{P'}(\bP^n) = |\cO(d)| = \bP\big(H^0(\bP^n,\cO(d))\big).
\]
Therefore $\Hilb^{P'}(\bP^n)(-) = \fM^P(-)$ is represented by a complex projective space of complex dimension $\dim_\bC H^0(\bP^n,\cO(d)) -1$. The subfunctor $\fM^{\mathrm{sm},P}$ is represented by the complement of the discriminant hypersurface.
\end{example}

In general, we have the following general representability result:
\begin{theorem}
The subfunctor $\fM^P(-) \subset \Hilb^{P'}(X)(-)$ is represented by a union of connected components of the Hilbert scheme $\Hilb^{P'}(X)$.
\end{theorem}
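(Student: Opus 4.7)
The goal is to show that the monomorphism of functors $\fM^P \hookrightarrow \Hilb^{P'}(X)$ is represented by a subscheme which is both open and closed in $\Hilb^{P'}(X)$. My plan is to verify each of these two properties separately, pinning down the precise conditions that the definition of $\fM^P$ imposes on top of a $T$-point of the Hilbert functor.

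\textbf{Openness.} I would start by observing that, given a $T$-point $Z \subset X \times T$ of $\Hilb^{P'}(X)$, the ideal sheaf $\cI_Z$ on $X \times T$ is $T$-flat (from the short exact sequence $0 \to \cI_Z \to \cO_{X \times T} \to \cO_Z \to 0$ and the assumption that $\cO_Z$ and $\cO_{X \times T}$ are flat over $T$). The only additional condition to land in $\fM^P$ is that the restriction $\cI_{Z_t}$ to each geometric fibre be an invertible sheaf. By the standard result that the locus where a finitely presented, fibrewise flat sheaf is locally free of a prescribed rank is open in the base (this is proved via Nakayama's lemma after reducing to the case where $\cI_Z$ admits a presentation), this determines an open subfunctor. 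Hence $\fM^P$ is represented by an open subscheme of $\Hilb^{P'}(X)$.

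\textbf{Closedness.} This is the more delicate half of the argument. The statement amounts to showing that, in a flat family $Z \subset X \times T$ with constant Hilbert polynomial $P'$, the Cartier locus $\{t \in T \mid \cI_{Z_t} \text{ is invertible}\}$ is also closed. I would argue as follows: the numerical data encoded by $P'$ being the Hilbert polynomial of a hypersurface forces every geometric fibre $Z_t$ to be pure of codimension one in the smooth variety $X_t$ (via the leading coefficient of $P'$). On a smooth projective variety, a closed subscheme is an effective Cartier divisor precisely when its ideal sheaf is locally principal; on the pure codimension one locus, this amounts to the absence of embedded components. The key input is that embedded components cannot appear in a flat family with constant Hilbert polynomial, since they would alter the lower-order coefficients of $P'$ on a Zariski-closed set of $t$'s, contradicting flatness. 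Combined with openness, this traps the Cartier locus between two closed sets, forcing it to be clopen.

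\textbf{Main obstacle.} The hard step is clearly the closedness statement, specifically the assertion that a flat degeneration of an effective Cartier divisor with fixed Hilbert polynomial remains Cartier. An alternative route I would consider, if the direct argument proved unwieldy, is to invoke Grothendieck's representability theorem for the relative effective Cartier divisor functor (as discussed in Kleiman's notes, e.g.\ \cite[Part 3]{kleiman_picard_2005}) to get that $\fM^P$ is represented by a scheme equipped with a natural monomorphism to $\Hilb^{P'}(X)$, and then argue that this monomorphism is proper (hence a closed immersion) by a valuative criterion: over a discrete valuation ring, any family of effective Cartier divisors with prescribed Hilbert polynomial extends uniquely within $\Hilb^{P'}(X)$ to an effective Cartier divisor, since the total space of the extension is automatically flat of pure relative codimension one. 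Properness combined with being an open immersion yields a clopen immersion, identifying $\fM^P$ with a union of connected components.
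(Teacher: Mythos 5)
Your overall strategy — show that the monomorphism $\fM^P \hookrightarrow \Hilb^{P'}(X)$ is an open immersion which is also proper, and conclude it is an isomorphism onto a union of connected components — is exactly the paper's route. The paper simply cites \cite[Theorem~1.13]{kollar_rational_1996} for both parts (noting that smoothness of $X$ is what powers the properness), and then uses separatedness of $\Hilb^{P'}(X)$ to conclude. Your openness argument is fine and matches the standard one.

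Your primary attempt at the closedness half, however, has a genuine gap. Constancy of the Hilbert polynomial in a flat family does \emph{not} by itself preclude embedded or lower-dimensional components on a special fibre. The leading coefficient of $P'$ fixes the degree and the dimension, but the ``purification'' $Z_0^{\mathrm{pure}}$ of a special fibre $Z_0$ could a priori represent a different class in $\NS(X)$ than the generic fibre does; by Hirzebruch--Riemann--Roch its Hilbert polynomial could then differ from $P'$ in lower order, and the extra length carried by embedded or lower-dimensional pieces could in principle make up exactly that difference. To close this loophole you would need to already know that $Z_0$ is a Cartier divisor of the same Chern class, which is what you are trying to prove.

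Your alternative route is the correct one, and is essentially the argument behind Kollár's theorem, but it needs a bit more than the one clause you give. Over a DVR $R$, flatness of $Z_R \to \Spec R$ forces every associated prime of $Z_R$ to dominate $\Spec R$, hence to lie over an associated prime of the Cartier (in particular Cohen--Macaulay) generic fibre $Z_K$; so $Z_R$ is pure of codimension one in $X_R$ with no embedded primes. Since $X_R$ is regular — this is precisely where the smoothness hypothesis on $X$ enters — the ideal $\cI_{Z_R}$ is locally principal. Finally, one must pass from ``$Z_R$ is Cartier in $X_R$'' to ``$Z_R$ is a \emph{relative} effective Cartier divisor over $R$'': the $R$-flatness of $\cO_{Z_R}$ means that a local equation $f$ for $Z_R$ and the uniformiser $\pi$ form a regular sequence in the (Cohen--Macaulay) local rings of $X_R$; permuting it shows $f \bmod \pi$ is a non-zero-divisor on $X_0$, i.e.\ $Z_0$ is Cartier. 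This furnishes properness of the open immersion, and the clopen conclusion follows. You do not need to ``trap the Cartier locus between two closed sets'' as in your first paragraph; properness plus the open immersion already gives a closed immersion, which for an open immersion is the same as a union of connected components.
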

\begin{proof}
By \cite[Theorem~1.13]{kollar_rational_1996}, $\fM^P(-)$ is represented by an open subscheme $U$ of the Hilbert scheme such that the inclusion $U \subset \Hilb(X)$ is universally closed (this uses that $X$ is smooth over $\Spec(\bC)$). As the Hilbert scheme is separated (because projective), $U$ must be a union of connected components.
\end{proof}

Our goal in now to give an explicit description of the scheme representing $\fM^P(-)$. For the remainder of this section, we fix a polynomial $P \in \bQ[x]$, denote by $\Pic^P(X) \subset \Pic(X)$ the subscheme defined in \cref{proposition:picard-scheme-hilbert}, and still write $\cP$ for the restriction of the chosen Poincaré line bundle to it. Recall that $p \colon \Pic^P(X) \times X \to \Pic^P(X)$ denotes the first projection. Let
\[
    \cK := \ker\big( p^*p_* \cP \to \cP \big)
\]
be the kernel sheaf of the counit of $p^* \dashv p_*$. From the monoidality of $p^*$ and standard properties of the relative Proj construction, we have an isomorphism
\[
    \underline{\Proj}_{\Pic^P(X) \times X}\big(\Sym p^*(p_*\cP)^\vee\big) \cong \underline{\Proj}_{\Pic^P(X)}\big(\Sym (p_*\cP)^\vee\big) \times X.
\]
Using the surjection of sheaves $p^*(p_*\cP)^\vee \twoheadrightarrow \cK^\vee$ we thus obtain a closed immersion
\[
    \underline{\Proj}_{\Pic^P(X) \times X}\big(\Sym \cK^\vee\big) \subset \underline{\Proj}_{\Pic^P(X)}\big(\Sym (p_*\cP)^\vee\big) \times X.
\]
\begin{definition}
We define the \emph{universal family} to be the morphism
\[
    \cU := \underline{\Proj}_{\Pic^P(X) \times X}\big(\Sym \cK^\vee\big) \lra \underline{\Proj}_{\Pic^P(X)}\big(\Sym (p_*\cP)^\vee\big) =: \cM
\]
obtained by projecting onto the first coordinate.
\end{definition}

The following is the main theorem of this section:

\begin{theorem}\label{theorem:representability-moduli}
Assume that $H^1(X_t,\cP_t) = 0$ for all $t \in \Pic^P(X)$. Then the universal family $\cU \to \cM$ represents the moduli functor $\fM^P(-)$.
\end{theorem}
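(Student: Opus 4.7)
The plan is to construct mutually inverse natural transformations between $\fM^P(-)$ and $\Hom(-, \cM)$, using the tautological sub-line bundle on $\cM$ together with the Poincaré bundle. The whole argument is built on translating between ``effective Cartier divisor and its defining section'' on one side and ``sub-line bundle of the pushforward'' on the other.

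\emph{Step 1: cohomology and base change.} The hypothesis $H^1(X_t, \cP_t) = 0$ for every $t \in \Pic^P(X)$ combined with Grauert's theorem (or \cite[Theorem~III.12.11]{hartshorne_algebraic_1977}) implies that $p_*\cP$ is a vector bundle on $\Pic^P(X)$ whose formation commutes with arbitrary base change: for any morphism $f \colon T \to \Pic^P(X)$, writing $\cP_T := (f \times \mathrm{id})^*\cP$ and $p_T \colon X \times T \to T$ for the projection, the natural map $f^*p_*\cP \to (p_T)_*\cP_T$ is an isomorphism. Consequently, $\cM \to \Pic^P(X)$ is a genuine projective bundle.

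\emph{Step 2: from $T$-points of $\cM$ to effective Cartier divisors.} By the universal property of the relative $\Proj$, a $T$-point of $\cM$ is the datum of $f \colon T \to \Pic^P(X)$ together with a line sub-bundle $\cN \hookrightarrow f^*p_*\cP$ (i.e. with locally free cokernel), taken modulo isomorphism. By Step 1 and the adjunction $p_T^* \dashv (p_T)_*$, such a sub-bundle corresponds to a section $\sigma \in \Gamma(X \times T, \cP_T \otimes p_T^*\cN^{-1})$ whose restriction to every fibre $X_t$ is non-zero; the fibrewise non-vanishing is precisely equivalent to the locally-free-cokernel condition. A fibrewise non-zero section of a line bundle on a smooth fibration defines a relative effective Cartier divisor $Z(\sigma) \subset X \times T$ that is flat and proper over $T$, and on fibres one reads $\cI_{Z(\sigma)_t}^{-1} \cong \cP_{f(t)}$, so the Hilbert polynomial is $P$ as required. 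This produces a natural transformation $\Hom(-,\cM) \to \fM^P(-)$, and one checks that applied to $\mathrm{id}_\cM$ it yields exactly the tautological construction $\cU \to \cM$; indeed, the identification $\cU = \underline{\Proj}(\Sym \cK^\vee)$ together with the description of $\cK$ as the kernel of fibrewise evaluation $p^*p_*\cP \to \cP$ says precisely that $\cU$ is the vanishing locus of the tautological section of $\cP_\cM \otimes p_\cM^*\cO_\cM(1)$.

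\emph{Step 3: the inverse, and uniqueness.} Conversely, given $Z \in \fM^P(T)$, apply \cref{lemma:existence-poincare-bundle} to the line bundle $\cI_Z^{-1}$ on $X \times T$, whose fibrewise Hilbert polynomial is $P$ by assumption: there is a unique $f \colon T \to \Pic^P(X)$ and a line bundle $\cN^{-1}$ on $T$ (well-defined up to isomorphism) with $\cI_Z^{-1} \cong \cP_T \otimes p_T^*\cN^{-1}$. The canonical section $\cO_{X \times T} \to \cI_Z^{-1}$ yields $\sigma \in \Gamma(X \times T, \cP_T \otimes p_T^*\cN^{-1})$, fibrewise non-zero because $Z$ is a relative Cartier divisor; reversing the adjunction of Step 2 gives the desired sub-bundle $\cN \hookrightarrow f^*p_*\cP$, hence a $T$-point of $\cM$. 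The two natural transformations are inverse to each other by direct inspection: in both directions the data $(f, \cN, \text{sub-bundle})$ and $(Z, \cI_Z^{-1}, \text{canonical section})$ are identified tautologically through the Poincaré bundle and the adjunction.

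The crux of the argument, and the only place where real work happens, is the equivalence in Step 2 between \emph{line sub-bundle with locally free cokernel} and \emph{section non-vanishing on each fibre}; this is what makes the $\underline{\Proj}$ construction land in the correct open locus corresponding to honest effective Cartier divisors rather than closed subschemes with embedded components. Everything else is the formal yoga of the Poincaré bundle, base change, and the adjunction between $p_T^*$ and $(p_T)_*$.
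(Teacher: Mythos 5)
Your proposal is correct and is essentially the argument that the paper outsources to the cited reference: the paper's own proof consists only of pointing to \cite[Proposition~8.2.7]{bosch_neron_1990} (taking $f = (X \to \Spec\bC)$, $T = \Pic^P(X)$, $\cL = \cP$) together with the observation that the hypothesis $H^1(X_t,\cP_t)=0$ feeds into cohomology and base change. What you have done is spell out the content of that proposition — the translation between sub-line-bundles $\cN \hookrightarrow f^*p_*\cP$ with locally free cokernel, adjoint sections $\sigma$ non-zero on every fibre, and relative effective Cartier divisors — rather than cite it, so it is the same route but with the internals exposed. One small caveat worth flagging: the paper's concrete description $\cU = \underline{\Proj}(\Sym \cK^\vee)$ presupposes that $\cK = \ker(p^*p_*\cP \to \cP)$ is a sub-vector-bundle, which requires the counit to be surjective (an assumption the paper only makes explicit later, in \cref{corollary:moduli-scheme-smooth-hypersurfaces}); your construction of the universal object as the vanishing locus of the tautological section of $\cP_\cM \otimes p_\cM^*\cO_\cM(1)$ sidesteps this and yields the universal family regardless, so your aside ``applied to $\mathrm{id}_\cM$ it yields $\cU$'' needs that extra hypothesis to literally match the paper's notation.
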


\begin{proof}
This is explained in \cite[Proposition~8.2.7]{bosch_neron_1990}. To translate to the notation in that book: take $f$ to be the structure morphism $X \to \Spec(\bC)$, $T$ to be $\Pic^P(X)$, and $\cL$ to be $\cP$. The flatness assumption on $\cL$ is implies by our assumption using cohomology and base change \cite[Theorem~III.12.11]{hartshorne_algebraic_1977}.
\end{proof}

\begin{remark}\label{remark:analytification-incidence-variety}
The analytification of the universal family is simply the incidence variety
\begin{center}
% https://tikzcd.yichuanshen.de/#N4Igdg9gJgpgziAXAbVABwnAlgFyxMJZABgBpiBdUkANwEMAbAVxiRAB12AjLAc0+AACABQAPUoOScAxgBkqkuBQCUgzjhiicQgO64AFoIC+g0WvZYwggBoTOkmfPOXzABSzSAeq+HXld9kUKZytOLldOAHE6AFsYumEZVwB9YCl2OQojVXVNbUE4JmlDHH06HGMCsVUAXkFicx5+diMQI1J0TFx8QhQyAEYqWkYWNjC+ARF0zIlkJRz2DS1dA0rpp04XTncvHz8AoJDGiPZouISk1PWshabOVqMhmCheeCJQADMAJwgYpDIQDgIEh+o8jEA
\begin{tikzcd}
{\big\{ (x, [\cL], [s]) \text{ with } x \in X, \ [\cL] \in \Pic^P(X), \ [s] \in \bP\Gamma(\cP_{[\cL]}) \text{ such that } s(x) = 0 \big\}} \arrow[d] \\
{\big\{ ([\cL], [s]) \text{ with } [\cL] \in \Pic^P(X), \ [s] \in \bP\Gamma(\cP_{[\cL]}) \big\}}                                                    
\end{tikzcd}
\end{center}
above the projectivisation of the vector bundle $\bV(p_*\cP) \to \Pic^P(X)$.
\end{remark}

\begin{definition}\label{definition:abel-jacobi-morphism}
The morphism of functors
\[
    \fM^P(T) \lra \Pic^P(X)(T), \quad (Z \subset X \times T) \longmapsto [\cI_Z^{-1}]
\]
is represented by the projection morphism
\[
    \cM = \underline{\Proj}_{\Pic^P(X)}\big(\Sym (p_*\cP)^\vee\big) \lra \Pic^P(X)
\]
which is usually called the \emph{Abel--Jacobi} morphism.
\end{definition}

We now explain how to obtain a scheme representing $\fM^{\mathrm{sm},P}(-)$. We assume that the evaluation morphism $p^*p_* \cP \to \cP$ is surjective. Recall the jet evaluation morphism 
\[
    p^*p_*\cP \lra J^1_{\Pic^P(X) \times X/ \Pic^P(X)} \cP
\]
from \cref{definition:jet-evaluation-morphism}. We have a commutative diagram with middle row a short exact sequence:
\begin{center}
% https://tikzcd.yichuanshen.de/#N4Igdg9gJgpgziAXAbVABwnAlgFyxMJZAJgBoAGAXVJADcBDAGwFcYkQAdDgIywHMAFAAIAUgD0AjAH1gXAApYAxmLkCAGgEohXPAFt4QtQHptHBctWaAvqcVzTvPhrFdaMNldLpMufIRRkEtR0TKzsaGIAVAJoUpFcds6u7iCe3th4BERkxMEMLGyInByKAIqpXiAYGX5EEqRBNPlhRQlyLhxuHpXVvlkoAMwNeaGFxYoA0h1dFel9-sjkw02j7OSzVT6ZC2QDIwVrG73bRAAsyyEHRetWwTBQfPBEoABmAE4QukgArDQ4EEgBmkQO9PoC-gDEBJgaCvlCIUhTjCPnCliB-khoZVYZiEYhiMiwfi8QA2Qlw87oyEAdnJ4KpSHIdMQQwZxMuLRA6xojHo3BgjDkW1qRTe-AAFjgNjjEJSMezGFgwGMoPQ4OL7iAaBr6FB2DgAO4QHVQBC3KxAA
\begin{tikzcd}
            &                                                & \big( J^1_{\Pic^P(X) \times X/ \Pic^P(X)} \cP \big)^\vee \arrow[d] &                                                  &   \\
0 \arrow[r] & \cP^\vee \arrow[r] \arrow[ru] \arrow[rd, "0"'] & p^*(p_*\cP)^\vee \arrow[r] \arrow[d]                               & \cK^\vee \arrow[r] \arrow[ld, two heads, dashed] & 0 \\
            &                                                & \cQ \arrow[d]                                                      &                                                  &   \\
            &                                                & 0                                                                  &                                                  &  
\end{tikzcd}
\end{center}
where $\cQ$ is defined to be the cokernel of the dual of the jet evaluation, and
\[
    \cP^\vee \lra \big( J^1_{\Pic^P(X) \times X/ \Pic^P(X)} \cP \big)^\vee
\]
is the dual of the projection morphism from the first jet bundle to the zeroth jet bundle $J^0\cP = \cP$. The composition $\cP^\vee \to p^*(p_*\cP)^\vee \to \cQ$ is the zero morphism, and we thus obtain a surjective morphism $\cK^\vee \twoheadrightarrow \cQ$.

\begin{corollary}\label{corollary:moduli-scheme-smooth-hypersurfaces}
Assume that the counit morphism $p^*p_* \cP \to \cP$ is surjective, and that $H^1(X_t,\cP_t) = 0$ for all $t \in \Pic^P(X)$. Then the moduli functor $\fM^{\mathrm{sm},P}(-)$ is represented by an open subscheme of $\cM$, hence of the Hilbert scheme of $X$.  More precisely, let $\pi \colon \cU \to \cM$ be the universal family and
\[
    \cZ := \underline{\Proj}_{\Pic^P(X) \times X}\big(\Sym \cQ\big) \hookrightarrow \cU
\]
be the closed subscheme determined by the surjection $\cK^\vee \twoheadrightarrow \cQ$. Then $\fM^{\mathrm{sm},P}(-)$ is represented by $\cM \setminus \pi(\cZ)$.
\end{corollary}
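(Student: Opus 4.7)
The plan is to exploit \cref{theorem:representability-moduli}, which identifies $\cM$ with the scheme representing $\fM^P$, together with the general fact that the subfunctor $\fM^{\mathrm{sm},P} \subset \fM^P$ is \emph{open}: since $\pi \colon \cU \to \cM$ is proper and flat (the defining property of a relative effective Cartier divisor), the locus in $\cM$ over which all geometric fibres of $\pi$ are smooth is open in $\cM$. It therefore suffices to identify the closed complement of this locus with $\pi(\cZ) \subset \cM$. Both will be shown to coincide with the discriminant locus of pairs $([\cL], [s])$ such that $V(s) \subset X$ has a singular point.

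The heart of the proof is a fibrewise computation identifying $\cZ$ set-theoretically with the locus of triples $([\cL], [s], x)$ for which $j^1s(x) = 0$. Fix a point $([\cL], x) \in \Pic^P(X) \times X$, and set $V = H^0(X, \cP_{[\cL]})$ and $J = \cP_{[\cL]}/\fm_x^2 \cP_{[\cL]}$, the fibre of the relative jet bundle $J^1\cP$ at this point. The jet evaluation $p^*p_*\cP \to J^1\cP$ restricts on fibres to the linear map $V \to J$, $s \mapsto j^1s(x)$. Let $\cK'_0 := \ker(V \to J)$ and $I := \mathrm{image}(V \to J)$, giving a short exact sequence $0 \to \cK'_0 \to V \to I \to 0$. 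Tensoring the right-exact sequence $(J^1\cP)^\vee \to (p^*p_*\cP)^\vee \to \cQ \to 0$ from the diagram preceding the corollary with the residue field $\kappa([\cL], x)$ yields $\cQ|_{([\cL],x)} = \mathrm{coker}(J^\vee \to V^\vee) = V^\vee/I^\vee$. Dualising the short exact sequence identifies $V^\vee/I^\vee$ with $(\cK'_0)^\vee$. Hence the fibre of $\cZ = \underline{\Proj}(\Sym \cQ)$ inside the fibre $\bP(\cK|_{([\cL],x)})$ of $\cU$ is the linear subspace $\bP(\cK'_0)$, i.e., the projectivisation of those sections of $\cP_{[\cL]}$ whose first jet vanishes at $x$.

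Consequently, $([\cL], [s], x) \in \cZ$ if and only if $x$ is a singular point of $V(s)$. Projecting under $\pi$, the image $\pi(\cZ) \subset \cM$ is precisely the set of $([\cL], [s])$ such that $V(s)$ fails to be smooth at some point of $X$. Since $\cU \subset \cM \times X$ and $X$ is projective, the morphism $\pi$ is proper, so $\pi(\cZ)$ is closed. Its open complement is therefore the locus of $\cM$ parameterising smooth hypersurfaces, which by the opening paragraph represents $\fM^{\mathrm{sm},P}$. Openness inside the Hilbert scheme then follows from the openness of $\cM$ inside $\Hilb^{P'}(X)$ recorded in \cref{theorem:representability-moduli}.

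The main subtlety is that the jet evaluation $p^*p_*\cP \to J^1\cP$ is not assumed surjective under the hypotheses of the corollary, so $\cQ$ need not be locally free and its formation need not commute with arbitrary base change. However, the computation of $\cQ|_{([\cL],x)}$ above uses only right-exactness of tensor product and elementary linear algebra of the map $V \to J$; surjectivity is never invoked, which is exactly the reason why $\cQ$ (rather than the dual to a possibly non-locally-free kernel of the jet evaluation) is the correct sheaf to define $\cZ$ under these hypotheses.
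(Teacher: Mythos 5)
Your proof is correct and follows the same strategy as the paper: openness of the smooth locus in $\cM$ (flatness and finite presentation of $\pi$), properness of $\pi$ to make $\cM \setminus \pi(\cZ)$ open, and a fibrewise Jacobian-criterion argument to identify the two loci set-theoretically. You have simply filled in the fibrewise linear-algebra computation (showing $\cQ|_{([\cL],x)} \cong \ker(j^1(-)(x))^\vee$, hence that $\cZ$ is the incidence locus of singular points) that the paper compresses into the phrase ``smoothness can be checked locally using the Jacobian criterion,'' and your closing remark on why right-exactness suffices without surjectivity of the jet evaluation is a correct and worthwhile observation mirroring the paper's own remark about $\cQ$ not being locally free in general.
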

\begin{remark}
The sheaf $\cQ$ is only a coherent sheaf of $\cO_{\Pic^P(X) \times X}$-modules and $\cZ$ is therefore not a vector bundle in general. Nonetheless, if we furthermore assume that the jet evaluation morphism is surjective (e.g. if all line bundles parameterised by $\Pic^P(X)$ are very ample), then $\cQ$ is the dual of the kernel of a surjective morphism of locally free sheaves, hence itself locally free.
\end{remark}
\begin{remark}
In the notation of \cref{remark:analytification-incidence-variety}, $\cZ^\mathrm{an}$ consists of those points $(x, [\cL], [s])$ such that $j^1(s)(x) = 0$, i.e. $s$ is singular at $x$.
\end{remark}
\begin{proof}
Smoothness is an open condition: the projection $\pi$ is flat and of finite presentation, so \stacks{01V9} applies and $\fM^{\mathrm{sm},P}(-)$ is seen to be represented by an open subscheme of $\cM$. The subscheme $\cM \setminus \pi(\cZ) \subset \cM$ is open because $\pi$ is proper. It represents the moduli functor as smoothness can be checked locally using the Jacobian criterion.
\end{proof}

We close this section with some general remarks about our assumptions in \cref{corollary:moduli-scheme-smooth-hypersurfaces}. We show that, although stated scheme-theoretically, they can be checked after analytification.
\begin{lemma}
Let $\cP^\mathrm{an}$ denote the analytic sheaf associated to $\cP$. If the counit morphism $p^*p_* \cP^\mathrm{an} \to \cP^\mathrm{an}$ is surjective, then the same is true before analytification.
\end{lemma}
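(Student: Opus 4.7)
The plan is to leverage three standard facts about analytification of coherent sheaves on quasi-projective $\bC$-schemes: (i) it is an exact functor, (ii) it commutes with pullback on the nose and with pushforward along projective morphisms (Serre's GAGA in its relative form), and (iii) it is faithful on coherent sheaves, in the sense that an algebraic coherent sheaf with vanishing analytification is itself zero. Taken together these will let me reduce surjectivity of the algebraic counit to surjectivity of its analytic counterpart.

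First I would form the cokernel sheaf $\cC := \operatorname{coker}\bigl( p^*p_*\cP \to \cP \bigr)$ on $\Pic^P(X) \times X$. This is coherent: the direct image $p_*\cP$ is coherent by Grothendieck's coherence theorem, since $p$ is projective (as $X$ is projective over $\Spec(\bC)$); its pullback along $p$ is then coherent, and so is the cokernel of any morphism between coherent sheaves. Surjectivity of the counit morphism is thus equivalent to the vanishing $\cC = 0$.

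Next I would compute $\cC^{\an}$. By exactness of analytification, $\cC^{\an}$ is the cokernel of $(p^*p_*\cP)^{\an} \to \cP^{\an}$. Analytification commutes with pullback, giving a canonical isomorphism $(p^*p_*\cP)^{\an} \cong (p^{\an})^*\bigl((p_*\cP)^{\an}\bigr)$, and relative GAGA applied to the projective morphism $p$ yields $(p_*\cP)^{\an} \cong (p^{\an})_*\cP^{\an}$. Composing these natural isomorphisms identifies the analytification of the algebraic counit with the analytic counit of $p^{\an} \dashv (p^{\an})_*$ applied to $\cP^{\an}$. By hypothesis this analytic counit is surjective, so $\cC^{\an} = 0$.

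Finally I would invoke faithfulness. For any closed point $x \in \Pic^P(X) \times X$, the local ring map $\cO_{x} \to \cO^{\an}_{x}$ is faithfully flat, and $\cC^{\an}_x \cong \cC_x \otimes_{\cO_x} \cO^{\an}_x$; hence $\cC^{\an} = 0$ forces $\cC_x = 0$ at every closed point. Since the support of a coherent sheaf is closed and closed points are dense in the quasi-projective scheme $\Pic^P(X) \times X$, we conclude $\cC = 0$, i.e.\ the algebraic counit is surjective. The only substantive input is relative GAGA for the projective projection $p$; this is classical, but is the step that deserves an explicit citation.
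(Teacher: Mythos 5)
Your proof is correct and follows the same strategy as the paper's one-line proof, which appeals to exactness of analytification to reduce surjectivity of the counit to vanishing of its cokernel. You usefully supply two further inputs that the paper's terse argument leaves implicit --- relative GAGA for the projective morphism $p$ to identify the analytic counit with the analytification of the algebraic one, and faithfulness of analytification via faithful flatness of $\cO_x \to \cO_x^{\an}$ to pass from $\cC^{\an}=0$ to $\cC=0$ --- so yours is the more carefully spelled-out version of the same argument.
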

\begin{proof}
This follows from exactness of the analytification of sheaves functor.
\end{proof}

\begin{lemma}\label{lemma:vanishing-h1-complex-lines}
Let $P \in \bQ[x]$ be a polynomial and suppose that $H^1(X, \cL) = 0$ for all $[\cL] \in \Pic^P(X)(\bC)$. Then $H^1(X_t, \cP_t) = 0$ for all $t \in \Pic^P(X)$.
\end{lemma}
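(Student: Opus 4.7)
The plan is to deduce the vanishing at all scheme-theoretic points from the vanishing at $\bC$-points by combining the semi-continuity theorem with the fact that the Picard scheme is of finite type over $\bC$. No genuinely hard step is involved; the proof is essentially a routine application of cohomology and base change.

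\textbf{Step 1 (set up semi-continuity).} Consider the projection $p \colon \Pic^P(X) \times X \to \Pic^P(X)$. It is proper (since $X$ is projective over $\Spec(\bC)$) and flat (as a base change of the structure morphism $X \to \Spec(\bC)$, which is flat). The Poincaré line bundle $\cP$ on $\Pic^P(X) \times X$ is locally free, hence flat over $\Pic^P(X)$. The semi-continuity theorem \cite[Theorem~III.12.8]{hartshorne_algebraic_1977} then applies and guarantees that the function
\[
    \varphi \colon \Pic^P(X) \lra \bZ_{\geq 0}, \quad t \longmapsto \dim_{\kappa(t)} H^1(X_t, \cP_t)
\]
is upper semi-continuous on $\Pic^P(X)$.

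\textbf{Step 2 (reduce to closed points).} The locus $Z := \{ t \in \Pic^P(X) \mid \varphi(t) \geq 1 \}$ is therefore closed. By \cref{proposition:picard-scheme-hilbert}, $\Pic^P(X)$ is quasi-projective over $\Spec(\bC)$, and in particular a scheme of finite type over an algebraically closed field. Consequently, its closed points are exactly its $\bC$-points; equivalently, every non-empty closed subscheme contains a $\bC$-point, by the Nullstellensatz applied to any affine open.

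\textbf{Step 3 (conclude).} The hypothesis of the lemma says precisely that $\varphi$ vanishes on every $\bC$-point of $\Pic^P(X)$, i.e. $Z$ contains no closed point. Combining this with Step~2, we must have $Z = \emptyset$, which is exactly the claim that $H^1(X_t, \cP_t) = 0$ for all $t \in \Pic^P(X)$.
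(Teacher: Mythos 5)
Your proof is correct and takes essentially the same approach as the paper: both invoke upper semi-continuity of $\dim H^1$ to conclude the vanishing locus is open, and then apply the Nullstellensatz to deduce that its (closed) complement would have to contain a $\bC$-point, contradicting the hypothesis. Your write-up merely spells out the standard prerequisites of the semi-continuity theorem (properness, flatness) that the paper leaves implicit.
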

\begin{proof}
By upper semicontinuity of cohomology, the subscheme
\[
    \left\{ t \in \Pic^P(X) \mid H^1(X_t, \cP_t) = 0 \right\} \subset \Pic^P(X)
\]
is open. Assume that its complement is non empty. Then it contains a complex point: it is of locally of finite type over $\Spec(\bC)$ and Hilbert's Nullstellensatz applies. This cannot be the case by assumption.
\end{proof}

We finally comment on the relation between Hilbert polynomials and Chern classes. For a holomorphic line bundle $\cL$ on $X$, recall the Hirzebruch--Riemann--Roch theorem giving an equality
\[
    \chi(X, \cL) = \int_X \mathrm{ch}(\cL) \ \mathrm{td}(X)
\]
where $\mathrm{ch}(-)$ is the Chern character and $\mathrm{td}(X)$ is the Todd class of $X$. In particular, if $Z \subset X$ is an effective Cartier divisor, its Hilbert polynomial only depends on the first Chern class of its associated line bundle $\cO_X(Z)$. We thus obtain a numerical criterion:
\begin{lemma}
Let $P \in \bQ[x]$ and let $C$ be the collection of Chern classes
\[
    C := \left\{ c_1(\cL) \mid [\cL] \in \Pic^P(X)(\bC) \right\}.
\]
If $\alpha - c_1(K_X)$ is ample for all $\alpha \in C$, then $H^1(X_t, \cP_t) = 0$ for all $t \in \Pic^P(X)$.
\end{lemma}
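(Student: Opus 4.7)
The plan is to reduce to a Kodaira vanishing statement at complex points. By the preceding \cref{lemma:vanishing-h1-complex-lines}, it suffices to verify that $H^1(X, \cL) = 0$ for every $[\cL] \in \Pic^P(X)(\bC)$. So the task reduces to a purely cohomological statement about a single line bundle $\cL$ on the smooth projective complex variety $X$ whose first Chern class $\alpha = c_1(\cL)$ lies in $C$.

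Given such an $\cL$, I would write $\cL \cong K_X \otimes (K_X^{-1} \otimes \cL)$ and observe that
\[
    c_1(K_X^{-1} \otimes \cL) = \alpha - c_1(K_X),
\]
which is ample by hypothesis. Since ampleness is a numerical property (the Nakai--Moishezon criterion recalled just before \cref{proposition:diffeomorphic-hypersurfaces}), the line bundle $K_X^{-1} \otimes \cL$ is itself ample. Applying the Kodaira vanishing theorem to the ample line bundle $K_X^{-1} \otimes \cL$ then yields
\[
    H^1(X, \cL) = H^1\big(X, K_X \otimes (K_X^{-1} \otimes \cL)\big) = 0,
\]
as desired. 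Invoking \cref{lemma:vanishing-h1-complex-lines} finishes the proof.

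There is no real obstacle here: once one has the previous lemma reducing to complex points, everything is a routine application of Kodaira vanishing combined with the numerical nature of ampleness. The only conceptual point worth flagging is that the hypothesis is phrased in terms of Chern classes rather than isomorphism classes of line bundles, which is precisely why one needs to cite that ampleness depends only on the numerical class, so that the ample class $\alpha - c_1(K_X) \in \NS(X)$ really does produce an ample representative $K_X^{-1} \otimes \cL$ for every $\cL$ with $c_1(\cL) = \alpha$.
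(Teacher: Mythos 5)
Your proof is correct and follows essentially the same route as the paper: reduce to complex points via \cref{lemma:vanishing-h1-complex-lines}, then apply Kodaira vanishing to $K_X^{-1}\otimes\cL$ (noting ampleness is numerical). The paper's version is terser but the underlying argument is identical; your explicit flag about Nakai--Moishezon usefully unpacks why the hypothesis on $\alpha-c_1(K_X)\in\NS(X)$ controls every representative $\cL$ with $c_1(\cL)=\alpha$.
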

\begin{proof}
This follows from \cref{lemma:vanishing-h1-complex-lines} whose assumption is verified by the Kodaira vanishing theorem.
\end{proof}

\subsection{A convenient point-set model}\label{subsection:point-set-model}

In this section, we unravel the result of \cref{corollary:moduli-scheme-smooth-hypersurfaces} and give an explicit point set model for the moduli space of smooth hypersurfaces.

\bigskip

We begin with notations which we will use throughout the rest of the article. If $[\cL] \in \Pic(X)$ is an isomorphism class of a line bundle, we write $\PL$ for the representative of that isomorphism class given by the restriction of $\cP$ to $X\cong \{[\cL]\} \times X \subset \Pic(X) \times X$. If $\alpha \in \NS(X)$ we recall from \cref{proposition:description-complex-picard} that $\Pic^\alpha(X) \subset \Pic(X)$ denotes the connected component parameterising line bundles of Chern class $\alpha$, and we will write $\cP_\alpha$ for the restriction of the Poincaré line bundle to that component.

\begin{definition}
The first jet bundle of $\cP$ relative to the projection $p$ (see \cref{definition:relative-jet-bundle}) is denoted
\[
    J^1_p\cP := J^1_{\Pic(X) \times X / \Pic(X)} \cP.
\]
When restricted to $\Pic^\alpha(X)$ for some $\alpha \in \NS(X)$, we will write $J^1_p\cP_\alpha := J^1_{\Pic^\alpha(X) \times X / \Pic^\alpha(X)}\cP_\alpha$.
\end{definition}

\begin{lemma}
Let $K_X$ be the canonical sheaf of $X$. Let $\alpha \in \NS(X)$ be such that $\alpha - c_1(K_X)$ is ample. Then $p_* \cP_\alpha$ is a vector bundle and the fibrewise jet evaluation map gives a map of vector bundles
\[
    p^*p_* \cP_\alpha \lra J^1_p\cP_\alpha.
\]
As sets, the geometric realisations are given by
\[
    \bV(p^*p_* \cP_\alpha) = \bV(p_*\cP_\alpha) \times X = \left\{ ([\cL], x, s) \mid [\cL] \in \Pic^\alpha(X), \ x \in X, \ s \in \Gamma(\PL) \right\}
\]
and
\[
    \bV(J^1_p\cP_\alpha) = \left\{ ([\cL], x, v) \mid [\cL] \in \Pic^\alpha(X), \ x \in X, \ v \in J^1\PL|_x \right\}.
\]
Under these identifications, the jet evaluation map is given by
\[
    \mathrm{jev} \colon ([\cL], x, s) \longmapsto ([\cL], x, j^1s(x)).
\]
\end{lemma}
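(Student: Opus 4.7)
The plan is to deduce the statement by assembling pieces already established in the preliminaries: the Kodaira vanishing theorem together with cohomology and base change will give that $p_*\cP_\alpha$ is a vector bundle, and then \cref{lemma:counit-is-evaluation} together with the description of the jet map as a Taylor expansion will identify the jet evaluation map with the stated formula.

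First I would verify that $p_*\cP_\alpha$ is a vector bundle. The hypothesis that $\alpha - c_1(K_X)$ is ample and the Kodaira vanishing theorem imply $H^1(X, \PL) = 0$ for every complex point $[\cL] \in \Pic^\alpha(X)$; by \cref{lemma:vanishing-h1-complex-lines}, the same vanishing holds on all fibres of $p$. Applying cohomology and base change \cite[Theorem~III.12.11]{hartshorne_algebraic_1977} as in the proof of \cref{proposition:diffeomorphic-hypersurfaces}, we conclude that $p_*\cP_\alpha$ is locally free of finite rank, i.e.\ a vector bundle on $\Pic^\alpha(X)$. In particular, $J^1_p\cP_\alpha$ is a vector bundle (the relative jet bundle of a line bundle along a smooth projection is locally free of the expected rank), so the jet evaluation morphism of \cref{definition:jet-evaluation-morphism} is a morphism of vector bundles.

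Next I would identify the total spaces as sets. For $\bV(p^*p_*\cP_\alpha)$, use the canonical identification $\bV(p^*\cE) \cong \bV(\cE) \times_{\Pic^\alpha(X)} (\Pic^\alpha(X) \times X) = \bV(\cE) \times X$ for $\cE = p_*\cP_\alpha$, together with the set-theoretic description of $\bV(p_*\cP_\alpha)$ as pairs $([\cL], s)$ with $s \in H^0(X, \PL)$, recorded in the discussion preceding \cref{lemma:counit-is-evaluation}. For $\bV(J^1_p\cP_\alpha)$, use that the fibre of $J^1_p\cP_\alpha$ at a point $([\cL], x)$ is the quotient $\PL/\fm_x^2 \PL$ described after \cref{definition:relative-jet-bundle}, which is by definition $J^1\PL|_x$.

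Finally, to identify the map, recall that the jet evaluation is the composition $p^*p_*\cP_\alpha \to p^*p_*J^1_p\cP_\alpha \to J^1_p\cP_\alpha$ of the pushforward of the jet map with the counit. \cref{lemma:counit-is-evaluation} identifies the counit on geometric realisations with the naive evaluation of sections at points. On the other hand, the jet map $d^1_{Y/S}\colon \cP_\alpha \to J^1_p\cP_\alpha$, read through the fibre identification $J^1\PL|_x \cong \PL/\fm_x^2\PL$, takes a section $s$ of $\PL$ to the class of its first-order Taylor expansion at $x$, i.e.\ to $j^1 s(x)$ (this is the content of the discussion at the end of \cref{subsection:preliminaries-jet-bundle}). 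Composing these two descriptions gives precisely $([\cL], x, s) \mapsto ([\cL], x, j^1 s(x))$, as claimed. The main subtlety is merely bookkeeping the two perspectives on vector bundles, sheaves versus their geometric total spaces, but this has already been set up cleanly in \cref{definition:geometric-vector-bundle} and \cref{lemma:counit-is-evaluation}.
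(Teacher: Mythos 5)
Your proposal is correct and takes essentially the same approach as the paper: it applies Kodaira vanishing together with cohomology and base change (as in \cref{proposition:diffeomorphic-hypersurfaces}) to get the vector bundle statement, and then unwinds the identifications already set up in \cref{subsection:preliminaries-jet-bundle} (the discussion around \cref{lemma:counit-is-evaluation} and the summary paragraph that follows it). The paper's own proof is a two-sentence pointer to exactly these ingredients; you have simply spelled them out.
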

\begin{proof}
The fact that $p_*\cP_\alpha$ is a vector bundle follows directly from cohomology and base change and the Kodaira vanishing theorem under the assumption that $\alpha - c_1(K_X)$ is ample. The rest of the lemma follows from the results recalled in \cref{subsection:preliminaries-jet-bundle}.
\end{proof}

Recall from \cref{corollary:moduli-scheme-smooth-hypersurfaces} the scheme $\cM \setminus \pi(\cZ)$ representing the moduli functor of smooth hypersurfaces. After analytification, we may restrict the Abel--Jacobi map
\begin{equation}\label{equation:abel-map-analytification}
    \big( \cM \setminus \pi(\cZ) \big)^\mathrm{an} \lra \Pic(X)^\mathrm{an}  
\end{equation}
to the connected component $\Pic^\alpha(X) \subset \Pic(X)$ (we will from now on drop the superscript ``$\mathrm{an}$"), recalled in \cref{proposition:description-complex-picard}, provided that $\alpha$ is ample enough:

\begin{definition}\label{definition:moduli-of-hypersurfaces}
Let $\alpha \in \NS(X)$ be such that $\alpha - c_1(K_X)$ is ample. The \emph{moduli of smooth hypersurfaces in $X$ of Chern class $\alpha$} is defined to be the preimage of $\Pic^\alpha(X)$ under the Abel--Jacobi map~\eqref{equation:abel-map-analytification}. In other terms, we have a homeomorphism:
\[
    \Mhyp^\alpha \cong \big( \bV(p_* \cP_\alpha) \setminus \mathrm{proj}(\mathrm{jev}^{-1}(0)) \big) / \bC^\times
\]
where the scalars act fibrewise over $\Pic(X)$, and $\mathrm{proj} \colon \bV(p^*p_*\cP) \to \bV(p_*\cP)$ is the projection induced by $p$.
\end{definition}

\begin{remark}\label{remark:point-set-description-moduli}
As sets, we have an identification
\[
    \Mhyp^\alpha = \{ ([\cL], [s]) \mid [\cL] \in \Pic^\alpha(X), \ [s] \in \Gammans(\cP_{[\cL]})/\bC^\times\}.
\]
That is, it will be technically convenient to think of a smooth hypersurface as a tuple consisting of a line bundle $\cL$ and a non-singular global section of it (up to isomorphism and scaling action). However, the name of moduli space is justified by the previous section: we have a homeomorphism
\[
    \Mhyp^\alpha \cong \left\{ Z \subset X \text{ smooth hypersurface with } c_1(\cO_X(Z)) = \alpha \right\} \subset \Hilb(X)^\mathrm{an}.
\]
\end{remark}

\section{Statement of the main theorem}\label{section:statement-main-theorem}

In this section, we construct a topological counterpart to the moduli of smooth hypersurfaces described in \cref{definition:moduli-of-hypersurfaces}. We then state our main theorem comparing the two objects.

\subsection{A topological counterpart}

We begin with some generalities about the topology of continuous section spaces. Let $E \to A \times B$ be a fibre bundle on a topological space $A \times B$. We denote a point of $E$ as a tuple $(a,b,e)$ where $a \in A$, $b \in B$ and $e \in E|_{(a,b)}$ is in the fibre. All mapping spaces are given the compact open topology.

\begin{definition}
The space of \emph{fibrewise sections} of $E$ over $A$ is defined to be the subspace
\begin{align*}
    \Gammaconfib(E \to A) := \left\{ (a, s) \mid a \in A, \ s \in \Gammacon(E|_{a \times B}) \right\} &\hookrightarrow \Map(B, E) \\
    (a,s) &\mapsto [b \mapsto (a,b,s(b))].
\end{align*}
\end{definition}

Post-composition with the projection maps $E \to A \times B \to A$ gives a continuous map $\Map(B, E) \to \Map(B ,A)$ which, when restricted to fibrewise sections, yields the projection map
\[
    \Gammaconfib(E \to A) \lra A, \quad (a,s) \longmapsto a.
\]
In particular, this projection map is continuous.

\begin{remark}
Let $Z$ be a topological space. A continuous map $Z \to \Gammaconfib(E \to A)$ is the same datum as a continuous map $f \colon Z \times B \to E$ over $B$ such that $\mathrm{proj} \circ f(z,-) \colon B \to E \to A$ is constant for any $z \in Z$.
\end{remark}

\begin{remark}\label{remark:holomorphic-continuous-fibrewise-sections}
When $A$ and $B$ are smooth projective complex varieties, one can modify the definition above by using the spaces of holomorphic maps instead of the whole mapping spaces. In fact, assume that $E = \bV(\cE)$ is also a vector bundle and that $\pi_* \cE$ is a vector bundle, where $\pi \colon A \times B \to A$ is the first projection. Then the holomorphic fibrewise section space is exactly $\bV(\pi_* \cE)$. Let us notice however that the holomorphic mapping spaces are more naturally topologised using the analytic topology of the Hom scheme. Fortunately, Douady shows in \cite{douady_probleme_1966} that the inclusion inside the whole mapping space is continuous.
\end{remark}

To lighten the notation, and as no confusion can arise, we will from now on drop the symbol $\bV(-)$ when considering continuous sections of a vector bundle.

\begin{definition}
Taking fibrewise, over $\Pic(X)$, continuous global sections of $J^1_p\cP$ which are never vanishing, we obtain the space
\begin{align*}
    \Gammaconfib(J^1_p\cP \setminus 0) &:= \Gammaconfib(J^1_p\cP \setminus 0 \to \Pic(X)) \\
    &= \left\{ ([\cL], s) \mid [\cL] \in \Pic(X), \ s \in \Gammacon(J^1\PL \setminus 0) \right\}.
\end{align*}
The group $\bC^\times$ acts by multiplying the sections by scalars and we let $\Gammaconfib(J^1_p\cP \setminus 0) / \bC^\times$ be the quotient for that action.
\end{definition}

\subsection{The main theorem}

By \cref{remark:holomorphic-continuous-fibrewise-sections}, the fibrewise jet map followed by the inclusion of the space of holomorphic sections inside continuous sections gives rise to a continuous map
\[
    j^1 \colon \Mhyp^\alpha \lra \Gammaconfib(J^1_p\cP_\alpha \setminus 0) / \bC^\times.
\]
Denote by $\bP(J^1\cO_X)$ the projectivisation of the first jet bundle of $\cO_X$ on $X$. We will make use of the following:
\begin{proposition}[{Compare \cite[Lemma~2.5 and Section~3 p.~73]{crabb_function_1984}}]\label{proposition:radon-hurwitz-connected-components}
The connected components of the section space $\Gammacon(\bP(J^1\cO_X))$ are in one-to-one correspondence with $H^2(X;\bZ)$. For a given Chern class $\alpha$, the associated connected component $\Gammacon^\alpha(\bP(J^1\cO_X))$ consists of those sections $s$ such that the pullback $s^*\cO(1)$ of the tautological bundle has Chern class $\alpha$.
\end{proposition}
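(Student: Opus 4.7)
The plan is to identify $\Gamma_{\mathcal{C}^0}(\bP(J^1\cO_X))$ with $\Map(X,\bP^\infty)$ up to a highly-connected comparison map, and then invoke the classical identification of the latter with Eilenberg--MacLane space mapping. Concretely, the tautological line bundle $\cO_{\bP(J^1\cO_X)}(1)$ is classified by some continuous map $\phi\colon \bP(J^1\cO_X) \to \bP^\infty = K(\bZ,2)$, and together with the bundle projection $\pi\colon \bP(J^1\cO_X) \to X$ it assembles into a map
\[
    (\pi,\phi)\colon \bP(J^1\cO_X) \lra X \times \bP^\infty
\]
of Serre fibrations over $X$. I would first explain this construction and note that composition with a section $s$ of the left-hand side yields a map $\phi \circ s\colon X \to \bP^\infty$ classifying exactly $s^*\cO(1)$, so that passing to $\pi_0$ recovers the map $[s] \mapsto c_1(s^*\cO(1)) \in H^2(X;\bZ)$ appearing in the statement.

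Next, I would compute the fibrewise connectivity of $(\pi,\phi)$. Over a point $x \in X$ it restricts to the classifying map of $\cO_{\bP^n}(1)$ on the fibre $\bP(J^1\cO_X|_x) \cong \bP^n$, i.e. the standard inclusion $\bP^n \hookrightarrow \bP^\infty$ with $n = \dim_\bC X$. Since $\bP^\infty$ is built from $\bP^n$ by attaching cells in dimensions $\geq 2n+2$, this inclusion is $(2n+1)$-connected. The variety $X$ has the homotopy type of a CW complex of dimension $2n$ (being a smooth manifold of real dimension $2n$), so the fibrewise Whitehead theorem applies: the induced map on section spaces
\[
    \Gamma_{\mathcal{C}^0}(\bP(J^1\cO_X)) \lra \Gamma_{\mathcal{C}^0}(X \times \bP^\infty) = \Map(X, \bP^\infty)
\]
is $((2n+1) - 2n) = 1$-connected. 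In particular, it induces a bijection on $\pi_0$, and composing with the standard identification $\pi_0\Map(X, K(\bZ,2)) \cong H^2(X;\bZ)$ gives the desired bijection $[s] \mapsto c_1(s^*\cO(1))$.

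The technical heart of the argument is the fibrewise Whitehead input: it is the precise matching of the fibrewise connectivity $2n+1$ against the base dimension $2n$ that buys the $1$-connectivity on sections, just enough to conclude bijectivity on $\pi_0$ (and no more, so higher-degree comparisons would require different tools). I expect the main subtlety to be checking that the classifying map $\phi$ really does restrict fibrewise to the standard generator; beyond that, the argument is a routine application of parametrised obstruction theory, and no nontrivial input about the Picard scheme or jet bundle geometry is needed for this purely topological statement.
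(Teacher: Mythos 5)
Your proposal is correct, and it does the useful work of actually proving what the paper only cites: the paper defers to Crabb (whose Lemma~2.5 is quoted as the source), whereas you give a self-contained obstruction-theoretic argument. Your argument is the standard one and it works: the key numerical coincidence is that the fibrewise inclusion $\bP^n\hookrightarrow\bP^\infty$ is $(2n+1)$-connected while $X$, being a closed smooth manifold of real dimension $2n$, has a CW structure of dimension $2n$, so the parametrised Whitehead/obstruction-theory comparison gives a $1$-connected map on section spaces --- exactly enough for a $\pi_0$ bijection and nothing more, as you correctly observe. Two small points worth making explicit if you wrote this up: first, the fibrewise restriction of $\phi$ only needs to be \emph{homotopic} to the standard inclusion, which is automatic because any map $\bP^n\to\bP^\infty$ classifying $\cO_{\bP^n}(1)$ is homotopic to it, so the ``subtlety'' you flag at the end is not one; second, $X$ need not be simply connected, but this causes no trouble since the obstruction groups lie in cohomology with coefficients in the (possibly local) systems $\pi_j(\mathrm{hofib})$, which vanish identically in the required range. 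So the argument is robust. What the paper's citation likely buys over your argument is uniformity --- Crabb's result presumably treats general sphere or projective bundles at once --- but for this specific statement your direct proof is entirely adequate and arguably clearer in context.
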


It follows from the proposition that if $\cL$ is a line bundle with Chern class $\alpha$, the quotient map
\[
    \Gammacon(J^1\cL \setminus 0) \lra \Gammacon(\bP(J^1\cL)) \cong \Gammacon(\bP(J^1\cO_X))
\]
has image inside the connected component $\Gammacon^\alpha(\bP(J^1\cO_X))$. Here we have used that $J^1\cL \cong J^1\cO_X \otimes \cL$ and the fact that the projectivisation of a vector bundle is invariant under tensoring with a line bundle. 

Now, let $\cL_0$ be a chosen line bundle with Chern class $\alpha$. By choosing an isomorphism of topological line bundles $\cL_0 \cong \PL$ for each $[\cL] \in \Pic^\alpha(X)$, we obtain a map
\begin{equation}\label{equation:quotient-map-fibrewise-sections-projective}
    \Gammaconfib(J^1_p\cP_\alpha \setminus 0) / \bC^\times \lra \Gammacon(\bP(J^1\cL_0)) \cong \Gammacon(\bP(J^1\cO_X))
\end{equation}
which factors through $\Gammacon^\alpha(\bP(J^1\cO_X))$. As any two choices of isomorphisms $\cL_0 \cong \PL$ differ by a non-zero constant, we see that the map is indeed uniquely well-defined and continuous. The following is our main result:

\begin{theorem}\label{theorem:main-theorem}
Let $X$ be a smooth projective complex variety. Let $\alpha \in \NS(X)$ be such that $\alpha - c_1(K_X)$ is ample. The jet map
\[
    j^1 \colon \Mhyp^\alpha \lra \Gammacon^\alpha(\bP(J^1\cO_X))
\]
induces an isomorphism in integral homology in the range of degrees $* < \frac{d(X,\alpha) - 3}{2}$. (See \cref{definition:constant-minimal-jet-ampleness}.)
\end{theorem}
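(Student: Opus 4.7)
The plan is to compare the Abel--Jacobi microfibration $\Mhyp^\alpha \to \Pic^\alpha(X)$ with an actual fibration on the section-space side, and to deduce a homology equivalence on total spaces from a weak equivalence of bases and a homology equivalence of fibres in the stated range of degrees.

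First, I would verify that the Abel--Jacobi map is a microfibration over $\Pic^\alpha(X)$ whose fibre above $[\cL]$ is the complement of the discriminant in the complete linear system $|\PL|$. The hypothesis that $\alpha - c_1(K_X)$ is ample, combined with Kodaira vanishing and cohomology-and-base-change, ensures that $p_* \cP_\alpha$ is a holomorphic vector bundle on $\Pic^\alpha(X)$; hence the projectivisation $\bP(p_* \cP_\alpha) \to \Pic^\alpha(X)$ is a genuine fibration, and openness of smoothness gives the microfibration property for the open subspace $\Mhyp^\alpha \subset \bP(p_* \cP_\alpha)$. On the section-space side, I would choose a reference line bundle $\cL_0$ with Chern class $\alpha$ and projectivise non-vanishing sections of $J^1 \cL_0$ to produce a principal $\Map(X,\bC^\times)$-bundle; quotienting the constants $\bC^\times \subset \Map(X,\bC^\times)$ on both total space and structure group and delooping yields the fibration
\[
    \Gammacon(J^1\cL_0 \setminus 0)/\bC^\times \lra \Gammacon^\alpha(\bP(J^1\cO_X)) \lra B\bigl(\Map(X,\bC^\times)/\bC^\times\bigr).
\]
The exponential short exact sequence identifies the base up to weak equivalence with $K(H^1(X;\bZ),1) \simeq \Pic^\alpha(X)$, and the fibrewise jet map assembles into a commutative square covering this equivalence of bases.

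The key fibrewise input, imported from \cite{aumonier_h-principle_2022}, is that for each $[\cL] \in \Pic^\alpha(X)$ the jet map
\[
    |\PL| \setminus \Delta \lra \Gammacon(J^1\PL \setminus 0)/\bC^\times
\]
is a homology isomorphism in degrees $* < (d(X,\alpha) - 3)/2$, with the bound stemming directly from the $d(X,\alpha)$-jet ampleness of $\PL$. Granted the base equivalence together with this fibrewise statement, I would close the argument by a Leray spectral sequence comparison via Zeeman's theorem. The main obstacle is that the Abel--Jacobi map is only a microfibration, so the classical Serre spectral sequence does not apply directly. I would handle this either by replacing the microfibration by a weakly equivalent genuine fibration via a Moore-path construction (which preserves the homotopy type of both the total space and each fibre), or by using the sheaf-theoretic Leray spectral sequence for the microfibration, exploiting that microfibrations over a paracompact base have locally constant derived pushforwards. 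In either setting the $E_2$-pages of the two spectral sequences are identified in the stated range by the base equivalence and the fibrewise equivalence, and Zeeman comparison yields the desired integral homology isomorphism $H_*(\Mhyp^\alpha) \cong H_*(\Gammacon^\alpha(\bP(J^1\cO_X)))$ in degrees $* < (d(X,\alpha) - 3)/2$.
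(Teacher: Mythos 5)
Your high-level strategy matches the paper's: factor the jet map through the fibrewise section space $\Gammaconfib(J^1_p\cP_\alpha \setminus 0)/\bC^\times$, compare the Abel--Jacobi microfibration with the genuine fibration over $\Pic^\alpha(X)$ using the fibrewise h-principle from \cite{aumonier_h-principle_2022}, and separately show that passing to $\Gammacon^\alpha(\bP(J^1\cO_X))$ is a weak equivalence. You also correctly identify the central obstacle: the Abel--Jacobi map is only a microfibration. However, both fixes you propose for that obstacle are flawed, and this is where the gap lies.

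The Moore-path replacement does \emph{not} preserve the homotopy type of each fibre for a microfibration. Replacing $p \colon \Mhyp^\alpha \to \Pic^\alpha(X)$ by a Hurewicz fibration $\tilde p$ via mapping paths preserves the total space, but the fibre of $\tilde p$ over $b$ is the \emph{homotopy} fibre of $p$ at $b$, which in general differs from the strict fibre $|\PL| \setminus \Delta$ --- indeed, the possible discrepancy between strict and homotopy fibres is precisely what distinguishes microfibrations from fibrations. Since the h-principle input from \cite{aumonier_h-principle_2022} is a statement about the strict fibres, this replacement loses exactly the information you need. Your alternative --- a sheaf-theoretic Leray argument premised on microfibrations having locally constant derived pushforwards --- is also unsubstantiated: a microfibration (e.g.\ an open subset of a fibre bundle restricted over the base) need not have locally constant $R^q p_* \bZ$, and no standard theorem gives this. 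So neither route produces the Zeeman-comparison input as stated.

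What the paper actually does is different and is the real technical content: it proves a bespoke comparison lemma (\cref{lemma:microfibration-comparison-after-suspension}) for the case where $p$ is the restriction to an open subset of a fibre bundle. The trick is to take a \emph{fibrewise double suspension}, which makes all fibres simply connected so that the fibrewise homology equivalences become fibrewise \emph{homotopy} equivalences by the homology Whitehead theorem; one then slightly enlarges the suspended open subset to keep it open in the suspended bundle, applies the Weiss--Raptis microfibration comparison theorem (\cref{theorem:raptis-microfibration-comparison}), and finally recovers the unsuspended homology statement via Mayer--Vietoris. This suspension-and-desuspension manoeuvre is the missing idea in your proposal. As a smaller point, the fibrewise input from \cite{aumonier_h-principle_2022} gives an isomorphism in degrees $* < (d-1)/2$, not $* < (d-3)/2$; the latter is the resulting total-space range after the comparison. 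Also, the second half of the argument --- that $\Gammaconfib(J^1_p\cP_\alpha \setminus 0)/\bC^\times \to \Gammacon^\alpha(\bP(J^1\cO_X))$ is a weak equivalence --- requires a careful identification of the maps on homotopy fibres, tracked on $\pi_0$ via compatible group structures; your proposal compresses this to a one-line invocation of the exponential sequence, which glosses over the work done in \cref{subsection:homotopy-type-fibrewise-sections}.
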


\section{Proof of the main theorem}\label{section:proof-main-theorem}

The proof of the main theorem is executed in two steps. In \cref{subsection:homology-isomorphism}, we first prove:
\begin{proposition}\label{proposition:main-homology-isomorphism}
Let $X$ and $\alpha$ be as in \cref{theorem:main-theorem}. The jet map
\[
    j^1 \colon \Mhyp^\alpha \lra \Gammaconfib(J^1_p\cP_\alpha \setminus 0) / \bC^\times
\]
induces an isomorphism in integral homology in the range of degrees $* < \frac{d(X, \alpha) - 3}{2}$.
\end{proposition}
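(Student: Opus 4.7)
The plan is to prove the statement fibrewise over the Picard variety $\mathrm{Pic}^\alpha(X)$. Both the source $\Mhyp^\alpha$ (via the Abel--Jacobi morphism of \cref{definition:abel-jacobi-morphism}) and the target $\Gammaconfib(J^1_p\cP_\alpha \setminus 0)/\bC^\times$ (via its defining projection onto the parametrising base) map naturally to $\mathrm{Pic}^\alpha(X)$, and the jet map $j^1$ commutes with these projections. Using the point-set description of \cref{remark:point-set-description-moduli}, the fibre of the source over $[\cL] \in \mathrm{Pic}^\alpha(X)$ is $\Gammans(\PL)/\bC^\times = |\PL| \setminus \Delta$, the complement of the discriminant in the complete linear system, while the fibre of the target is $\Gammacon(J^1\PL \setminus 0)/\bC^\times$. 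On these fibres, $j^1$ restricts to precisely the projectivised jet map on a single linear system studied in the author's previous work \cite{aumonier_h-principle_2022}, which was shown there to induce an integral homology isomorphism in degrees $*<\frac{d-3}{2}$ whenever $\PL$ is $d$-jet ample. By the definition of $d(X,\alpha)$ in \cref{definition:constant-minimal-jet-ampleness}, this applies with $d = d(X,\alpha)$ uniformly for every $[\cL] \in \mathrm{Pic}^\alpha(X)$.

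To upgrade this fibrewise comparison to a homology isomorphism between total spaces, I would compare the associated Leray-type spectral sequences over $\mathrm{Pic}^\alpha(X)$. The right-hand projection is an honest Serre fibration because fibrewise continuous sections of a fibre bundle over $\mathrm{Pic}^\alpha(X) \times X$ always form a Serre fibration over $\mathrm{Pic}^\alpha(X)$. The left-hand projection, by contrast, is only a microfibration in general: $\Mhyp^\alpha$ is an open subset of the projective bundle $\bP(p_*\cP_\alpha) \to \mathrm{Pic}^\alpha(X)$, and the removed discriminant locus varies non-trivially with the holomorphic structure on $\PL$, so local topological triviality of the projection cannot be expected. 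My plan is to replace this microfibration by a weakly equivalent quasi-fibration via a path-space construction over $\mathrm{Pic}^\alpha(X)$; the microfibration property is exactly what ensures that the replacement has the same homotopy fibres over each point and preserves integral homology. One can then run the Leray--Serre spectral sequence for the replacement.

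With both spectral sequences in hand, $j^1$ induces a map of spectral sequences which is the identity on the base and, by the fibrewise comparison of the first paragraph, an isomorphism on the $E^2$-columns with $q < \frac{d(X,\alpha)-3}{2}$. A Zeeman-type comparison argument then yields the claimed homology isomorphism on total spaces. The main obstacle is precisely the upgrade of the microfibration $\Mhyp^\alpha \to \mathrm{Pic}^\alpha(X)$ to a usable fibration or spectral sequence: the novelty of this proposition over the single-linear-system result of \cite{aumonier_h-principle_2022} lies in uniformly controlling the topology of the family of discriminant complements over the Picard torus, and this is where the microfibration technology acknowledged in the introduction plays its essential role.
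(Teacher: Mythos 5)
Your opening paragraph correctly identifies the fibrewise structure of the argument: the jet map restricts over each $[\cL] \in \Pic^\alpha(X)$ to the map studied in \cite{aumonier_h-principle_2022}, and the single-linear-system result supplies the fibrewise input. One small but relevant inaccuracy: that input result (\cref{theorem:mon-theoreme-h-principe}) gives a homology isomorphism in degrees $* < \frac{d-1}{2}$, not $* < \frac{d-3}{2}$ as you quote. The missing two degrees are not in the input but are \emph{lost} in the passage from the fibrewise statement to the total-space statement, and this loss is exactly what your proposal does not account for.

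The main gap is in your second and third paragraphs, where you propose to replace the microfibration $\Mhyp^\alpha \to \Pic^\alpha(X)$ by a ``weakly equivalent quasi-fibration via a path-space construction'' and then run a Leray--Serre spectral sequence comparison. This does not obviously work, for two reasons. First, the strict fibres of a microfibration need not be the homotopy fibres, and their homotopy types are allowed to vary over the base (the paper stresses this point explicitly); a path-space replacement changes the strict fibres to the homotopy fibres, so it does \emph{not} preserve the fibrewise data you need, namely the discriminant complements $\Gammans(\cP_{[\cL]})/\bC^\times$. There is therefore no obvious spectral sequence with $E^2$-page built from $H_*(\Gammans(\cP_{[\cL]})/\bC^\times)$. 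Second, and more fundamentally, the only general comparison tool available here is the Weiss--Raptis theorem (\cref{theorem:raptis-microfibration-comparison}), and that theorem requires the fibrewise maps to be $n$-\emph{connected} on homotopy groups. Your input from \cite{aumonier_h-principle_2022} is only a \emph{homology} isomorphism, and the fibres are not simply connected, so you cannot upgrade it to a connectivity statement directly. The paper resolves this by taking a fibrewise double unreduced suspension (\cref{lemma:microfibration-comparison-after-suspension}): the suspended fibres are simply connected, the homology Whitehead theorem converts the homology isomorphism into $(m+2)$-connectivity, Raptis' theorem then applies after a small modification to make the suspension an open subset of a bundle, and a Mayer--Vietoris argument recovers a homology statement about the original map. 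This is the step your proposal has no substitute for, and it is also the step responsible for the drop from $\frac{d-1}{2}$ to $\frac{d-3}{2}$ in the range.
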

Then, in \cref{subsection:homotopy-type-fibrewise-sections}, we show the following:
\begin{proposition}\label{proposition:main-homotopy-equivalence}
The map defined in~\eqref{equation:quotient-map-fibrewise-sections-projective}
\[
    \Gammaconfib(J^1_p\cP_\alpha \setminus 0) / \bC^\times \lra \Gammacon^\alpha(\bP(J^1\cO_X))
\]
is a weak homotopy equivalence.
\end{proposition}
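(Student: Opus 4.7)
The plan is to factor the map of the statement, call it $\Phi$, as a composition of Hurewicz fibrations, compute its strict fiber, and show it is weakly contractible. First, note that $J^1\cL \cong J^1\cO_X \otimes \cL$ as topological complex vector bundles on $X$ for any line bundle $\cL$, since the jet exact sequence of line bundles splits topologically. Consequently, the projectivisation $\bP(J^1_p\cP_\alpha) \to \Pic^\alpha(X) \times X$ is canonically isomorphic to the pullback of $\bP(J^1\cO_X) \to X$ along the second projection, and taking fibrewise sections over $\Pic^\alpha(X)$ trivialises as $\Gammaconfib(\bP(J^1_p\cP_\alpha) \to \Pic^\alpha(X)) \cong \Pic^\alpha(X) \times \Gammacon(\bP(J^1\cO_X))$. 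This yields a factorisation
\[
\Phi \colon \Gammaconfib(J^1_p\cP_\alpha\setminus 0)/\bC^\times \xrightarrow{p} \Pic^\alpha(X) \times \Gammacon^\alpha(\bP(J^1\cO_X)) \xrightarrow{\mathrm{pr}_2} \Gammacon^\alpha(\bP(J^1\cO_X))
\]
in which $p$ records both the isomorphism class of the line bundle and the projectivised section.

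The first map $p$ is, up to weak equivalence, a principal $H^1(X;\bZ)$-covering. Indeed, taking fibrewise sections of the principal $\bC^\times$-bundle $J^1_p\cP_\alpha\setminus 0 \to \bP(J^1_p\cP_\alpha)$ yields a principal $\Map(X,\bC^\times)$-bundle onto $\Pic^\alpha(X) \times \Gammacon^\alpha(\bP(J^1\cO_X))$; surjectivity holds because the obstruction Chern class $c_1(\bar s^*\cO(-1)_{\bP(J^1\cO_X)}) + c_1(\PL) = -\alpha + \alpha$ vanishes for every $[\cL] \in \Pic^\alpha(X)$. Quotienting by the scalar subgroup $\bC^\times \subset \Map(X,\bC^\times)$, and using the long exact sequence of the topological group extension $\bC^\times \to \Map(X,\bC^\times) \to \Map(X,\bC^\times)/\bC^\times$ to see that $\Map(X,\bC^\times)/\bC^\times \simeq H^1(X;\bZ)$ is homotopically discrete, displays $p$ as a covering. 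Therefore $\Phi = \mathrm{pr}_2 \circ p$ is a Hurewicz fibration whose strict fiber over $\bar s$ is the restriction of $p$ to $\Pic^\alpha(X) \times \{\bar s\}$, an $H^1(X;\bZ)$-covering of $\Pic^\alpha(X) \simeq K(H^1(X;\bZ), 1)$ classified by a group homomorphism $\rho \colon H^1(X;\bZ) \to H^1(X;\bZ)$.

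The crux is to show that $\rho = \mathrm{id}$. For a loop $\gamma \colon S^1 \to \Pic^\alpha(X)$, the monodromy of the pulled-back covering is the obstruction in $\Map(X,\bC^\times)/\bC^\times \simeq H^1(X;\bZ)$ to a global trivialisation on $S^1 \times X$ of the line bundle $\bar s^*\cO(-1)_{\bP(J^1\cO_X)} \otimes \gamma^*\cP_\alpha$. Its first Chern class has vanishing $(0,2)$-Künneth component by the same cancellation as above, and $(1,1)$-component equal to that of $c_1(\gamma^*\cP_\alpha)$. The defining universal property of the Poincaré line bundle identifies this last component with $[\gamma]$ under the standard isomorphism $\pi_1(\Pic^\alpha(X)) \cong H^1(X;\bZ)$, so $\rho$ is the identity. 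Consequently, the fiber of $\Phi$ over $\bar s$ is weakly equivalent to the universal cover of $K(H^1(X;\bZ), 1)$, which is contractible, and $\Phi$ is therefore a weak homotopy equivalence. The most delicate step is the Chern class identification of $\rho$ via the Poincaré bundle; the rest follows formally from the topological triviality of $\bP(J^1\cL)$ as $\cL$ varies over $\Pic^\alpha(X)$.
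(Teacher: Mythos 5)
Your proof is correct and takes a genuinely different route from the paper's. The paper fits source and target of $\Phi$ into a morphism of fibration sequences sharing the common total space $\Gammacon(J^1\cP_{[\cL_0]} \setminus 0)/\bC^\times$, then shows the induced map on homotopy fibres $F_1 \simeq \Omega_{[\cL_0]}\Pic^\alpha(X) \to F_2 \simeq \Map(X,\bC^\times)/\bC^\times$ is a weak equivalence by proving, via a careful $\pi_0$-chase through models of homotopy fibres, that it is a \emph{surjective group homomorphism} between groups abstractly isomorphic to $H^1(X;\bZ)$, and then invoking that a surjective endomorphism of a finitely generated free abelian group is automatically bijective. You instead factor $\Phi$ through $\Pic^\alpha(X) \times \Gammacon^\alpha(\bP(J^1\cO_X))$ using the canonical identification $\bP(J^1_p\cP_\alpha) \cong \mathrm{pr}_X^*\bP(J^1\cO_X)$, exhibit the strict fibre of $\Phi$ over $\bar s$ as a principal $\Map(X,\bC^\times)/\bC^\times$-covering of the torus $\Pic^\alpha(X)$, and identify it as the universal cover by computing the monodromy $\rho$. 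This makes the geometric content transparent — the fibre of $\Phi$ \emph{is} the universal cover of the Picard torus — and avoids the homotopy-fibre bookkeeping of \cref{lemma:bijection-pi0-lifting-paths} and \cref{lemma:connected-components-fibre-principal-bundle}. The trade-off is your appeal to the identification $\rho = \mathrm{id}$ via ``the defining universal property of the Poincaré line bundle'': this is a true and standard fact (the $(1,1)$-K\"unneth component of $c_1(\cP_\alpha)$ is the canonical pairing), but deducing it requires more than the bare representability statement of \cref{lemma:existence-poincare-bundle} — one must relate the algebraic universal property to the analytic/topological picture via the exponential exact sequence. The paper's Hopfian argument is precisely what spares it from having to nail down the monodromy exactly; it only needs surjectivity, which follows from the same obstruction-vanishing you use. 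Both proofs are sound; yours is the cleaner one to remember, while the paper's is slightly more self-contained given its preliminaries.
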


\subsection{The homology isomorphism}\label{subsection:homology-isomorphism}

The jet map fits in the following diagram where the top row is its restriction to a fibre above an $[\cL] \in \Pic^\alpha(X)$:
\begin{center}
% https://tikzcd.yichuanshen.de/#N4Igdg9gJgpgziAXAbVABwnAlgFyxMJZABgBoBGAXVJADcBDAGwFcYkQAdDgWQAsBPNAD0uTNL3ogAvqXSZc+QinIVqdJq3ZcA4vQC2e+gGMCAMywAjABQApIeQD6aLkYAKD0Y3H0ABFzgwOHpYYMxwPsQAlD4A9H4cFgDCIhx4evDSsiAY2HgERGQATGoMLGyInByuWEYpYhJWABqRmXK5ikQqxTSlmhVc1bWe3k0tMm0K+ShkxCUa5ZW6BvRgcFYDADLRcVxJKWkZ49nyeUrIKrM981ocS4YmYLb2m-EBQSFhEdvxe1wHCFI1DAoABzDIoUCmABOED0SAALDQcBAkMQjtDYUgAKxIlGIcjomFwxCIkDI7GEzGIMhkvEErIY4kqWlIADMlOJNPJiEKHKQhVxbJovBg9Cg7EgYDYNEYIQWUAgzAsjGlZPoWEYEoIbEBUiAA
\begin{tikzcd}
\Gammans(\PL) / \bC^\times \arrow[d] \arrow[r] & \Gammacon(J^1\PL \setminus 0) / \bC^\times \arrow[d]             \\
\Mhyp^\alpha \arrow[r] \arrow[d]               & \Gammaconfib(J^1_p\cP_\alpha \setminus 0) / \bC^\times \arrow[d] \\
\Pic^\alpha(X) \arrow[r, equal]  & \Pic^\alpha(X)                                                  
\end{tikzcd}
\end{center}
The uppermost map was studied in \cite{aumonier_h-principle_2022} where the following result was proved:
\begin{theorem}[{Compare \cite[Corollary~8.1]{aumonier_h-principle_2022}}]\label{theorem:mon-theoreme-h-principe}
Let $\cL$ be a $d$-jet ample line bundle on a smooth projective complex variety $X$. Then the jet map
\[
    \Gammans(\cL) / \bC^\times \lra \Gammacon(J^1\cL \setminus 0) / \bC^\times
\]
induces an isomorphism in homology in the range of degrees $* < \frac{d-1}{2}$.
\end{theorem}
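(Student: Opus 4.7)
The plan is to apply the Vassiliev spectral sequence / simplicial resolution method for complements of discriminants, pioneered by Vassiliev and later adapted in the algebro-geometric setting by Tommasi, Mostovoy, and others. After passing to the $\bC^\times$-quotients, both spaces are complements of discriminants: $\Gammans(\cL)$ sits inside the finite-dimensional affine space $\Gamma(\cL)$ as the complement of the subset $\Sigma_{\mathrm{alg}}$ of sections with a singular zero, while $\Gammacon(J^1\cL\setminus 0)$ sits inside $\Gammacon(J^1\cL)$ as the complement of the subset $\Sigma_{\mathrm{top}}$ of continuous jet-sections meeting the zero section. The jet map $s\mapsto j^1 s$ carries $\Sigma_{\mathrm{alg}}$ into $\Sigma_{\mathrm{top}}$ and is $\bC^\times$-equivariant, so by Alexander duality the theorem reduces to a range-dependent comparison of the Borel--Moore homologies of the two discriminants. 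In the infinite-dimensional continuous-section setting I would first approximate $\Gammacon(J^1\cL)$ by a cofinal system of finite-dimensional affine subspaces through which the jet map factors, run the argument at each finite stage, and pass to the colimit.

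The second step is to build compatible augmented semi-simplicial resolutions $\widetilde{\Sigma}_{\mathrm{alg},\bullet}\to\Sigma_{\mathrm{alg}}$ and $\widetilde{\Sigma}_{\mathrm{top},\bullet}\to\Sigma_{\mathrm{top}}$ whose $p$-simplices consist of a section together with $p+1$ distinct singular points (respectively, $p+1$ distinct zeros of the jet section). Both augmentations are proper weak equivalences by the standard non-degenerate-simplicial-resolution argument. Filtering the geometric realizations by skeleta produces two spectral sequences abutting to the Borel--Moore homologies of the two discriminants, and the jet map induces a morphism of spectral sequences between them. The $p$-th layer on the holomorphic side is a bundle over the unordered configuration space $\UConf_{p+1}(X)$ whose fiber is the open simplex $\mathring{\Delta}^p$ times the linear subspace of $\Gamma(\cL)$ consisting of sections vanishing to first order at each of the chosen points. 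The analogous layer on the continuous side is a bundle with fiber $\mathring{\Delta}^p$ times the affine-contractible space of continuous jet-sections meeting zero at each of the chosen points.

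The crucial input is exactly where $d$-jet ampleness enters: first-order vanishing at $p+1$ distinct points corresponds, in the notation of jet ampleness, to multiplicities $k_i=2$ at each of the $t=p+1$ points, so $\sum_i k_i = 2(p+1)$; the evaluation map $\Gamma(\cL)\to\bigoplus_i \Gamma(\cL\otimes\cO_X/\fm_i^2)$ is then surjective precisely when $2(p+1)\le d+1$, i.e.\ $p\le (d-1)/2$. In this range the holomorphic fiber is an honest linear subspace of codimension $(n+1)(p+1)$ (where $n=\dim_\bC X$) and the continuous fiber is an affine-contractible subset of the same codimension, so the jet map between fibers is a weak equivalence. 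A standard comparison of spectral sequences shows that up to layer $p=\lfloor (d-1)/2\rfloor$ the two $E^1$-pages agree, so the induced map on Borel--Moore homology of the discriminants is an isomorphism in the corresponding range, which dualises under Alexander duality to the stated range $*<(d-1)/2$ for the homology of the complements. Since the scaling $\bC^\times$-action is free and preserved throughout, taking quotients does not affect the range.

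The main obstacle is the analytic/technical step of making Alexander duality rigorous for $\Gammacon(J^1\cL)$ rather than for a finite-dimensional vector space. One must choose the finite-dimensional exhaustion of the continuous section space carefully so that (a) the jet map from $\Gamma(\cL)$ factors through each stage, (b) the Vassiliev resolutions converge compatibly, and (c) the degree range is uniform in the exhaustion parameter and survives the colimit. A secondary technical issue is controlling how the simplicial resolution behaves over the strata of $\Sigma_{\mathrm{alg}}$ on which the cardinality of the singular locus jumps, which is handled by working with the non-degenerate geometric realization. The remainder --- configuration spaces, properness of the resolutions, and the dimension count powered by $d$-jet ampleness --- is a now fairly well-established template, so the heart of the work is erecting the continuous spectral sequence and comparing it with the holomorphic one.
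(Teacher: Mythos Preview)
This theorem is not proved in the present paper: it is quoted verbatim from the author's earlier work \cite[Corollary~8.1]{aumonier_h-principle_2022} and used as a black box input to the proof of Proposition~5.2. So there is no proof here to compare your attempt against.

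That said, your proposed strategy---Vassiliev-style simplicial resolution of the two discriminants, comparison of the resulting spectral sequences, and Alexander duality---is the standard template for results of this type and is essentially the method of the cited reference. You have correctly located the crux: $d$-jet ampleness is exactly what guarantees that the linear subspace of $\Gamma(\cL)$ consisting of sections singular at $p+1$ prescribed points has the expected codimension $(n+1)(p+1)$, and the inequality $2(p+1)\le d+1$ is the right threshold. Two minor comments. First, on the continuous side one does not usually exhaust $\Gammacon(J^1\cL)$ by finite-dimensional affines; instead one works directly with a truncated resolution and \v{C}ech/Borel--Moore cohomology of the (infinite-dimensional but paracompact) section space, which sidesteps the Alexander duality issue you flag. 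Second, your sketch does not make explicit why the comparison map on $E^1$-pages being an isomorphism for $p\le(d-1)/2$ translates into the homology range $*<(d-1)/2$; the bookkeeping here (each layer contributes starting in degree $p$ after the Thom/duality shifts cancel) is routine but is where the precise constant comes from, and it is worth writing out.
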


Now, if both lower vertical maps were fibrations, a comparison of the associated Serre spectral sequences would prove \cref{proposition:main-homology-isomorphism}. This is indeed the case for the map on the right-hand side. The other map is only a microfibration, which turns out to be sufficient for the argument to go through. We start by reviewing this technical notion popularised by Weiss in \cite{weiss_what_2005}.

\begin{definition}
A map $\pi \colon E \to B$ is called a Serre microfibration if for any $k \geq 0$ and any commutative diagram
\begin{center}
% https://tikzcd.yichuanshen.de/#N4Igdg9gJgpgziAXAbVABwnAlgFyxMJZARgBoAGAXVJADcBDAGwFcYkQBREAX1PU1z5CKMsWp0mrdgCEefEBmx4CRcqTE0GLNohDI1YgAQAdY3gC28QwBEAegGs5-JUNUVxWqbtPByp7iZmWJZwNg484jBQAObwRKAAZgBOEOZIaiA4EEhkEtrspmhYTiDJqUgATDRZOZqSOnQgNIz0AEYwjAAKAsrCIElY0QAWOCVlaYgAzNXZiBmeDcxjKRPTmbNVmfRYjOxDEBCO3JTcQA
\begin{tikzcd}
\{0\} \times D^k \arrow[r, "u"] \arrow[d, hook] & E \arrow[d, "\pi"] \\
{[0,1] \times D^k} \arrow[r, "v"']              & B                 
\end{tikzcd}
\end{center}
there exists an $\varepsilon > 0$ and a map $h \colon [0,\varepsilon] \times D^k \to E$ such that $h(0,x) = u(x)$ and $\pi \circ h(t,x) = v(t,x)$ for all $x \in D^k$ and $t \in [0, \varepsilon]$.
\end{definition}

\begin{remark}
Any Serre fibration is a microfibration. More generally, the restriction of a Serre fibration to an open subspace of the total space is a microfibration.
\end{remark}

Contrary to the case of fibrations, the homotopy types of the fibres of a microfibration can vary. Nonetheless, we have the very useful comparison theorem of Raptis generalising a result of Weiss:
\begin{theorem}[{Compare \cite[Theorem~1.3]{raptis_serre_2017}}]\label{theorem:raptis-microfibration-comparison}
Let $p \colon E \to B$ be a Serre microfibration, $q \colon V \to B$ be a Serre fibration, and $f \colon E \to V$ a map over $B$. Suppose that $f_b \colon p^{-1}(b) \to q^{-1}(b)$ is $n$-connected for some $n \geq 1$ and for all $b \in B$. Then the map $f \colon E \to V$ is $n$-connected.
\end{theorem}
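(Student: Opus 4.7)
The plan is to reduce the microfibration comparison to the classical comparison for Serre fibrations by replacing $p$ with an equivalent genuine fibration. Consider the mapping path space
\[ P(p) := \{(e,\gamma) \in E \times B^{[0,1]} \mid \gamma(0) = p(e)\} \]
together with the projection $\pi \colon P(p) \to B$ defined by $\pi(e,\gamma) = \gamma(1)$. The inclusion $E \hookrightarrow P(p)$ by constant paths is always a homotopy equivalence, $\pi$ is always a Serre fibration, and its fibre over $b$ is the homotopy fibre $\mathrm{hofib}_b(p)$.

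The key technical step is a quasi-fibration lemma for Serre microfibrations: the natural inclusion $p^{-1}(b) \hookrightarrow \mathrm{hofib}_b(p)$ is a weak equivalence for every $b \in B$. Given $(e, \gamma) \in \mathrm{hofib}_b(p)$, the microfibration axiom supplies a lift of $\gamma$ to $E$ on a short initial interval $[0,\varepsilon]$ starting from $e$. By compactness of $[0,1]$ and a finite subdivision $0 = t_0 < t_1 < \cdots < t_N = 1$, iterating this construction produces a lift $\tilde\gamma$ of $\gamma$ whose endpoint lies in $p^{-1}(b)$; the truncations $s \mapsto (\tilde\gamma(s), \gamma|_{[s,1]})$ then give a path in $\mathrm{hofib}_b(p)$ from $(e,\gamma)$ to a point in the image of $p^{-1}(b)$, proving surjectivity on $\pi_0$. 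A parametrised version of the same argument over discs, rather than over intervals, handles the higher homotopy groups.

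Once the quasi-fibration lemma is available, both $\pi \colon P(p) \to B$ and $q \colon V \to B$ are Serre fibrations, and $f$ extends, using path lifting in $q$, to a map $\tilde{f} \colon P(p) \to V$ over $B$ whose restriction on each fibre is weakly equivalent to $f_b$ and therefore $n$-connected by hypothesis. Comparing the long exact sequences of homotopy groups of the two fibrations via the five-lemma in the usual way yields that $\tilde{f}$, hence $f$, is $n$-connected. The restriction to $n \geq 1$ is exactly what ensures that the five-lemma can be applied uniformly across components.

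The main obstacle is the quasi-fibration lemma. The $\varepsilon$-lifts supplied by the microfibration axiom are strictly local in the parameter, and assembling them into a global lift of a path requires simultaneously controlling both the sizes of the subdivision intervals (so that every piece admits a lift) and the homotopies reconciling the endpoints of successive lifts with the starting points of the next ones. One can either carry this out directly by a careful induction on subdivisions, or appeal to Weiss's theorem that a Serre microfibration with weakly contractible fibres is a genuine Serre fibration, applied to an appropriate parametrised lifting problem, to extract the quasi-fibration property as a formal consequence.
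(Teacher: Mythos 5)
The paper does not actually prove this statement; it quotes it from Raptis \cite[Theorem~1.3]{raptis_serre_2017} with no proof supplied, so there is no in-text argument to compare against. Evaluating your proposal on its own merits, the central step is unfortunately false.

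Your ``quasi-fibration lemma''---that the inclusion $p^{-1}(b) \hookrightarrow \mathrm{hofib}_b(p)$ is a weak equivalence for every Serre microfibration $p$---fails. Take $B = [0,1]$, $E = [0,1]^2 \setminus \{(1/2,1/2)\}$, and let $p$ be the projection to the first coordinate. As $E$ is open in $[0,1]^2$ and the projection $[0,1]^2 \to [0,1]$ is a trivial fibration, $p$ is a Serre microfibration by the remark immediately preceding the theorem in the paper. The total space $E$ is homotopy equivalent to $S^1$, while $B$ is contractible, so $\mathrm{hofib}_b(p) \simeq S^1$ for every $b$. Yet $p^{-1}(1/2)$ is a disjoint union of two contractible intervals and $p^{-1}(b)$ for $b \neq 1/2$ is a single contractible interval; no fibre is weakly equivalent to $S^1$. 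The inclusion into the homotopy fibre is neither injective on $\pi_0$ over $b = 1/2$, nor surjective on $\pi_1$ over any $b$. This is precisely why Weiss's and Raptis's results are non-trivial: for a Serre microfibration the strict fibre is in general a poor approximation to the homotopy fibre, and replacing $p$ by its associated path-space fibration destroys exactly the fibrewise information the theorem is about.

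Two further issues compound the problem. Your iterated-lifting argument for $\pi_0$-surjectivity is incomplete: the microfibration axiom returns an $\varepsilon$ that depends on the partial lift constructed so far, so assembling $\varepsilon$-lifts along a subdivision of $[0,1]$ need not terminate after finitely many steps, and there is no control forcing the partial lifts to stabilise. And the attempted fallback to Weiss's contractible-fibre theorem cannot extract the quasi-fibration property, since that theorem has a weak-contractibility hypothesis absent here, and the counterexample shows the statement you want from it is simply false. Raptis's actual proof does not pass through the associated path-space fibration; it works directly with relative lifting problems for pairs $(D^k,\partial D^k)$, using the microfibration axiom to shrink a lifting problem into a single fibre where the $n$-connectivity of $f_b$ applies.
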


In the present situation, we only have access to \cref{theorem:mon-theoreme-h-principe} which provides an isomorphism in homology, rather than on homotopy groups. The remedy chosen here is to suspend the spaces as to obtain simply connected spaces and then apply the homology Whitehead theorem.

\begin{definition}
For a map $p \colon E \to B$, its fibrewise (unreduced) $k$\textsuperscript{th} suspension is defined to be
\[
	\Sigma^k_B E = \left( E \times [0,1] \times S^{k-1} \right) \bigg/ \big((e,0,s) \sim (e,0,s') \text{ and } (e,1,s) \sim (e',1,s) \text{ if } p(e) = p(e') \big).
\]
The fibre of the natural map $\Sigma^k_B p \colon \Sigma^k_B E \to B$ induced by $p$ is the unreduced $k$\textsuperscript{th} suspension of the fibre of $p$ (here modelled as the join with the sphere $S^{k-1}$):
\[
	(\Sigma^k_B p)^{-1}(b) = \Sigma^k p^{-1}(b), \quad \forall b \in B.
\]
\end{definition}

\begin{lemma}\label{lemma:projection-fibrewise-sections-pic-fibre-bundle}
The map 
\[
    \Gammaconfib(J^1_p\cP_\alpha \setminus 0) / \bC^\times \lra \Pic^\alpha(X)
\]
is a fibre bundle.
\end{lemma}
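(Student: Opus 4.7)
The plan is to build explicit local trivializations of the projection, using that $\Pic^\alpha(X)$ is a complex torus (hence a smooth manifold with contractible neighborhoods) together with the homotopy invariance of topological complex vector bundles. Fix $[\cL_0] \in \Pic^\alpha(X)$ and a contractible open neighborhood $U \ni [\cL_0]$. Write $p_X \colon U \times X \to X$ for the second projection and $i_0 \colon X \to U \times X$, $x \mapsto ([\cL_0], x)$, for the inclusion at the basepoint, so that $p_X \circ i_0 = \mathrm{id}_X$ and, by contractibility of $U$, $i_0 \circ p_X$ is homotopic to $\mathrm{id}_{U \times X}$. Applying the covering homotopy property for topological complex vector bundles, I would obtain a topological isomorphism
\[
    \Psi \colon J^1_p\cP_\alpha|_{U \times X} \xrightarrow{\ \cong\ } p_X^* V_0,
\]
where $V_0 := i_0^* J^1_p\cP_\alpha$ is, topologically, the first jet bundle $J^1\cL_0$ on $X$ (using that $\cP_{[\cL_0]} \cong \cL_0$ as line bundles).

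Given such a $\Psi$, its restriction to each slice $\{u\} \times X$ is a fibrewise $\bC$-linear isomorphism $\psi_u \colon J^1_p\cP_\alpha|_{\{u\} \times X} \xrightarrow{\cong} V_0$ which sends the zero section to the zero section. The assignment
\[
    \Phi \colon \Gammaconfib(J^1_p\cP_\alpha \setminus 0)|_U \lra U \times \Gammacon(V_0 \setminus 0), \quad (u, s) \longmapsto (u, \psi_u \circ s)
\]
is then a bijection with inverse built from $\Psi^{-1}$, and continuity in both directions follows from the exponential law for the compact-open topology (using that $X$ is compact Hausdorff and that $\Psi$ is continuous on $U \times X$). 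Because each $\psi_u$ is $\bC$-linear, $\Phi$ is equivariant for the fibrewise $\bC^\times$-action; passing to quotients produces the local trivialization
\[
    \left(\Gammaconfib(J^1_p\cP_\alpha \setminus 0)/\bC^\times\right)|_U \cong U \times \left(\Gammacon(V_0 \setminus 0)/\bC^\times\right),
\]
over $U$, and varying $[\cL_0]$ covers all of $\Pic^\alpha(X)$.

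The main obstacle to watch carefully is the production of the bundle isomorphism $\Psi$: one needs to verify that homotopy invariance genuinely delivers a continuous bundle isomorphism over the whole of $U \times X$, rather than merely a fibrewise isomorphism. Once this is in place, the remaining points are routine: that different choices of basepoint or of $\Psi$ yield transition functions given by continuous families of $\bC$-linear bundle automorphisms (so compatible overlaps), and that the freeness of the fibrewise $\bC^\times$-action lets one descend local trivializations through the quotient. I would therefore expect the heart of the proof to consist of one clean appeal to homotopy invariance, followed by standard exponential-law manipulations.
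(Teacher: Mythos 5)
Your proposal is correct and follows essentially the same route as the paper: restrict to a contractible open $U \subset \Pic^\alpha(X)$, produce a topological bundle isomorphism over $U \times X$ trivializing $J^1_p\cP_\alpha$ in the $U$-direction, and use it (plus its $\bC^\times$-equivariance) to trivialize the fibrewise section space and its quotient over $U$. If anything, you spell out more carefully than the paper does why the bundle isomorphism $\Psi$ exists — the paper's phrase ``a topological vector bundle being trivial over a contractible base'' is shorthand for the homotopy-invariance argument over $U \times X \simeq X$ that you make explicit.
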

\begin{proof}
Let $U \subset \Pic^\alpha(X)$ be a small contractible open subset. A topological vector bundle being trivial over a contractible base, we obtain an isomorphism of vector bundles
\[
    \psi \colon J^1_p \cP_\alpha|_{U \times X} \overset{\cong}{\lra} U \times J^1 \cP_{[\cL_0]}
\]
over $U \times X$, with $[\cL_0] \in U$ a chosen basepoint. The map
\begin{align*}
    \left( \Gammaconfib(J^1_p\cP_\alpha \setminus 0) / \bC^\times \right)|_U &\lra U \times \Gammacon(J^1 \cP_{[\cL_0]} \setminus 0) \\
    ([\cL], s) &\longmapsto ([\cL], \psi \circ s)
\end{align*}
is then a homeomorphism over $U$ exhibiting the local triviality of the fibre bundle.
\end{proof}

We will say that a map $A \to B$ is homology $m$-connected if it induces an isomorphism on homology groups $H_i(A) \to H_i(B)$ for $i < m$ and a surjection when $i = m$.
\begin{lemma}\label{lemma:microfibration-comparison-after-suspension}
Let $q \colon V \to B$ be a fibre bundle, and $p \colon U \to B$ be the restriction of a fibre bundle $E \to B$ to an open subset $U \subset E$. Let $f \colon U \to V$ be a map over $B$ and suppose that for every $b \in B$, the restriction to the fibre
\[
	f_b \colon p^{-1}(b) \lra q^{-1}(b)
\]
is homology $m$-connected. Then $f \colon U \to V$ is homology $m$-connected.
\end{lemma}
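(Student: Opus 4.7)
My plan is to reduce the lemma to \cref{theorem:raptis-microfibration-comparison} by fibrewise suspending the whole diagram enough times to make all fibres simply connected, and then descending the resulting conclusion by Mayer--Vietoris. The key obstruction to applying Raptis' theorem directly is that it gives a conclusion about homotopy groups from a hypothesis on homotopy groups, whereas our hypothesis is purely homological; suspending twice converts the problem to one between simply connected spaces, where Hurewicz bridges the two worlds.

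Concretely, form the two-fold fibrewise unreduced suspensions $\Sigma^2_B p \colon \Sigma^2_B U \to B$ and $\Sigma^2_B q \colon \Sigma^2_B V \to B$. It is routine to check that $\Sigma^2_B p$ remains a Serre microfibration---partial lifts in the interior of the suspension coordinates use the microfibration property of $p$, while near the collapsed endpoints any choice of lift works---and that $\Sigma^2_B q$ remains a fibre bundle. The induced map $\Sigma^2_B f$ restricts on the fibre at $b$ to the double unreduced suspension $\Sigma^2 f_b$. Since $f_b$ is homology $m$-connected (with $m \geq 0$, in particular surjective on $\pi_0$, so both fibres are non-empty over all of $B$), the map $\Sigma^2 f_b$ is homology $(m+2)$-connected between simply connected spaces; by the Hurewicz and Whitehead theorems it is therefore $(m+2)$-connected on homotopy groups. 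Since $m+2 \geq 1$, \cref{theorem:raptis-microfibration-comparison} applies and yields that $\Sigma^2_B f$ is $(m+2)$-connected, hence in particular homology $(m+2)$-connected.

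To descend this conclusion to $f$ itself, observe that for any map $r \colon E \to B$ the fibrewise unreduced suspension $\Sigma_B E$ is the double mapping cylinder of $r$ with itself, and hence is the homotopy pushout of $B \xleftarrow{r} E \xrightarrow{r} B$. This produces a natural Mayer--Vietoris sequence
\[
    \cdots \to H_n(E) \to H_n(B) \oplus H_n(B) \to H_n(\Sigma_B E) \to H_{n-1}(E) \to \cdots
\]
Comparing the sequences for $p$ and $q$ via the ladder induced by $f$, whose middle terms are identified by the identity of $H_n(B) \oplus H_n(B)$, a straightforward five-lemma argument gives the shift principle: if $(\Sigma_B g)_*$ is homology $(n+1)$-connected for a map $g$ over $B$, then $g_*$ is homology $n$-connected. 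Applying this shift once to $\Sigma_B f$ and once more to $f$ descends the homology $(m+2)$-connectedness of $\Sigma^2_B f$ established above to the homology $m$-connectedness of $f$. The only technical point that requires care is the preservation of the microfibration property under $\Sigma^2_B$; the remainder is a formal combination of Raptis' theorem with a standard Mayer--Vietoris degree shift.
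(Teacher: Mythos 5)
Your overall plan mirrors the paper's: double the fibrewise unreduced suspension, use Hurewicz/Whitehead to translate the homological hypothesis into a homotopical one, apply Raptis' comparison theorem (\cref{theorem:raptis-microfibration-comparison}), and descend via a Mayer--Vietoris ladder. The descent step and the ``suspend to reach simple connectivity'' step are exactly as in the paper. However, there is a genuine gap at the point where you want to apply Raptis' theorem.

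You assert that ``it is routine to check that $\Sigma^2_B p$ remains a Serre microfibration,'' with the intended justification that partial lifts in the interior use the microfibration property of $p$ while near the collapsed endpoints any lift works. This is precisely the step the paper flags as problematic: $\Sigma^2_B U$ is \emph{not} open in $\Sigma^2_B E$. Indeed, near the upper cone (suspension coordinate $t=1$), any neighbourhood in $\Sigma^2_B E$ of a point $[e,1,s]\in\Sigma^2_B U$ contains points $[e',t',s']$ with $t'<1$ and $e'\notin U$, which are not in $\Sigma^2_B U$. So the standard fact ``open subset of a fibre bundle is a Serre microfibration'' does not apply to $\Sigma^2_B p$, and your informal argument does not substitute for it: if the initial datum $g\colon D^k\to\Sigma^2_B U$ lands on the upper cone at some parameters and in the interior at others, one needs a \emph{continuous} choice of the $E$-coordinate on a family that is only partially prescribed, and the time for which a given $E$-coordinate stays inside the open set $U$ can degenerate as the suspension coordinate tends to $1$. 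Making this work requires a genuine argument (e.g.\ reparametrising the suspension coordinate to reach $1$ at a rate controlled by the compactness-extracted $\varepsilon$'s), which is neither routine nor what you wrote. The paper avoids the issue entirely by replacing $\Sigma^2_B U$ with the open enlargement $W = \Sigma^2_B U \cup \big(E\times(0.5,1]\times S^1\big)/{\sim}\ \subset \Sigma^2_B E$, which \emph{is} open in the suspended fibre bundle and hence a Serre microfibration over $B$ for the standard reason, and is fibrewise homotopy equivalent to $\Sigma^2_B U$ by collapsing the added collar. That enlargement is the key technical device in the proof, and your proposal is missing it. Either supply a careful proof that $\Sigma^2_B p$ is a microfibration, or adopt the paper's open enlargement.
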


\begin{proof}
For any $b \in B$, the suspension of $f$ on the fibre
\[
	\Sigma^2 f_b \colon \Sigma^2 p^{-1}(b) \lra \Sigma^2 q^{-1}(b)
\]
induces an isomorphism in homology in degrees $* \leq m+1$ and a surjective morphism in degree $* = m+2$. As both spaces are simply connected, the homology Whitehead theorem implies that this map is $(m+2)$-connected. We would like to apply \cref{theorem:raptis-microfibration-comparison} to $\Sigma^2_B f$, but $\Sigma^2_B U \subset \Sigma^2_B E$ is not open and it is unclear if $\Sigma^2_B U \to B$ is a microfibration. We resolve the issue by enlarging slightly the space to a homotopy equivalent one. More precisely, let
\[
	W = \big(\Sigma^2_B U\big) \cup \big( (E \times (0.5,1] \times S^1)) / \sim\big) \subset \Sigma^2_B E,
\]
and denote by $E_b, W_b, U_b$ the fibres of the respective spaces above a point $b \in B$. Using in each fibre the homotopy equivalence $\big((E_b \times (0.5,1] \times S^1)/\sim\big) \simeq S^1$ given by collapsing gives a homotopy equivalence
\[
	(W, W_b) \overset{\simeq}{\lra} (\Sigma^2_B U, \Sigma^2 U_b)
\]
for all $b \in B$. Now, the fibrewise suspension of the fibre bundles $E \to B$ and $V \to B$ are fibre bundles. As $W \subset \Sigma^2_B E$ is open, the restriction $W \to B$ is a microfibration. Applying \cref{theorem:raptis-microfibration-comparison} to the composite
\[
	W \overset{\simeq}{\lra} \Sigma^2_B U \overset{\Sigma^2_B f}{\lra} \Sigma^2_B V
\]
and using that the first map is a homotopy equivalence, we obtain that $\Sigma^2_B f \colon \Sigma^2_B U \to \Sigma^2_B V$ is $(m+2)$-connected. Hence it is homology $(m+2)$-connected. Comparing the Mayer--Vietoris sequences of the fibrewise suspensions finally shows that $f \colon U \to B$ is homology $m$-connected.
\end{proof}

\begin{proof}[Proof of \cref{proposition:main-homology-isomorphism}]
By \cref{definition:moduli-of-hypersurfaces}, the map $\Mhyp^\alpha \to \Pic^\alpha(X)$ is the restriction of the projective bundle $\bP(p_*\cP_\alpha) \to \Pic^\alpha(X)$ to the open subset $\Mhyp^\alpha$. Using \cref{theorem:mon-theoreme-h-principe} and \cref{lemma:projection-fibrewise-sections-pic-fibre-bundle}, we can apply \cref{lemma:microfibration-comparison-after-suspension} to conclude.
\end{proof}

\subsection{The homotopy type of the space of fibrewise sections}\label{subsection:homotopy-type-fibrewise-sections}

To prove that the map of \cref{proposition:main-homotopy-equivalence} is weak equivalence, we will extend it to a morphism between fibrations with equal total spaces, and then compare the homotopy fibres. (See~\eqref{equation:morphism-of-fibrations-hofibres} below for the precise diagram.) For this latter part, we shall need to work with convenient point-set models for the homotopy fibres, and thus we begin by making explicit some basic results in algebraic topology. The material of this section is very standard, but we include it to fix the notation.

For a pointed space $(A,a)$, we let $P(A,a) = \Map_*(([0,1],0), (A,a))$ be the space of paths in $A$ starting at $a$. We will write $\text{cte}_*$ for the constant loop based at a point $*$.

\subsubsection{The homotopy fibre of a homotopy fibre}
Let $\pi \colon (E,e_0) \to (B,b_0)$ be a fibration between pointed spaces, $F = \pi^{-1}(b_0)$ be the fibre, and $\Omega_{b_0} B$ be the loop space of $B$ based at $b_0$. Writing $i \colon F \hookrightarrow E$ for the inclusion, the space
\[
    Hi := \{ (e, \alpha) \in F \times P(E,e_0) \mid \alpha(1) = i(e) = e \}
\]
is a model of its homotopy fibre. It is well known that $Hi$ and $\Omega_{b_0} B$ are homotopy equivalent, and the goal of this small section is to recall an explicit description of the induced bijection on connected components.

\bigskip

We write
\[
    H\pi := \{ (e, \gamma) \in E \times P(B,b_0) \mid \gamma(1) = \pi(e) \}
\]
for the homotopy fibre of $\pi$. Let $j \colon H\pi \to E$ be the map $(e, \gamma) \mapsto e$, whose homotopy fibre is given by
\begin{align*}
    Hj &:= \{ (e, \gamma, \alpha) \in H\pi \times P(E,e_0) \mid \alpha(1) = e \} \\
    &\cong \{ (\gamma, \alpha) \in P(B,b_0) \times P(E,e_0) \mid \gamma(1) = \pi \circ \alpha(1) \}.
\end{align*}
The map $\Omega_{b_0} B \to Hj$ given by $\gamma \longmapsto (\gamma, \text{cte}_{e_0})$ is a homotopy equivalence (see \cite[Note~4.7.1]{dieck_algebraic_2008}). The situation is summarised in the following diagram:
\begin{center}
% https://tikzcd.yichuanshen.de/#N4Igdg9gJgpgziAXAbVABwnAlgFyxMJZAZgBoBGAXVJADcBDAGwFcYkQAhEAX1PU1z5CKAEwVqdJq3YBRHnxAZseAkXLiaDFm0QgAEgB0DaLPP7Kha0gAYJW6boBiZxQJXDk1jZO3s9AKxclQVUUL1tNKR19U15zEI8vETso9iMAeQBbGABzegB9YAAjfOtuLm4JGCgc+CJQADMAJwhMpDIQHAgkMU76LEZ2AAsICABrEEjfXSNsbIBHSZBGeiKYRgAFN0tdJqwcoZwXZtb2mi6kdT6B4dGJqYcQWIUTtsRei8Qr+2jAmhW1pttqEQHsDkc4iBXpdzt1EF4fI8jCYlgD1lsLCCwYdji03gAWWE9SHQxAAViJiGIJLxSApnThhMR0VmWAWqNW6OBwlB+xxNNOiAAbJSmT80gY5jBFpVuEA
\begin{tikzcd}
Hi \arrow[r] \arrow[d, "\simeq"'] & F \arrow[d, "\simeq"', hook] \arrow[rd, "i", hook] &                     &   \\
Hj \arrow[r]                      & H\pi \arrow[r, "j"']                               & E \arrow[r, "\pi"'] & B \\
\Omega_{b_0}B \arrow[u, "\simeq"] &                                                    &                     &  
\end{tikzcd}
\end{center}

\begin{lemma}\label{lemma:bijection-pi0-lifting-paths}
Let $\gamma \colon [0,1] \to B$ be a loop based at $b_0$. Let $\alpha \colon [0,1] \to E$ be a lift of that loop starting at $e_0$. The map on connected components
\begin{align*}
    \pi_0(\Omega_{b_0} B) &\lra \pi_0(Hi) \\
    [\gamma] &\longmapsto [(\alpha(1), \alpha)]
\end{align*}
is well-defined and is a bijection.
\end{lemma}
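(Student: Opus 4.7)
The plan is to prove the lemma directly (rather than chasing through the homotopy equivalence $Hi \simeq Hj \simeq \Omega_{b_0}B$ of the preceding diagram) by constructing an explicit two-sided inverse on $\pi_0$. Define
\[
    \Phi \colon \pi_0(Hi) \lra \pi_0(\Omega_{b_0}B), \qquad [(e,\beta)] \longmapsto [\pi \circ \beta].
\]
This is well-posed at the level of representatives because $\beta(0) = e_0$ and $\beta(1) = e \in F = \pi^{-1}(b_0)$, so $\pi \circ \beta$ is a loop at $b_0$; and a continuous path $s \mapsto (e_s, \beta_s)$ in $Hi$ pushes forward to a continuous path $s \mapsto \pi \circ \beta_s$ in $\Omega_{b_0}B$, so $\Phi$ descends to components.

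For well-definedness of the lemma map, the core input is the homotopy lifting property of $\pi$ applied to square–shaped pairs. First I would show that two different lifts $\alpha, \alpha'$ of the \emph{same} loop $\gamma$ starting at $e_0$ give the same class in $\pi_0(Hi)$: apply HLP for the pair $(I^2,\ \{0,1\}\times I \cup I \times \{0\})$ (which is a strong deformation retract of $I^2$) to the constant homotopy $K(s,t) := \gamma(t)$ with boundary lift $\alpha$ on $\{0\}\times I$, $\alpha'$ on $\{1\}\times I$, and constant $e_0$ on $I\times\{0\}$. The resulting lift $\widetilde K \colon I^2 \to E$ satisfies $\pi \circ \widetilde K(s,1) = b_0$, so $s \mapsto (\widetilde K(s,1), \widetilde K(s,\cdot))$ is a path in $Hi$ joining $(\alpha(1),\alpha)$ to $(\alpha'(1),\alpha')$. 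Second, if $\gamma_0 \simeq \gamma_1$ via a based homotopy $H \colon I^2 \to B$, lift $H$ using HLP for $(I^2,\ \{0\}\times I \cup I \times \{0\})$ to a map $\widetilde H$ extending a chosen lift $\alpha_0$ of $\gamma_0$ along $I\times\{0\}$ by the constant $e_0$; then $s\mapsto (\widetilde H(s,1),\widetilde H(s,\cdot))$ connects $(\alpha_0(1),\alpha_0)$ to some lift data for $\gamma_1$, and the previous step identifies this with the class associated to any chosen lift of $\gamma_1$.

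Finally, I would verify that the two maps are mutually inverse on $\pi_0$. One direction is tautological: starting from $[\gamma]$, choose a lift $\alpha$; then $\Phi([(\alpha(1),\alpha)]) = [\pi \circ \alpha] = [\gamma]$. For the other direction, starting from $[(e,\beta)] \in \pi_0(Hi)$ we obtain $[\pi \circ \beta]$; the lemma map then sends this to $[(\widetilde\beta(1),\widetilde\beta)]$ for any lift $\widetilde\beta$ of $\pi\circ\beta$ starting at $e_0$. But $\beta$ is itself such a lift, and the first sub-claim of Step~2 ensures the chosen lift does not affect the component, so this class equals $[(e,\beta)]$.

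The main obstacle is precisely the independence-of-lift step (the HLP application to the three-sided frame $\{0,1\}\times I\cup I\times\{0\}\subset I^2$); everything else then falls into place formally. This is a mild but crucial point because without it the candidate inverse $\Phi$ would not have a well-defined pre-image up to path-component in $Hi$.
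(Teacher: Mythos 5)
Your proof is correct and takes a genuinely different route from the paper's. The paper passes through the auxiliary space $Hj$: it uses the two homotopy equivalences $\Omega_{b_0}B \to Hj$, $\gamma\mapsto(\gamma,\text{cte}_{e_0})$, and $Hi\to Hj$, $(e,\beta)\mapsto(\text{cte}_{b_0},\beta)$, constructed just before the lemma, and then argues that the resulting images in $\pi_0(Hj)$ agree, by comparing both to the class of $(\gamma,\alpha)$. You instead produce an explicit two-sided inverse $\Phi\colon[(e,\beta)]\mapsto[\pi\circ\beta]$ and establish well-definedness and bijectivity by two applications of the homotopy lifting property for a three-sided frame in $I^2$, bypassing $Hj$ entirely. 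Your route is more elementary and, notably, avoids a delicate point in the paper's sketch: $(\gamma,\alpha)$ with $\alpha$ lifting $\gamma$ is always in the \emph{basepoint} component of $Hj$ (connected to $(\text{cte}_{b_0},\text{cte}_{e_0})$ via $s\mapsto(\gamma(\min(s,-)),\alpha(\min(s,-)))$), whereas $(\gamma,\text{cte}_{e_0})$ and $(\text{cte}_{b_0},\alpha)$ correspond to $[\gamma]$ and $[\gamma]^{-1}$ under $\pi_0(Hj)\cong\pi_1(B,b_0)$ and so need not lie in that component; the lemma's map is nevertheless a bijection, differing from the composite of the two $Hj$-equivalences only by loop inversion, and your direct argument proves exactly this without ever touching $Hj$. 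What the paper's route buys is that the same $Hj$-language is reused in the following lemma; what yours buys is a self-contained, elementary verification in which the step you flag as the crux — independence of the chosen lift, handled by lifting the constant homotopy over $\{0,1\}\times I\cup I\times\{0\}$ — is indeed the substantive input and is carried out correctly.
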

\begin{proof}
The natural map $F \to H\pi$ is a homotopy equivalence as $\pi$ is a fibration. Hence the induced map $Hi \to Hj$ given by $(e,\alpha) \mapsto (\text{cte}_{b_0}, \alpha)$ is a homotopy equivalence. Therefore it suffices to show that $(\gamma, \text{cte}_{e_0})$ and $(\alpha(1), \alpha)$ are in the same connected component of $Hj$. Both are in the same component as $(\gamma, \alpha)$, as seen by deforming either the first or the second path.
\end{proof}

\subsubsection{Homotopy fibre of a principal bundle}
In this subsection, $\pi \colon (E,e_0) \to (B,b_0)$ is now a principal $G = \pi^{-1}(b_0)$-bundle. We let $\alpha \colon [0,1] \to B$ be a path from $b_0$ to a point $b_1$, and we choose a point $e_1 \in \pi^{-1}(b_1)$. As before, recall that a model for the homotopy fibre of $\pi$ is given by
\[
    H\pi := \{ (e, \gamma) \in E \times P(B,b_0) \mid \gamma(1) = \pi(e) \}.
\]
We may choose a lift of the path $\alpha$ to a path $\beta \colon [0,1] \to E$ such that $\pi \circ \beta = \alpha$. We define $e_1 = \beta(1)$. As the action of $G$ on $\pi^{-1}(b_1)$ is free and transitive, there exists a unique $g_1 \in G$ such that $g_1 \cdot e_1 = e_1'$ (where $\cdot$ denotes the action).
\begin{lemma}\label{lemma:connected-components-fibre-principal-bundle}
We keep the notation as above. Then the points $(e_1', \alpha)$ and $(g_1 \cdot e_0, \text{cte}_{e_0})$ are in the same connected component of the homotopy fibre $H\pi$.
\end{lemma}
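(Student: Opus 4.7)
The plan is to construct an explicit continuous path in $H\pi$ from $(e_1', \alpha)$ to $(g_1 \cdot e_0, \text{cte}_{b_0})$. The guiding idea is to use a $G$-translate of the chosen lift $\beta$ as a new lift of $\alpha$ ending at $e_1'$, and then to simultaneously retract $\alpha$ back to the constant path at $b_0$ while sliding the first coordinate backwards along this new lift.

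First I would observe that, because $\pi \colon E \to B$ is a \emph{principal} $G$-bundle, the $G$-action preserves the fibres of $\pi$. Hence $\tilde\beta := g_1 \cdot \beta \colon [0,1] \to E$ is again a lift of $\alpha$, with endpoints $\tilde\beta(0) = g_1 \cdot e_0$ and $\tilde\beta(1) = g_1 \cdot e_1 = e_1'$.

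Next I would define
$$H \colon [0,1] \lra H\pi, \qquad H(s) := \bigl(\tilde\beta(1-s),\ \alpha_s\bigr), \quad \text{where } \alpha_s(t) := \alpha\bigl(t(1-s)\bigr).$$
The path $\alpha_s$ starts at $\alpha_s(0) = \alpha(0) = b_0$ and ends at $\alpha_s(1) = \alpha(1-s) = \pi(\tilde\beta(1-s))$, so $H(s) \in H\pi$ for every $s$; continuity in $s$ is immediate. Reading off the endpoints gives $H(0) = (e_1', \alpha)$ and $H(1) = (g_1 \cdot e_0, \text{cte}_{b_0})$, which is the claimed equality of path components.

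There is no real obstacle here: the only conditions to verify are the basepoint condition $\alpha_s(0) = b_0$ and the matching condition $\pi(\tilde\beta(1-s)) = \alpha_s(1)$ throughout the homotopy, both of which are built into the formula. The lemma is essentially a direct packaging of path-lifting in a principal bundle, together with the fact that left translation by $g_1$ turns the chosen lift $\beta$ into a lift landing at the prescribed point $e_1'$.
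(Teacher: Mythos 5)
Your proof is correct and follows essentially the same approach as the paper: you use the $G$-translate $\tilde\beta = g_1 \cdot \beta$ as a lift of $\alpha$ ending at $e_1'$, and the explicit path $s \mapsto (\tilde\beta(1-s), \alpha_s)$ you write down is just the time-reversal of the paper's path $t \mapsto ((g_1\cdot\beta)(t), \alpha(t\cdot-))$. You also silently correct what appears to be a typo in the lemma statement, writing $\text{cte}_{b_0}$ (a constant path in $B$) where the paper writes $\text{cte}_{e_0}$, and like the paper you tacitly assume the lift $\beta$ was chosen to start at $e_0$.
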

\begin{proof}
The map $g_1 \cdot \beta \colon [0,1] \to E$ is a path from $g_1 \cdot e_0$ to $g_1 \cdot e_1 = e_1'$, and is such that $\pi \circ (g_1 \cdot \beta) = \pi \circ \beta = \alpha$. Thus the map
\begin{align*}
    [0,1] &\lra H\pi \\
    t &\longmapsto \big( (g_1 \cdot \beta)(t), \ \alpha(t \cdot -) \big)
\end{align*}
is a path from $(g_1 \cdot e_0, \text{cte}_{e_0})$ to $(e_1', \alpha)$ in $H\pi$.
\end{proof}

\subsubsection{The proof of \texorpdfstring{\cref{proposition:main-homotopy-equivalence}}{the homotopy equivalence}}
For concreteness, we start by fixing basepoints. Let $[\cL_0] \in \Pic^\alpha(X)$, and let $s_0 \in \Gammacon(J^1\cP_{[\cL_0]} \setminus 0)$. We will use these as basepoints, as well as the images $[s_0] \in  \Gammacon(J^1\cP_{[\cL_0]} \setminus 0) / \bC^\times$ and $\bP s_0 \in \Gammacon^\alpha(\bP(J^1\cO_X))$.

Pointwise multiplication of maps gives $\Map(X, \bC^\times)$ the structure of a topological group. By \cite[Proposition~2.6]{crabb_function_1984}, there is a principal $\Map(X,\bC^\times)$-bundle:
\begin{equation}
    \Map(X,\bC^\times) \lra \Gammacon(J^1\cP_{[\cL_0]} \setminus 0) \lra \Gammacon^\alpha(\bP(J^1\cO_X)).
\end{equation}
There is also the subgroup $\bC^\times \subset \Map(X, \bC^\times)$ of the constant functions, and modding out fibrewise gives a principal bundle:
\begin{equation}\label{equation:principal-mapping-bundle}
    \Map(X,\bC^\times) / \bC^\times \lra \Gammacon(J^1\cP_{[\cL_0]} \setminus 0) / \bC^\times \lra \Gammacon^\alpha(\bP(J^1\cO_X)).
\end{equation}
We obtain a commutative diagram of pointed spaces where each row is a fibration sequence
\begin{equation}\label{equation:morphism-of-fibrations-hofibres}
% https://tikzcd.yichuanshen.de/#N4Igdg9gJgpgziAXAbVABwnAlgFyxMJZARgBoAGAXVJADcBDAGwFcYkQAdDxmAMxwAUAAi4BxegFsJ9AMYEBAKQB6xLjIAKAfWDI1AGU1UAviI5wYOCVjDM4Q8gEoA9FwBGAYSVc8E+KVNCyHCGlKYATlgA5gAWOA4gRqTomLj4hCgATBTUdEys7Fw8-MJiktJyYLxYrooqmmimGppcTGjR9Kbmlta29g5CTqYeXhw+fgFBIeFRsfGJydh4BERkxDkMLGyInNx8gqbiUrLyyqocTTr6ISZcXVY2do4uHMPeWL5w-lyBwVTTMXEEkkQBhFmkiFk1jQNvltoU9iUOIdygQRq12gI3OpamcZAB5TQADQcDi+HCG6iEv3+syBC1SyxQ5Gy0LyWxAADFNMQ6SCUkt0shmVDcpt2FyMgkcjAoJF4ERQLwwhAJEhmSAcBAkFlRbCbIxGDRGPRXDBGOp+eDtkUcCAaNEYPQoOxIGA2EbrOyoBBmK4eLylSq1TRNUgyLqtvrDSBjabzZbGTG9gHlarEOHQ4gAMyssWIKNGk1mi1gxM2lNBxA6zM5iNIAsxovx0vpJP8CtpgAsIa1iHI8xAgbTAFYe9qB0OkN2Nb3h0ZKEYgA
\begin{tikzcd}
F_1 \arrow[r] \arrow[d] & {\left( \Gammacon(J^1\cP_{[\cL_0]} \setminus 0)/\bC^\times, \ [s_0] \right)} \arrow[d, equal] \arrow[r] & {\left( \Gammaconfib(J^1_p \cP_\alpha \setminus 0) / \bC^\times, \ [s_0] \right)} \arrow[d] \\
F_2 \arrow[r]           & {\left( \Gammacon(J^1\cP_{[\cL_0]} \setminus 0)/\bC^\times, \ [s_0] \right)} \arrow[r]                                & {\left( \Gammacon^\alpha(\bP(J^1\cO_X)), \ \bP s_0 \right)}                                
\end{tikzcd}
\end{equation}
and the spaces $F_1$ and $F_2$ are defined as the respective homotopy fibres, using the models recalled in the previous section. Using the 5-lemma and the long exact sequence of homotopy groups associated to a fibration, \cref{proposition:main-homotopy-equivalence} follows directly from the next lemma.
\begin{lemma}
Using the notation as above, the map induced on the homotopy fibres $F_1 \to F_2$ is a homotopy equivalence.
\end{lemma}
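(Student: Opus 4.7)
My strategy is to show that both $F_1$ and $F_2$ are weakly equivalent to the discrete abelian group $H^1(X;\bZ)$, and then verify that the induced map on $\pi_0$ is a bijection.

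First, I identify the homotopy types. By \cref{lemma:projection-fibrewise-sections-pic-fibre-bundle} the projection $\Gammaconfib(J^1_p\cP_\alpha \setminus 0)/\bC^\times \to \Pic^\alpha(X)$ is a fibre bundle with fibre $\Gammacon(J^1\cP_{[\cL_0]}\setminus 0)/\bC^\times$ at $[\cL_0]$, so $F_1 \simeq \Omega_{[\cL_0]}\Pic^\alpha(X)$. Since $\Pic^\alpha(X)$ is a complex torus, hence $K(H^1(X;\bZ),1)$, we obtain $F_1 \simeq H^1(X;\bZ)$ (discrete). On the other hand, the principal bundle~\eqref{equation:principal-mapping-bundle} gives $F_2 \simeq \Map(X,\bC^\times)/\bC^\times$. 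Using $\bC^\times \simeq S^1$ and the connectedness of $X$, the standard decomposition $\Map(X,S^1) \simeq S^1 \times \Map_*(X,S^1)$ with $\pi_i \Map_*(X,K(\bZ,1)) \cong \tilde H^{1-i}(X;\bZ)$ shows that $\Map(X,\bC^\times)/\bC^\times$ is homotopically discrete with $\pi_0 \cong H^1(X;\bZ)$.

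It then suffices to verify that $F_1 \to F_2$ induces a bijection on $\pi_0 \cong H^1(X;\bZ)$. To this end, I would trace a loop $\gamma$ in $\Pic^\alpha(X)$ based at $[\cL_0]$ through the construction: by \cref{lemma:bijection-pi0-lifting-paths}, its class in $\pi_0 F_1$ is represented by $(\tilde\gamma(1), \tilde\gamma)$ for any lift $\tilde\gamma$ of $\gamma$ starting at $[s_0]$. Trivialising the pullback $\gamma^*\cP_\alpha$ on $[0,1] \times X$ compatibly with the starting identification, the monodromy is realised as multiplication by a non-vanishing function $f_\gamma \colon X \to \bC^\times$; its class in $[X,\bC^\times] \cong H^1(X;\bZ)$ coincides with the image of $[\gamma]$ under $\pi_1\Pic^\alpha(X) \cong H^1(X;\bZ)$. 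Applying \cref{lemma:connected-components-fibre-principal-bundle} to the image $(\tilde\gamma(1), \bP\tilde\gamma) \in F_2$, and choosing as lift of $\bP\tilde\gamma$ the path obtained from the same trivialisation, the comparison element $g_1 \in \Map(X,\bC^\times)/\bC^\times$ is $f_\gamma^{\pm 1}$, so the induced map on $\pi_0$ is $\pm\,\mathrm{id}$ on $H^1(X;\bZ)$—a bijection.

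The main technical obstacle is keeping the two natural identifications of $H^1(X;\bZ)$ aligned—one via monodromy of the Poincaré bundle on the Picard torus, the other via $\pi_0 \Map(X,\bC^\times)/\bC^\times$—and tracking the choices of lifts required by \cref{lemma:bijection-pi0-lifting-paths,lemma:connected-components-fibre-principal-bundle}. A potential sign discrepancy in the final formula $[g_1] = \pm [f_\gamma]$ is harmless, as negation is an automorphism of $H^1(X;\bZ)$.
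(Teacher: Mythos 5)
Your proof takes a genuinely different route from the paper's. Both begin the same way: identify $F_1 \simeq \Omega_{[\cL_0]}\Pic^\alpha(X)$ and $F_2 \simeq \Map(X,\bC^\times)/\bC^\times$, observe both are homotopy discrete with $\pi_0 \cong H^1(X;\bZ)$, and reduce to a $\pi_0$-bijection. After that the strategies diverge. The paper shows the composite $\pi_0(\Omega\Pic^\alpha(X)) \to \pi_0(\Map(X,\bC^\times)/\bC^\times)$ is surjective and compatible with the group structures (loop concatenation on the source, pointwise multiplication on the target), and then concludes via the purely algebraic fact that a surjective endomorphism of the finitely generated free abelian group $H^1(X;\bZ)$ is automatically an isomorphism. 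You instead try to exhibit the map \emph{explicitly} as $\pm\mathrm{id}$ by tracking the monodromy of the Poincar\'e bundle around a loop $\gamma$.

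There is, however, a genuine gap in your version. The entire weight of the argument rests on the assertion that, after topologically trivialising $\gamma^*\cP_\alpha$ over $[0,1]\times X$, the resulting monodromy function $f_\gamma \in \Map(X,\bC^\times)$ has homotopy class in $[X,\bC^\times]\cong H^1(X;\bZ)$ equal to the class of $[\gamma]$ under the identification $\pi_1\Pic^\alpha(X)\cong H^1(X;\bZ)$. This is not a bookkeeping matter of ``keeping the two natural identifications aligned''; it is a substantive statement about the Poincar\'e bundle. It amounts to saying that the Künneth $(1,1)$-component of $c_1(\cP_\alpha)\in H^2(\Pic^\alpha(X)\times X;\bZ)$, viewed as a homomorphism $H_1(\Pic^\alpha(X);\bZ)\to H^1(X;\bZ)$, is an isomorphism. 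That requires either invoking the universal property of $\cP$ in a non-trivial way, or a direct computation via the Hodge-theoretic description $\Pic^\alpha(X)\cong H^1(X;\cO_X)/H^1(X;\bZ)$ together with the structure of the Poincar\'e bundle on a complex torus and its dual. As stated, you have replaced the lemma you are trying to prove by a fact about $\cP$ of comparable depth, without establishing it. The paper's route is designed precisely to sidestep this: it needs only surjectivity (produced by an explicit element) and the group-homomorphism property (checked by unwinding loop concatenation against path lifting), never the finer claim that the map is $\pm\mathrm{id}$.

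If you wish to pursue your route, you should add a proof of the monodromy claim. One viable argument: reduce to the case $\alpha=0$ by tensoring with a fixed $\cP_{[\cL_0]}^{-1}$, pull back the Poincar\'e bundle on $\Pic^0(X)\times \mathrm{Alb}(X)$ along $\mathrm{id}\times\mathrm{alb}_X$, and use that the Albanese map induces an isomorphism on $H^1$ together with the known $c_1$ of the Poincar\'e bundle on a torus times its dual. Alternatively, show injectivity directly: a loop with nullhomotopic monodromy would yield a family of line bundles over $S^1$ that is topologically constant, and upper-semicontinuity plus the universal property forces the loop to be nullhomotopic in $\Pic^\alpha(X)$.
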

\begin{proof}
We already know that $F_1 \simeq \Omega_{[\cL_0]} \Pic^\alpha(X)$ (cf. \cref{lemma:projection-fibrewise-sections-pic-fibre-bundle}) and $F_2 \simeq \Map(X, \bC^\times) / \bC^\times$ (cf. \cref{equation:principal-mapping-bundle}), which are both homotopy equivalent to the discrete space $H^1(X;\bZ)$. Therefore we only need to verify that the map $F_1 \to F_2$ induces a bijection on the set of connected components. We have a diagram of sets
\begin{center}
% https://tikzcd.yichuanshen.de/#N4Igdg9gJgpgziAXAbVABwnAlgFyxMJZABgBpiBdUkANwEMAbAVxiRAB120sB9YgAk4AjLAHMAFIPYB5ALYxRdHsGScAxgBk+FAL5SACljUA9TozQALOuIAaASikjRdkDtLpMufIRQBGclS0jCxsnNx84gBiPL4ubh7YeARE-r6B9MysiBxcvMRRPABMce4gGIneRGRp1Bkh2WF5jmKSnACydGi2pMIAwqbsePJwDgD0jv2cQ-DNzq6BMFCi8ESgAGYAThCySADM1DgQSIW1wVk5agSiINQMdEIwDPqeST4gDDBrOK6lm9tI-hAh2O8RAfx2iDIQKOiEBdXO6iuN3e90ezwqyWyGzEFm+oPBSChwMQ+3eWDA5ygdDgFkW8x0QA
\begin{tikzcd}
{\pi_0 \big( \Omega_{[\cL_0]} \Pic^\alpha(X) \big)} \arrow[r, "\cong"', "1"{circled, yshift=2pt}] \arrow[d, dashed] & \pi_0(F_1) \arrow[d, "2"{circled, xshift=3pt}] \\
{\pi_0 \big( \Map(X,\bC^\times) / \bC^\times \big)} \arrow[r, "\cong", "3"{circled, swap, yshift=-2pt}]                    & \pi_0(F_2)          
\end{tikzcd}
\end{center}
where the right vertical map is induced from $F_1 \to F_2$, the top map \circlednum{1} is explained in \cref{lemma:bijection-pi0-lifting-paths}, the bottom map \circlednum{3} is induced by the inclusion of the fibre inside the homotopy fibre, and the dotted arrow is defined by composition. It suffices to show that this last arrow is a bijection.

To do so, we go through the composition and use the explicit descriptions of the maps recalled in the last section. Let $\gamma \in \Omega_{[\cL_0]} \Pic^\alpha(X)$ be a loop. Using the fibre bundle of \cref{lemma:projection-fibrewise-sections-pic-fibre-bundle}
\[
    \Gammaconfib(J^1_p\cP_\alpha \setminus 0) / \bC^\times \lra \Pic^\alpha(X)
\]
we choose a lift to a path
\[
    \alpha \colon [0,1] \to \Gammaconfib(J^1_p \cP_\alpha \setminus 0) / \bC^\times
\]
starting at $[s_0]$ and ending at some $[s_1']$. The map \circlednum{1} is given according to \cref{lemma:bijection-pi0-lifting-paths} by
\[
    \pi_0 \big( \Omega_{[\cL_0]} \Pic^\alpha(X) \big) \lra \pi_0(F_1), \quad [\gamma] \longmapsto [([s_1'], \alpha)].
\]
Write $\bP \alpha$ for the image of $\alpha$ in $\Gammacon^\alpha(\bP(J^1\cO_X))$. Applying \circlednum{2} then gives
\[
    [([s_1'], \alpha)] \longmapsto [([s_1'], \bP\alpha)].
\]
Using the fibration~\eqref{equation:principal-mapping-bundle}
\[
    \Gammacon(J^1\cP_{[\cL_0]} \setminus 0)/\bC^\times \lra \Gammacon^\alpha(\bP(J^1\cO_X))
\]
we lift the path $\bP\alpha$ to a path $\beta$ in $\Gammacon(J^1\cP_{[\cL_0]} \setminus 0)/\bC^\times$ starting at $[s_0]$ and ending at some point $[s_1]$. Using the principal bundle structure of~\eqref{equation:principal-mapping-bundle}, there is a unique class of a map $[\varphi_1] \in \Map(X,\bC^\times) / \bC^\times$ such that $[\varphi_1 \cdot s_1] = [s_1']$. By \cref{lemma:connected-components-fibre-principal-bundle}
\[
    [([s_1'], \bP\alpha)] = [([\varphi_1 \cdot s_1], \bP \alpha)] = [([\varphi_1 \cdot s_0], \bP \text{cte}_{s_0})].
\]
From the homotopy equivalence given by the inclusion of the strict fibre inside the homotopy fibre of the bundle~\eqref{equation:principal-mapping-bundle}:
\[
    \Map(X, \bC^\times) / \bC^\times \overset{\simeq}{\lra} F_2, \quad [\varphi] \longmapsto [([\varphi \cdot s_0], \bP \text{cte}_{s_0})]
\]
we see that the inverse of the map \circlednum{3} then sends
\[
    [([\varphi_1 \cdot s_0], \bP \text{cte}_{s_0})] \longmapsto [\varphi_1].
\]
We see first that it is surjective: indeed the composition of \circlednum{2} and the inverse of \circlednum{3} is surjective as any $[\varphi]$ is the image of $[([\varphi \cdot s_0], \text{cte}_{[s_0]})]$. Secondly we check that it is compatible with the group structures: on the source given by composition of loops, and on the target given by multiplication of maps. This is enough to finish the proof as both groups are isomorphic to $H^1(X;\bZ)$, so that any surjective morphism is fact an isomorphism.

We check the compatibility with the group structures using the notations as above. Let $\gamma_1, \gamma_2 \in \Omega_{[\cL_0]} \Pic^\alpha(X)$ be two loops. As above, choose lifts $\alpha_1$ and $\alpha_2$ starting at $[s_0]$ and ending at $[s_1']$ and $[s_2']$ respectively. Then lift $\bP\alpha_1$ and $\bP\alpha_2$ to paths $\beta_1, \beta_2$ starting at $[s_0]$ and ending at $[s_1]$ and $[s_2]$ respectively. Write $[\varphi_1\cdot s_1] = [s_1']$ and $[\varphi_2 \cdot s_2] = [s_2']$, for unique $[\varphi_1], [\varphi_2]$. As we have shown above, the composition of \circlednum{1}, \circlednum{2} and the inverse of \circlednum{3} maps
\[
    [\gamma_1] \longmapsto [\varphi_1], \quad [\gamma_2] \longmapsto [\varphi_2].
\]
Let us now consider the concatenation of loops $\gamma_1 \ast \gamma_2$. First, observe that there exists a unique $[\varphi]$ such that $[\varphi \cdot s_0] = [s_1']$. Therefore the loop $\gamma_1 \ast \gamma_2$ can be lifted to the path $\alpha_3 := \alpha_1 \ast (\varphi \cdot \alpha_2)$ ending at $[s_3'] := [\varphi \cdot s_2']$. Then we may lift $\bP\alpha_3 := \bP(\alpha_1 \ast (\varphi\cdot\alpha_2)) = \bP(\alpha_1 \ast \alpha_2)$ to the path $\beta_1 \ast (\varphi_1^{-1}\cdot\varphi\cdot\beta_2)$ ending at $[s_3]$. We write $[\varphi_3\cdot s_3] = [s_3']$ for a unique $[\varphi_3]$. Once again, the composition of \circlednum{1}, \circlednum{2} and the inverse of \circlednum{3} maps
\[
    [\gamma_1 \ast \gamma_2] \longmapsto [\varphi_3].
\]
Now observe that, as $[s_3]$ is the end point of $\beta_1 \ast (\varphi_1^{-1}\cdot\varphi\cdot\beta_2)$, we have
\[
    [s_3] = [\varphi_1^{-1}\cdot\varphi\cdot s_2].
\]
Therefore
\[
    [\varphi_3\cdot s_3] = [\varphi_3\cdot\varphi_1^{-1}\cdot\varphi\cdot s_2].
\]
But $[\varphi_3\cdot s_3] = [s_3'] = [\varphi \cdot s_2'] = [\varphi\cdot\varphi_2\cdot s_2]$. By uniqueness
\[
    [\varphi_3\cdot\varphi_1^{-1}\cdot\varphi] = [\varphi\cdot\varphi_2],
\]
hence $[\varphi_3] = [\varphi_1 \cdot \varphi_2]$, which achieves the proof.
\end{proof}

\section{Rational computations and stability}\label{section:rational-homotopy}

In this part, we show how \cref{theorem:main-theorem} can be used to make explicit computations of the rational cohomology of $\Mhyp^\alpha$. Assuming that the underlying variety $X$ is topologically parallelisable, we will also exhibit a phenomenon of homological stability.

We will first recall a general strategy, dating back to Haefliger \cite{haefliger_rational_1982}, to compute the cohomology of continuous section spaces. In \cref{theorem:rational-cohomology-section-space-projective-bundle} below, we provide a commutative differential graded algebra (CDGA) computing the rational cohomology of the section space of the projective bundle. We hope that this will convince the reader that the homotopical approach taken in this paper may be useful in practical computations. We will freely use the notations and results from rational homotopy theory. A textbook account can be found in \cite{felix_rational_2001}. In particular, we write $\Lambda(-)$ for the free commutative graded algebra. We let $n$ be the complex dimension of $X$.

\subsection{Haefliger's tower of section spaces}

Although \cref{theorem:main-theorem} provides an integral homology isomorphism, we will mainly be interested in the rational cohomology groups for computational reasons. Fibrewise rationalisation (denoted $(-)_{f\bQ}$) yields a fibration
\begin{equation}\label{equation:fibrewise-rationalised-bundle}
    \bP^n_\bQ \lra \bP(J^1\cO_X)_{f\bQ} \lra X.
\end{equation}
By \cite[Theorem~5.3]{moller_nilpotent_1987}, the natural map $\bP(J^1\cO_X) \to \bP(J^1\cO_X)_{f\bQ}$ induces a map on section spaces
\[
    \Gammacon(\bP(J^1\cO_X)) \lra \Gammacon(\bP(J^1\cO_X)_{f\bQ})
\]
which is a rationalisation when restricted to a connected component on the source and target. (Beware the fact that the source has $H^2(X;\bZ)$ many connected components, while the target has $H^2(X;\bQ)$ many of them.) We apply the general strategy described in \cite[Section~1.3]{haefliger_rational_1982} to compute the rational homotopy type of the section space $\Gammacon(\bP(J^1\cO_X)_{f\bQ})$. The fibration~\eqref{equation:fibrewise-rationalised-bundle} admits a Moore--Postnikov decomposition of the form
\begin{center}
% https://tikzcd.yichuanshen.de/#N4Igdg9gJgpgziAXAbVABwnAlgFyxMJZARgBoAmAXVJADcBDAGwFcYkQBNAfQAYACALx8AGiAC+pdJlz5CKMsWp0mrdt2LjJIDNjwEiZHkoYs2iTl3J8AOtewBbGAEcb1gEYAFABQApAHrEtgDGAPJcwgCUXMAAZrZuAIpimlK6skQ8pEY0JqrmANJe8QkUYADUxBEp2tJ6csiZijkqZiCFxRRVEqky+ijkFMYt7O3uJQDMXVo6vfUDTcqmI0VjpWXkXUowUADm8ESgMQBOEPZIZCA4EEiZi3naXBrdIMenSAAsNFfnz69niANLtdEBdcq00JZqn8kOMvsDyL8Tv9bt9EABWZpLcwAa14UKR5zhSAAbJj7rinpQxEA
\begin{tikzcd}
{K(\bQ,2n+1)} \arrow[r] & Y_2 \simeq \bP(J^1\cO_X)_{f\bQ} \arrow[d, "p_2"] &               \\
{K(\bQ,2)} \arrow[r]    & Y_1 \arrow[d, "p_1"] \arrow[r, "k_1"]            & {K(\bQ,2n+2)} \\
                        & Y_0 = X \arrow[r, "k_0"]                         & {K(\bQ,3)}   
\end{tikzcd}
\end{center}
where each $p_i \colon Y_i \to Y_{i-1}$ is a principal fibration classified by the $k$-invariant $k_{i-1}$. The latter were computed by Møller:
\begin{lemma}[{Compare \cite[Lemma~2.1]{moller_homology_1985}}]\label{lemma:moller-k-invariants}
The $k$-invariant $k_0$ is trivial. In particular $Y_1 \simeq X \times K(\bQ,2)$. Writing $z \in H^2(K(\bQ,2);\bQ)$ for the generator, $k_1$ corresponds to the cohomology class
\[
    \pushQED{\qed}
    \sum_{i=0}^{n+1} (-1)^i c_i(J^1\cO_X) \otimes z^{n+1-i} \in H^*(X;\bQ) \otimes H^*(K(\bQ,2);\bQ). \qedhere
    \popQED
\]
\end{lemma}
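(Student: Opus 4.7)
The cleanest route is via relative minimal Sullivan models of fibrations, which compute rational Moore--Postnikov towers algebraically. Recall that the rational homotopy type of $\bP^n_\bQ$ is given by the Sullivan algebra $(\Lambda(z,w), d)$ with $|z|=2$, $|w|=2n+1$, $dz=0$ and $dw = z^{n+1}$. A standard construction (twisted by the Chern classes of the bundle) produces a relative Sullivan model for the projective bundle of a rank $n+1$ complex vector bundle $E \to X$, namely
\[
\bigl( \APL(X) \otimes \Lambda(z,w),\ d\bigr) \quad \text{with} \quad dz = 0,\ \ dw = \sum_{i=0}^{n+1} (-1)^i c_i(E) \, z^{n+1-i},
\]
where $c_0 = 1$. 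One verifies this is a model by checking it has the correct cohomology via Leray--Hirsch: $H^*(\bP(E);\bQ) \cong H^*(X;\bQ)[z] / \bigl( \sum_{i=0}^{n+1} (-1)^i c_i(E) z^{n+1-i} \bigr)$, and that it is a Koszul--Sullivan extension of $\APL(X)$.

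\textbf{Reading off the tower.} A relative Sullivan algebra built by adjoining generators in order of ascending degree corresponds directly to the rational Moore--Postnikov decomposition of the associated fibration: each new generator $v$ in degree $m$ contributes a principal fibration with fiber $K(\bQ,m)$, classified by the $k$-invariant $[dv]$ in $H^{m+1}$ of the previous stage. Applied to the model above for $E = J^1\cO_X$, adjoining $z$ with $dz = 0$ gives $k_0 = 0 \in H^3(X;\bQ)$, so $Y_1 \simeq X \times K(\bQ,2)$. Adjoining $w$ then yields $Y_2 \simeq \bP(J^1\cO_X)_{f\bQ}$ and $k_1 \in H^{2n+2}(X \times K(\bQ,2);\bQ)$ is read off as $[dw]$, which is precisely
\[
\sum_{i=0}^{n+1} (-1)^i c_i(J^1\cO_X) \otimes z^{n+1-i},
\]
as claimed.

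\textbf{Main obstacle.} The only real work is justifying that the displayed CDGA is indeed a Sullivan model of $\bP(J^1\cO_X)$. One natural approach is to pull it back from the universal rank $(n+1)$ projective bundle $\bP(\gamma_{n+1}) \to BU(n+1)$, for which the Sullivan model is classical, along the classifying map of $J^1\cO_X$; the relative model then transfers by naturality. Alternatively, one can argue directly via the Serre spectral sequence, using that the class $z$ lifts globally to $c_1(\cO_{\bP(J^1\cO_X)}(1))$ (so all its differentials vanish, which both gives $k_0 = 0$ geometrically and forces the relative model to have $dz=0$) and that the Leray--Hirsch relation forces the prescribed form of $dw$. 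The remaining delicate point is simply to keep the sign conventions consistent between the definition of Chern classes, the Leray--Hirsch relation, and the expression for $k_1$.
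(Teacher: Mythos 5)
The paper itself does not prove this lemma: it closes the statement with a QED and cites Møller's Lemma~2.1, so there is no in-paper argument to compare against. Your proof via the relative Sullivan model of a projective bundle, combined with the standard dictionary between relative Sullivan algebras (generators adjoined in increasing degree) and rational Moore--Postnikov towers, is correct and gives a clean derivation. The two essential inputs you identify are the right ones: $z$ lifts globally to $c_1(\cO_{\bP(J^1\cO_X)}(1))$, forcing $dz=0$ and hence $k_0=0$, and $dw$ must encode the Grothendieck relation for the projective bundle, yielding exactly $\sum_{i}(-1)^i c_i(J^1\cO_X)\, z^{n+1-i}$. Your sign convention is consistent with taking $z=c_1(\cO(1))$ rather than $c_1(\cO(-1))$, matching the paper's formula. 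What you call the ``main obstacle'' --- justifying that the displayed CDGA is a relative Sullivan model --- is genuinely the only thing to check; either of your two sketches (pullback from $\bP(\gamma_{n+1})\to BU(n+1)$, or a direct Leray--Hirsch argument) completes it, and the required nilpotency of the fibration (trivial monodromy of $\pi_1(X)$ on $H^*(\bP^n;\bQ)$) is implicit in the fact that $z$ extends to a global class. One small caveat worth stating explicitly in a final write-up: producing the relative model requires choosing a CDGA map $\APL(X)\otimes\Lambda(z,w)\to \APL(\bP(J^1\cO_X))$ realizing it (send $z$ to a closed form representing $c_1(\cO(1))$ and $w$ to a primitive of the form representing the Grothendieck relation), rather than just checking cohomology agrees as graded algebras.
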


Let $s \in \Gammacon^\alpha(\bP(J^1\cO_X)_{f\bQ})$ with $\alpha \in H^2(X;\bQ)$. The map $p_2 \circ s$ is a section of $Y_1 \to X$, and we denote by $\Gamma_1 \subset \Gammacon(Y_1 \to X)$ its connected component. As $k_0$ is trivial by \cref{lemma:moller-k-invariants}, there is a homotopy equivalence
\[
    \Gammacon(Y_1 \to X) \simeq \Map(X, K(\bQ,2)) \simeq K(\bQ,2) \times K(H^1(X;\bQ),1) \times H^2(X;\bQ).
\]
and $\Gamma_1$ corresponds to the connected component indexed by $\alpha$.

\begin{lemma}[{Compare \cite[Lemma~2.2]{moller_homology_1985}}]\label{lemma:moller-pullback-generator}
Let $\Psi$ be the composite
\begin{equation}\label{equation:composite-evaluation-k-invariant}
    \Psi \colon K(\bQ,2) \times K(H^1(X;\bQ),1) \times X \simeq \Gamma_1 \times X \overset{\mathrm{ev}}{\lra} Y_1 \overset{k_1}{\lra} K(\bQ,2n+2).
\end{equation}
Let $z \in H^2(K(\bQ,2);\bQ)$ be the generator. Let $\{x_j\}$ be a basis of $H^1(X;\bZ)$ and let $\{x_j'\}$ be the dual basis of $H^1(K(H^1(X;\bZ),1);\bZ) \cong H^1(X;\bZ)^\vee$. The morphism induced in cohomology $\Psi^*$ sends the generator $\chi \in H^{2n+2}(K(\bQ,2n+2);\bQ)$ to the class:
\[
    \pushQED{\qed}
    \Psi^*(\chi) = \sum_{i=0}^{n+1} (-1)^i \bigg(1\otimes 1 \otimes c_i(J^1\cO_X)\bigg) \cup \bigg(z \otimes 1 \otimes 1 + 1 \otimes 1 \otimes \alpha + \sum_j 1 \otimes x_j' \otimes x_j\bigg)^{n+1-i}. \qedhere
    \popQED
\]
\end{lemma}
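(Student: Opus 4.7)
The composite $\Psi$ factors as $(k_1) \circ \mathrm{ev}$, so by \cref{lemma:moller-k-invariants} and the identification $Y_1 \simeq X \times K(\bQ,2)$ we have
\[
\Psi^*(\chi) = \mathrm{ev}^*(k_1^*(\chi)) = \mathrm{ev}^*\!\left(\sum_{i=0}^{n+1} (-1)^i c_i(J^1\cO_X) \otimes z^{n+1-i}\right).
\]
The composition $\Gamma_1 \times X \xrightarrow{\mathrm{ev}} Y_1 \simeq X \times K(\bQ,2) \to X$ is simply the projection onto the second factor of the source, so each $c_i(J^1\cO_X) \otimes 1$ pulls back to $1 \otimes 1 \otimes c_i(J^1\cO_X)$ under the Künneth decomposition of $H^*(\Gamma_1 \times X; \bQ)$. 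It therefore remains to identify $\mathrm{ev}^*(z) \in H^2(\Gamma_1 \times X; \bQ)$ and expand the resulting powers.

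I will recover $\mathrm{ev}^*(z)$ from its three Künneth components in $H^a(\Gamma_1;\bQ) \otimes H^b(X;\bQ)$ with $a+b = 2$. Restricting $\mathrm{ev}$ to $\{s\} \times X$ for any $s \in \Gamma_1$ yields the section $s$ itself, whose pullback $s^*(z) = \alpha$ is the class indexing the component; this contributes the $H^0 \otimes H^2$-part $1 \otimes 1 \otimes \alpha$. Dually, restricting $\mathrm{ev}$ to $\Gamma_1 \times \{x_0\}$ for a basepoint $x_0 \in X$ is the evaluation-at-$x_0$ map, which factors through the first factor of $\Gamma_1 \simeq K(\bQ,2) \times K(H^1(X;\bQ),1)$ by the standard decomposition of the component $\Map(X, K(\bQ,2))_\alpha$; this contributes $z \otimes 1 \otimes 1$. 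The remaining $H^1(\Gamma_1) \otimes H^1(X)$-component detects the twist in the mapping space: under the canonical isomorphism $\pi_1(\Map(X, K(\bQ,2))_\alpha) \cong H^1(X;\bQ)$ identifying the factor $K(H^1(X;\bQ),1) \subset \Gamma_1$, the dual basis $\{x_j'\}$ is set up precisely so that this term is $\sum_j 1 \otimes x_j' \otimes x_j$. Summing the three contributions yields
\[
\mathrm{ev}^*(z) = z \otimes 1 \otimes 1 + 1 \otimes 1 \otimes \alpha + \sum_j 1 \otimes x_j' \otimes x_j,
\]
from which the claimed formula for $\Psi^*(\chi)$ follows by expansion.

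The main obstacle is justifying the $H^1 \otimes H^1$ cross term. The cleanest route is via the adjunction: $\Map(X, K(\bQ,2))$ represents $Y \mapsto H^2(Y \times X; \bQ)$, and $\mathrm{ev}^*(z)$ is the tautological class. Its total Künneth expansion is forced by naturality together with the two restriction calculations above; the $(1,1)$-piece is the Kronecker pairing $H^1(X;\bQ)^\vee \otimes H^1(X;\bQ) \to \bQ$, which, after choosing the basis $\{x_j\}$ and its dual $\{x_j'\}$, takes exactly the stated form. Once $\mathrm{ev}^*(z)$ is pinned down, raising it to the $(n+1-i)$-th power and multiplying by $1 \otimes 1 \otimes c_i(J^1\cO_X)$, then summing with signs, gives the formula in the statement.
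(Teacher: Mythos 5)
The paper itself does not prove this lemma: the inline \(\qed\) after the displayed formula signals that the statement is quoted from Møller's Lemma~2.2 (the bracketed citation) and left unproved. Your reconstruction is therefore an independent argument, and in outline it is the right one: reduce via \cref{lemma:moller-k-invariants} to computing \(\mathrm{ev}^*(1 \otimes z)\), pull the Chern classes straight through, and read off the three Künneth components of the tautological class. The \((2,0)\) and \((0,2)\) pieces are correctly identified by restriction to \(\Gamma_1 \times \{x_0\}\) and \(\{s\} \times X\). The one place you are too quick is the \((1,1)\) cross term: the phrase ``forced by naturality together with the two restriction calculations'' is misleading, because both of those restrictions annihilate the \(H^1(\Gamma_1) \otimes H^1(X)\) summand and so say nothing about it. What you actually need is a separate argument that this component of the evaluation class is the canonical copairing \(\sum_j x_j' \otimes x_j\). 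This is true, and it is exactly what Møller's Thom-type splitting \(\Map(X, K(\bQ,2)) \simeq K(\bQ,2) \times K(H^1(X;\bQ),1) \times H^2(X;\bQ)\) is designed to record, but it should be justified directly --- for instance by pairing \(\mathrm{ev}^*(z)\) against tori \(\gamma \times \delta\) with \(\gamma \in \pi_1(\Gamma_1) \cong H^1(X;\bQ)\) and \(\delta \in H_1(X)\), or by unwinding the adjunction \([\Gamma_1, \Map(X,K(\bQ,2))] \cong H^2(\Gamma_1 \times X;\bQ)\) against the inclusion of the \(K(H^1(X;\bQ),1)\) factor. With that step filled in, your proof is complete and matches what the cited reference does.
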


Let $\overline{k_1} \colon \Gamma_1 \to K(\bQ,2n+2)^X$ be the adjoint of the map~\eqref{equation:composite-evaluation-k-invariant}. There is a homotopy equivalence (see \cite[Section~1]{haefliger_rational_1982})
\begin{equation}\label{equation:mapping-space-to-EM-product}
    K(\bQ,2n+2)^X \simeq \prod_{i=2}^{2n+2} K(H^{2n+2-i}(X;\bQ), i).
\end{equation}
\begin{lemma}[Compare \cite{haefliger_rational_1982}]\label{lemma:haefliger-adjoint-map-in-cohomology}
Let $\varphi_i$ be the map to the $i$-th factor of the product:
\[
    \varphi_i \colon \Gamma_1 \overset{\overline{k_1}}{\lra} K(\bQ,2n+2)^X \lra K(H^{2n+2-i}(X;\bQ),i).
\]
The morphism induced in cohomology is given explicitly by:
\begin{align*}
    \varphi_i^* \colon H^{2n+2-i}(X;\bQ)^\vee \cong H^i(K(H^{2n+2-i}(X;\bQ);\bQ) &\lra H^i(\Gamma_1;\bQ) \\
    y' &\longmapsto y' \cap \Psi^*(\chi).
\end{align*}
Here, for $w \otimes y \in H^*(\Gamma_1) \otimes H^*(X)$ and $y' \in H^*(X)^\vee$, we write $y' \cap (w \otimes y) = y'(y) w$. \qed
\end{lemma}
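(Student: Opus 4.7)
The plan is to identify the homotopy equivalence \eqref{equation:mapping-space-to-EM-product} with the slant-product decomposition of the universal evaluation class, then apply naturality along $\overline{k_1}$.

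First I would unpack \eqref{equation:mapping-space-to-EM-product} explicitly. Under the evaluation map $\mathrm{ev} \colon K(\bQ,2n+2)^X \times X \to K(\bQ,2n+2)$, the fundamental class $\chi$ pulls back to $\mathrm{ev}^*(\chi) \in H^{2n+2}(K(\bQ,2n+2)^X \times X;\bQ)$, which the rational Künneth theorem (applicable since $X$ has finite-dimensional rational cohomology) decomposes as
\[
    \mathrm{ev}^*(\chi) = \sum_{i,\lambda} u^{(i)}_\lambda \otimes \xi^{(i)}_\lambda,
\]
for a fixed basis $\{\xi^{(i)}_\lambda\}_\lambda$ of $H^{2n+2-i}(X;\bQ)$ and classes $u^{(i)}_\lambda \in H^i(K(\bQ,2n+2)^X;\bQ)$. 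Checking on homotopy groups (or equivalently on Postnikov sections, as $K(\bQ,2n+2)^X$ is a product of rational Eilenberg--MacLane spaces because all $k$-invariants vanish rationally), one sees that under \eqref{equation:mapping-space-to-EM-product} the projection $\pi_i \colon K(\bQ,2n+2)^X \to K(H^{2n+2-i}(X;\bQ), i)$ pulls back the element of $H^i(K(H^{2n+2-i}(X;\bQ),i);\bQ) \cong H^{2n+2-i}(X;\bQ)^\vee$ dual to $\xi^{(i)}_\lambda$ to exactly $u^{(i)}_\lambda$.

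Since $\overline{k_1}$ is by definition the adjoint of $\Psi$, naturality of pullback along $\overline{k_1} \times \mathrm{id}_X$ gives $\Psi^*(\chi) = (\overline{k_1} \times \mathrm{id}_X)^* \mathrm{ev}^*(\chi)$. Writing the Künneth decomposition $\Psi^*(\chi) = \sum_{i,\lambda} w^{(i)}_\lambda \otimes \xi^{(i)}_\lambda$, we therefore have $w^{(i)}_\lambda = \overline{k_1}^*(u^{(i)}_\lambda)$. Because $\varphi_i = \pi_i \circ \overline{k_1}$, the pullback $\varphi_i^*$ sends the dual basis element $(\xi^{(i)}_\lambda)^\vee \in H^{2n+2-i}(X;\bQ)^\vee$ to $w^{(i)}_\lambda$, which is precisely $(\xi^{(i)}_\lambda)^\vee \cap \Psi^*(\chi)$ in the notation of the lemma. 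Extending linearly in $y'$ recovers the formula $\varphi_i^*(y') = y' \cap \Psi^*(\chi)$.

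The only non-formal ingredient is the identification of the factors in \eqref{equation:mapping-space-to-EM-product} with the Künneth slant components of the universal evaluation class; this is classical (and used, in exactly this form, by Haefliger). Once it is spelled out, the remainder of the argument is an immediate consequence of the naturality of pullback along $\overline{k_1} \times \mathrm{id}_X$.
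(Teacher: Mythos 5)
Your argument is correct and is exactly the standard Haefliger argument that the paper is citing (the paper omits the proof, marking the lemma \qed with a reference to Haefliger). The key point, that the homotopy equivalence~\eqref{equation:mapping-space-to-EM-product} is the one for which the projection to the $i$-th factor corresponds to the $i$-th K\"unneth component of the evaluation class, is precisely how Haefliger fixes that equivalence, so there is no ambiguity to worry about; once that identification is in place, the rest is, as you say, naturality of pullback along $\overline{k_1}\times\mathrm{id}_X$.

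One small remark: the justification you give for $K(\bQ,2n+2)^X$ splitting as a product of Eilenberg--MacLane spaces (``all $k$-invariants vanish rationally'') is a bit circular as phrased. The cleanest reason is that $K(\bQ,m)$ can be modelled as a topological abelian group, hence so can $K(\bQ,m)^X$, and a topological abelian group is always a product of Eilenberg--MacLane spaces. Alternatively one can argue as you indicate via adjunction: a map $\Gamma \to K(\bQ,m)^X$ is a class in $H^m(\Gamma\times X;\bQ)$, which K\"unneth identifies with $\bigoplus_i H^i(\Gamma;H^{m-i}(X;\bQ))$, i.e.\ with $[\Gamma,\prod_i K(H^{m-i}(X;\bQ),i)]$, naturally in $\Gamma$, so the two functors are represented by weakly equivalent spaces. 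Either way the conclusion stands and the proof is complete.
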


\begin{proposition}[Compare \cite{haefliger_rational_1982}]\label{proposition:haefliger-fibration-of-section-spaces}
There is a fibration
\[
    K(\bQ,2n+1)^X \lra \Gammacon^\alpha(\bP(J^1\cO_X)_{f\bQ}) \lra \Gamma_1
\]
pulled back from the path space fibration over $K(\bQ,2n+2)^X$ via the map $\overline{k_1}$. \qed
\end{proposition}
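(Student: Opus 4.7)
The plan is to apply the continuous-section functor $\Gammacon(- \to X)$ to the homotopy pullback square witnessing that $p_2 \colon Y_2 \to Y_1$ is a principal $K(\bQ,2n+1)$-fibration classified by $k_1$. All four spaces in that square will sit over $X$, so the section-space functor is meaningful and will produce the asserted fibration.

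First I would record the defining homotopy pullback
\begin{center}
\begin{tikzcd}
Y_2 \arrow[r] \arrow[d, "p_2"'] & PK(\bQ,2n+2) \arrow[d] \\
Y_1 \arrow[r, "k_1"'] & K(\bQ,2n+2)
\end{tikzcd}
\end{center}
expressing $Y_2$ as the homotopy fibre of $k_1$, where $Y_1$ is fibred over $X$ via $p_1$ and the two Eilenberg--MacLane spaces are regarded as constant fibrations over $X$. Taking fibrewise sections over $X$ of this square then yields a commutative diagram
\begin{center}
\begin{tikzcd}
\Gammacon(Y_2 \to X) \arrow[r] \arrow[d] & \Map(X, PK(\bQ,2n+2)) \arrow[d] \\
\Gammacon(Y_1 \to X) \arrow[r, "\overline{k_1}"'] & \Map(X, K(\bQ,2n+2))
\end{tikzcd}
\end{center}
in which the bottom map is exactly the adjoint defined before the statement, by the construction of $\overline{k_1}$ from $k_1 \circ \mathrm{ev}$ in \cref{lemma:moller-pullback-generator}.

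The key step is to check that this new square is itself a homotopy pullback and that its right-hand column is a model for the path-space fibration over $K(\bQ,2n+2)^X$. The former follows from the fact that $\Gammacon(- \to X)$, viewed as a functor from Hurewicz fibrations over $X$ to spaces, takes strict pullbacks to strict pullbacks (as it is a right adjoint in an appropriate sense) and hence preserves homotopy pullbacks once $p_2$ and the right-hand vertical map are modelled as fibrations. The latter follows from the canonical identifications $\Map(X, PK(\bQ,2n+2)) \simeq P\bigl(K(\bQ,2n+2)^X\bigr)$ and $\Omega\bigl(K(\bQ,2n+2)^X\bigr) \simeq K(\bQ,2n+1)^X$. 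Restricting the square to the component $\Gamma_1 \subset \Gammacon(Y_1 \to X)$ and using $Y_2 \simeq \bP(J^1\cO_X)_{f\bQ}$ then yields a homotopy pullback whose upper-left corner is a disjoint union of components of $\Gammacon(\bP(J^1\cO_X)_{f\bQ} \to X)$. Restricting once more to the component $\Gammacon^\alpha(\bP(J^1\cO_X)_{f\bQ})$ produces the fibration in the statement.

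The main obstacle is the verification that $\Gammacon(- \to X)$ preserves homotopy pullbacks of fibrations over $X$, together with the component-tracking needed to isolate $\Gammacon^\alpha(\bP(J^1\cO_X)_{f\bQ})$ inside the pullback of $\Gamma_1$. Both are essentially formal once one commits to a rigid point-set model (Hurewicz fibrations and the standard path space, with $Y_2 \to Y_1$ replaced by a fibration if necessary), but the indexing of components on the total space side must be matched against the isomorphism $\pi_0(\Gammacon(Y_1 \to X)) \cong H^2(X;\bQ) \times H^1(X;\bQ) \times \bQ$ described just after \cref{lemma:moller-k-invariants} in order to single out the correct component indexed by $\alpha$.
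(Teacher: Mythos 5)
Your proposal is correct and follows the standard Haefliger argument, which the paper cites without giving its own proof. Applying the right-adjoint functor $\Gammacon(- \to X)$ to the strict pullback square defining $Y_2$, noting that post-composition $\Map(X, PK(\bQ,2n+2)) \to \Map(X, K(\bQ,2n+2))$ is a Hurewicz fibration (so the strict pullback of section spaces is a homotopy pullback), and invoking the exponential-law identifications $\Map(X, PK) \cong P(K^X)$ and $\Omega(K^X) \cong (\Omega K)^X$ is exactly the intended route.

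Two small inaccuracies worth fixing. First, $\pi_0\bigl(\Gammacon(Y_1 \to X)\bigr) \cong H^2(X;\bQ)$ only, not $H^2(X;\bQ) \times H^1(X;\bQ) \times \bQ$: in the splitting $\Gammacon(Y_1 \to X) \simeq K(\bQ,2) \times K(H^1(X;\bQ),1) \times H^2(X;\bQ)$ the first two factors are connected and contribute to $\pi_2$ and $\pi_1$, not $\pi_0$. Second, the final ``restrict once more'' step is unnecessary: since $X$ has real dimension $2n$, $H^{2n+1}(X;\bQ) = 0$, so the fibre $K(\bQ,2n+1)^X$ is already connected; hence the pullback over the connected base $\Gamma_1$ is a single path component (and it is nonempty because the existence of the original section $s$ shows $\overline{k_1}|_{\Gamma_1}$ is nullhomotopic), so it coincides with $\Gammacon^\alpha(\bP(J^1\cO_X)_{f\bQ})$ without any further trimming.
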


\begin{theorem}\label{theorem:rational-cohomology-section-space-projective-bundle}
Let $z$ and $\{x_j'\}$ be as in \cref{lemma:moller-pullback-generator}. Let $\{y_{ik}'\}$ be a basis of the rational cohomology of~\eqref{equation:mapping-space-to-EM-product} where $y_{ik}' \in H^{2n+2-i}(X;\bQ)^\vee$ is in degree $i$. The rational cohomology of $\Gammacon^\alpha(\bP(J^1\cO_X)_{f\bQ})$ is given by the cohomology of the following commutative differential graded algebra:
\[
    \Lambda \big(z, \ x_j', \ s^{-1}y_{ik}' \big), \quad d(z) = 0, \ d(x_j') = 0, \ d(s^{-1}y_{ik}') = \varphi_i^*(y_{ik}')
\]
where $z$ is in degree $2$, each $x_j'$ is in degree $1$, each $s^{-1}y_{ik}'$ is in degree $i-1$, and $\varphi_i^*$ is given as in \cref{lemma:haefliger-adjoint-map-in-cohomology}.
\end{theorem}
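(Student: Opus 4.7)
The plan is to build a Sullivan model for the fibration of \cref{proposition:haefliger-fibration-of-section-spaces} and then read off the cohomology. Since the fibration
\[
    K(\bQ,2n+1)^X \lra \Gammacon^\alpha(\bP(J^1\cO_X)_{f\bQ}) \lra \Gamma_1
\]
is obtained by pulling back the path-space fibration $PK(\bQ,2n+2)^X \to K(\bQ,2n+2)^X$ along $\overline{k_1} \colon \Gamma_1 \to K(\bQ,2n+2)^X$, a relative model for it is obtained from a relative model of the path-space fibration, tensored along $\overline{k_1}^*$. This is the standard Haefliger--Sullivan recipe.

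First I would identify the cohomology of the base $\Gamma_1$. Since $k_0$ is trivial by \cref{lemma:moller-k-invariants}, $\Gamma_1 \simeq K(\bQ,2) \times K(H^1(X;\bQ),1)$ up to the choice of path component, and its rational cohomology algebra is the free graded commutative algebra $\Lambda(z, x_j')$ where $z$ is the degree-$2$ generator pulled back from $K(\bQ,2)$ and the $x_j' \in H^1$ form the dual basis. In particular $\Gamma_1$ is formal and intrinsically its own minimal model.

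Next, note that the path-space fibration $K(\bQ,2n+1) \to PK(\bQ,2n+2) \to K(\bQ,2n+2)$ has the evident relative minimal model given by adjoining a generator in degree $2n+1$ which transgresses to the fundamental class $\chi$. Passing to mapping spaces out of $X$ and using the equivalence~\eqref{equation:mapping-space-to-EM-product}, the relative model of $PK(\bQ,2n+2)^X \to K(\bQ,2n+2)^X$ adjoins, to $\Lambda(y_{ik}')$, new generators $s^{-1}y_{ik}'$ of degree $i-1$ transgressing to $y_{ik}'$. Pulling back along $\overline{k_1}$ then gives the CDGA model
\[
    \Big(\Lambda(z, x_j') \otimes \Lambda(s^{-1}y_{ik}'),\ d\Big), \quad d(z) = 0,\ d(x_j') = 0,\ d(s^{-1}y_{ik}') = \overline{k_1}^*(y_{ik}') = \varphi_i^*(y_{ik}'),
\]
where $\varphi_i^*$ is given explicitly by \cref{lemma:haefliger-adjoint-map-in-cohomology}. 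Since the underlying graded algebra is free, the cohomology of this CDGA computes $H^*(\Gammacon^\alpha(\bP(J^1\cO_X)_{f\bQ});\bQ)$ directly, and \cref{theorem:main-theorem} then identifies it, in the stable range, with the rational cohomology of $\Mhyp^\alpha$.

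The only delicate point is justifying that the pullback construction really produces a valid (not necessarily minimal) Sullivan model of the total space. This follows because $\overline{k_1}$ is a map between simply-connected-in-the-relevant-sense nilpotent spaces of finite $\bQ$-type and the path-space fibration is principal, so the relative KS extension is preserved under the pullback of CDGA models; alternatively one may appeal directly to the Haefliger model \cite[Section~1.3]{haefliger_rational_1982}. The main bookkeeping obstacle is consistency of the degree and signs between the explicit formulas in \cref{lemma:moller-pullback-generator} and \cref{lemma:haefliger-adjoint-map-in-cohomology}, but once these are matched the theorem follows.
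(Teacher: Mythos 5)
Your proof is correct and takes essentially the same approach as the paper's: both rest on \cref{proposition:haefliger-fibration-of-section-spaces} and arrive at the same CDGA. The paper phrases the last step as an Eilenberg--Moore computation of $\Lambda(z,x_j') \otimes^{\mathbb{L}}_{\Lambda(y_{ik}')} \bQ$ using the cofibrant replacement $\Lambda(y_{ik}') \to \Lambda(s^{-1}y_{ik}', y_{ik}')$, which is precisely the relative Sullivan (KS) model you build for the pulled-back path-space fibration.
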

\begin{proof}
By \cref{proposition:haefliger-fibration-of-section-spaces}, there is a homotopy pullback square:
\begin{center}
% https://tikzcd.yichuanshen.de/#N4Igdg9gJgpgziAXAbVABwnAlgFyxMJZABgBpiBdUkANwEMAbAVxiRAB12BxOgW17oBjAgD1OjNAAs6ACk4AjAAoyAUiICMnQQHkA+gA0AlLuAAzBQEUAvoZBXS6TLnyEUZdVVqMWbTj350uup2DiAY2HgEROqkHtT0zKyIIADScuzyFqQATGAA1NmGIvohjhEu0eSeCT7JAFR2njBQAObwRKCmAE4QvEgxIDgQSNnx3kkc7BA0MF0MWGAwwADWQVYg1Ax08jAMik6RriAMMKY4pSDdvUhkg8OI6vadPX2IAMzUQyNPly83n-c3lYKFYgA
\begin{tikzcd}
\Gammacon^\alpha(\bP(J^1\cO_X)_{f\bQ}) \arrow[d] \arrow[r] & * \arrow[d]     \\
\Gamma_1 \arrow[r, "\overline{k_1}"]                       & {K(\bQ,2n+2)^X}
\end{tikzcd}
\end{center}
By the Eilenberg--Moore theorem, the cohomology of the pullback is given by the derived tensor product
\[
    \Lambda(z,x_j') \otimes^\mathbb{L}_{\Lambda(y_{ik}')} \bQ
\]
which can be computed by choosing $\Lambda(y_{ik}') \to \big(\Lambda(s^{-1}y_{ik}', y_{ik}'), \ d(s^{-1}y_{ik}') = y_{ik}' \big) \simeq \bQ$ as a cofibrant replacement.
\end{proof}

\begin{example}\label{example:torus-rational-cdga}
Let $X$ be a smooth curve ($n=1$) of genus $1$ (i.e. a torus). It is a framed manifold, hence its jet bundle has trivial Chern classes. Write $a, b$ for the standard basis of $H^1(X;\bZ)$ such that $a^2 = b^2 = 0$ and $u = ab$ generates $H^2(X;\bZ)$. Let $a',b'$ be the dual basis. Let $\alpha = k \cdot u$ for some $k \in \bZ$. With the notations of \cref{lemma:moller-pullback-generator} we have
\begin{align*}
    \Psi^*(\chi) &= \big(z \otimes 1 \otimes 1 + 1 \otimes 1 \otimes \alpha + 1 \otimes a' \otimes a + 1 \otimes b \otimes b'\big)^2 \\
        &= (2k(z \otimes 1) - 2(1 \otimes a'b')) \otimes u + 2(z \otimes a' \otimes a) + 2(z \otimes b' \otimes b) + z^2 \otimes 1 \otimes 1.
\end{align*}
The morphisms $\varphi_i^*$ of \cref{lemma:haefliger-adjoint-map-in-cohomology} are given by
\begin{align*}
    \varphi_2^* \colon u' &\longmapsto u' \cap \Psi^*(\chi) = 2k(z \otimes 1) - 2(1 \otimes a'b') \\
    \varphi_3^* \colon a' &\longmapsto a' \cap \Psi^*(\chi) = 2(z \otimes a') \\
        b' &\longmapsto b' \cap \Psi^*(\chi) = 2(z \otimes b') \\
    \varphi_4^* \colon 1 &\longmapsto 1 \cap \Psi^*(\chi) = z^2 \otimes 1.
\end{align*}
Therefore the cohomology of $\Gammacon^\alpha(\bP(J^1\cO_X)) \simeq \Map_\alpha(X, \bP^2)$ is given by the cohomology of the CDGA:
\[
    \Lambda(z, a', b', y_1, y_2, y_2', y_3), \quad d(y_1) = 2kz - 2a'b', \ d(y_2) = 2za', \ d(y_2') = 2zb', \ d(y_3) = z^2
\]
where the indices on the last four variables indicate their degrees. (See \cite[Section~3]{moller_homology_1985} for related computations.)
\end{example}

\subsection{Homological stability}

Despite the formula given in \cref{theorem:rational-cohomology-section-space-projective-bundle}, it is unclear to us how the cohomology varies when $\alpha$ does. Nonetheless, when $X$ is topologically parallelisable, we can make the following qualitative remark:

\begin{proposition}
Let $X$ be a smooth projective complex variety such that $\Omega^1_X$ is a topologically trivial vector bundle, and let $\alpha \in H^2(X;\bQ)$. Then there is a homotopy equivalence
\[
    \Gammacon^{\alpha}(\bP(J^1\cO_X)_{f\bQ}) \simeq \Gammacon^{k\alpha}(\bP(J^1\cO_X)_{f\bQ})
\]
for any non-zero rational number $k \in \bQ^\times$.
\end{proposition}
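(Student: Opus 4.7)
The plan is to exploit the topological triviality hypothesis to reduce $\bP(J^1\cO_X)_{f\bQ}$ to a trivial $\bP^n_\bQ$-bundle over $X$, and then to use the rational self-equivalences of $\bP^n_\bQ$ that scale $c_1(\cO(1))$ by $k$.

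\emph{Trivialising the projective bundle.} First I would invoke the Atiyah-type short exact sequence of holomorphic vector bundles
\[
    0 \lra \Omega^1_X \lra J^1\cO_X \lra \cO_X \lra 0,
\]
which splits as a sequence of smooth complex vector bundles (choose a Hermitian metric on $J^1\cO_X$ and take orthogonal complements). Since $\Omega^1_X = T^*X$ is topologically trivial by hypothesis, $J^1\cO_X$ is a topologically trivial complex vector bundle of rank $n+1$. Hence $\bP(J^1\cO_X) \cong X \times \bP^n$ as topological projective bundles, and fibrewise rationalisation produces $\bP(J^1\cO_X)_{f\bQ} \cong X \times \bP^n_\bQ$. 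Consequently
\[
    \Gammacon^\alpha(\bP(J^1\cO_X)_{f\bQ}) \simeq \Map_\alpha(X, \bP^n_\bQ),
\]
where $\alpha \in H^2(X;\bQ)$ indexes the component via the pullback of $c_1(\cO(1))$.

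\emph{Scaling on the target.} The minimal Sullivan model of $\bP^n_\bQ$ is $(\Lambda(z,y), d)$ with $|z|=2$, $|y|=2n+1$, $dz=0$, and $dy = z^{n+1}$. For any $k \in \bQ^\times$, the assignment $z \mapsto kz$ and $y \mapsto k^{n+1}y$ defines a CDGA automorphism, as one checks via $d(k^{n+1}y) = k^{n+1}z^{n+1} = (kz)^{n+1}$. By Sullivan realisation this yields a self-homotopy-equivalence $\phi_k \colon \bP^n_\bQ \to \bP^n_\bQ$ acting as multiplication by $k$ on $H^2(-;\bQ)$. Post-composition with $\phi_k$ then produces the desired homotopy equivalence
\[
    (\phi_k)_* \colon \Map_\alpha(X, \bP^n_\bQ) \overset{\simeq}{\lra} \Map_{k\alpha}(X, \bP^n_\bQ),
\]
which, combined with the identification of the previous paragraph, proves the proposition.

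\emph{Where care is needed.} The only step of genuine substance is the topological trivialisation of $J^1\cO_X$: one must be explicit that the splitting of the jet sequence is smooth/topological, not holomorphic in general, and that all that is required for the argument is an isomorphism of underlying complex vector bundles. Beyond this, the proof is a very short exercise in the minimal model of $\bP^n_\bQ$.
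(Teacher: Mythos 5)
Your proof is correct and follows essentially the same route as the paper's: observe that topological triviality of $\Omega^1_X$ (equivalently, of $J^1\cO_X$ via the split jet sequence) collapses the section space to $\Map_\alpha(X,\bP^n_\bQ)$, and then post-compose with a rational self-equivalence of $\bP^n_\bQ$ scaling $c_1$ by $k$. The paper states these two steps more tersely (it calls the self-map ``of degree $1/k$'' rather than constructing it from the Sullivan model), but the substance is identical; your extra detail on the smooth splitting of the jet sequence and the explicit CDGA automorphism is a useful elaboration, not a different argument.
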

\begin{proof}
As $X$ is topologically parallelisable, the jet bundle $J^1\cO_X$ is topologically trivial. Hence the section space is the mapping space into the fibre:
\[
    \Gammacon^{k\alpha}(\bP(J^1\cO_X)_{f\bQ}) \simeq \Map_{k\alpha}(X, \bP^n_\bQ)
\]
where the subscript $k\alpha$ on the right-hand side indicates the connected component of the maps which pullback the generator in cohomology to $k\alpha$. Post-composing with a self map of $\bP^n_\bQ$ of degree $1/k$ gives a homotopy equivalence
\[
    \Map_{k\alpha}(X, \bP^n_\bQ) \simeq \Map_{\alpha}(X, \bP^n_\bQ). \qedhere
\]
\end{proof}

\begin{corollary}\label{corollary:rational-stability}
Let $X$ be a smooth projective complex variety which is topologically parallelisable. Let $\alpha \in \NS(X)$ be ample, and assume that $d(X,\alpha) \geq 1$ (see \cref{definition:constant-minimal-jet-ampleness}). Then, for any integer $k \geq 1$, there is map
\[
    \Mhyp^{k\alpha} \lra \Map_{\alpha}(X,\bP^n_\bQ)
\]
inducing an isomorphism in rational cohomology in the range of degrees $* < \frac{k \cdot d(X,\alpha) - 3}{2}$. In particular, the rational cohomology stabilises as $k \to \infty$. \qed
\end{corollary}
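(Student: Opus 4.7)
The plan is to chain together three ingredients: the main theorem applied to $k\alpha$, the preceding proposition, and a multiplicativity estimate for the jet ampleness constant.

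First I would apply \cref{theorem:main-theorem} to the class $k\alpha$, obtaining an integral homology isomorphism
\[
    \Mhyp^{k\alpha} \lra \Gammacon^{k\alpha}(\bP(J^1\cO_X))
\]
in degrees $* < (d(X, k\alpha) - 3)/2$. Rationalising and using topological parallelisability to trivialise $J^1\cO_X$ (as a smooth complex vector bundle it is an extension of $\cO_X$ by $\Omega^1_X$, both topologically trivial), the target becomes $\Map_{k\alpha}(X, \bP^n_\bQ)$. Post-composition with a self-map of $\bP^n_\bQ$ of rational degree $1/k$, exactly as in the preceding proposition, produces the equivalence $\Map_{k\alpha}(X, \bP^n_\bQ) \simeq \Map_\alpha(X, \bP^n_\bQ)$ and hence the map in the statement.

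The remaining task is to establish the bound $d(X, k\alpha) \geq k \cdot d(X, \alpha)$. I would argue that the $k$-th tensor power gives a map of $\Pic^0(X)$-torsors $\Pic^\alpha(X) \to \Pic^{k\alpha}(X)$, equivariant for the multiplication-by-$k$ endomorphism of $\Pic^0(X)$. Since $\Pic^0(X) \cong H^1(X;\cO_X)/H^1(X;\bZ)$ is a complex torus, hence divisible, this endomorphism is surjective, so every line bundle of Chern class $k\alpha$ is of the form $\cL^{\otimes k}$ with $\cL$ of Chern class $\alpha$. Iterating the tensor-product formula for jet ampleness then shows that each such $\cL^{\otimes k}$ is at least $k \cdot d(X, \alpha)$-jet ample, yielding the desired estimate.

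I expect this divisibility argument on the Picard torus to be the crux. One minor technical detail to address is the main theorem's hypothesis that $k\alpha - c_1(K_X)$ be ample; this is automatic for sufficiently large $k$ given ampleness of $\alpha$, so either one restricts attention to $k \gg 0$ or one first replaces $\alpha$ as in \cref{remark:arrange-ampleness} so that $\alpha - c_1(K_X)$ is already ample, a property which then persists for all $k \geq 1$. Stabilisation is immediate since the target $\Map_\alpha(X, \bP^n_\bQ)$ does not depend on $k$.
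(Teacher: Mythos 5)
Your proof is correct and, as far as one can tell, it is exactly the argument the paper leaves implicit (the corollary ends with a \qed in the statement, indicating it is regarded as an immediate consequence of the preceding proposition and the main theorem). You correctly identify the key missing step, namely the inequality $d(X, k\alpha) \geq k \cdot d(X, \alpha)$, and your proof of it via divisibility of the complex torus $\Pic^0(X) \cong H^1(X;\cO_X)/H^1(X;\bZ)$ is the right one: fixing any $\cL_0$ with $c_1(\cL_0) = \alpha$, given $\cN \in \Pic^{k\alpha}(X)$ divisibility produces $\cP \in \Pic^0(X)$ with $\cP^{\otimes k} \cong \cN \otimes \cL_0^{-\otimes k}$, so $\cN \cong (\cL_0 \otimes \cP)^{\otimes k}$; iterating the tensor-product formula for jet ampleness then gives $k \cdot d(X,\alpha)$-jet ampleness of $\cN$. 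Note that the appendix's \cref{proposition:arbitrarily-big-jet-ampleness} would only give that $d(X, k\alpha) \to \infty$ qualitatively (via Serre vanishing), not the explicit linear bound the corollary's range requires, so the divisibility argument really is needed. Your remark about the hypothesis $k\alpha - c_1(K_X)$ being ample is a fair catch about the corollary's statement itself — it is an implicit requirement for $\Mhyp^{k\alpha}$ to be defined as in \cref{definition:moduli-of-hypersurfaces}, and it persists for all $k \geq 1$ once it holds for $k=1$ (since $\alpha$ is ample and ample + nef = ample), so the resolutions you suggest are appropriate.
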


\begin{remark}
The rational homotopy type of the mapping space $\Map(X,\bP^n_\bQ)$ can be easily computed without the results of the last section. In \cite[Theorem~1.4]{berglund_rational_2015}, Berglund gives an explicit $L_\infty$-algebra model whose underlying graded $\bQ$-vector space is given by
\[
    H^*(X;\bQ) \otimes \bQ\cdot\{u,w\}
\]
where $H^i(X)$ sits in degree $-i$, and $u,w$ are respectively in degrees $1$ and $2n$. (This uses that $X$ is a formal space.) For the connected component corresponding to $\alpha \in H^2(X;\bQ)$, the associated Maurer--Cartan element is $\tau = \alpha \otimes u$. In particular, the only possibly non-vanishing brackets are given by
\[
    [x_1 \otimes u, \ldots, x_r \otimes u]_\tau = \pm \frac{(n+1)!}{(n+1-r)!} (\alpha^{n+1-r} x_1 x_2\cdots x_r) \otimes w.
\]
In fact, in the case of the torus, the Chevalley--Eilenberg complex associated to this $L_\infty$-algebra is the CGDA given in \cref{example:torus-rational-cdga}.
\end{remark}

\section{Scanning and configuration spaces on curves}\label{section:scanning-configuration-spaces}

In this section, we explain how the present article fits into the general philosophy of scanning maps in topology. In \cref{theorem:scanning-for-curves}, we recover a special case of a result of McDuff about the homology of configuration spaces of points on a curve.

\subsection{Scanning}

We begin with a brief and intuitive exposition of the general idea behind scanning. Suppose given $M \subset N$, a $d$-dimensional submanifold of an $n$-dimensional manifold. We can try to see what $M$ looks like by looking locally at each point of $N$. One can imagine looking through a magnifying glass: either we are far from $M$ and see nothing, or close to $M$ and see a first-order approximation of $M$, i.e. a tangent space, together with a small vector from the center of the lens to $M$. To formalise this intuition, recall the tautological quotient bundle over the Grassmannian of $d$-dimensional planes in $\bR^n$:
\[
    \bR^{n-d} \lra \gamma_{d,n}^\perp := \{ (H,v) \mid H \in \Grass(d, \bR^n), \; v \in \bR^n / H \} \lra \Grass(d, \bR^n).
\]
One thinks of a point $(H,v) \in \gamma_{d,n}^\perp$ as a $d$-dimensional plane together with a normal vector. The Thom space $\Grass(d, \bR^n)^{\gamma_{d,n}^\perp}$ is obtained by one-point compactifying the total space. This construction can be done fibrewise to the tangent bundle $TN$ of $N$, and we denote by $\Grass(d,TN)^{\gamma_{d,n}^\perp}$ the resulting bundle over $N$. The submanifold $M$ then gives a section 
\[
    N \lra \Grass(d,TN)^{\gamma_{d,n}^\perp}
\]
obtained by sending a point far away from $M$ to the point at infinity (in the Thom space), and sending a point $x \in N$ close to a point $y \in M$ to the tangent space $T_yM \subset T_yN$ together with the vector pointing from $y$ to $x$. Of course this requires to be made precise, e.g. by choosing a tubular neighbourhood of $M$ inside $N$. In many cases, this idea can be implemented in families to obtain a map from a parameter space of submanifolds to a section space.

\bigskip

We are now ready to give an interpretation of the jet map of \cref{theorem:main-theorem} in the spirit of scanning. We take $X$ and $\alpha$ as in the assumptions of that theorem. We also denote by $n = \dim_\bC X$ the complex dimension of $X$. The main observation is the following:
\begin{lemma}
For integers $d, m$, let $\Grass(d, \bC^m)$ be the Grassmannian of complex $d$-dimensional planes in $\bC^m$ and $\gamma_{d,m}^\perp$ be the tautological quotient bundle. There is a homeomorphism
\begin{align*}
    \gamma_{d,m}^\perp &\overset{\cong}{\lra} \Grass(d+1, \bC^m \oplus \bC) \setminus \Grass(d+1, \bC^m) \\
    (H,v) &\longmapsto (H,0) \oplus (v,1)
\end{align*}
where $\Grass(d+1, \bC^m)$ is embedded inside $\Grass(d+1, \bC^m \oplus \bC)$ via $P \mapsto (P,0)$. \qed
\end{lemma}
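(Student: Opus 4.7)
The map is essentially a Schubert-cell decomposition statement, and I would prove it by explicitly writing down the inverse and then checking continuity.

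\textbf{Step 1 (well-definedness).} Given $(H,v) \in \gamma_{d,m}^\perp$, choose any lift $\tilde v \in \bC^m$ of $v \in \bC^m/H$. The proposed image is the $(d+1)$-dimensional subspace
\[
    W(H,\tilde v) := \{(h,0) \mid h \in H\} + \bC\cdot(\tilde v, 1) \subset \bC^m \oplus \bC.
\]
I would first observe that $W(H,\tilde v)$ is independent of the lift $\tilde v$: a different choice differs by some $h \in H$, so $(\tilde v', 1) = (\tilde v, 1) + (h, 0) \in W(H,\tilde v)$. Moreover, since $(\tilde v, 1) \in W(H, \tilde v)$ has non-zero last coordinate, $W(H,\tilde v)$ is not contained in $\bC^m \times \{0\}$, so it lies in the claimed target.

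\textbf{Step 2 (inverse).} For any $W \in \Grass(d+1, \bC^m \oplus \bC) \setminus \Grass(d+1, \bC^m)$, let $\pi \colon \bC^m \oplus \bC \to \bC$ be projection onto the last coordinate. The restriction $\pi|_W$ is surjective (otherwise $W \subset \bC^m \times \{0\}$), so its kernel $W \cap (\bC^m \times \{0\})$ has complex dimension $d$ and defines a subspace $H \subset \bC^m$. Pick any $w = (\tilde v, 1) \in W$; then $\tilde v$ is well defined modulo $H$, so the class $v := [\tilde v] \in \bC^m/H$ is intrinsic to $W$. Setting $W \longmapsto (H, v)$ gives a well-defined map in the opposite direction, and I would check directly that it is a two-sided inverse to the map in the statement.

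\textbf{Step 3 (homeomorphism).} Both source and target are smooth complex algebraic varieties of the same complex dimension $d(m-d) + (m-d) = (d+1)(m-d)$, which matches the expected count. Continuity in both directions can be verified on a standard affine chart of the Grassmannian: on the chart where $H$ is the graph of a linear map $\bC^d \to \bC^{m-d}$ with coordinates $A$, and $v$ has coordinates $v \in \bC^{m-d}$, the image $W$ is the graph of the map $\bC^{d+1} \to \bC^{m-d}$ whose matrix is $[A \mid v]$, which depends polynomially on $(A,v)$. Conversely, reading off $(H,v)$ from the matrix representative of $W$ is also polynomial. Hence both maps are continuous (even algebraic morphisms), and combined with Step 2 this yields the claimed homeomorphism.

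The only real content is the dimension count and the observation that a lift $(\tilde v, 1) \in W$ exists precisely when $W \not\subset \bC^m \times \{0\}$; the rest is bookkeeping in coordinates, so I do not expect any serious obstacle.
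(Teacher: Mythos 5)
The paper states this lemma with no proof (it is marked \qed as obvious), and your argument supplies exactly the routine verification one would expect: the independence of the chosen lift, the explicit inverse $W \mapsto \bigl(W \cap (\bC^m \times \{0\}),\, [\tilde v]\bigr)$, and continuity in affine charts. Your proof is correct and is the standard argument; there is nothing to compare against.
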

When $V$ is an $n$-dimensional complex vector space, the tautological quotient bundle over $\Grass(n-1,V) = \bP(V)$ is homeomorphic to $\bP(V \oplus \bC) \setminus \{*\}$. Hence its Thom space is $\bP(V \oplus \bC)$. From the isomorphism $J^1\cO_X \cong \Omega^1_X \oplus \cO_X$ as smooth complex vector bundles, we see that
\[
    \bP(J^1\cO_X) \cong \Grass(n-1, \Omega^1_X)^{\gamma_{n-1,n}^\perp} \cong \Grass(n-1, TX)^{\gamma_{n-1,n}^\perp}.
\]
Under these identifications, the jet map
\[
    \Mhyp^\alpha \lra \Gammacon \big( \Grass(n-1, TX)^{\gamma_{n-1,n}^\perp} \big)
\]
is very close to the general idea of scanning described above. Given a hypersurface $V(s) \subset X$, the derivative $x \mapsto ds(x)$ records the tangent space when non-zero, i.e. near the hypersurface, and $x \mapsto s(x)$ records in some sense the distance to the hypersurface, an analogue of the normal vector. 

\subsection{Configuration spaces on curves}

Let us now describe the case $n=1$ in more details. The variety $X$ is then a curve and we think of $\alpha \in \NS(X)$ as an integer under the isomorphism $\NS(X) \subset H^2(X;\bZ) \cong \bZ$ given by the complex orientation. A hypersurface of Chern class $\alpha$ is simply an unordered configuration of $\alpha$ points and we have a homeomorphism
\begin{align*}
    \Mhyp^\alpha &\overset{\cong}{\lra} \UConf_\alpha(X) \\
    ([\cL] \in \Pic^\alpha(X), \ [s] \in \Gammans(\cP_{[\cL]})/\bC^\times) &\lra V([s]).
\end{align*}
There is also an identification
\[
    \bP(J^1\cO_X) \cong \Grass(0, TX)^{\gamma_{0,1}^\perp} \cong \dot{TX}
\]
with the fibrewise one-point compactification of the tangent bundle. In \cite{mcduff_configuration_1975}, McDuff studied a scanning map on configuration spaces of points on a manifold, i.e. spaces of $0$-dimensional submanifolds. In the present work, we instead study (complex) codimension $1$ submanifolds. On a curve these agree and we recover a special case of McDuff's theorem, although our scanning map is now more algebraic in nature:
\begin{theorem}\label{theorem:scanning-for-curves}
Let $X$ be a smooth projective complex curve of genus $g$. Let $\alpha \in H^2(X;\bZ) \cong \bZ$ be such that $\alpha > 2g-2$. The jet map
\[
    \UConf_\alpha(X) \cong \Mhyp^\alpha \lra \Gammacon^\alpha(\bP(J^1\cO_X)) \cong \Gammacon^\alpha(\dot{TX})
\]
induces an isomorphism in integral homology in the range of degrees $* < \frac{\alpha - 2g - 3}{2}$.
\end{theorem}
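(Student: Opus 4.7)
The plan is to deduce \cref{theorem:scanning-for-curves} as a direct specialisation of \cref{theorem:main-theorem} to complex dimension $n=1$. Three pieces of data must be unpacked in the curve case: the source $\Mhyp^\alpha$ as the unordered configuration space $\UConf_\alpha(X)$, the target $\bP(J^1\cO_X)$ as the fibrewise one-point compactification $\dot{TX}$, and the jet-ampleness constant $d(X,\alpha)$ as an explicit function of $\alpha$ and $g$. The ampleness hypothesis $\alpha - c_1(K_X)$ ample translates on a curve to $\alpha > 2g - 2$, which is exactly the assumption of the theorem.

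For the source, \cref{remark:point-set-description-moduli} presents $\Mhyp^\alpha$ as the space of pairs $([\cL], [s])$ with $[\cL] \in \Pic^\alpha(X)$ and $[s] \in \Gammans(\cP_{[\cL]})/\bC^\times$. On a curve, a non-singular section of a degree-$\alpha$ line bundle vanishes on exactly $\alpha$ distinct points, so the vanishing-locus map $([\cL], [s]) \mapsto V(s)$ defines a continuous bijection $\Mhyp^\alpha \to \UConf_\alpha(X)$; since both spaces are the analytifications of smooth varieties of the same dimension and the map comes from the Abel--Jacobi morphism, this is in fact a homeomorphism. For the target, setting $n=1$ in the lemma immediately preceding the theorem gives $\bP(J^1\cO_X) \cong \Grass(0, TX)^{\gamma_{0,1}^\perp} \cong \dot{TX}$, since the Grassmannian of $0$-dimensional subspaces of a line is just a point and the tautological quotient bundle is $TX$ itself. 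Under these identifications, the jet map matches the one appearing in the statement.

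The key quantitative input is $d(X,\alpha) = \alpha - 2g$ for $\alpha \geq 2g$, a classical fact for curves that I would establish by Serre duality. A degree-$\alpha$ line bundle $\cL$ is $k$-jet ample iff for every choice of distinct points $x_1,\ldots,x_t$ and positive integers $k_1,\ldots,k_t$ with $\sum k_i = k+1$, the restriction
\[
H^0(X, \cL) \lra H^0\big(X, \cL \otimes \cO_X / \cO_X(-\textstyle\sum_i k_i x_i)\big)
\]
is surjective; on a curve the cokernel injects into $H^1(X, \cL(-\sum_i k_i x_i))$, which by Serre duality is dual to $H^0(X, K_X \otimes \cL^{-1}(\sum_i k_i x_i))$. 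The latter vanishes precisely when the degree $2g - 2 - \alpha + (k+1)$ is negative, i.e.\ when $\alpha \geq 2g + k$, and a configuration of $k+1$ generic points witnesses the converse. Substituting $d(X,\alpha) = \alpha - 2g$ into the range given by \cref{theorem:main-theorem} yields the claimed $\ast < \tfrac{\alpha - 2g - 3}{2}$. The only substantive step is the jet-ampleness formula, and even that is entirely classical; the remainder is unpacking of definitions.
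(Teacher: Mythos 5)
Your proposal is correct and takes essentially the same route as the paper: specialise \cref{theorem:main-theorem} to $n=1$, translate the ampleness hypothesis $\alpha - c_1(K_X)$ ample into $\alpha > 2g-2$ via $\deg K_X = 2g-2$, and compute the jet-ampleness constant by Serre duality and Riemann--Roch (the paper delegates this to \cref{lemma:riemann-roch-jet-ampleness}, whose proof is precisely your argument reducing to the vanishing of $H^1(\cL \otimes \cI_Z)$). One small inaccuracy worth flagging: your claim that $H^0(X, K_X \otimes \cL^{-1}(\sum k_i x_i))$ vanishes \emph{precisely} when its degree is negative, with ``a configuration of $k+1$ generic points'' witnessing the converse, is not right as stated --- a degree-zero or small-positive-degree line bundle can perfectly well have no global sections, and generic points are exactly the ones that would fail to produce a section. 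However, this converse direction is never used: only the lower bound $d(X,\alpha) \geq \alpha - 2g$ enters the range estimate, and that direction (negative degree implies vanishing) is the part you proved correctly, matching the paper's lemma.
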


\begin{proof}
This is a direct consequence of \cref{theorem:main-theorem}. To verify the assumption on the ampleness of $\alpha - c_1(K_X)$, recall that the canonical divisor has degree $2g -2$ and that a line bundle of positive degree is ample. The final bound is obtained by computing $d(X, \alpha) = \alpha - 2g$ using Riemann--Roch as explained in \cref{lemma:riemann-roch-jet-ampleness}.
\end{proof}

\begin{remark}
In \cite{mcduff_configuration_1975}, McDuff does not provide any concrete range for the homology isomorphism. However, the range $* \leq \alpha/2$ was later proved by Segal in \cite[Appendix~A]{segal_topology_1979}. In particular, it is better than the one given in the present article.
\end{remark}

\begin{remark}
The observations above lead us to ask about subvarieties of greater codimension: could the spaces $\Grass(n-c, \Omega^1_X)^{\gamma_{n-c,n}^\perp}$ be related to Hilbert schemes of codimension $c$ smooth subvarieties?
\end{remark}

\section{Characteristic classes and manifold bundles}\label{section:characteristic-classes}

In this section, we comment on the stable rational cohomology of $\Gammans(\cL) / \bC^\times$. Our main motivation is trying to relate it to the stable cohomology of moduli spaces of manifolds as investigated by Galatius and Randal-Williams \cite{galatius_stable_2014,galatius_homological_2017,galatius_homological_2018}. None of this section uses the new results of this article, and will in fact be deduced entirely from \cite{aumonier_h-principle_2022}. Nonetheless, we think that its fits naturally with the ``moduli space point of view" adopted in this paper.

We will assume that $\dim_\bC X = n \geq 4$ and that the fundamental group of $X$ is virtually polycyclic (eg. trivial) to apply the results of \cite{galatius_moduli_2019,friedrich_homological_2017}. We also choose a very ample line bundle $\cL$ on $X$.

\subsection{Recollections on stable classes}

We will shortly recall from \cite{aumonier_h-principle_2022} the geometric interpretation of the stable classes in the rational cohomology of $\Gammans(\cL)$. As we are here mostly interested in the quotient by the scalars $\bC^\times$, we first make two observations.
\begin{lemma}\label{lemma:discriminant-is-hypersurface}
Let $\cL$ be a $2$-jet ample line bundle on $X$. Then there exists a homogeneous polynomial
\[
    \Delta \colon \Gamma(\cL) \lra \bC
\]
such that $\Gammans(\cL) = \Delta^{-1}(\bC^\times)$. In other words, the complement of $\Gammans / \bC^\times \subset \bP(\Gamma(\cL))$ is a hypersurface given by the vanishing of the \emph{discriminant} polynomial $\Delta$.
\end{lemma}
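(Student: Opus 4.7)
The plan is to realise the complement $D := \Gamma(\cL) \setminus \Gammans(\cL)$ as an irreducible $\bC^\times$-invariant algebraic hypersurface inside $\Gamma(\cL) \cong \bA^N$; the existence of the desired homogeneous polynomial $\Delta$ then follows formally from commutative algebra.

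First I would introduce the incidence variety
\[
    I := \{(s, x) \in \Gamma(\cL) \times X \mid j^1 s(x) = 0\},
\]
i.e.\ the preimage of the zero section under the jet evaluation map $\mathrm{jev} \colon \Gamma(\cL) \times X \to \bV(J^1\cL)$ of \cref{definition:jet-evaluation-morphism}. Since $\cL$ is $2$-jet ample, hence in particular $1$-jet ample, $\mathrm{jev}$ is fibrewise surjective over $X$. Therefore $I$ is an algebraic sub-vector-bundle of the trivial bundle $\Gamma(\cL) \times X \to X$ of rank $N - (n+1)$, where $N := \dim_\bC \Gamma(\cL)$ and $n = \dim_\bC X$. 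In particular $I$ is smooth and irreducible of dimension $N - 1$.

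Next, since $X$ is proper, the projection $\pi \colon I \to \Gamma(\cL)$ is a proper morphism of schemes, so $D = \pi(I)$ is a closed irreducible subvariety of $\Gamma(\cL)$. The main hurdle I anticipate is checking that $D$ has codimension exactly $1$, rather than only at most $1$. For this I would use the extra input of $2$-jet ampleness to construct a single section $s_0 \in \Gamma(\cL)$ with an isolated Morse singularity at some point $x_0 \in X$: the surjection $\Gamma(\cL) \twoheadrightarrow \Gamma(\cL \otimes \cO_X/\fm_{x_0}^3)$ lets one prescribe a $2$-jet at $x_0$ whose $0$- and $1$-parts vanish but whose quadratic part is non-degenerate. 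Any such $s_0$ has $x_0$ as its unique singular point on a small neighbourhood, and a standard argument (e.g.\ Bertini applied to a generic pencil through $s_0$) ensures that $x_0$ is in fact the only singular point of $s_0$ on all of $X$. Then $\pi^{-1}(s_0)$ is $0$-dimensional, and the fibre dimension theorem forces $\dim D = \dim I = N-1$.

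Finally, an irreducible hypersurface in affine space $\bA^N$ is cut out by an irreducible polynomial, unique up to a non-zero scalar; pick one such polynomial and call it $\Delta$. Since $D$ is manifestly preserved by the fibrewise scaling $\bC^\times$-action on $\Gamma(\cL)$, the principal ideal $(\Delta) \subset \bC[\Gamma(\cL)]$ is $\bC^\times$-stable, which forces the irreducible generator $\Delta$ to be a homogeneous polynomial. By construction $\Gammans(\cL) = \Gamma(\cL) \setminus D = \Delta^{-1}(\bC^\times)$, finishing the proof.
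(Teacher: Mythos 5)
Your overall architecture — incidence variety $I$, $1$-jet ampleness to show $I$ is a smooth irreducible vector bundle of dimension $N-1$, $2$-jet ampleness to produce a Morse singularity, and finally homogeneity of $\Delta$ via $\bC^\times$-invariance of the prime principal ideal — matches the paper's proof closely (the paper works with the projectivised incidence variety $R \subset X \times \bP(\Gamma(\cL))$ and then invokes Katz, \emph{Pinceaux de Lefschetz}, Propositions 3.4--3.5). The first and last paragraphs are fine.

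However, the middle step has a genuine gap. You assert that the section $s_0$ produced by $2$-jet ampleness ``has $x_0$ as its only singular point on all of $X$'' by ``Bertini applied to a generic pencil through $s_0$''. Two problems. First, this is simply not what is needed: for the fibre-dimension argument it suffices that the fibre $\pi^{-1}(s_0)$ be $0$-dimensional \emph{near} $(s_0,x_0)$, not that it be a singleton. Second, and more seriously, the claim cannot be justified with only $2$-jet ampleness. The natural way to show that a generic section with prescribed $2$-jet at $x_0$ has no singularity at another point $x$ is to observe that the map $\Gamma(\cL) \to J^2\cL|_{x_0} \oplus J^1\cL|_x$ is surjective — but that requires $\sum k_i = 3 + 2 = 5$, i.e.\ $4$-jet ampleness, not $2$. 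A vague appeal to Bertini does not circumvent this, since the pencil you would use is constrained to pass through the non-generic section $s_0$.

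The repair, which is in fact what Katz proves and what the paper leans on, is the following local statement: at a point $(s_0,x_0)\in I$ where the Hessian of $s_0$ at $x_0$ is non-degenerate, the differential of $\pi\colon I\to\Gamma(\cL)$ is injective. A short tangent-space computation gives
\[
    T_{(s_0,x_0)} I = \big\{ (\dot s,\dot x) \,:\, \dot s(x_0) = 0,\ d\dot s(x_0) = -\mathrm{Hess}_{x_0}(s_0)(\dot x, -) \big\},
\]
so $\ker d\pi_{(s_0,x_0)} = \{0\}\times\ker\mathrm{Hess}_{x_0}(s_0) = 0$ by non-degeneracy. Since $I$ is smooth, $\pi$ is therefore unramified at $(s_0,x_0)$, hence has discrete fibre near that point, so the local fibre dimension $\dim_{(s_0,x_0)}\pi^{-1}(\pi(s_0,x_0))$ is $0$. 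Chevalley's semicontinuity theorem (the local fibre dimension is everywhere $\geq \dim I - \dim \overline{\pi(I)}$, with upper semicontinuity) then forces $\dim D = \dim I = N-1$, without any claim about $s_0$ having a unique singular point. With that substitution your proof is correct, and is essentially the paper's, merely unpacking the citations to Katz rather than quoting them.
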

\begin{proof}
The general theory of discriminants is explained in \cite{gelfand_discriminants_1994}, see eg. page 15 for the definition of the discriminant polynomial. In general, the subspace of singular sections of $\cL$ has codimension at least $1$, and exactly $1$ is most cases, see eg. \cite[Corollary~1.2]{gelfand_discriminants_1994}. We show that when $\cL$ is $2$-jet ample, we are indeed in the latter case. Consider the incidence variety
\[
    R := \left\{ (x,H) \in X \times \bP(\Gamma(\cL)) \mid x \in H \text{ is a singular point}\right\}
\]
and its two projections $p_1 \colon R \to X$ and $p_2 \colon R \to \bP(\Gamma(\cL))$. The set of singular hypersurfaces is $p_2(R)$. By $1$-jet ampleness of $\cL$, $p_1$ is a vector bundle. This implies that $R$ is irreducible of dimension $\dim \bP(\Gamma(\cL)) - 1$. In \cite[Proposition~3.4]{katz_pinceaux_1973}, Katz shows that the locus of $(x,H) \in R$ where $x$ is a non-degenerate singular point of $H$ is open. By $2$-jet ampleness, it is non empty, hence dense as $R$ is irreducible. Now \cite[Proposition~3.5]{katz_pinceaux_1973} shows that $p_2$ is birational. Therefore $p_2(R)$ has codimension $1$.
\end{proof}
We learned the following from \cite[Lemma~2.7]{das_space_2021}:
\begin{lemma}\label{lemma:discriminant-leray-hirsch}
Suppose that $\cL$ is $2$-jet ample. Then there is an isomorphism of $H^*(\Gammans(\cL); \bQ)$-modules:
\[
    H^*(\Gammans(\cL); \bQ) \cong H^*(\Gammans(\cL)/ \bC^\times; \bQ) \otimes H^*(\bC^\times; \bQ).
\]
\end{lemma}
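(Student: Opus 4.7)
The projection $\Gammans(\cL) \to \Gammans(\cL)/\bC^\times$ is a principal $\bC^\times$-bundle. Since $H^*(\bC^\times;\bQ)$ is a free $\bQ$-module of finite type generated in degrees $0$ and $1$, my strategy is to verify the hypothesis of the Leray--Hirsch theorem: I will exhibit a class $\eta \in H^1(\Gammans(\cL);\bQ)$ whose restriction to every fiber generates $H^1(\bC^\times;\bQ)$. The Leray--Hirsch theorem will then produce the desired module isomorphism.

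The candidate for $\eta$ comes from the previous lemma. By \cref{lemma:discriminant-is-hypersurface}, there is a homogeneous polynomial $\Delta \colon \Gamma(\cL) \to \bC$ such that $\Gammans(\cL) = \Delta^{-1}(\bC^\times)$, and since the complement $V(\Delta) \subset \bP(\Gamma(\cL))$ is a (nonempty, proper) hypersurface, $\Delta$ is nonconstant, so its degree $d$ satisfies $d \geq 1$. Restricting $\Delta$ to the open subset where it does not vanish yields a continuous map
\[
    \Delta \colon \Gammans(\cL) \lra \bC^\times.
\]
I will then set $\eta := \Delta^*(\iota)$, where $\iota \in H^1(\bC^\times;\bQ)$ is a chosen generator.

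The verification that $\eta$ restricts to a generator on every fiber is a direct computation from the homogeneity of $\Delta$. Pick any $s_0 \in \Gammans(\cL)$; its fiber is parametrised by the orbit map $\bC^\times \to \Gammans(\cL)$, $\lambda \mapsto \lambda \cdot s_0$. The composite
\[
    \bC^\times \lra \Gammans(\cL) \overset{\Delta}{\lra} \bC^\times, \qquad \lambda \longmapsto \lambda^d \Delta(s_0),
\]
is homotopic to the $d$-th power map, hence induces multiplication by $d$ on $H^1(\bC^\times;\bZ) \cong \bZ$. Because $d \neq 0$, this is an isomorphism after tensoring with $\bQ$, so the restriction of $\eta$ to each fiber is a generator of $H^1(\bC^\times;\bQ)$, as desired.

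The Leray--Hirsch theorem now yields an isomorphism of $H^*(\Gammans(\cL)/\bC^\times;\bQ)$-modules (and hence of $H^*(\Gammans(\cL);\bQ)$-modules after pullback along the quotient map)
\[
    H^*(\Gammans(\cL);\bQ) \cong H^*(\Gammans(\cL)/\bC^\times;\bQ) \otimes H^*(\bC^\times;\bQ),
\]
sending $1\otimes 1$ to $1$ and $1 \otimes \iota$ to $\eta$. The only nontrivial step is producing the class $\eta$ from the discriminant, and this is precisely where the $2$-jet ample hypothesis intervenes via \cref{lemma:discriminant-is-hypersurface}; everything else is a formal application of Leray--Hirsch.
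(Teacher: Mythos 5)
Your proof is correct and takes essentially the same approach as the paper: both use the discriminant polynomial $\Delta$ from \cref{lemma:discriminant-is-hypersurface} as a map $\Gammans(\cL)\to\bC^\times$, observe by homogeneity that its restriction to each $\bC^\times$-orbit has nonzero degree $\deg\Delta$ and is hence a rational cohomology isomorphism, and conclude by Leray--Hirsch. You merely spell out the mechanics of the Leray--Hirsch hypothesis (exhibiting $\eta=\Delta^*\iota$ explicitly) a bit more fully than the paper does.
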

\begin{proof}
Let $\Delta \colon \Gamma(\cL) \to \bC$ be the discriminant provided by \cref{lemma:discriminant-is-hypersurface} so that $\Gammans(\cL) = \Gamma(\cL) \setminus \Delta^{-1}(0)$. There is a fibre bundle
\[
    \bC^\times \lra \Gammans(\cL) \lra \Gammans(\cL) / \bC^\times
\]
and for any fibre the map $\bC^\times \hookrightarrow \Gammans(\cL) \overset{\Delta}{\to} \bC^\times$ is of degree $\deg(\Delta) \neq 0$, hence an isomorphism on rational cohomology. The lemma then follows by the Leray--Hirsch theorem.
\end{proof}

Consider the universal bundle of hypersurfaces:
\begin{equation*}
    V(s) \lra U(\cL) := \{ (s, x) \in \Gammans(\cL) \times X \mid s(x) = 0 \} \overset{\pi}{\lra} \Gammans(\cL).
\end{equation*}
At each point $(s,x) \in U(\cL)$, the derivative $ds(x)$ is non-zero, thus giving a map
\begin{equation*}
    j \colon U(\cL) \lra \Omega^1_X \otimes \cL \setminus 0.
\end{equation*}
For $\cL$ ample enough such that the Euler class of $\Omega^1_X \otimes \cL$ is non zero, the cohomology of the target of $j$ in degrees $* \geq 2n$ is
\[
    H^{* \geq 2n}(\Omega^1_X \otimes \cL \setminus 0; \bZ) \cong H^{* \geq 1}(X; \bZ) [2n-1],
\]
where $[2n-1]$ indicates a shift of degrees.
\begin{proposition}[{Compare \cite[Proposition~8.6]{aumonier_h-principle_2022}}]\label{proposition:stable-cohomology-fibrewise-integration}
Let $\cL$ be a $d$-jet ample line bundle on a smooth projective complex variety $X$. Suppose furthermore that the Euler class of $\Omega^1_X \otimes \cL$ is non-zero. Then the morphism given by pulling back along $j$ and integrating along the fibres of $\pi$
\[
    \pi_! \circ j^* \colon \Lambda(H^{* \geq 2n}(\Omega^1_X \otimes \cL \setminus 0; \bQ)) \lra H^*(\Gammans(\cL); \bQ) \lra H^*(\Gammans(\cL) /\bC^\times; \bQ)
\]
is an isomorphism in the range of degrees $* < \frac{d-1}{2}$. (The second arrow is obtained from \cref{lemma:discriminant-leray-hirsch} by projecting away from the second tensor factor.)
\end{proposition}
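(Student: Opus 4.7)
The plan is to deduce the proposition essentially directly from \cite[Proposition~8.6]{aumonier_h-principle_2022}, whose statement is the analogous assertion in the prior paper's setting, and to bridge to the $\bC^\times$-quotient using \cref{lemma:discriminant-leray-hirsch}.

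First, I verify the $\bC^\times$-compatibility of the whole geometric setup. The scalar action on $\Gammans(\cL)$ lifts canonically to $U(\cL)$ by acting trivially on the second coordinate, since $(cs)(x) = 0$ iff $s(x) = 0$, and this makes $\pi$ equivariant. On the target of $j$, the group $\bC^\times$ acts on $\Omega^1_X \otimes \cL \setminus 0$ by scaling the $\cL$-factor, and $j(s,x) = ds(x)$ is equivariant because $d(cs) = c \cdot ds$. Hence $\pi_! \circ j^*$ is $\bC^\times$-equivariant; since $\bC^\times$ is connected it acts trivially on all rational cohomologies in sight. What must then be checked is that the image of $\pi_! \circ j^*$ lands in the first tensor factor $H^*(\Gammans(\cL)/\bC^\times;\bQ) \otimes 1$ of the Leray--Hirsch splitting of \cref{lemma:discriminant-leray-hirsch}; in that case the second arrow in the displayed composite is merely projection onto this factor.

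Next, I would invoke the stable cohomology computation of \cite[Proposition~8.6]{aumonier_h-principle_2022} to identify the composite as an isomorphism in the range $* < \frac{d-1}{2}$. That prior result proceeds in three steps: (i) apply the h-principle of \cref{theorem:mon-theoreme-h-principe} to replace $\Gammans(\cL)/\bC^\times$ by the continuous section space $\Gammacon(J^1\cL \setminus 0)/\bC^\times$ up to rational homology in the stable range; (ii) build a continuous analogue of the universal family $(U(\cL), \pi, j)$ using the jet bundle, and verify that fibre integration for it matches $\pi_! \circ j^*$ under the h-principle; (iii) compute the rational cohomology of the continuous section space via a Haefliger--Sullivan style model, identifying it with the free graded algebra in the statement. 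The restriction $* \geq 2n$ on the generators arises because the fibre $J^1_x\cL \setminus 0 \simeq_\bQ S^{2n+1}$ is $2n$-connected, so no obstructions appear below that degree.

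The heavy conceptual work lies entirely in step (iii) of the prior paper, where the Haefliger--Sullivan model must be shown to realise precisely the geometric fibre integration $\pi_! \circ j^*$. In the present setting, the only new content is the $\bC^\times$-equivariance check of the first paragraph; beyond that, the proposition follows by direct quotation.
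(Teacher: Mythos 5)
There is a genuine gap: your proposal treats the proposition as a direct quotation of \cite[Proposition~8.6]{aumonier_h-principle_2022}, but that result (as used in the paper) is stated for a different geometric configuration, and the translation between the two is precisely the content of the proof you need to supply. Concretely, the prior result concerns the map $g \colon \Gammans(\cL) \times X \to J^1\cL \setminus 0$ defined on the \emph{full product}, together with the projection $p \colon \Gammans(\cL) \times X \to \Gammans(\cL)$, and asserts that $p_! \circ g^*$ is an isomorphism onto $H^*(\Gammans(\cL)/\bC^\times;\bQ)$ out of $\Lambda(H^{*\geq 2n+2}(J^1\cL\setminus 0;\bQ))$. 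The proposition to be proved instead concerns $j \colon U(\cL) \to \Omega^1_X \otimes \cL \setminus 0$ on the universal hypersurface and generators in degrees $\geq 2n$. Passing from one to the other requires observing that the square relating the inclusion $\iota \colon \Omega^1_X\otimes\cL\setminus 0 \hookrightarrow J^1\cL\setminus 0$ and the inclusion $i \colon U(\cL) \hookrightarrow \Gammans(\cL)\times X$ is a transverse pullback, deducing the Gysin base-change identity $i_! \circ j^* = g^* \circ \iota_!$, factoring $\pi_! = p_! \circ i_!$, and finally checking that $\iota_!$ carries $H^{*\geq 2n}(\Omega^1_X\otimes\cL\setminus 0;\bQ)$ isomorphically onto $H^{*\geq 2n+2}(J^1\cL\setminus 0;\bQ)$ under the Euler class hypothesis. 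None of this appears in your proposal; the step you label ``(ii) build a continuous analogue of the universal family $(U(\cL),\pi,j)$'' is not carried out by the prior paper but is the new work here.

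Two secondary issues. First, your $\bC^\times$-equivariance paragraph is not needed: the prior result is already quoted with target $H^*(\Gammans(\cL)/\bC^\times;\bQ)$, so the quotient is handled there rather than introduced afresh. Second, your explanation of the degree bound $* \geq 2n$ misattributes it to the $2n$-connectivity of the fibre $J^1_x\cL \setminus 0 \simeq S^{2n+1}$; that fibre gives the range $\geq 2n+2$ appearing in the prior result, whereas the range $\geq 2n$ in the present proposition comes from the fibre $\bC^n \setminus 0 \simeq S^{2n-1}$ of $\Omega^1_X\otimes\cL\setminus 0$. Keeping the two bundles and their ranges straight is exactly what the Gysin-map step is doing.
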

\begin{proof}
Let us write $g \colon \Gammans(\cL) \times X \to J^1\cL \setminus 0$ for the jet map and $p \colon \Gammans(\cL) \times X \to X$ for the projection. Then \cite[Proposition~8.6]{aumonier_h-principle_2022} shows that
\[
    p_! \circ g^* \colon \Lambda(H^{* \geq 2n+2}(J^1\cL \setminus 0; \bQ)) \lra H^*(\Gammans(\cL) /\bC^\times; \bQ)
\]
is an isomorphism in the range $* < \frac{d-1}{2}$. There is a pullback square
\[
\begin{tikzcd}
\Omega^1_X \otimes \cL \setminus 0 \arrow[r, "\iota", hook] & J^1\cL \setminus 0 \\
U(\cL) \arrow[r, "i", hook] \arrow[u, "j"] & \Gammans(\cL) \times X \arrow[u, "g"']
\end{tikzcd}
\]
where $\iota$ and $g$ are transverse, as one sees directly from a local computation using that $\cL$ is globally generated. Thus $i_! \circ j^* = g^* \circ \iota_!$. Furthermore, $\pi = p \circ i$ so that $\pi_! = p_! \circ i_!$. The proposition now follows from the fact that $\iota_!$ sends $H^{* \geq 2n}(\Omega^1_X \otimes \cL \setminus 0; \bQ)$ isomorphically onto $H^{* \geq 2n+2}(J^1\cL \setminus 0; \bQ)$.
\end{proof}

\subsection{Comparison with diffeomorphisms}\label{section:comparison-with-diffeomorphisms}

Let us fix a non-singular section $s \in \Gammans(\cL)$ and write $H := V(s)$ for the associated hypersurface. From the point of view of the Hilbert scheme developed in \cref{section:moduli-of-hypersurfaces}, the subspace $\Gammans(\cL) / \bC^\times$ classifies algebraic bundles with fibres equivalent to $H$ (as divisors) and embedded in $X$. Seeing $H$ only as a smooth oriented real manifold, one can consider the classifying space $\BDiff^\mathrm{or}(H)$ of its group of orientation preserving diffeomorphisms. By definition, this latter space classifies fibre bundles with fibre $H$ and structure group $\mathrm{Diff}^\mathrm{or}(H)$. In particular, there is a map
\begin{equation}\label{equation:classying-map-to-bdiff}
    \Gammans(\cL) / \bC^\times \lra \BDiff^\mathrm{or}(H)
\end{equation}
classifying the universal bundle
\begin{equation}\label{equation:universal-bundle-over-gammans-mod-c}
    U(\cL) / \bC^\times := \{ ([s], x) \in \Gammans(\cL) / \bC^\times \times X \mid s(x) = 0 \} \overset{\pi}{\lra} \Gammans(\cL) / \bC^\times.
\end{equation}
One could wonder if the map~\eqref{equation:classying-map-to-bdiff} induces an isomorphism in rational cohomology in a range of degrees. This was shown to be false when $X = \bP^n$ and $\cL = \cO(d)$ by Randal-Williams \cite{randal-williams_millermoritamumford_2019}. On the other hand, one could alter the situation by considering diffeomorphisms preserving other kinds of tangential structures: we have picked orientation, but could have chosen a spin structure in some cases, or a map to a background space, etc. We describe below a peculiar tangential structure $\theta$ such that the map classifying the universal bundle
\[
    \Gammans(\cL) \lra \BDiff^\theta(H)
\]
is ``very close" to being a rational homology isomorphism in a range of degrees.

\bigskip

Choose maps $[TX] \colon X \to BU(n)$ and $[\cL] \colon X \to BU(1)$ classifying respectively the tangent bundle of $X$ and $\cL$ as topological complex vector bundles. Let $B$ be the space defined by the following homotopy pullback square:
\begin{equation}\label{equation:homotopy-pullback-defining-tangential-structure}
% https://tikzcd.yichuanshen.de/#N4Igdg9gJgpgziAXAbVABwnAlgFyxMJZABgBpiBdUkANwEMAbAVxiRACEQBfU9TXfIRQAmclVqMWbABrdeIDNjwEiZAIzj6zVog4BVABRgAtGoCUAAgA6VvAFt4F9ofNy+SwUVEbqWqbucjSxt7R0DXLnEYKABzeCJQADMAJwg7JDIQHAgkNR4k1PTENWpspABmX0kdEBsAIywYgwtkABVpKmsrFpsAYwAZCi6GmLM3EBS0jNKcxGF8icKkUSzZyoltNnrG5oADXZsINGY4KxBSLq67OhwAC2S7YDRkrgB9YWHGseoGOjqYBgABX4yiEIGSjVuOG4FC4QA
\begin{tikzcd}
B \arrow[rr] \arrow[d]                                                       &  & X \arrow[d, "{\big( [TX], \ [\cL] \big)}"] \\
BU(n-1) \times BU(1) \arrow[rr, "{\big( ``\oplus", \ \mathrm{pr}_2 \big)}"'] &  & BU(n) \times BU(1)                        
\end{tikzcd}
\end{equation}
where $``\oplus" \colon BU(n-1) \times BU(1) \to BU(n)$ classifies taking the direct sum of vector bundles. We will adopt the point of view of spaces over $BO(2n-2)$ to describe tangential structures. (See \cite[Section~4.5]{galatius_moduli_2019} for a discussion.) In this language, the \emph{tangential structure} $\theta$ simply means the map:
\[
    \theta \colon B \lra BU(n-1) \times BU(1) \overset{\mathrm{pr}_1}{\lra} BU(n-1) \lra BO(2n-2).
\]

\begin{remark}
Informally, a $\theta$-structure on a $(2n-2)$-manifold $M$ is the datum of a lift of the map classifying the tangent bundle:
\begin{center}
% https://tikzcd.yichuanshen.de/#N4Igdg9gJgpgziAXAbVABwnAlgFyxMJZABgBoBGAXVJADcBDAGwFcYkQBZEAX1PU1z5CKchWp0mrdgCEA8gAoATGAC0igJQ8+IDNjwEio4uIYs2iENJ7iYUAObwioAGYAnCAFskimjghJRCTN2AB0QnAALGBx6LRd3L0QyED8AmlMpC2QAFQ5KEBpGegAjGEYABQF9YRBXLDsInDiQN08kZNTEHxBGLDBzECh6OCioa24gA
\begin{tikzcd}
                                          & B \arrow[d, "\theta"] \\
M \arrow[r, "{[TM]}"'] \arrow[ru, dashed] & BO(2n-2)             
\end{tikzcd}
\end{center}
up to homotopy. By the universal property of the homotopy pullback, this amounts to providing two maps
\[
    M \lra BU(n-1) \times BU(1) \quad \text{ and } \quad M \lra X
\]
which become homotopic after further composing to $BU(n) \times BU(1)$ and such that $M \to BU(n-1)$ classifies the tangent bundles of $M$. In other words, this is the data of a map $\iota \colon M \to X$ and a complex line bundle $\cL'$ (corresponding to $M \to BU(1)$) such that $TH \oplus \cL' \cong \iota^*TX$ and $\iota^* \cL \cong \cL'$. Therefore, a $\theta$-structure on $M$ should be intuitively interpreted as a choice of an immersion $\iota \colon M \to X$ with normal bundle $\iota^*\cL$.
\end{remark}

We have chosen to construct $B$ via the homotopy pullback~\eqref{equation:homotopy-pullback-defining-tangential-structure} as it allowed us to informally understand the geometric meaning of a $\theta$-structure. But it turns out that we can give more familiar expressions for $B$ and the bundle classified by $\theta$ as explained in the following two lemmas.

\begin{lemma}\label{lemma:B-homotopy-equivalent-sphere-bundle}
There is a homotopy equivalence above $X$
\[
    B \simeq \Omega^1_X \otimes \cL \setminus 0.
\]
\end{lemma}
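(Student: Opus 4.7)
The plan is to unravel the homotopy pullback defining $B$ and directly identify the resulting fibre bundle over $X$ with $\Omega^1_X \otimes \cL \setminus 0$. Since the right leg $\mathrm{pr}_2 \colon BU(n-1) \times BU(1) \to BU(1)$ of the pullback square is already a projection, the homotopy fibre of $(\oplus, \mathrm{pr}_2)$ over $(T_xX, \cL|_x)$ is equivalent to the homotopy fibre of $BU(n-1) \overset{-\oplus \cL|_x}{\lra} BU(n)$ over $T_xX$. Concretely, this fibre is the space of pairs $(V, \phi)$ where $V$ is a complex vector space of dimension $n-1$ and $\phi \colon V \oplus \cL|_x \overset{\sim}{\lra} T_xX$ is a linear isomorphism.

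First I would define a natural forgetful map from each such fibre to $(\Omega^1_X \otimes \cL)|_x \setminus 0 = \Hom(T_xX, \cL|_x) \setminus 0$, by sending $(V, \phi)$ to the composition $T_xX \overset{\phi^{-1}}{\lra} V \oplus \cL|_x \twoheadrightarrow \cL|_x$, using that any nonzero linear map $T_xX \to \cL|_x$ is automatically surjective for dimension reasons. To check that this is a weak equivalence, I would compute its fibre over a given surjection $\psi$: the subspace $V = \ker \psi$ is forced, while the remaining data of $\phi$ amounts to a splitting $\cL|_x \to T_xX$ of the short exact sequence $0 \to V \to T_xX \to \cL|_x \to 0$. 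The space of such splittings is an affine space modelled on $\Hom(\cL|_x, V)$, hence contractible.

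Finally, to globalise the statement, I would check that both $B \to X$ and $\Omega^1_X \otimes \cL \setminus 0 \to X$ are fibre bundles and that the forgetful construction can be performed uniformly in $x$, yielding a map of fibre bundles which is a fibrewise weak equivalence and therefore a weak equivalence of total spaces over $X$. The main technical obstacle will be choosing a strict point-set model for the homotopy pullback $B$ so that this global assembly is manifestly continuous; a convenient such model is the space of triples $(x, V, \iota)$ with $x \in X$, $V \subset T_xX$ a codimension-$1$ complex subspace, and $\iota \colon \cL|_x \hookrightarrow T_xX$ a linear injection whose image is complementary to $V$. From this model, both the comparison map to $B$ and the forgetful map to $\Omega^1_X \otimes \cL \setminus 0$ (sending $(x, V, \iota)$ to the associated quotient $T_xX \twoheadrightarrow T_xX/V \cong \cL|_x$) are transparently continuous, and the contractibility argument above applies fibrewise to show they are both weak equivalences.
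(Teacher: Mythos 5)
Your proposal and the paper's proof hinge on the same core mechanism — identifying the homotopy fibre of $(\text{``}\oplus\text{''}, \mathrm{pr}_2)$ with the sphere $\bC^n\setminus 0$ via the ``kernel of a surjection'' map — but the packaging is different. The paper constructs an explicit point-set model of the map $BU(n-1)\times BU(1)\to BU(n)\times BU(1)$ as the sphere bundle $\gamma_n^\vee\boxtimes\gamma_1\setminus 0$ (using the identification sending $(P,L,\varphi\otimes v)$ to $(\ker(\varphi\otimes v),L)$), which is a fibration; the homotopy pullback $B$ then becomes a strict pullback of that sphere bundle along $([TX],[\cL])$, which is literally $\Omega^1_X\otimes\cL\setminus 0$, and nothing further needs to be compared. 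You instead propose a concrete model $B'$ of triples $(x,V,\iota)$, map it to $\Omega^1_X\otimes\cL\setminus 0$ by the induced quotient $T_xX\twoheadrightarrow T_xX/V\cong\cL|_x$, and observe that the fibres of this forgetful map are affine spaces of splittings, hence contractible — this part is correct and is a genuine (if mild) variant: you prove a fibrewise weak equivalence where the paper produces a homeomorphism of bundle models.

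The one place your argument is thinner than you suggest is the comparison $B'\to B$. You call it ``transparently continuous,'' but $B$ is a homotopy pullback, so to compare you must either (a) produce a map from $B'$ into a mapping-path-space model, which requires using the chosen classifying maps $[TX],[\cL]$ to embed $V\subset T_xX$ and $\cL|_x$ into $\bC^\infty$ and to produce the connecting path — not hard, but not free; or (b) replace $(\text{``}\oplus\text{''},\mathrm{pr}_2)$ by a fibration and take the strict pullback, at which point you are essentially reconstructing the paper's model $\gamma_n^\vee\boxtimes\gamma_1\setminus 0$. So I would not call it a gap so much as a step that, when spelled out, collapses your route onto the paper's. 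The paper's approach buys you a cleaner globalisation precisely because it never has to re-identify a strict model with the homotopy pullback; your approach buys a perhaps more conceptual fibrewise picture (splittings of a short exact sequence) at the cost of that extra comparison.
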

\begin{proof}
We will use the following explicit point set models:
\begin{align*}
    BU(j) &:= \{ P \subset \bC^\infty \mid \text{$P$ is a $j$-dimensional plane} \}, \\
    \gamma_j &:= \{ (P, v) \mid P \in BU(j), \ v \in P \}, \\
    \gamma_j^\vee &:= \{ (P, \varphi) \mid P \in BU(j), \ \varphi \colon P \to \bC \text{ linear map}\},
\end{align*}
for the classifying space, and its tautological vector bundle and the dual of it.
Recall that the classical fibration sequence
\[
    \bC^n \setminus 0 \lra BU(n-1) \lra BU(n)
\]
can be modelled by the sphere bundle of the dual tautological bundle using the homeomorphism
\[
    \gamma_n^\vee \setminus 0 \cong BU(n-1), \quad (P, \varphi \colon P \twoheadrightarrow \bC) \mapsto \ker(\varphi).
\]
From the pullback square
\begin{center}
% https://tikzcd.yichuanshen.de/#N4Igdg9gJgpgziAXAbVABwnAlgFyxMJZABgBpiBdUkANwEMAbAVxiRACEBVACjAFoAjAEoQAX1LpMufIRQDyVWoxZsuvEeMnY8BImQGL6zVog49+wgAQAda3gC28S2uFiJIDNplF5B6kZVTNTAhGzssRzhnHldRRRgoAHN4IlAAMwAnCHskACZqHAgkAGZ-ZRMQbgADKtsINGY4axBSMPs6HAALDPtgNAzRAH1ckWoGOgAjGAYABSkdWRAMrETOnDd0rJzEeRBCks0QTOykMj2ixFzD4+2z-Z240SA
\begin{tikzcd}
BU(n-1) \arrow[d] \arrow[r] \arrow[dr, phantom, "\scalebox{1.5}{$\lrcorner$}" , very near start, color=black]                                  & BU(n) \arrow[d]    \\
BU(n-1) \times BU(1) \arrow[r, "{(``\oplus", \mathrm{pr}_2)}"'] & BU(n) \times BU(1)
\end{tikzcd}
\end{center}
we see that the homotopy fibre of the bottom map is $\bC^n \setminus 0$. In fact, we can likewise model the fibre sequence
\[
    \bC^n \setminus 0 \lra BU(n-1) \times BU(1) \overset{(``\oplus", \mathrm{pr}_2)}{\lra} BU(n) \times BU(1)
\]
by the sphere bundle $\gamma_n^\vee \boxtimes \gamma_1 \setminus 0$. Indeed, we may write
\[
    \gamma_n^\vee \boxtimes \gamma_1 \setminus 0 \cong \{ (P, L, \varphi \otimes v) \mid P \in BU(n), \ L \in BU(1), \ \varphi \otimes v \in P^\vee \otimes L \setminus 0 \}
\]
and use the homeomorphism
\[
    \gamma_n^\vee \boxtimes \gamma_1 \setminus 0 \cong BU(n-1) \times BU(1), \quad (P, L, \varphi \otimes v) \mapsto (\ker(\varphi \otimes v), L).
\]
Under these identifications, one can check that the map $``\oplus"$ can be modelled by
\[
    (P, L, \varphi \otimes v) \longmapsto \ker(\varphi \otimes v) \oplus L \subset \bC^\infty \oplus \bC^\infty \cong \bC^\infty.
\]
Hence, by pulling back along $([TX], [\cL])$, one sees that $B \simeq \Omega^1_X \otimes \cL \setminus 0$.
\end{proof}

\begin{lemma}\label{lemma:virtual-vector-bundle-is-genuine}
Let $q \colon \Omega^1_X \otimes \cL \setminus 0 \to X$ be the projection. The virtual vector bundle $q^*(TX - \cL)$ is in fact the genuine vector bundle $\theta^*\gamma$ classified by the map $\theta \colon \Omega^1_X \otimes \cL \setminus 0 \to BO(2n-2)$.
\end{lemma}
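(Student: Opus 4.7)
My plan is to exhibit an explicit rank $n-1$ complex sub-bundle $K \subset q^*TX$ so that $q^*TX \cong K \oplus q^*\cL$ as topological complex vector bundles, and then recognise $K$ as the pullback of the tautological complex $(n-1)$-plane bundle under the map $B \to BU(n-1)$ appearing in the proof of \cref{lemma:B-homotopy-equivalent-sphere-bundle}.

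\textbf{Construction of $K$.} Over the total space $B = \Omega^1_X \otimes \cL \setminus 0$ we have a tautological non-vanishing section $\tau$ of $q^*(\Omega^1_X \otimes \cL)$: it is the section whose value at a point $(x,\varphi) \in B$, with $\varphi \in (T^*_xX \otimes \cL_x) \setminus 0$, is $\varphi$ itself. Via the canonical identification $\Omega^1_X \otimes \cL \cong \mathrm{Hom}(TX, \cL)$, the section $\tau$ is the same data as a morphism of complex vector bundles on $B$
\[
    \widetilde{\tau} \colon q^*TX \lra q^*\cL
\]
which is fibrewise surjective, since $\varphi \neq 0$ everywhere on $B$. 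I then define $K := \ker(\widetilde{\tau})$; it is a rank $n-1$ complex sub-bundle of $q^*TX$ and fits into a short exact sequence
\[
    0 \lra K \lra q^*TX \overset{\widetilde{\tau}}{\lra} q^*\cL \lra 0.
\]

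\textbf{Splitting and computation of the virtual bundle.} Choosing a Hermitian metric on $\Omega^1_X \otimes \cL$ (or equivalently on $TX$ and $\cL$) splits this short exact sequence topologically, producing an isomorphism $q^*TX \cong K \oplus q^*\cL$ of complex vector bundles on $B$. Consequently, in reduced $K$-theory,
\[
    q^*(TX - \cL) = [q^*TX] - [q^*\cL] = [K],
\]
so that the virtual bundle $q^*(TX-\cL)$ is represented by the genuine rank $n-1$ complex vector bundle $K$.

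\textbf{Identification with $\theta^*\gamma$.} It remains to check that $K$ is the pullback of the tautological bundle along $\theta$. I will trace through the explicit homotopy equivalence of \cref{lemma:B-homotopy-equivalent-sphere-bundle}: the map $B \to BU(n-1)$ it produces sends a point $(x, \varphi \otimes v)$ of $\Omega^1_X \otimes \cL \setminus 0$ to $\ker(\varphi \otimes v) \subset T_xX$, which is precisely the fibre of $K$ at that point. Hence $K$ is, by construction, the pullback of the tautological complex $(n-1)$-plane bundle on $BU(n-1)$. Postcomposing with $BU(n-1) \to BO(2n-2)$ means forgetting the complex structure, and the definition of $\theta$ then gives $K_\bR \cong \theta^* \gamma_{2n-2}$, completing the proof.

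The only subtle point is the functoriality of the identification in \cref{lemma:B-homotopy-equivalent-sphere-bundle}: I need the explicit model written in that proof to show that, under the homotopy equivalence $B \simeq \Omega^1_X \otimes \cL \setminus 0$, the map to $BU(n-1)$ really does classify the kernel bundle $K$ and not some other $(n-1)$-plane bundle. Since that earlier proof is phrased as pulling back the universal ``$\ker(\varphi\otimes v)$" construction along $([TX],[\cL])$, this identification is essentially automatic and should not cause difficulty.
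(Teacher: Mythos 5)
Your proof is correct and takes essentially the same route as the paper: both identify $\theta^*\gamma$ as the kernel of the tautological surjection $q^*TX \to q^*\cL$ via the explicit point-set model from \cref{lemma:B-homotopy-equivalent-sphere-bundle}, and both use the resulting short exact sequence $0 \to \theta^*\gamma \to q^*TX \to q^*\cL \to 0$. The one small difference is that you insert a Hermitian-metric splitting to get an honest isomorphism $q^*TX \cong K \oplus q^*\cL$ before passing to $K$-theory classes, whereas the paper simply reads off $[q^*TX] - [q^*\cL] = [\theta^*\gamma]$ directly from the short exact sequence; your extra step is harmless but not needed.
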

\begin{proof}
We will use the homeomorphism
\[
    \Omega^1_X \otimes \cL \setminus 0 \cong \{ (x, \varphi) \mid x \in X, \ \varphi \colon TX|_x \twoheadrightarrow \cL|_x \text{ surjective linear map} \}
\]
given by identifying a non-zero vector of $(\Omega^1_X \otimes \cL)|_x$ with a surjective linear map. As one sees from the point set models described in the proof of \cref{lemma:B-homotopy-equivalent-sphere-bundle}, the pullback vector bundle $\theta^*\gamma$ classified by $\theta$ is equivalent to the one whose fibre above a point $(x, \varphi)$ is given by the kernel of $\varphi$. Writing out the vector bundles
\begin{align*}
    q^*TX &= \{ (x,\varphi,v) \mid (x,\varphi) \in \Omega^1\otimes \cL \setminus 0, \ v \in TX|_x \}, \\
    q^*\cL &= \{ (x,\varphi,v) \mid (x,\varphi) \in \Omega^1\otimes \cL \setminus 0, \ v \in \cL|_x \},
\end{align*}
we identify $\theta^*\gamma$ as the kernel of the morphism of vector bundles
\[
    q^*TX \lra q^*\cL, \ (x,\varphi,v) \longmapsto (x,\varphi,\varphi(v)).
\]
We thus obtain the short exact sequence of vector bundles
\[
    0 \lra \theta^*\gamma \lra q^*TX \lra q^*\cL \lra 0
\]
which proves the lemma.
\end{proof}

Let $H = V(s)$ be a smooth hypersurface with $s \in \Gammans(\cL)$. Using non-singularity, we obtain a map $\ell \colon H \to \Omega^1_X \otimes \cL \setminus 0$ given by $\ell(x) = ds(x)$.
\begin{lemma}\label{lemma:high-connectivity-lefschetz}
The map $\ell \colon H \to \Omega^1_X \otimes \cL \setminus 0$ is $(n-1)$-connected.
\end{lemma}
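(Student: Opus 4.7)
The plan is to exploit the fact that $\ell$ is a lift of the inclusion $\iota \colon H \hookrightarrow X$ through the projection $q \colon \Omega^1_X \otimes \cL \setminus 0 \to X$, since by construction $q(\ell(x)) = q(ds(x)) = x$. I would then compare the known connectivities of $\iota$ and $q$ to conclude.

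First I would invoke the classical Lefschetz hyperplane theorem. Since $\cL$ is very ample, the linear system $|\cL|$ yields a closed immersion $X \hookrightarrow \bP^N$ under which $H = V(s)$ becomes a smooth hyperplane section. The Lefschetz theorem then gives that $\iota \colon H \hookrightarrow X$ is $(n-1)$-connected, i.e. $\iota_* \colon \pi_i(H) \to \pi_i(X)$ is an isomorphism for $i < n-1$ and a surjection for $i = n-1$.

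Next I would identify $q$ as a sphere bundle: the vector bundle $\Omega^1_X \otimes \cL$ has complex rank $n$, so removing the zero section gives a fibre bundle $q \colon \Omega^1_X \otimes \cL \setminus 0 \to X$ whose fibres are homotopy equivalent to $S^{2n-1}$. Since the fibre is $(2n-2)$-connected, the long exact sequence of the fibration shows that $q$ is $(2n-1)$-connected; in particular $q_* \colon \pi_i(\Omega^1_X \otimes \cL \setminus 0) \to \pi_i(X)$ is an isomorphism for $i < 2n - 1$. Given the standing hypothesis $n \geq 4$, this range comfortably contains $i \leq n-1$.

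Combining these two inputs with the factorisation $\iota = q \circ \ell$, one reads off $\ell_* = q_*^{-1} \circ \iota_*$ in the relevant range, which shows that $\ell_*$ is an isomorphism for $i < n-1$ and surjective for $i = n - 1$, i.e. $\ell$ is $(n-1)$-connected. There is no real obstacle here; the argument is essentially formal once one has the correct identification of $q$ as a highly connected sphere bundle and the Lefschetz bound for $\iota$.
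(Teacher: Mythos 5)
Your proof is correct and is essentially the same argument as the paper's: both factor the inclusion $\iota = q \circ \ell$, use the Lefschetz hyperplane theorem for $\iota$, and the $(2n-1)$-connectivity of the sphere-bundle projection $q$ to conclude. The only cosmetic difference is that you spell out the fibration long exact sequence for $q$ and invoke the (unnecessary for this step) standing hypothesis $n \geq 4$, where the paper just states $q$ is $(2n-1)$-connected.
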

\begin{proof}
The inclusion $\iota \colon H \hookrightarrow X$ factors as
\[
    H \overset{\ell}{\lra} \Omega^1_X \otimes \cL \setminus 0 \lra X
\]
where the second map is the projection map of the bundle, hence $(2n-1)$-connected. Therefore it suffices to show that $\iota \colon H \to X$ is $(n-1)$-connected. But this is precisely the Lefschetz hyperplane theorem.
\end{proof}

Recall that a \emph{$\theta$-structure} on a manifold $M$ is a bundle map (fibrewise linear isomorphism) $TM \to \theta^*\gamma$, where $\gamma$ is the universal bundle above $BO(2n-2)$. In the proposition below, we observe that each $\ell \colon H \to \Omega^1_X \otimes \cL \setminus 0$ underlies a bundle map $\hat\ell \colon TH \to \theta^*\gamma$, and that these naturally assemble when varying $H$ in the universal bundle.
\begin{proposition}\label{proposition:bundle-of-theta-structures}
The universal bundle $U(\cL) \to \Gammans(\cL)$ admits the structure of a smooth fibre bundle with $\theta$-structure over $\ell$ in each fibre.
\end{proposition}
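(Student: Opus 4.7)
The proof splits into two parts: first establish that $\pi \colon U(\cL) \to \Gammans(\cL)$ is a smooth fibre bundle, and then exhibit a $\theta$-structure on it which restricts to $\hat\ell$ in each fibre.

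For the fibre bundle structure, the plan is to invoke Ehresmann's lemma. The incidence variety $U(\cL) \subset \Gammans(\cL) \times X$ is cut out by the equation $s(x) = 0$. Writing the tangent space at $(s,x)$ as pairs $(\dot s, \dot x) \in \Gamma(\cL) \times T_xX$ satisfying $\dot s(x) + ds(x)(\dot x) = 0$, one checks that the projection to $\Gamma(\cL) = T_s\Gammans(\cL)$ is surjective because $ds(x) \colon T_xX \to \cL|_x$ is surjective (since $ds(x) \neq 0$ and $\cL|_x$ is one-dimensional). Hence $U(\cL)$ is smooth and $\pi$ is a submersion. Properness follows from the compactness of $X$, completing the application of Ehresmann.

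For the $\theta$-structure, I would assemble the fibrewise maps $\ell$ into a single map on the total space. Concretely, define
\[
    \tilde\ell \colon U(\cL) \lra \Omega^1_X \otimes \cL \setminus 0, \quad (s,x) \longmapsto ds(x),
\]
which is well-defined and smooth since $s$ is non-singular, and whose restriction to the fibre above $s$ is exactly the map $\ell$ of \cref{lemma:high-connectivity-lefschetz}. Under the homotopy equivalence $B \simeq \Omega^1_X \otimes \cL \setminus 0$ of \cref{lemma:B-homotopy-equivalent-sphere-bundle}, this provides the underlying map of a would-be $\theta$-structure.

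It remains to promote $\tilde\ell$ to a bundle map from the vertical tangent bundle $T_\pi U(\cL)$ to $\theta^*\gamma$. By \cref{lemma:virtual-vector-bundle-is-genuine}, the fibre of $\theta^*\gamma$ above a point $(x,\varphi) \in \Omega^1_X \otimes \cL \setminus 0$ is canonically identified with $\ker(\varphi) \subset T_xX$. On the other hand, the vertical tangent space of $\pi$ at $(s,x)$ is $T_xV(s) = \ker(ds(x))$. These agree tautologically, and so the identity map on each fibre produces a canonical bundle isomorphism
\[
    \hat{\tilde\ell} \colon T_\pi U(\cL) \overset{\cong}{\lra} \tilde\ell^*\theta^*\gamma
\]
covering $\tilde\ell$, which by construction restricts to $\hat\ell$ on each fibre $V(s)$. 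The main subtlety—more bookkeeping than obstacle—is matching the two descriptions of the relevant kernel bundle through the identifications made in \cref{lemma:B-homotopy-equivalent-sphere-bundle} and \cref{lemma:virtual-vector-bundle-is-genuine}; once these are in place the $\theta$-structure drops out of the definition.
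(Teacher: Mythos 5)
Your proof is correct and takes essentially the same route as the paper: identify the vertical tangent bundle with the tautological kernel bundle $\ker(ds(x))$ via the point-set models of \cref{lemma:virtual-vector-bundle-is-genuine}, and observe that $(s,x) \mapsto ds(x)$ and $(s,x,v) \mapsto (x, ds(x), v)$ provide the needed bundle map. The only difference is that you also explicitly re-verify the smooth fibre bundle structure of $\pi$ via Ehresmann's lemma, which the paper treats as already established (cf. the argument in \cref{proposition:diffeomorphic-hypersurfaces}); this is a harmless addition.
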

\begin{proof}
Let $T_vU(\cL)$ be the vertical tangent bundle of the universal bundle. By definition of a bundle with $\theta$-structure, we have to provide the horizontal maps in the following diagram to construct a vector bundle map
\begin{center}
% https://tikzcd.yichuanshen.de/#N4Igdg9gJgpgziAXAbVABwnAlgFyxMJZABgBpiBdUkANwEMAbAVxiRABUB9GgVQAoAOgIDGAGQCUIAL6l0mXPkIoyARiq1GLNvyFjJMudjwEiK0mur1mrRCCEB5ALYwA5nQB6KzgA0ABEIg8Zzh-EVFQuBgcRywwJhDiaVkQDCNFU3J1Ky1bIRwACyiPACohN0dHOml1GCgXeCJQADMAJwhHJDMQHAgkACYDEFb2pABmah7+weGOxDJu3sQVabbZ+cnEUakKKSA
\begin{tikzcd}
T_vU(\cL) \arrow[d] \arrow[r] & \theta^*\gamma \arrow[d]           \\
U(\cL) \arrow[r]              & \Omega^1_X \otimes \cL \setminus 0
\end{tikzcd}
\end{center}
which restricts to a linear isomorphism in each fibre.
Using the notation from the proof of \cref{lemma:virtual-vector-bundle-is-genuine}, we write
\[
    \Omega^1_X \otimes \cL \setminus 0 = \{ (x, \varphi) \mid x \in X, \ \varphi \colon TX|_x \twoheadrightarrow \cL|_x \text{ surjective linear map} \}
\]
and
\[
    \theta^*\gamma = \{ (x, \varphi, v) \mid (x,\varphi) \in \Omega^1_X \otimes \cL \setminus 0, \ v \in \ker(\varphi) \}.
\]
Differentiating a non-singular section $s \colon X \to \cL$ yields a short exact sequence of vector bundles
\[
    0 \lra TV(s) \lra TX \overset{ds}{\lra} \cL \lra 0
\]
which shows that $\ker(ds(x)) = TV(s)|_x$. In particular, we have a point-set model for the vertical tangent bundle given by
\[
    T_vU(\cL) = \left\{ (s,x,v) \mid s \in U(\cL), \ x \in V(s), \ v \in \ker(ds(x)) \right\}.
\]
Hence, taking
\[
    U(\cL) \lra \Omega^1_X \otimes \cL \setminus 0, \quad (s,x) \longmapsto (x, ds(x))
\]
and
\[
    T_vU(\cL) \lra \theta^*\gamma, \quad (s,x,v) \longmapsto (x, ds(x), v)
\]
gives the wanted square.
\end{proof}

Let us now look at hypersurfaces of higher degree. For every integer $d \geq 1$, we pick a section $s_d \in \Gammans(\cL^{\otimes d})$ and write $H_d = V(s_d) \subset X$ for the associated hypersurface. Replacing $\cL$ by $\cL^{\otimes d}$ in the diagram~\eqref{equation:homotopy-pullback-defining-tangential-structure}, we obtain spaces $B_d \simeq \Omega^1_X \otimes \cL^{\otimes d} \setminus 0$. We write $\theta_d \colon B_d \to BO(2n-2)$ for the tangential structure and $\hat\ell_d \colon TH_d \to \theta_d^*\gamma$ for the tangential structure on $H_d$ induced from the inclusion inside $X$ (as constructed in the proof of \cref{proposition:bundle-of-theta-structures}). Let $\mathrm{Bun}^{\theta_d}(H_d)$ denote the space of bundles maps $TH_d \to \theta_d^*\gamma$, and write
\[
    \cM^{\theta_d}(H_d) = \mathrm{Bun}^{\theta_d}(H_d) \sslash \mathrm{Diff}(H_d)
\]
for the Borel construction. As explained in \cite[Section~2.2]{galatius_moduli_2019}, this is a moduli space classifying smooth $H_d$-bundles with $\theta_d$-structure. We let $\cM^{\theta_d}(H_d, \hat\ell_d) \subset \cM^{\theta_d}(H_d)$ be the connected component of $\hat\ell_d$. Work of Galatius and Randal-Williams provides the following:
\begin{theorem}[{Compare \cite[Theorem~4.5]{galatius_moduli_2019}}]\label{theorem:oscar-soren-maintheorem}
Using the notations as above, let $\alpha = c_1(\cL)$ and $N := \int_X \alpha^{n+1} \neq 0$. There is a map
\[
    \cM^{\theta_d}(H_d, \hat\ell_d) \lra \Omega^\infty MT\theta_d \simeq \Omega^\infty \big(\Omega^1_X \otimes \cL^{\otimes d} \setminus 0 \big)^{q^*\cL^{\otimes d} - q^*TX}
\]
which, when regarded as a map onto the path component that it hits, induces an isomorphism in integral homology in degrees $* \leq \frac{1}{3}Cd^{n+1} + O(d^n)$, for some constant $C$ depending on $n$ and satisfying $\frac{1}{2}\cdot\frac{13}{15}N \leq C$.
\end{theorem}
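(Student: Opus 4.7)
The strategy is to invoke the homological stability theorem for moduli spaces of manifolds with tangential structure, due to Galatius and Randal-Williams in the simply connected case and extended by Friedrich to the virtually polycyclic setting. All the geometric preparation has been done: the spaces $B_d$, the tangential structures $\theta_d$, and the $\theta_d$-structure $\hat\ell_d$ have already been produced in \cref{lemma:B-homotopy-equivalent-sphere-bundle,lemma:virtual-vector-bundle-is-genuine,proposition:bundle-of-theta-structures}. The proof then proceeds in three steps: identify the target, verify the hypotheses of the stability theorem, and estimate the stable genus.

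First, I would identify $MT\theta_d$ explicitly. By definition, $MT\theta_d$ is the Thom spectrum of the virtual bundle $-\theta_d^*\gamma$ over $B_d$. The short exact sequence of bundles constructed in the proof of \cref{lemma:virtual-vector-bundle-is-genuine} shows that $-\theta_d^*\gamma = q^*\cL^{\otimes d} - q^*TX$ in reduced K-theory, and combined with the homotopy equivalence $B_d \simeq \Omega^1_X \otimes \cL^{\otimes d} \setminus 0$ of \cref{lemma:B-homotopy-equivalent-sphere-bundle} this gives the displayed identification. The map from $\cM^{\theta_d}(H_d,\hat\ell_d)$ is the universal parametrised Pontryagin--Thom / Madsen--Tillmann--Weiss map associated to a bundle of $\theta_d$-manifolds. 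The hypotheses of [galatius\_moduli\_2019, Theorem~4.5] (and its extension in [friedrich\_homological\_2017]) can then be verified directly: \cref{lemma:high-connectivity-lefschetz} shows that the underlying map $\ell_d$ of $\hat\ell_d$ is $(n-1)$-connected, so in particular the required sphericality/connectivity holds; the bound $n \geq 4$ ensures that the real dimension $2n-2 \geq 6$ where the surgery arguments apply; and the polycyclic assumption on $\pi_1(X)$ passes to $\pi_1(H_d)$ via Lefschetz and is precisely what Friedrich's extension requires.

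The main quantitative step is to bound the stable $\theta_d$-genus $g(H_d,\hat\ell_d)$ from below, since the stability theorem yields an isomorphism in the range $* \leq (g-3)/2$. The middle Betti number is computed from the adjunction formula: since $H_d$ is $(n-2)$-connected,
\[
b_{n-1}(H_d) \;=\; (-1)^{n-1}\bigl(\chi(H_d) - 2\bigr), \qquad \chi(H_d) \;=\; \int_X d\alpha \cdot \Bigl[ \tfrac{c(TX)}{1 + d\alpha} \Bigr]_{n-1},
\]
whose leading contribution as $d \to \infty$ is $(-1)^{n-1} N d^{n+1}$, so $b_{n-1}(H_d) = Nd^{n+1} + O(d^n)$. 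The stable genus of an $(n-2)$-connected closed $\theta_d$-manifold is bounded below by half the rank of the middle homology up to a correction measuring the failure of the intersection form to be hyperbolic; Friedrich's explicit lower bound on this quantity produces the factor $\tfrac{13}{15}$, yielding $g(H_d,\hat\ell_d) \geq \tfrac{2}{3}Cd^{n+1} + O(d^n)$ with $C \geq \tfrac{1}{2}\cdot\tfrac{13}{15}N$, from which the stated range follows.

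The main obstacle is this last genus bound. The growth rate $Nd^{n+1}$ of $b_{n-1}(H_d)$ is straightforward, but to turn a rank estimate into a genus estimate one must control the isotropy-theoretic invariants (symmetric or symplectic, depending on the parity of $n-1$) of the intersection form, together with the subtlety that the stabilising classes must carry compatible $\theta_d$-structure. Each step is present in the literature [galatius\_moduli\_2019, friedrich\_homological\_2017], and the proof reduces to a careful citation combined with the Chern-class bookkeeping above; the constant $\tfrac{13}{15}$ is what those references provide.
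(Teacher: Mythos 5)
Your proposal matches the paper's proof essentially point for point: the paper simply cites \cite[Theorem~4.5]{galatius_moduli_2019}, verifying the connectivity hypothesis via \cref{lemma:high-connectivity-lefschetz} and the Thom spectrum identification via \cref{lemma:virtual-vector-bundle-is-genuine}, and delegates the range to \cite[Remark~5.6]{galatius_moduli_2019}. The extra detail you supply — the Euler-characteristic computation of the middle Betti number and the genus lower bound — is exactly the content of that cited remark, so you are unpacking a citation rather than taking a different route.

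One imprecision is worth flagging. You write ``since $H_d$ is $(n-2)$-connected, $b_{n-1}(H_d) = (-1)^{n-1}(\chi(H_d)-2)$.'' The Lefschetz hyperplane theorem says the \emph{inclusion} $H_d \hookrightarrow X$ is $(n-1)$-connected, which forces $H_i(H_d) \cong H_i(X)$ for $i < n-1$, not that $H_d$ is $(n-2)$-connected; the latter would require $X$ itself to be highly connected, but the paper only assumes $\pi_1(X)$ virtually polycyclic. So the displayed identity is not literally correct. The correct version subtracts the (Poincaré-dual pair of) Betti numbers of $X$ in degrees $\neq n-1$, but since those are $d$-independent this does not change the asymptotic growth of $b_{n-1}(H_d)$, and your conclusion stands. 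Separately, with $\dim_\bC X = n$ as used throughout this section, $\chi(H_d)$ grows like $d^n$ and the leading coefficient is $\int_X \alpha^n$; the $d^{n+1}$ and $\int_X \alpha^{n+1}$ appearing in both the paper's statement and your sketch carry over GRW's convention (ambient $\bP^{n+1}$, hypersurface of complex dimension $n$) without translating indices, so you have inherited that offset from the paper rather than introduced it. Finally, the paper attributes the $\frac{13}{15}$ factor to \cite[Remark~5.6]{galatius_moduli_2019} whereas you attribute it to Friedrich; the role of \cite{friedrich_homological_2017} is to relax the simply-connectedness hypothesis rather than to provide that constant.
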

\begin{proof}
The connectivity assumption of \cite[Theorem~4.5]{galatius_moduli_2019} is verified in \cref{lemma:high-connectivity-lefschetz}. The identification of the Thom spectrum is given by \cref{lemma:virtual-vector-bundle-is-genuine}. Finally, the range given is explained in \cite[Remark~5.6]{galatius_moduli_2019}
\end{proof}

A major advantage of that theorem resides in the fact that the rational homotopy of the infinite loop space is easily calculated:
\begin{lemma}
Suppose that the Euler class of $\Omega^1_X \otimes \cL^{\otimes d}$ does not vanish. Then there is an isomorphism of commutative rings
\begin{align*}
    H^*\big(\cM^{\theta_d}(H_d, \hat\ell_d); \bQ\big) &\cong \Lambda\big(H^{2n-1}(X)[1] \oplus H^1(X)[2] \oplus H^2(X)[3] \oplus \cdots \oplus H^{2n}(X)[2n+1]\big) \\
    &=: \Lambda\big(H^{2n-1}(X)[1]\big) \otimes \Lambda\big(H^{\bullet > 0}(X)[\bullet +1]\big)
\end{align*}
where $H^i(X)[j]$ denotes the graded $\bQ$-vector space $H^i(X;\bQ)$ placed in degree $j$.
\end{lemma}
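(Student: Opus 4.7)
The plan is to combine the Galatius--Randal-Williams stable homology theorem (\cref{theorem:oscar-soren-maintheorem}) with a direct computation of the rational cohomology of the Madsen--Tillmann spectrum $MT\theta_d$. The overall structure is in four steps: (i) pass from $\cM^{\theta_d}(H_d,\hat\ell_d)$ to $\Omega^\infty MT\theta_d$; (ii) rationally split the infinite loop space; (iii) compute the rational cohomology of the Thom spectrum via the Thom isomorphism; (iv) use the Gysin sequence to relate this back to the cohomology of $X$.

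In more detail: by \cref{theorem:oscar-soren-maintheorem}, the map $\cM^{\theta_d}(H_d,\hat\ell_d) \to \Omega^\infty_0 MT\theta_d$ (the subscript $0$ denoting the component hit) is an isomorphism in rational cohomology in the stable range. Since the underlying spectrum is rational after tensoring with $\bQ$, it splits as a product of Eilenberg--MacLane spectra, and thus
\[
    H^*\bigl(\Omega^\infty_0 MT\theta_d;\bQ\bigr) \;\cong\; \Lambda\bigl(H^{*>0}(MT\theta_d;\bQ)\bigr)
\]
as graded commutative algebras. It therefore suffices to identify the positive-degree rational cohomology of the spectrum. Using the identification $MT\theta_d \simeq (\Omega^1_X \otimes \cL^{\otimes d}\setminus 0)^{q^*\cL^{\otimes d} - q^*TX}$ from \cref{theorem:oscar-soren-maintheorem} and the fact that the virtual bundle $q^*\cL^{\otimes d} - q^*TX$ is rationally oriented of rank $2-2n$ (as a complex virtual bundle), the Thom isomorphism gives
\[
    H^j(MT\theta_d;\bQ) \;\cong\; H^{j+2n-2}\bigl(\Omega^1_X\otimes\cL^{\otimes d}\setminus 0;\bQ\bigr).
\]

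The last ingredient is to compute $H^*(\Omega^1_X\otimes\cL^{\otimes d}\setminus 0;\bQ)$ using the Gysin sequence of the sphere bundle $S^{2n-1}\to\Omega^1_X\otimes\cL^{\otimes d}\setminus 0 \to X$. This is precisely where the non-vanishing Euler class assumption enters: the Euler class $e\in H^{2n}(X;\bQ)\cong\bQ$ being non-zero means that $\cup e\colon H^0(X;\bQ)\to H^{2n}(X;\bQ)$ is an isomorphism, while all other cup product maps $\cup e\colon H^{i}(X;\bQ)\to H^{i+2n}(X;\bQ)$ vanish for degree reasons (since $X$ has real dimension $2n$). A routine inspection of the Gysin long exact sequence then yields an identification of graded vector spaces
\[
    H^i\bigl(\Omega^1_X\otimes\cL^{\otimes d}\setminus 0;\bQ\bigr) \;\cong\; \begin{cases} H^i(X;\bQ), & 0\leq i\leq 2n-1, \\ H^{i-2n+1}(X;\bQ), & 2n\leq i\leq 4n-1. \end{cases}
\]
Substituting this into the Thom shift and restricting to strictly positive degrees produces exactly the graded vector space displayed in the statement, namely $H^{2n-1}(X)[1]\oplus H^1(X)[2]\oplus\cdots\oplus H^{2n}(X)[2n+1]$, and thus its free graded commutative algebra is the claimed stable cohomology ring.

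The calculation is essentially routine once the structural input is in place; the main obstacle is therefore purely bookkeeping. Specifically, one must carefully track two shifts simultaneously: the rank $2-2n$ shift coming from the virtual Thom isomorphism, and the $(2n-1)$ shift appearing in the Gysin sequence. Getting the degree conventions consistent so that the piece $H^{2n-1}(X)$ lands in degree $1$ (from the ``$A$'' part of the Gysin decomposition, via Thom) while the shifted pieces $H^i(X)$ for $1\leq i\leq 2n$ land in degree $i+1$ (from the ``$B$'' part) is the only genuinely delicate verification.
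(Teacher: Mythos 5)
Your proposal is correct and takes essentially the same approach as the paper: the paper simply cites the computation (``well-known computation in rational homotopy theory using the Thom isomorphism'', referring to \cite[Remark~4.2]{galatius_moduli_2019}), and you have spelled out precisely that computation, combining the Galatius--Randal-Williams theorem, the rational splitting of $\Omega^\infty_0 MT\theta_d$, the Thom isomorphism for the complex-oriented virtual bundle $q^*\cL^{\otimes d} - q^*TX$ of rank $2-2n$, and the Gysin sequence of the sphere bundle, with the degree shifts tracked correctly.
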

\begin{proof}
This is a well-known computation in rational homotopy theory using the Thom isomorphism. See for example \cite[Remark~4.2]{galatius_moduli_2019}.
\end{proof}

By \cref{proposition:bundle-of-theta-structures}, the universal bundle is pulled back along a map
\begin{equation}\label{equation:classifying-map-to-theta-moduli}
    \Gammans(\cL^{\otimes d}) \lra \cM^{\theta_d}(H_d, \hat\ell_d).  
\end{equation}
Our work describes the stable rational cohomology of the domain, whereas Galatius and Randal-Williams compute the one of the codomain. We will shortly reveal the relation between these two rings of characteristic classes. First, as argued as the beginning of this section, it is more geometrically natural to consider the quotient of the domain of~\eqref{equation:classifying-map-to-theta-moduli} by the group $\bC^\times$. We have not yet seen a counterpart to this action on the codomain, so we explain how to proceed in the following:
\begin{proposition}\label{proposition:weird-cyclic-quotient}
Suppose that $\cL^{\otimes d}$ is $2$-jet ample, and denote by $\Delta \colon \Gamma(\cL^{\otimes d}) \to \bC$ the discriminant polynomial (see \cref{lemma:discriminant-is-hypersurface}). Let $\mu_{\deg \Delta} \subset \bC^\times$ be the cyclic subgroup of the ($\deg \Delta$)\textsuperscript{th} roots of unity. Then there is a homeomorphism
\[
    \Gammans(\cL^{\otimes d}) / \bC^\times \cong \Delta^{-1}(1) / \mu_{\deg \Delta},
\]
and an action of $\mu_{\deg \Delta}$ on $\cM^{\theta_d}(H_d, \hat\ell_d)$ such that the map~\eqref{equation:classifying-map-to-theta-moduli} fits into a commutative diagram:
\begin{center}
% https://tikzcd.yichuanshen.de/#N4Igdg9gJgpgziAXAbVABwnAlgFyxMJZABgBpiBdUkANwEMAbAVxiRAB12BxOgW17pg4ACk4BjADIA9YJwh5e8AARQAvgEoQq0uky58hFAEZyVWoxZtxAWRmccACxg46AfTXCAEu9JLODuhxOGAYGd01tXWw8AiIyIzN6ZlZEDm4+ASFRdkk7dnksRTgVDSUAej92ACMAYSl7QvgtHRAMaIMiEwTqJMtUmzzHZzcPbyhff0Dg0PDyyt4mV1l2WABzSoAREJdVLTMYKFWmlFAAMwAnCF4kACZqHAgkAGZIkAurpBMQB+fX9+vEGRvo9EDc-pcAUCfogjKoKKogA
\begin{tikzcd}
\Gammans(\cL^{\otimes d}) \arrow[d] \arrow[r]    & {\cM^{\theta_d}(H_d, \hat\ell_d)} \arrow[d]           \\
\Gammans(\cL^{\otimes d}) / \bC^\times \arrow[r] & {\cM^{\theta_d}(H_d, \hat\ell_d) / \mu_{\deg \Delta}}
\end{tikzcd}
\end{center}
where both vertical arrows are the quotient maps for the actions and the bottom arrow is the induced map
\[
    \Gammans(\cL) / \bC^\times \cong \Delta^{-1}(1) / \mu_{\deg \Delta} \lra \cM^{\theta_d}(H_d, \hat\ell_d) / \mu_{\deg \Delta}.
\]
on the quotients from $\Delta^{-1}(1) \subset \Gammans(\cL^{\otimes d}) \to \cM^{\theta_d}(H_d, \hat\ell_d)$.
\end{proposition}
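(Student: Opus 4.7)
The plan has three parts: the homeomorphism on the left, the construction of the $\mu_{\deg \Delta}$-action on the moduli space, and equivariance of the classifying map. The strategy is to first build a \emph{full} $\bC^\times$-action on $\cM^{\theta_d}(H_d, \hat\ell_d)$, prove equivariance, and then restrict to the finite cyclic subgroup at the very end.

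For the homeomorphism, I would argue that since $\Delta$ is homogeneous of degree $\deg \Delta$, one has $\Delta(\lambda s) = \lambda^{\deg \Delta} \Delta(s)$, so for any $s \in \Gammans(\cL^{\otimes d})$ the equation $\Delta(\lambda s) = 1$ admits exactly $\deg \Delta$ solutions $\lambda \in \bC^\times$, pairwise differing by an element of $\mu_{\deg \Delta}$. Combined with freeness of the scaling action (sections in $\Gammans$ being nonzero), this shows that the inclusion $\Delta^{-1}(1) \hookrightarrow \Gammans(\cL^{\otimes d})$ descends to a continuous bijection on quotients; local triviality of both principal bundles then upgrades it to the desired homeomorphism.

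To construct the $\bC^\times$-action on $\cM^{\theta_d}(H_d, \hat\ell_d)$, I would work in the point-set model of \cref{lemma:virtual-vector-bundle-is-genuine}: fibrewise scaling $\lambda \cdot (x,\varphi) = (x, \lambda\varphi)$ defines a $\bC^\times$-action on $B_d = \Omega^1_X \otimes \cL^{\otimes d} \setminus 0$, which lifts tautologically to the bundle $\theta_d^*\gamma$ by the identity on fibres, because $\ker(\lambda \varphi) = \ker(\varphi)$. Post-composition with this lifted scaling gives a $\bC^\times$-action on $\mathrm{Bun}^{\theta_d}(H_d)$ which manifestly commutes with the $\mathrm{Diff}(H_d)$-action, hence descends to $\cM^{\theta_d}(H_d)$; connectedness of $\bC^\times$ ensures the component of $\hat\ell_d$ is preserved. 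Equivariance of~\eqref{equation:classifying-map-to-theta-moduli} is then immediate from \cref{proposition:bundle-of-theta-structures}: the bundle map attached to $s$ sends $v \in T_xV(s)$ to $(x, ds(x), v)$, while the one attached to $\lambda s$ sends $v$ to $(x, \lambda \cdot ds(x), v)$, which is precisely its image under the lifted scaling. Restricting to $\mu_{\deg \Delta} \subset \bC^\times$ and to $\Delta^{-1}(1) \subset \Gammans(\cL^{\otimes d})$, then passing to quotients, produces the commutative square.

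The main obstacle I anticipate is ensuring that the $\bC^\times$-action survives passage to the Borel construction defining $\cM^{\theta_d}(H_d)$. While in the strict point-set model above the $\bC^\times$- and $\mathrm{Diff}(H_d)$-actions commute on the nose, one must check that this strict commutation is compatible with the model of the homotopy quotient used in \cite{galatius_moduli_2019}; the cleanest resolution is to realise the $\bC^\times$-action via bundle automorphisms of the universal $H_d$-bundle, which automatically descends through any functorial model of the Borel construction. A secondary book-keeping task is identifying the bottom arrow of the square with the map induced from $\Delta^{-1}(1) \hookrightarrow \Gammans(\cL^{\otimes d}) \to \cM^{\theta_d}(H_d, \hat\ell_d)$, which amounts to unravelling the chain of identifications established in the previous steps.
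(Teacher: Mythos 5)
Your proposal follows the same approach as the paper's: for the homeomorphism you both rely on homogeneity of $\Delta$ (the paper writes down the explicit inverse $s \mapsto \Delta(s)^{-1/\deg\Delta}\cdot s$ coming from the global Milnor fibration, which is equivalent to your counting-solutions argument plus local triviality), and for the action you both define the full $\bC^\times$-action by fibrewise scaling $\lambda\cdot(x,\varphi,v)=(x,\lambda\varphi,v)$ on the point-set model of $\theta_d^*\gamma$, observe it commutes with the $\mathrm{Diff}(H_d)$-action so it descends to the Borel construction, and restrict to $\mu_{\deg\Delta}$. Your closing worry about compatibility with the homotopy quotient is resolved exactly as you suggest, and the paper simply takes this for granted, so there is no real gap.
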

\begin{proof}
The classical theory of global Milnor fibrations provides a commutative diagram whose top two rows and leftmost two columns are fibre bundles:
\begin{center}
% https://tikzcd.yichuanshen.de/#N4Igdg9gJgpgziAXAbVABwnAlgFyxMJZABgBpiBdUkANwEMAbAVxiRAB12BbJgfWE6wA5gAJOAERgMcdAL4hZpdJlz5CKMgEYqtRizacARgGEAepzxd4CpSAzY8BImQBMO+s1aIO7E+faW8CIA9GLcfALswmGS0nI2yg5qRJqk2tQe+t6cAOJ0XFx0YHAAFJwAxgAyAJQJdiqO6siplBl6Xj6xMqbAALSasiWatYqJqk4oqW5tnga+ZhZYVgij9UkTyC5p7u1zeQVFpRU1IWF+i8t19uNNW626s9nsXXQ9-YPDp5w8-IIwohIpDJ5LIdDAoEJrChQAAzABOEC4SFSIBwECQLlW8MRSAAzNQ0UgAKwzLKdIF0OrYpGILao9GIEkPMkVAhCKkImlkenIgl0LAMNgACwgEAA1hycYgACwEhn41H8wXeEXiyVcuVIWWKgXC0USrGc3k8xAKmS6lX69VazWIADshqlZoZADZHTS7baXaSOqywOzQbIgA
\begin{tikzcd}
\mu_{\deg \Delta} \arrow[d, hook] \arrow[r, hook] & \Delta^{-1}(1) \arrow[d, hook] \arrow[r]    & \Delta^{-1}(1) / \mu_{\deg \Delta} \arrow[d, "\cong"] \\
\bC^\times \arrow[d] \arrow[r, hook]              & \Gammans(\cL) \arrow[d, "\Delta"] \arrow[r] & \Gammans(\cL) / \bC^\times                            \\
\bC^\times / \mu_{\deg \Delta} \arrow[r, "\cong"] & \bC^\times                                  &                                                      
\end{tikzcd}
\end{center}
The inverse of the homeomorphism
\[
    \Delta^{-1}(1) / \mu_{\deg \Delta} \overset{\cong}{\lra} \Gammans(\cL)/\bC^\times
\]
is explicitly given by $s \mapsto \Delta(s)^{-1/ \deg \Delta} \cdot s$. This proves the first claim. For the second, recall that we have defined:
\[
    \cM^{\theta_d}(H_d, \hat\ell_d) = \mathrm{Bun}^{\theta_d}(H_d, \hat\ell_d) \sslash \mathrm{Diff}(H_d)
\]
where $\mathrm{Bun}^{\theta_d}(H_d, \hat\ell_d)$ denotes the connected component of $\hat\ell_d$ in the space of bundle maps. Recall also from the proof of \cref{proposition:bundle-of-theta-structures} the point-set model:
\[
    \theta^*\gamma = \{ (x, \varphi, v) \mid (x,\varphi) \in \Omega^1_X \otimes \cL \setminus 0, \ v \in \ker(\varphi) \}.
\]
The group $\bC^\times$ acts fibrewise on the vector bundle $\Omega^1_X \otimes \cL$, thus on $\theta^*\gamma$ via
\[
    \lambda \cdot (x, \varphi, v) = (x, \lambda \cdot \varphi, v) \quad \text{for } \lambda \in \bC^\times,
\]
and therefore acts on $\cM^{\theta_d}(H_d, \hat\ell_d)$ by post-composition on bundle maps. This action can be restricted to the cyclic subgroup $\mu_{\deg \Delta} \subset \bC^\times$. Finally the existence of the claimed commutative square follows by inspection.
\end{proof}

We are now ready to summarise the relation between the two rings of characteristic classes in the main result of this section:
\begin{theorem}\label{theorem:comparison-of-characteristic-classes}
Let $i \geq 0$ be an integer and let $d \gg 0$ be big enough so that
\[
    H^*(\cM^{\theta_d}(H_d, \hat\ell_d); \bQ) \cong \Lambda\big(H^{2n-1}(X)[1]\big) \otimes \Lambda\big(H^{\bullet > 0}(X)[\bullet +1]\big)
\]
and
\[
    H^*(\Gammans(\cL^{\otimes d}) / \bC^\times; \bQ) \cong \Lambda\big(H^{\bullet > 0}(X)[\bullet +1]\big)
\]
in degrees $* \leq i$. The map classifying the universal bundle
\[
    \Gammans(\cL^{\otimes d}) \lra \cM^{\theta_d}(H_d, \hat\ell_d)
\]
induces a ring morphism in rational cohomology with the following properties in cohomological degrees $* \leq i$:
\begin{enumerate}[(i)]
    \item Its restriction to $\Lambda\big(H^{\bullet > 0}(X)[\bullet +1]\big) \subset H^*(\cM^{\theta_d}(H_d, \hat\ell_d); \bQ)$ is injective.
    \item Its restriction to $\Lambda\big(H^{2n-1}(X)[1]\big) \subset H^*(\cM^{\theta_d}(H_d, \hat\ell_d); \bQ)$ is zero.
\end{enumerate}
In particular, its image in degrees $* \leq i$ is the subring $H^*(\Gammans(\cL)/ \bC^\times; \bQ) \subset H^*(\Gammans(\cL); \bQ)$, and the map
\[
    \Gammans(\cL)/ \bC^\times \lra \cM^{\theta_d}(H_d, \hat\ell_d) / \mu_{\deg \Delta}
\]
of \cref{proposition:weird-cyclic-quotient} induces an surjection in rational cohomology in degrees $* \leq i$.
\end{theorem}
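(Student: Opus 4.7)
The plan is to factor the classifying map as
\[
\Gammans(\cL^{\otimes d}) \lra \cM^{\theta_d}(H_d, \hat\ell_d) \lra \Omega^\infty MT\theta_d,
\]
exploit that by \cref{theorem:oscar-soren-maintheorem} the right-hand arrow is a rational cohomology isomorphism in the stable range, and analyse the generators of $H^*(\Omega^\infty MT\theta_d;\bQ)$ individually. Writing $q \colon B_d \to X$ for the bundle projection, the Thom isomorphism gives $H^*(MT\theta_d;\bQ) \cong H^{*+2n-2}(B_d;\bQ)$, and each primitive class corresponds to some $c \in H^{\geq 2n-1}(B_d;\bQ)$ whose pullback along the classifying map is the MMM-class $\pi_!\, j^* c$ of the universal bundle $\pi \colon U(\cL^{\otimes d}) \to \Gammans(\cL^{\otimes d})$ equipped with the $\theta_d$-structure $j \colon U(\cL^{\otimes d}) \to B_d$, $(s,x) \mapsto (x,ds(x))$, constructed in \cref{proposition:bundle-of-theta-structures}. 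Running the Serre spectral sequence of the sphere bundle $B_d \to X$ (the non-vanishing Euler class provides the only non-trivial differential) yields $H^k(B_d;\bQ) \cong H^k(X;\bQ)$ for $0 \leq k \leq 2n-1$ and $H^k(B_d;\bQ) \cong H^{k-2n+1}(X;\bQ)$ for $k \geq 2n$, so that the two tensor factors in the statement correspond respectively to $q^*$-pullbacks from $H^{2n-1}(X;\bQ)$ and to classes in $H^{\geq 2n}(B_d;\bQ)$.

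For point~(i), the generators of $\Lambda(H^{>0}(X)[\bullet+1])$ correspond to $\kappa_c$ for $c \in H^{\geq 2n}(B_d;\bQ)$, so the restriction of the pullback morphism to this subring is the ring morphism $\pi_! \circ j^* \colon \Lambda(H^{\geq 2n}(B_d;\bQ)) \to H^*(\Gammans(\cL^{\otimes d})/\bC^\times;\bQ)$ of \cref{proposition:stable-cohomology-fibrewise-integration}, which is an isomorphism in the stable range for $d \gg 0$. In particular it is injective with image the whole ring $H^*(\Gammans(\cL^{\otimes d})/\bC^\times;\bQ) \subset H^*(\Gammans(\cL^{\otimes d});\bQ)$.

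For point~(ii), the generators of $\Lambda(H^{2n-1}(X)[1])$ are $\kappa_{q^*c_X}$ for $c_X \in H^{2n-1}(X;\bQ)$. Their pullback can be computed via the inclusion $\iota \colon U(\cL^{\otimes d}) \hookrightarrow \Gammans(\cL^{\otimes d}) \times X$, which realises $U(\cL^{\otimes d})$ as the zero locus of the evaluation section of $p_X^* \cL^{\otimes d}$, giving $\iota_!(1) = p_X^*(d\alpha)$ with $\alpha = c_1(\cL)$. Writing $\pi = p_\Gamma \circ \iota$ and $q \circ j = p_X \circ \iota$, the projection formula yields
\[
\pi_!\, j^* q^* c_X \ = \ (p_\Gamma)_!\, p_X^*(d\alpha \cup c_X),
\]
which vanishes since $d\alpha \cup c_X$ lies in $H^{2n+1}(X;\bQ) = 0$.

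Finally, for the surjection onto the $\mu_{\deg\Delta}$ quotient: the action of $\lambda \in \mu_{\deg\Delta} \subset \bC^\times$ on $B_d$ is fibrewise complex scaling, homotopic to the identity along any path in $\bC^\times$ joining $\lambda$ to $1$, so trivial on $H^*(B_d;\bQ)$. By the description of \cref{proposition:weird-cyclic-quotient}, the induced action on $\theta^*\gamma$ is trivial on fibres, hence preserves the Thom class; it follows that $\mu_{\deg\Delta}$ acts trivially on $H^*(\Omega^\infty MT\theta_d;\bQ)$ and, via \cref{theorem:oscar-soren-maintheorem}, on $H^*(\cM^{\theta_d}(H_d,\hat\ell_d);\bQ)$ in the stable range. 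Since rational cohomology of a finite quotient equals the invariants, $H^*(\cM^{\theta_d}(H_d,\hat\ell_d)/\mu_{\deg\Delta};\bQ) \cong H^*(\cM^{\theta_d}(H_d,\hat\ell_d);\bQ)$ in that range, and the surjection follows from~(i). The main delicate step is the projection-formula computation in~(ii), requiring the correct identification of the Poincaré-dual normal-bundle class of the incidence $\iota$.
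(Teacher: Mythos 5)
Your proof follows essentially the same approach as the paper's: the two key computations (Proposition~\ref{proposition:stable-cohomology-fibrewise-integration} for injectivity on $\Lambda(H^{\bullet>0}(X)[\bullet+1])$, and the projection-formula vanishing $\pi_! i^* w = (\mathrm{pr}_\Gamma)_! p_X^*(c_1(\cL^{\otimes d})\cup w) = 0$ in $H^{2n+1}(X;\bQ)=0$ for the classes coming from $H^{2n-1}(X)$) are exactly what the paper does, and the Gysin-sequence bookkeeping you use to separate the two tensor factors in $H^*(\Omega^1_X\otimes\cL^{\otimes d}\setminus 0;\bQ)$ is the same identification the paper uses implicitly via Proposition~\ref{proposition:stable-cohomology-fibrewise-integration}.

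The one place you genuinely diverge, and where your argument is weaker, is the last step handling the $\mu_{\deg\Delta}$-quotient. You pass through the Thom spectrum: you argue $\mu_{\deg\Delta}$ acts trivially on $H^*(\Omega^\infty MT\theta_d;\bQ)$ and then transport this back to $H^*(\cM^{\theta_d}(H_d,\hat\ell_d);\bQ)$ via Theorem~\ref{theorem:oscar-soren-maintheorem}. For that transport step you implicitly need the Pontryagin--Thom scanning map $\cM^{\theta_d}(H_d,\hat\ell_d)\to\Omega^\infty MT\theta_d$ to be $\mu_{\deg\Delta}$-equivariant, which you do not establish. The paper argues more directly and more economically: the $\mu_{\deg\Delta}$-action on $\cM^{\theta_d}(H_d,\hat\ell_d)$ itself extends to a $\bC^\times$-action (by post-composition on bundle maps), $\bC^\times$ is connected, so each $\lambda$ acts by a map homotopic to the identity, hence trivially on cohomology; then one runs the Serre spectral sequence of $\cM^{\theta_d}\to\cM^{\theta_d}/\mu_{\deg\Delta}\to B\mu_{\deg\Delta}$ (the action being free, the quotient is a homotopy orbit space) and uses $H^*(B\mu_{\deg\Delta};\bQ)=\bQ$. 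You should replace your detour through $MT\theta_d$ with this direct homotopy-triviality argument on $\cM^{\theta_d}$; then your transfer/invariants step closes the argument exactly as the paper's fibration argument does.

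One more small omission: in~(i) you assert the image is exactly $H^*(\Gammans(\cL^{\otimes d})/\bC^\times;\bQ)\subset H^*(\Gammans(\cL^{\otimes d});\bQ)$. This identification of the target as a subring of $H^*(\Gammans(\cL^{\otimes d});\bQ)$ rests on Lemma~\ref{lemma:discriminant-leray-hirsch}, which you use silently; it is worth citing explicitly, as the paper does.
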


\begin{proof}
Recall from \cite[Section~3.1]{galatius_moduli_2019} that the characteristic classes of $\theta_d$-bundles are given by integration along the fibres. From the similar description given in \cref{proposition:stable-cohomology-fibrewise-integration}, we see that, in degrees $* \leq i$, a basis of\[
    H^{\bullet > 0}(X)[\bullet +1] \subset H^*(\cM^{\theta_d}(H_d, \hat\ell_d); \bQ)
\]
is sent to a basis of
\[
    H^{\bullet > 0}(X)[\bullet +1] \subset H^*(\Gammans(\cL^{\otimes d}); \bQ)
\]
under the morphism induced by the map classifying the universal bundle. This proves the first point. To prove the second, we recall that the image of a element $w \in H^{2n-1}(X)$ is the fibre integration
\[
    \pi_!(i^*w) \in H^*(\Gammans(\cL^{\otimes d}); \bQ)
\]
where $\pi$ is the universal bundle, and $i \colon U(\cL^{\otimes d}) \to X \times \Gammans(\cL^{\otimes d}) \to X$ is the map $(f,x) \mapsto x$. From the commutative diagram
\begin{center}
% https://tikzcd.yichuanshen.de/#N4Igdg9gJgpgziAXAbVABwnAlgFyxMJZABgBpiBdUkANwEMAbAVxiRAFUAKAHW4GMAMgD1gvCHgC28AARQAvgEoQc0uky58hFGQCMVWoxZteAcToSJdMHB79ho7uKxS4sxctUgM2PASI7SPWp6ZlZEEFNzS2tbQRExSRl5JRU1H01-cn0Qo3DIiysbXjiHJxc3BWleRNcADWV9GCgAc3giUAAzACcICSQyEBwIJACDUONuNCwQagY6ACMYBgAFdV8tEC6sZoALHA9Onr7EAGZqIaQAJmDDMIjuSxwdrolgNC65AH1Lg5Bu3v652Gp1SfyOIyBV2oOxgdCgbEgYFYsywSLYUAgTHmDGRgzoWAYCIIuLmixWawy4VR2FgDTkQA
\begin{tikzcd}
U(\cL^{\otimes d}) \arrow[d, "\pi"'] \arrow[r, "\iota"]           & \Gammans(\cL^{\otimes d}) \times X \arrow[d, "\mathrm{pr}_2"] \\
\Gammans(\cL^{\otimes d}) \arrow[r, equal] & \Gammans(\cL^{\otimes d}) 
\end{tikzcd}
\end{center}
we compute that
\[
    \pi_!(i^*w) = (\mathrm{pr}_2)_!(\iota_!(1) \cup 1 \otimes w) = (\mathrm{pr}_2)_!(1 \otimes c_1(\cL^{\otimes d}) \cup w) = 0.
\]
The identification of the subring corresponding to the image follows from \cref{lemma:discriminant-leray-hirsch}. Finally, to prove the last claim, it remains to show that the quotient map
\[
    \cM^{\theta_d}(H_d, \hat\ell_d) \lra \cM^{\theta_d}(H_d, \hat\ell_d) / \mu_{\deg \Delta}
\]
induces an isomorphism in rational cohomology. As the action is free, the quotient is the homotopy orbit space and we thus have a fibration:
\[
    \cM^{\theta_d}(H_d, \hat\ell_d) \lra \cM^{\theta_d}(H_d, \hat\ell_d) / \mu_{\deg \Delta} \lra B \mu_{\deg \Delta}.
\]
The monodromy action of $\pi_1(B \mu_{\deg \Delta}) = \mu_{\deg \Delta}$ on the cohomology of the fibre is trivial as it can be extended to the connected group $\bC^\times$. A finite group has trivial rational cohomology, here $H^*(B \mu_{\deg \Delta}; \bQ) = \bQ$, and the result follows.
\end{proof}

\begin{corollary}\label{corollary:simply-connected-gammans-bdiff}
Suppose that $H^{2n-1}(X;\bQ)$ vanishes (e.g. $X$ is simply connected) and let $i,d$ be as in \cref{theorem:comparison-of-characteristic-classes}. Then $\Gammans(\cL^{\otimes d}) / \bC^\times = \Mhyp^{d\alpha}$ for $\alpha = c_1(\cL)$ and the map
\[
    \Mhyp^{d\alpha} = \Gammans(\cL^{\otimes d}) / \bC^\times \lra \cM^{\theta_d}(H_d, \hat\ell_d) / \mu_{\deg \Delta}
\]
induce an isomorphism rational cohomology in degrees $* \leq i$. \qed
\end{corollary}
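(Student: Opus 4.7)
The plan is to deduce this essentially by inspection from \cref{theorem:comparison-of-characteristic-classes}, together with an identification of $\Gammans(\cL^{\otimes d})/\bC^\times$ with the whole moduli space $\Mhyp^{d\alpha}$ under the vanishing hypothesis.

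First I would justify the identification $\Gammans(\cL^{\otimes d})/\bC^\times = \Mhyp^{d\alpha}$. By Poincaré duality on the smooth projective variety $X$ we have $H^{2n-1}(X;\bQ) \cong H^1(X;\bQ)$, so the hypothesis gives $H^1(X;\bQ) = 0$. Since $H^1(X;\bZ) = \Hom(\pi_1(X),\bZ)$ is torsion-free, this forces $H^1(X;\bZ)=0$, and by Hodge theory $H^1(X;\cO_X)=0$ as well. Consequently $\Pic^{d\alpha}(X) \cong H^1(X;\cO_X)/H^1(X;\bZ)$ is a single point, namely $[\cL^{\otimes d}]$. Using the point-set description of \cref{remark:point-set-description-moduli}, the Abel--Jacobi map $\Mhyp^{d\alpha} \to \Pic^{d\alpha}(X)$ is a homeomorphism onto its fibre, which is precisely $\Gammans(\cL^{\otimes d})/\bC^\times$.

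Next I would invoke \cref{theorem:comparison-of-characteristic-classes}. Under the vanishing $H^{2n-1}(X;\bQ)=0$, the factor $\Lambda\big(H^{2n-1}(X)[1]\big)$ on which the classifying map vanishes reduces to $\bQ$, so the stable cohomology of $\cM^{\theta_d}(H_d,\hat\ell_d)$ is concentrated in the factor $\Lambda\big(H^{\bullet>0}(X)[\bullet+1]\big)$ on which the map is injective in degrees $* \leq i$. Item~(i) of the theorem identifies the image of the ring homomorphism, in degrees $* \leq i$, with the subring $H^*(\Gammans(\cL^{\otimes d})/\bC^\times;\bQ) \subset H^*(\Gammans(\cL^{\otimes d});\bQ)$. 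Thus the map classifying the universal bundle
\[
    \Gammans(\cL^{\otimes d}) \lra \cM^{\theta_d}(H_d,\hat\ell_d)
\]
becomes, after passing to the quotient by $\bC^\times$ on the source as in \cref{proposition:weird-cyclic-quotient}, a rational cohomology isomorphism in degrees $* \leq i$ onto $\cM^{\theta_d}(H_d,\hat\ell_d)/\mu_{\deg\Delta}$. Here I would use the final assertion of \cref{theorem:comparison-of-characteristic-classes}, namely that the quotient map $\cM^{\theta_d}(H_d,\hat\ell_d) \to \cM^{\theta_d}(H_d,\hat\ell_d)/\mu_{\deg\Delta}$ is itself a rational cohomology isomorphism (the action of the finite cyclic group extends to the connected group $\bC^\times$ and so acts trivially on rational cohomology), to transport the isomorphism statement through to the target of interest.

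There is essentially no hard step: the work has been done in \cref{theorem:comparison-of-characteristic-classes}, and the corollary is a clean consequence of erasing the $\Lambda(H^{2n-1}(X)[1])$-factor via the hypothesis. The only potential subtlety is to keep track of the fact that the identification $\Mhyp^{d\alpha} = \Gammans(\cL^{\otimes d})/\bC^\times$ requires the vanishing of $H^1(X;\bQ)$ (equivalently of $H^{2n-1}(X;\bQ)$), which is precisely what is assumed.
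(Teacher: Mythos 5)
Your proposal is correct and takes essentially the same route as the paper, which presents the corollary as an immediate consequence of \cref{theorem:comparison-of-characteristic-classes} and gives no separate argument. You correctly spell out the two implicit steps: (a) $H^{2n-1}(X;\bQ)=0$ forces $H^1(X;\bQ)=0$ by Poincaré duality, hence $H^1(X;\bZ)=0$ (torsion-free) and $H^1(X;\cO_X)=0$ by Hodge theory, so $\Pic^{d\alpha}(X)$ is a point and $\Mhyp^{d\alpha}$ collapses to $\Gammans(\cL^{\otimes d})/\bC^\times$; and (b) the $\Lambda(H^{2n-1}(X)[1])$-factor in the stable cohomology of $\cM^{\theta_d}(H_d,\hat\ell_d)$ disappears, so the surjection from the theorem is a surjection between graded vector spaces of the same finite dimension in each degree $\leq i$, hence an isomorphism. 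One small imprecision: you attribute the statement that $\cM^{\theta_d}(H_d,\hat\ell_d)\to\cM^{\theta_d}(H_d,\hat\ell_d)/\mu_{\deg\Delta}$ is a rational cohomology isomorphism to ``the final assertion'' of the theorem, but that fact appears only inside its proof (the stated final assertion is the surjectivity of the quotiented classifying map); your recapitulation of the argument (finite group action extending to the connected group $\bC^\times$) is nonetheless the correct one.
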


\begin{remark}
By general theory, a map from a space $T$ to the homotopy orbit space $\cM^{\theta}(H, \hat\ell) \mathbin{/\mkern-6mu/} \mu_{\deg \Delta}$ is given by a principal $\mu_{\deg \Delta}$-bundle $P \to T$ and an equivariant map $P \to \cM^{\theta}(H, \hat\ell)$. From that point of view, the map
\[
    \Gammans(\cL) / \bC^\times \lra \cM^{\theta}(H, \hat\ell) / \mu_{\deg \Delta}
\]
is given by the datum of a $\theta$-structure on the pullback of the universal bundle along the étale cover
\[
    \Delta^{-1}(1) \lra \Gammans(\cL)/\bC^\times
\]
with Galois group $\mu_{\deg \Delta}$.
\end{remark}

\subsubsection{Removing the quotient}

Using the general machinery of \cite[Section~4.3]{galatius_moduli_2019}, we construct a new tangential structure $\theta_d'$ which takes into account the $\mu_{\deg \Delta}$-action. The group $\mu_{\deg \Delta}$ acts on $B_d \simeq \Omega^1_X \otimes \cL^{\otimes d} \setminus 0$ through the scalar action on the bundle $\cL^{\otimes d}$, and we have:
\begin{lemma}
The map $\theta_d \colon B_d \to BO(2n-2)$ factors through the orbit space of the $\mu_{\deg \Delta}$-action via a map which we denote
\[
    \theta_d' \colon B_d / \mu_{\deg \Delta} \lra BO(2n-2).
\]
\end{lemma}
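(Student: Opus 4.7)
The plan is to exhibit a natural lift of the $\mu_{\deg \Delta}$-action on $B_d$ to the total space of the vector bundle $\theta_d^*\gamma$, and then to descend this bundle to the orbit space. Its classifying map will serve as $\theta_d'$.

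Using the explicit model from \cref{lemma:virtual-vector-bundle-is-genuine}, a point of $B_d \simeq \Omega^1_X \otimes \cL^{\otimes d} \setminus 0$ is a pair $(x,\varphi)$ with $\varphi \colon TX|_x \twoheadrightarrow \cL^{\otimes d}|_x$ a surjective linear map, and $\theta_d^*\gamma|_{(x,\varphi)} = \ker(\varphi) \subset TX|_x$. The key observation is that for any $\lambda \in \bC^\times$ we have $\ker(\lambda\varphi) = \ker(\varphi)$. Therefore the $\mu_{\deg \Delta}$-action $\lambda \cdot (x,\varphi) = (x, \lambda\varphi)$ on $B_d$ admits a canonical linear lift to the total space of $\theta_d^*\gamma$ defined by $\lambda \cdot (x,\varphi, v) = (x, \lambda\varphi, v)$. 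This lift is by bundle maps covering the action on the base.

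Since $\mu_{\deg \Delta} \subset \bC^\times$ acts freely on $\Omega^1_X \otimes \cL^{\otimes d} \setminus 0$ by scalar multiplication in the fibres, the quotient map $q \colon B_d \to B_d/\mu_{\deg \Delta}$ is a principal $\mu_{\deg \Delta}$-bundle, hence a finite covering map. As the lifted action on $\theta_d^*\gamma$ is free and fibrewise linear, the orbit space $E_d := \theta_d^*\gamma / \mu_{\deg \Delta}$ inherits the structure of a real rank $2n-2$ vector bundle over $B_d/\mu_{\deg \Delta}$, and the square
\begin{center}
\begin{tikzcd}
\theta_d^*\gamma \arrow[d] \arrow[r] & E_d \arrow[d] \\
B_d \arrow[r, "q"'] & B_d/\mu_{\deg \Delta}
\end{tikzcd}
\end{center}
is a pullback diagram of vector bundles.

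Letting $\theta_d' \colon B_d/\mu_{\deg \Delta} \to BO(2n-2)$ be a map classifying $E_d$, the composite $\theta_d' \circ q$ classifies $q^*E_d \cong \theta_d^*\gamma$ and is therefore homotopic to $\theta_d$. As tangential structures are only defined up to homotopy of maps to $BO(2n-2)$, we may replace $\theta_d$ by $\theta_d' \circ q$ within its homotopy class and obtain the strict factorisation claimed. There is no real obstacle here: the whole argument reduces to the elementary fact that rescaling a surjective linear map does not affect its kernel, which makes the action on the base lift tautologically to the bundle.
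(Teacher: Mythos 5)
Your proof is correct and rests on the same key observation as the paper's — that $\ker(\lambda\varphi) = \ker(\varphi)$ for $\lambda \in \bC^\times$, so the $\mu_{\deg\Delta}$-action on $B_d \simeq \Omega^1_X\otimes\cL^{\otimes d}\setminus 0$ lifts trivially on the kernel bundle $\theta_d^*\gamma$. The packaging differs slightly: you descend the vector bundle to the orbit space and take a classifying map, acknowledging that the factorisation holds only up to homotopy, whereas the paper invokes the explicit point-set model (where $\theta_d$ literally sends $(x,\varphi)$ to $\ker\varphi$ inside a Grassmannian model of $BO(2n-2)$) so that the invariance of $\theta_d$ — and hence the strict factorisation — is immediate. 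Both are fine; the paper's version is just more direct given the models already set up in \cref{lemma:B-homotopy-equivalent-sphere-bundle} and \cref{lemma:virtual-vector-bundle-is-genuine}.
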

\begin{proof}
One can argue using the point-set models described in \cref{lemma:virtual-vector-bundle-is-genuine}. Indeed, recall that $B_d \simeq \Omega^1_X \otimes \cL^{\otimes d} \setminus 0$ can be identified with the space of pairs $(x,\varphi)$ of a point $x \in X$ and a linear surjection $\varphi \colon TX|_x \twoheadrightarrow \cL^{\otimes d}|_x$, and the map $\theta_d$ to $BO(2n-2)$ sends $(x,\varphi)$ to the kernel $\ker(\varphi)$. The group $\mu_{\deg \Delta}$ acts on that space by scalar multiplication on $\cL^{\otimes d}$, and this preserves the kernel of an epimorphism $\varphi$.
\end{proof}

The analogue of \cref{proposition:bundle-of-theta-structures} also holds for this new tangential structure:
\begin{lemma}\label{lemma:theta-prime-structure}
The universal bundle above $\Gammans(\cL^{\otimes d}) / \bC^\times$ admits the structure of a smooth fibre bundle with $\theta_d'$-structure.
\end{lemma}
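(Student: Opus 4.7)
The plan is to descend the construction of \cref{proposition:bundle-of-theta-structures} along the scalar quotients, using the identification of \cref{proposition:weird-cyclic-quotient}. The key observation is that the recipe for the bundle map provided by the proof of \cref{proposition:bundle-of-theta-structures}, namely
\[
    (s,x) \longmapsto (x, ds(x)) \quad \text{and} \quad (s,x,v) \longmapsto (x, ds(x), v),
\]
is $\bC^\times$-equivariant: scaling the section $s$ by $\lambda$ scales $ds(x)$ by $\lambda$ as well, and this is precisely the scalar action on $\theta_d^*\gamma$ introduced in the proof of \cref{proposition:weird-cyclic-quotient}. In particular, the fibre coordinate $v$ (which lives in $\ker(\varphi) = \ker(\lambda \varphi)$) is left untouched.

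I then restrict these maps to $\Delta^{-1}(1) \subset \Gammans(\cL^{\otimes d})$ and pass to $\mu_{\deg \Delta}$-orbits. On the base this yields precisely the map
\[
    U(\cL^{\otimes d})/\bC^\times \cong U(\cL^{\otimes d})|_{\Delta^{-1}(1)}/\mu_{\deg \Delta} \lra B_d/\mu_{\deg \Delta}.
\]
On the vertical tangent bundles, the freeness of the $\mu_{\deg \Delta}$-action allows one to form the quotient $T_v U(\cL^{\otimes d})|_{\Delta^{-1}(1)}/\mu_{\deg \Delta}$, which one identifies with $T_v\big(U(\cL^{\otimes d})/\bC^\times\big)$. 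Finally, because $\mu_{\deg \Delta}$ acts trivially on the kernel coordinate of $\theta_d^*\gamma$, the quotient $\theta_d^*\gamma/\mu_{\deg \Delta}$ is naturally identified with the pullback $(\theta_d')^*\gamma$. Assembling these descended pieces produces a commutative square
\[
\begin{tikzcd}
T_v\big(U(\cL^{\otimes d})/\bC^\times\big) \arrow[r] \arrow[d] & (\theta_d')^*\gamma \arrow[d] \\
U(\cL^{\otimes d})/\bC^\times \arrow[r] & B_d/\mu_{\deg \Delta}
\end{tikzcd}
\]
whose top horizontal arrow is a fibrewise linear isomorphism, providing the required $\theta_d'$-structure.

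The argument is essentially bookkeeping with group actions; the only subtle point I expect is verifying that the bundle map of \cref{proposition:bundle-of-theta-structures} is genuinely equivariant for the non-standard scalar action on the target. This comes down to the elementary identity $d(\lambda s) = \lambda \cdot ds$ and the fact that kernels are unchanged under rescaling, both of which are immediate.
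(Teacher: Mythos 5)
Your proof is correct and follows the same approach as the paper: the paper's own argument chooses the representative $f$ of $[f]$ with $\Delta(f)=1$ (unique up to $\mu_{\deg\Delta}$) and observes that $[df(x)] \in (\Omega^1_X \otimes \cL^{\otimes d}\setminus 0)/\mu_{\deg\Delta}$ is well-defined, which is exactly your restriction to $\Delta^{-1}(1)$ followed by descent along the free $\mu_{\deg\Delta}$-action. Your extra bookkeeping (the $\bC^\times$-equivariance of $(s,x)\mapsto(x,ds(x),v)$ and the identification $\theta_d^*\gamma/\mu_{\deg\Delta}\cong(\theta_d')^*\gamma$) merely spells out details the paper leaves implicit.
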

\begin{proof}
A point of the universal bundle is a pair $([f], x)$ of an equivalence class $[f] \in \Gammans(\cL^{\otimes d}) / \bC^\times$ and a point $x \in V(f)$. We can choose a representative $f$ of the class $[f]$ with discriminant $\Delta(f) = 1$ and take its derivative at $x$ to define a map:
\[
    ([f], x) \longmapsto [df(x)] \in \big(\Omega^1_X \otimes \cL^{\otimes d} \setminus 0\big) / \mu_{\deg \Delta}.
\]
Although $df(x)$ is only well-defined up to a root of unity, its class is well-defined in the quotient. This map thus construct a $\theta_d'$-structure on the universal bundle.
\end{proof}

As before, if $H_d = V(f)$ is a smooth hypersurface with $f \in \Gammans(\cL^{\otimes d}) / \bC^\times$, we write $\hat\ell_d' \colon TH_d \to \theta_d'^*\gamma$ for the tangential structure constructed in the proof of \cref{lemma:theta-prime-structure} above. Finally we obtain:
\begin{corollary}\label{corollary:without-mu-quotient}
Suppose that $H^{2n-1}(X;\bQ) = 0$, and let $i,d$ be as in \cref{theorem:comparison-of-characteristic-classes}. The map classifying the universal bundle
\[
    \Mhyp^{c_1(\cL^{\otimes d})} = \Gammans(\cL^{\otimes d}) / \bC^\times \lra \cM^{\theta_d'}(H_d, \hat\ell_d')
\]
induces an isomorphism in rational cohomology in the range $* \leq i$.
\end{corollary}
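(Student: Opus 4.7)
The plan is to deduce the corollary from \cref{corollary:simply-connected-gammans-bdiff} by exhibiting a weak homotopy equivalence
\[
\cM^{\theta_d}(H_d,\hat\ell_d)/\mu_{\deg \Delta} \overset{\simeq}{\lra} \cM^{\theta_d'}(H_d,\hat\ell_d')
\]
which intertwines the two classifying maps of the universal bundle. Since $\theta_d^*\gamma$ is by construction the pullback of $\theta_d'^*\gamma$ along $B_d \to B_d/\mu_{\deg \Delta}$, any $\theta_d$-bundle map $TH_d \to \theta_d^*\gamma$ projects to a $\theta_d'$-bundle map $TH_d \to \theta_d'^*\gamma$. This assignment is invariant under the post-composition scalar action of $\mu_{\deg \Delta}$ on bundle maps, hence descends to a map on Borel constructions, which will be the candidate weak equivalence.

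The first main step is to show that the induced map $\mathrm{Bun}^{\theta_d}(H_d,\hat\ell_d)/\mu_{\deg \Delta} \to \mathrm{Bun}^{\theta_d'}(H_d,\hat\ell_d')$ is a homeomorphism. The key input is that the $\mu_{\deg \Delta}$-action on $B_d \simeq \Omega^1_X \otimes \cL^{\otimes d} \setminus 0$, given by scalars on a line bundle with the zero section removed, is free, so $B_d \to B_d/\mu_{\deg \Delta}$ is a genuine $\mu_{\deg \Delta}$-Galois étale cover. Since $X$ is simply connected and $n \geq 4$, the Lefschetz hyperplane theorem (compare \cref{lemma:high-connectivity-lefschetz}) gives that $H_d$ is simply connected; hence every continuous map $H_d \to B_d/\mu_{\deg \Delta}$ admits a global lift to $B_d$, uniquely up to the $\mu_{\deg \Delta}$-action, and the fibrewise linear isomorphism data matches up via the pullback identification of $\theta_d^*\gamma$. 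Passing next to the homotopy quotient by $\mathrm{Diff}(H_d)$ is harmless: the pre-composition action of $\mathrm{Diff}(H_d)$ commutes with the post-composition action of $\mu_{\deg \Delta}$, and the freeness of the latter lets the Borel construction commute with taking $\mu_{\deg \Delta}$-orbits.

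To close the argument I would check that, under this weak equivalence, the classifying map from \cref{lemma:theta-prime-structure} corresponds to the composite
\[
\Mhyp^{c_1(\cL^{\otimes d})} \lra \cM^{\theta_d}(H_d,\hat\ell_d)/\mu_{\deg \Delta} \overset{\simeq}{\lra} \cM^{\theta_d'}(H_d,\hat\ell_d'),
\]
which is immediate from the fact that $\hat\ell_d'$ was constructed precisely as the descent of $\hat\ell_d$ along the quotient $B_d \to B_d/\mu_{\deg \Delta}$. Combining with \cref{corollary:simply-connected-gammans-bdiff} then yields the desired rational cohomology isomorphism in the range $* \leq i$. The main obstacle I anticipate is the lifting step: the global existence of a $\theta_d$-lift of any $\theta_d'$-structure on $H_d$ depends essentially on the simple connectivity of $H_d$ (and hence on both $n \geq 4$ and the vanishing of $\pi_1(X)$), and one must verify that such a lift can be chosen continuously in families over the parameter space $\Mhyp^{c_1(\cL^{\otimes d})}$ rather than merely pointwise; this is where a careful use of the covering-space property of $B_d \to B_d/\mu_{\deg \Delta}$ becomes essential.
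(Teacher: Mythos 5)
Your approach is genuinely different from the paper's, and it would actually prove a stronger statement if it went through. The paper invokes \cite[Theorem~4.5]{galatius_moduli_2019} abstractly: it observes that the composite $H_d \to \Omega^1_X \otimes \cL^{\otimes d}\setminus 0 \to (\Omega^1_X \otimes \cL^{\otimes d}\setminus 0)/\mu_{\deg\Delta}$ is a Moore--Postnikov $(n-1)$-stage with group of weak self-equivalences $\Map(X,\mu_{\deg\Delta}) = \mu_{\deg\Delta}$, and concludes from that theorem (together with the freeness of the action) that $\cM^{\theta_d'}(H_d,\hat\ell_d')$ and $\cM^{\theta_d}(H_d,\hat\ell_d)/\mu_{\deg\Delta}$ have the same \emph{stable rational cohomology} — not that they are weakly equivalent. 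You instead attempt a direct geometric argument producing an actual weak homotopy equivalence $\cM^{\theta_d}(H_d,\hat\ell_d)/\mu_{\deg\Delta} \to \cM^{\theta_d'}(H_d,\hat\ell_d')$ by lifting bundle maps along the $\mu_{\deg\Delta}$-cover $B_d \to B_d/\mu_{\deg\Delta}$. This is a more elementary and more informative route, and the steps about swapping the strict $\mu$-quotient with the homotopy $\mathrm{Diff}$-quotient (using freeness and commutativity of the actions) and about intertwining the two classifying maps are sound.

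There is, however, a gap in the key lifting step. You assume $X$ is simply connected and invoke Lefschetz to get $\pi_1(H_d)=1$, then conclude that \emph{every} map $H_d \to B_d/\mu_{\deg\Delta}$ lifts. But the hypothesis of the corollary is only $H^{2n-1}(X;\bQ)=0$ (equivalently $H^1(X;\bQ)=0$), and \cref{section:comparison-with-diffeomorphisms} only assumes $\pi_1(X)$ is virtually polycyclic; $X$, and hence $H_d$, need not be simply connected. Your closing remark that ``the global existence of a $\theta_d$-lift\ldots depends essentially on the simple connectivity of $H_d$'' is exactly where the argument overreaches. The fix is to drop simple connectivity and use instead that one works within a fixed path component: the chosen basepoint $\hat\ell_d'$ lifts to $\hat\ell_d$ by construction, the lifting criterion along a covering map is a $\pi_1$ condition that is invariant under homotopy of the underlying map, and a path in $\mathrm{Bun}^{\theta_d'}(H_d,\hat\ell_d')$ starting at $\hat\ell_d'$ can be lifted via the unique path-lifting property of the cover $B_d \to B_d/\mu_{\deg\Delta}$. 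This shows surjectivity of $\mathrm{Bun}^{\theta_d}(H_d,\hat\ell_d)/\mu_{\deg\Delta} \to \mathrm{Bun}^{\theta_d'}(H_d,\hat\ell_d')$ without any $\pi_1$ vanishing; injectivity and continuity then follow from uniqueness of lifts and the covering property, also without simple connectivity. With that correction your argument is valid and in fact strengthens the statement to an actual weak equivalence of moduli spaces, while the paper's argument is content with equality of stable rational cohomology.
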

\begin{proof}
The differential map $\ell_d' \colon H_d = V(f) \to (\Omega^1_X \otimes \cL^{\otimes d} \setminus 0) / \mu_{\deg \Delta}$ is not $(n-1)$-connected anymore (compare with \cref{lemma:high-connectivity-lefschetz}). But the composition
\[
    H_d \lra \Omega^1_X \otimes \cL^{\otimes d} \setminus 0 \lra (\Omega^1_X \otimes \cL^{\otimes d} \setminus 0) / \mu_{\deg \Delta}
\]
is a Moore--Postnikov $(n-1)$-stage. The group-like topological monoid of weak equivalences of $\Omega^1_X \otimes \cL^{\otimes d} \setminus 0$ above its quotient by $\mu_{\deg \Delta}$ is directly seen to be equivalent to
\[
    \Map(X, \mu_{\deg \Delta}) = \mu_{\deg \Delta}.
\]
Hence \cite[Theorem~4.5]{galatius_moduli_2019} applies (homotopy orbits and strict orbits are equivalent here as the group action is free) and shows that $\cM^{\theta_d'}(H_d, \hat\ell_d')$ and $\cM^{\theta_d}(H_d, \hat\ell_d) / \mu_{\deg \Delta}$ have the same stable rational cohomology. The result then follows by \cref{corollary:simply-connected-gammans-bdiff}.
\end{proof}

\subsection{En rød sild}
We finish this section by a remark on the infinite loop space from \cref{theorem:oscar-soren-maintheorem}. As explained in \cite[Theorem~8.11]{aumonier_h-principle_2022}, there is a map
\[
    \Gammans(\cL^{\otimes d}) \lra \Omega^{\infty+1} X^{J^1\cL^{\otimes d} - TX}
\]
which induces an isomorphism in rational cohomology in the range of degrees $* < \frac{d-1}{2}$. \footnote{To be precise, \cite[Theorem~8.11]{aumonier_h-principle_2022} only states that the map is $2n$-connected. However the connectivity solely appears as the connectivity of the map $S^{2n+1} \to \Omega^\infty \Sigma^\infty S^{2n+1}$. The latter is a rational equivalence, and the proof can be repeated after rationalisation.}
To compare this infinite loop space with the one appearing above, we will use the following well-known lemma:
\begin{lemma}
Let $V, W$ be two vector bundles on a space $Z$. We may assume that $W$ is a virtual vector bundle. Then there is a cofibre sequence of Thom spectra:
\[
    S(V)^W \lra Z^W \lra Z^{V \oplus W}.
\]
\end{lemma}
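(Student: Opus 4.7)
The strategy is to take the classical cofibre sequence coming from the pair $(D(V), S(V))$ and then ``twist by $W$''. Choosing a metric on $V$, write $D(V)$ and $S(V)$ for the unit disk and sphere bundles; the cofibre of the closed inclusion $S(V) \hookrightarrow D(V)$ is by definition the Thom space $Z^V$. Since the zero section makes $D(V) \simeq Z$ a homotopy equivalence, this gives the cofibre sequence of based spaces
\[
    S(V)_+ \lra Z_+ \lra Z^V.
\]

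I would then apply the fibrewise smash product $(-) \wedge_{Z_+} Z^W$, viewed as a functor from pointed spaces (or spectra) over $Z$ to spectra. This functor is a left adjoint and therefore preserves cofibre sequences, yielding
\[
    S(V)_+ \wedge_{Z_+} Z^W \lra Z_+ \wedge_{Z_+} Z^W \lra Z^V \wedge_{Z_+} Z^W.
\]
Identifying each term is then routine: the middle one is tautologically $Z^W$; the left one is the Thom spectrum of the pullback of $W$ along $\pi \colon S(V) \to Z$, which by abuse of notation is $S(V)^W$; and for the right one, the multiplicativity identity $Z^\xi \wedge_{Z_+} Z^\eta \simeq Z^{\xi \oplus \eta}$ for Thom spectra over a common base gives $Z^V \wedge_{Z_+} Z^W \simeq Z^{V \oplus W}$. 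Splicing these identifications into the cofibre sequence above produces the asserted sequence.

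The only genuine point to check is that these identifications hold on the nose and that the fibrewise smash product is exact in the appropriate sense; this is the main technical hurdle but is entirely formal in any reasonable framework for parametrised spectra over $Z$ (for example the ex-spaces of May--Sigurdsson). When $W$ is allowed to be a virtual bundle, one simply interprets $Z^W$ as a genuine parametrised spectrum over $Z$ (possibly formed after a shift to replace $W$ by an honest vector bundle) and runs the same argument; no additional ideas are needed.
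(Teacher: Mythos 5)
Your proof is correct and follows essentially the same route as the paper's: start from the cofibre sequence $S(V) \hookrightarrow D(V) \to Z^V$ of (ex-)spaces over $Z$, use $D(V) \simeq Z$, and then twist fibrewise by the (possibly virtual) bundle $W$ — the paper phrases this as ``passing to Thom spectra with respect to $W$'' and leaves the bookkeeping implicit, whereas you spell it out via the fibrewise smash product and the multiplicativity of Thom spectra. One small point of care: the cofibre you want to twist is really the cofibre of $S(V)_{+Z} \to D(V)_{+Z}$ \emph{as ex-spaces over $Z$}, namely the fibrewise Thom space $\mathrm{Th}_Z(V)$, rather than the global Thom space $Z^V$ (which has no natural structure map to $Z$); after smashing over $Z$ with $\mathrm{Th}_Z(W)$ and pushing forward along $Z \to *$, this does produce $Z^{V\oplus W}$ as you claim, so the argument is sound once stated in those terms.
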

\begin{proof}
The Thom space $Z^V$ is defined by collapsing the sphere bundle $S(V)$ inside the disc bundle $D(V)$. Thus there is a cofibre sequence of spaces
\[
    S(V) \hookrightarrow D(V) \lra Z^V.
\]
The lemma follows by passing to Thom spectra with respect to the virtual bundle $W$ and using the equivalence $D(V) \simeq Z$.
\end{proof}

Applying the lemma to $Z = X$, $V = \Omega^1_X \otimes \cL^{\otimes d}$ and $W = \cL^{\otimes d} -TX$ yields the fibre sequence of spaces
\[
    \Omega^{\infty + 1}X^{J^1\cL^{\otimes d} - TX} \lra \Omega^\infty S(\Omega^1_X \otimes \cL^{\otimes d})^{\cL^{\otimes d} - TX} \lra \Omega^\infty X^{\cL^{\otimes d} - TX},
\]
in which we recognise the two infinite loop spaces appearing earlier in this section. Finally, let us remark that the rational homotopy type of the rightmost space
\[
    (\Omega^\infty X^{\cL^{\otimes d} - TX})_\bQ \simeq H^2(X;\bQ) \times K(H^1(X;\bQ), 1)
\]
is not far from that of $\Pic(X)$. This was one of the starting observations for the present article, although it now seems to me to be red herring.

\appendix

\section{Range estimates for jet ampleness}\label{section:range-estimates}

\subsection{The case of curves}

When $X$ is a curve, that is of dimension 1, one can give an explicit formula for the jet ampleness of a line bundle depending only on its degree.
\begin{lemma}\label{lemma:riemann-roch-jet-ampleness}
Let $X$ be a smooth projective complex curve of genus $g$. Let $\cL$ be a line bundle on $X$ and denote by $c_1(\cL) \in H^2(X;\bZ) \cong \bZ$ its degree, i.e. its first Chern class. Let $d \geq 1$ be an integer. If $c_1(\cL) > 2g-1+d$ then $\cL$ is $d$-jet ample.
\end{lemma}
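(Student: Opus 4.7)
The plan is to translate jet ampleness on a curve into the surjectivity of a restriction map to a length-$(d+1)$ subscheme, and then to reduce this to a cohomological vanishing that follows from Serre duality and a degree count.

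First I would unpack the definition of $d$-jet ampleness in the one-dimensional setting. Given distinct points $x_1,\ldots,x_t \in X$ and positive integers $k_1,\ldots,k_t$ with $\sum_i k_i = d+1$, the subscheme $D = \sum_i k_i x_i \subset X$ is an effective Cartier divisor of degree $d+1$ (since $X$ is a smooth curve, the tensor product $\bigotimes_i \fm_i^{k_i}$ is exactly the ideal sheaf $\cO_X(-D)$). Thus $d$-jet ampleness of $\cL$ is equivalent to the surjectivity of
\[
    H^0(X, \cL) \lra H^0(X, \cL|_D) \cong \bigoplus_{i=1}^t H^0(X, \cL \otimes \cO_X/\fm_i^{k_i})
\]
for every such $D$ of degree $d+1$.

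Next, I would write down the short exact sequence
\[
    0 \lra \cL(-D) \lra \cL \lra \cL|_D \lra 0
\]
and the associated long exact sequence in cohomology. Surjectivity of the above restriction map is implied by the vanishing $H^1(X, \cL(-D)) = 0$. By Serre duality, this group is dual to $H^0(X, K_X \otimes \cL^{-1}(D))$, whose line bundle has degree
\[
    (2g-2) - \deg(\cL) + (d+1) = 2g - 1 + d - \deg(\cL).
\]
Under the hypothesis $\deg(\cL) = c_1(\cL) > 2g - 1 + d$ this degree is strictly negative, so the sections vanish (a line bundle of negative degree on a smooth projective curve has no global sections), hence $H^1(X,\cL(-D)) = 0$.

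There is no real obstacle here: the only thing to be mildly careful about is making sure that, on a smooth curve, the product of ideal sheaves $\bigotimes_i \fm_i^{k_i}$ really coincides with $\cO_X(-D)$ for the divisor $D = \sum_i k_i x_i$ (this uses that the $x_i$ are distinct and that the local rings at points of $X$ are discrete valuation rings, so the relevant ideals are comaximal and principal). Once that identification is made, the argument is a one-line Serre-duality / degree computation.
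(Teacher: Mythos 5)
Your proof is correct and follows essentially the same route as the paper: reduce $d$-jet ampleness on a curve to surjectivity onto a length-$(d+1)$ subscheme, pass to $H^1(\cL \otimes \cI_Z) = H^1(\cL(-D))$, apply Serre duality, and conclude by a negative-degree count. The only cosmetic difference is that you phrase $\cI_Z$ as $\cO_X(-D)$ and spend a sentence justifying that identification, which the paper treats as implicit.
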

\begin{proof}
As there is only tangent direction at each point on a curve, to show that $\cL$ is $d$-jet ample it suffices to show that the restriction map
\[
    H^0(\cL) \lra H^0(\cL \otimes \cO_Z)
\]
is surjective for all subschemes $Z \subset X$ of length $d+1$. Recall the short exact sequences of sheaves
\[
    0 \lra \cI_Z \lra \cO_X \lra \cO_Z \lra 0
\]
where $\cI_Z$ denotes the ideal sheaf of $Z$. From the long exact sequence in cohomology, we see that it suffices to show that
\[
    H^1(\cL \otimes \cI_Z) = 0.
\]
By Serre duality, this group is isomorphic to $H^0(K_X \otimes \cL^{-1} \otimes \cI_Z^{-1})$ where $K_X$ is the canonical sheaf. It is now enough to show that $K_X \otimes \cL^{-1} \otimes \cI_Z^{-1}$ has negative degree under the assumptions of the lemma. This follows by computing that $\deg \cI_Z^{-1} = d+1$, $\deg \cL^{-1} = -c_1(\cL)$ and $\deg K_X = 2g-2$ by Riemann--Roch.
\end{proof}

\subsection{The case of toric varieties}

When $X$ is a smooth projective toric variety, its fundamental group is trivial, hence the Picard scheme is discrete. In that case the results of this paper are simply obtained from \cite{aumonier_h-principle_2022}. We nevertheless comment on how to compute the jet ampleness of a line bundle to give a sense of the difficulty of the problem.

The basic idea is as follows: if $\cL$ is a $d$-jet ample line bundle on $X$, then so is its restriction to any rational curve on $X$. On such a rational curve $C \cong \bP^1$, a line bundle is of the form $\cO_{\bP^1}(a)$ and is $d$-jet ample if and only if $a \geq d$. Now there are some distinguished curves on $X$, namely the ones invariant under the torus action, and it turns out to be enough to check jet ampleness on them:
\begin{theorem}[{Compare \cite{rocco_generation_1999}}]
Let $\cL$ be a line bundle on a smooth projective toric variety. Then $\cL$ is d-jet ample if and only if $\cL \cdot C \geq d$ for any torus invariant curve $C \subset X$.
\end{theorem}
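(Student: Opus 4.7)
The equivalence splits into two directions of very unequal difficulty, one elementary and the other the substantive content of Di Rocco's theorem.

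\emph{Necessity.} Suppose $\cL$ is $d$-jet ample on $X$ and let $C \subset X$ be a torus-invariant curve, so that $C \cong \bP^1$ and $\cL|_C \cong \cO_{\bP^1}(\cL \cdot C)$. For any closed point $x \in C$, the quotient map of Artinian local rings $\cO_{X,x}/\fm_{X,x}^{d+1} \twoheadrightarrow \cO_{C,x}/\fm_{C,x}^{d+1}$ is surjective. Composing with the single-point $d$-jet evaluation of $\cL$ (taken with multiplicity $d+1$), I obtain that $H^0(X, \cL) \to H^0(C, \cL|_C \otimes \cO_C / \fm_{C,x}^{d+1})$ is surjective. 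This map factors through the restriction $H^0(C, \cL|_C)$, so $\cL|_C$ has surjective $(d+1)$-jet evaluation at every point of $\bP^1$. A direct monomial count (or \cref{lemma:riemann-roch-jet-ampleness}) then forces $\cL \cdot C = \deg \cL|_C \geq d$.

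\emph{Sufficiency.} The approach exploits the dictionary between an ample line bundle $\cL$ on a smooth projective toric variety $X$ and its lattice polytope $P_\cL$: torus-fixed points correspond to vertices of $P_\cL$, torus-invariant curves to edges, and the intersection number $\cL \cdot C$ is precisely the lattice length of the edge dual to $C$. The hypothesis thus translates to the combinatorial statement that every edge of $P_\cL$ has length at least $d$. I would argue by induction on $n = \dim X$, the base case being \cref{lemma:riemann-roch-jet-ampleness}. For the inductive step, I would first reduce to verifying $d$-jet ampleness for configurations concentrated at torus-fixed points by degenerating an arbitrary configuration $(x_1,\dots,x_t;k_1,\dots,k_t)$ along a generic one-parameter subgroup of the dense torus, using upper semicontinuity of the corank of the jet evaluation in the resulting flat family over $\bA^1$. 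At a torus-fixed vertex $v$, global sections of $\cL$ are the characters indexed by lattice points of $P_\cL$ and jets of order $\leq d$ at $v$ correspond to monomials of total degree $\leq d$ in a toric chart. The edge-length hypothesis guarantees that sufficiently many lattice points of $P_\cL$ lie near $v$ in directions along each incident edge to produce such monomials.

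The main obstacle is handling configurations whose torus degeneration spreads jet data across several torus-fixed points simultaneously: one cannot verify surjectivity vertex by vertex since the sections are global. My plan is to combine Demazure vanishing on toric varieties, which recasts global surjectivity as the vanishing of $H^1$ of an ideal sheaf of a torus-invariant fat-point scheme twisted by $\cL$, with restriction to a torus-invariant divisor $D \subset X$ so that the induction on dimension processes the jet data on each stratum. The hypothesis passes to $D$ because every torus-invariant curve of $D$ is also one of $X$, which closes the induction.
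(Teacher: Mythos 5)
This statement is cited in the paper directly from \cite{rocco_generation_1999}; the paper does not supply its own proof, so there is nothing internal to compare your argument against. Taking your proposal on its own terms: the necessity direction is essentially correct (the monomial count on $\bP^1$ does it; note that \cref{lemma:riemann-roch-jet-ampleness} only gives a \emph{sufficient} condition for jet ampleness, so it does not by itself force the inequality, but $h^0(\bP^1,\cO(a)) = a+1 \geq d+1$ does). The sufficiency direction, which is the entire content of Di Rocco's theorem, has a genuine gap in the reduction step.

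The problem is the degeneration by a one-parameter subgroup $\lambda\colon \bG_m \to T$. Every point of the dense torus orbit flows, under $\lambda(t)$ as $t\to 0$, to the \emph{same} torus-fixed point $v_\lambda$ (the one whose cone contains $-\lambda$ in its interior). So a generic $\lambda$ does not distribute the points $x_1,\dots,x_t$ among distinct vertices; it piles them all onto one vertex. The flat limit of $\cI_{x_1}^{k_1}\cdots\cI_{x_t}^{k_t}$ is then a single length-$(d+1)$ subscheme of $\cO_{X,v_\lambda}$ whose ideal is generally \emph{not} of the form $\fm_v^{k}$, and whose structure depends on the initial configuration in a way that the vertex-by-vertex monomial count on $P_\cL$ does not control. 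Upper semicontinuity of corank correctly reduces the problem to surjectivity at the flat limit, but what you must then establish is surjectivity of $H^0(\cL)\to H^0(\cL\otimes\cO_{Z_0})$ for an essentially arbitrary length-$(d+1)$ punctual subscheme $Z_0$ at a fixed point — a statement that is stronger than (and not implied by) the jet-ampleness condition at that point. Your proposed fix via Demazure vanishing and induction on torus-invariant divisors is sketched too briefly to close this: one would need to explain how to split the degenerated subscheme across strata, and in particular how to handle the non-monomial part of the limit ideal. Di Rocco's actual argument proceeds instead through a polytope-theoretic and Seshadri-constant analysis that avoids this degeneration altogether, which is precisely why the result is not a routine consequence of the curve case.
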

In \cite{rocco_generation_1999}, Di Rocco also proves two more equivalent criteria for jet ampleness in terms of convexity of the support function of $\cL$ and Seshadri constants at each point of $X$. We refer to that paper for the full details. Importantly for us, the criterion shows that $d$-jet ampleness can be checked by a finite number of inequations.

\subsection{Fujita's conjecture and jet ampleness on surfaces}

Whereas Kleiman's criterion shows that ampleness is a numerical property, jet ampleness, or even just very ampleness or global generation, is a trickier question to settle. In 1985, Fujita proposed the following conjecture which remains unsolved in general:
\begin{conjecture}[Compare \cite{fujita_polarized_1987}]
Let $X$ be a smooth projective complex variety of dimension $n$. Let $A$ be an ample line bundle on $X$. Then $K_X + (n+1)A$ is globally generated, and $K_X + (n+2)A$ is very ample.
\end{conjecture}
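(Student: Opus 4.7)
The standard approach to the global generation half of Fujita's conjecture proceeds via multiplier ideals and Nadel vanishing, following Angehrn--Siu and Koll\'ar. To show that $K_X + (n+1)A$ is globally generated at a fixed point $x \in X$, it suffices to produce an effective $\bQ$-divisor $D \sim_\bQ cA$ with $c < n+1$ whose multiplier ideal sheaf $\cJ(D)$ satisfies $\cJ(D)_x \subsetneq \cO_{X,x}$ and agrees with $\cO_X$ on a punctured neighbourhood of $x$. Indeed, Nadel vanishing applied to the ample $\bQ$-divisor $(n+1)A - D \sim_\bQ (n+1-c)A$ then yields
\[
    H^1\big(X, \cO_X(K_X + (n+1)A) \otimes \cJ(D)\big) = 0,
\]
and the long exact sequence produces the desired surjection onto the skyscraper at $x$. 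An analogous construction with $\cJ(D)$ isolating the formal neighbourhood of two points delivers very ampleness of $K_X + (n+2)A$.

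The construction of $D$ is the crux. First I would force high multiplicity at $x$: a dimension count together with $A^n > 0$ shows that for large $m$ there exist sections of $mA$ vanishing to order $>nm$ at $x$, and scaling produces $D_0 \sim_\bQ nA$ with $\mathrm{mult}_x D_0 > n$, so that $x$ lies in the non-klt locus of $D_0$. The remaining task is to shrink this locus to the single point $x$ while keeping the total coefficient strictly below $n+1$. The standard tool is the Angehrn--Siu tie-breaking trick: pick a minimal log canonical centre $Z \subseteq \mathrm{Nklt}(D_0)$ through $x$, restrict and apply an inductive form of the statement on $\dim Z < n$ to obtain a divisor on $Z$ with controlled singularities at $x$, extend it to $X$, add a small multiple, and re-tie-break. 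Iterating, each descent in dimension consumes a controlled amount of ample coefficient, and one hopes to arrive at the isolated non-klt locus $\{x\}$ before the budget $n+1$ is exhausted.

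The hard part, and the reason the conjecture remains open for $n \geq 6$, is precisely the bookkeeping of this ample budget during the inductive descent. A direct application of Angehrn--Siu produces the far weaker bound $\tfrac{1}{2}(n^2+n+2)$, and the sharp bound $n+1$ has so far been attained only through delicate case-by-case analyses: Reider's vector-bundle argument for $n=2$, and Kawamata-type extension techniques pushed to their limit by Ein--Lazarsfeld, Kawamata, and Ye--Zhu for $n \leq 5$. The principal obstacle I anticipate is the absence of a uniform way to control the cumulative ample cost across all the dimensional strata encountered in the descent; a genuinely new ingredient, such as a Hodge-theoretic refinement of Nadel vanishing or a more economical adjunction along non-smooth centres, seems to be required. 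The very ampleness half is harder still, essentially because the required multiplier ideal must separate two variable points, and even for threefolds it is open.
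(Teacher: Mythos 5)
This statement is labeled a \emph{conjecture} in the paper precisely because it is an open problem; the paper offers no proof and says explicitly that it ``remains unsolved in general,'' citing only the dimension~$1$ case (Riemann--Roch) and Reider's theorem for $n=2$. You correctly recognised this and, rather than manufacturing a spurious argument, gave an accurate and well-calibrated survey of the state of the art: the multiplier-ideal/Nadel-vanishing strategy of Angehrn--Siu with the weaker bound $\tfrac{1}{2}(n^2+n+2)$, the inductive tie-breaking descent and its bookkeeping difficulties, the dimension-by-dimension results of Ein--Lazarsfeld, Kawamata, and Ye--Zhu, and the observation that the very-ampleness half is open already for threefolds. This is the right response; the only thing to flag is that what you wrote is a strategy sketch and an honest account of why it falls short, not a proof, and it should be presented as such. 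Since the paper's surrounding discussion relies only on Reider's theorem for surfaces (which it states separately) and on the cohomological criterion of \cref{lemma:cohomological-criterion} combined with Serre vanishing for the asymptotic statement in \cref{proposition:arbitrarily-big-jet-ampleness}, nothing in the paper depends on the conjecture being true.
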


In dimension 1, the conjecture follows from the Riemann--Roch theorem. In higher dimension, the approach taken for curves would require proving a Kodaira-type vanishing theorem for non-invertible sheaves. However, in dimension 2, the conjecture was solved by Reider by different means:
\begin{theorem}[Compare \cite{reider_vector_1988}]
Fujita's conjecture is true for $n = 2$.
\end{theorem}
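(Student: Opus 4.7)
The plan is to deduce Fujita's conjecture in dimension two from the following numerical criterion of Reider, which is the technical heart of the theorem: if $X$ is a smooth projective complex surface, $L$ is a nef line bundle with $L^2 \geq 5$ (resp. $L^2 \geq 10$), and $K_X + L$ fails to be globally generated at some point $x$ (resp. fails to separate two points or tangent vectors at $x$), then there exists an effective divisor $E$ through $x$ with $L \cdot E = 0,\ E^2 = -1$ or $L \cdot E = 1,\ E^2 = 0$ (resp. $L \cdot E \leq 1,\ E^2 \in \{-1,0\}$ or $L \cdot E = 2,\ E^2 = 0$). Granting this, Fujita's conjecture for surfaces follows almost formally: if $L = 3A$ with $A$ ample, then $L^2 \geq 9 \geq 5$, and the bad divisors $E$ cannot exist since $A$ ample forces $A \cdot E \geq 1$ for $E$ effective, hence $L \cdot E \geq 3$; similarly for $L = 4A$ and very ampleness, $L^2 \geq 16 \geq 10$ and $L \cdot E \geq 4$ rules out every case.

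To prove Reider's criterion I would follow the standard strategy via rank-two vector bundles and the Bogomolov instability theorem. Given a point $x$ where $K_X + L$ is not base-point free, the obstruction is the failure of the restriction map $H^0(X, K_X + L) \to H^0(X, (K_X + L) \otimes \mathcal{O}_X / \mathfrak{m}_x)$ to be surjective. By Serre duality this produces a non-split extension class in $\mathrm{Ext}^1(\mathcal{I}_x \otimes L, \mathcal{O}_X)$, and using the Cayley--Bacharach property one builds a locally free rank two sheaf $\mathcal{E}$ fitting in
\[
    0 \lra \mathcal{O}_X \lra \mathcal{E} \lra \mathcal{I}_x \otimes L \lra 0,
\]
with $c_1(\mathcal{E}) = c_1(L)$ and $c_2(\mathcal{E}) = 1$. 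The very-ampleness case is analogous with $\mathcal{I}_Z$ for a length-two subscheme $Z \subset X$, giving $c_2(\mathcal{E}) = 2$. The hypothesis $L^2 \geq 5$ (respectively $\geq 10$) is exactly the discriminant inequality $c_1(\mathcal{E})^2 - 4 c_2(\mathcal{E}) > 0$ needed to trigger Bogomolov's theorem.

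Bogomolov instability then provides a saturated line sub-bundle $\mathcal{O}_X(M) \hookrightarrow \mathcal{E}$ such that the class $2M - L$ lies in the positive cone and satisfies $(2M - L)^2 > 0$. Setting $E := L - M$, a diagram chase against the defining extension shows that $M$ does not factor through $\mathcal{O}_X$, so the composition $\mathcal{O}_X(M) \hookrightarrow \mathcal{E} \twoheadrightarrow \mathcal{I}_x \otimes L$ is non-zero and exhibits $E$ as an effective divisor passing through $x$. Combining the Hodge index inequality applied to $L$ and $2M - L$ with nefness of $L$ and the numerical bounds $L \cdot E \geq 0$, $E^2 \geq -c_2(\mathcal{E})$ yields the short list of possibilities in the statement.

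The main obstacle is the Bogomolov instability step: one needs the full strength of the Bogomolov--Gieseker inequality for semistable rank-two bundles on surfaces, together with a careful analysis of the positive cone and the Hodge index theorem to translate the destabilising sub-line bundle into the concrete numerical constraints on $E$. The construction of $\mathcal{E}$ via Cayley--Bacharach is standard but requires checking that the length-one (or length-two) subscheme satisfies the relevant global sections condition, which is where the case distinction between base-point freeness and very ampleness enters; the rest of the argument is then essentially intersection theory on the resulting $E$.
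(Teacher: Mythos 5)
The paper does not prove this theorem: it is stated as a citation to Reider's original paper, with a pointer to Lazarsfeld's lecture notes for an exposition. So there is no ``paper's own proof'' to compare against, and your task was effectively to sketch Reider's argument from scratch.

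Your outline is essentially the standard proof via rank-two bundles and Bogomolov instability, and the formal deduction of Fujita for surfaces from Reider's numerical criterion at the start is clean and correct (with $L = 3A$, ampleness gives $A\cdot E\geq 1$, so $L\cdot E\geq 3$, excluding every bad divisor; likewise $L = 4A$ gives $L\cdot E\geq 4$). Two small imprecisions in the Reider sketch: the claim that $L^2 \geq 10$ is ``exactly'' the discriminant inequality $c_1(\mathcal{E})^2 - 4c_2(\mathcal{E}) > 0$ is off --- with $c_2(\mathcal{E}) = 2$ this inequality reads $L^2 > 8$, i.e.\ $L^2 \geq 9$; the bound $L^2 \geq 10$ in Reider's theorem comes from a finer case analysis after Bogomolov instability is invoked, not from the instability threshold itself. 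Also your list of bad divisors in the separation case drops the possibility $L\cdot E = 0$, $E^2 = -2$. Neither affects the deduction of Fujita, since $L\cdot E\geq 4$ eliminates all cases anyway, but a careful writeup should get the exact numerics right. Beyond that, the Cayley--Bacharach construction of $\mathcal{E}$, the Chern class computation, the destabilising sub-line bundle, and the Hodge index argument are all the right ingredients and in the right order.
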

We recommend the lecture notes of Lazarsfeld \cite{lazarsfeld_lectures_1994} for a beautiful introduction to Fujita's conjecture and Reider's theorem.

\subsection{Cohomological criterion}

Although effective vanishing theorems like Riemann--Roch do not exist in higher dimension, there exist alternatives that can be used to provide qualitative statements about jet ampleness. The starting observation is the following cohomological criterion:
\begin{lemma}\label{lemma:cohomological-criterion}
Let $\cL$ be a line bundle on a smooth projective variety $X$ of dimension $n$. Let $d \geq 1$ be an integer. Then $\cL$ is $d$-jet ample if the cohomology groups
\[
    H^1(\cL \otimes \cI_Z)
\]
vanish for all 0-dimensional subschemes $Z \subset X$ of length $\sum\limits_{j=0}^d \binom{n+j-1}{j}$, and ideal sheaf $\cI_Z$.
\end{lemma}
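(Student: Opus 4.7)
The strategy is to translate the condition of $d$-jet ampleness into a cohomological vanishing for the ideal sheaf of a $0$-dimensional subscheme, and then exploit the freedom to enlarge such a subscheme to reach the exact length required by the hypothesis.

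Fix distinct points $x_1, \ldots, x_t \in X$ and positive integers $k_1, \ldots, k_t$ with $\sum_i k_i = d+1$. Since the points are distinct, the Chinese remainder theorem yields $\prod_i \fm_i^{k_i} = \bigcap_i \fm_i^{k_i}$ and $\cO_X / \prod_i \fm_i^{k_i} \cong \bigoplus_i \cO_X / \fm_i^{k_i}$, so this is the ideal sheaf $\cI_Z$ of a $0$-dimensional subscheme $Z \subset X$. The short exact sequence
\[
    0 \lra \cL \otimes \cI_Z \lra \cL \lra \cL \otimes \cO_Z \lra 0
\]
shows, via its long exact sequence in cohomology, that the evaluation map in the definition of $d$-jet ampleness is surjective as soon as $H^1(X, \cL \otimes \cI_Z) = 0$.

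Summing the Hilbert function of a regular local ring of dimension $n$ gives $\mathrm{length}(\cO_{X,x_i}/\fm_i^{k_i}) = \sum_{j=0}^{k_i-1}\binom{n+j-1}{j} = \binom{n+k_i-1}{n}$, hence $\mathrm{length}(Z) = \sum_i \binom{n+k_i-1}{n}$. I would then check the combinatorial bound
\[
    \sum_i \binom{n+k_i-1}{n} \leq \binom{n+d}{n} = \sum_{j=0}^d \binom{n+j-1}{j}
\]
by iterating the super-additivity inequality $\binom{n+a-1}{n} + \binom{n+b-1}{n} \leq \binom{n+a+b-1}{n}$, which itself follows from the elementary injective linear map $(p, q) \mapsto p + x_1^a q$ from polynomials of degree $< a$ and polynomials of degree $< b$ in $n$ variables into polynomials of degree $< a+b$ (the two summands land in disjoint degree ranges).

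Finally, since $X$ is smooth of positive dimension, we may enlarge $Z$ to a $0$-dimensional subscheme $Z' \supset Z$ of length exactly $\sum_{j=0}^d \binom{n+j-1}{j}$ by adjoining reduced closed points away from $\mathrm{supp}(Z)$. The hypothesis gives $H^1(X, \cL \otimes \cI_{Z'}) = 0$, whence $H^0(\cL) \twoheadrightarrow H^0(\cL \otimes \cO_{Z'})$. The inclusion $Z \hookrightarrow Z'$ induces a surjection $\cO_{Z'} \twoheadrightarrow \cO_Z$ with kernel supported in dimension zero, so $H^0(\cL \otimes \cO_{Z'}) \twoheadrightarrow H^0(\cL \otimes \cO_Z)$; composing proves the required surjectivity. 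The only mildly delicate step is the super-additivity of $a \mapsto \binom{n+a-1}{n}$, but the monomial injection reduces it to a transparent dimension count.
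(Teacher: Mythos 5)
Your proof is correct and follows the same overall strategy as the paper's: translate $d$-jet ampleness into surjectivity of the evaluation map onto $H^0(\cL \otimes \cO_Z)$, deduce it from $H^1(\cL \otimes \cI_Z) = 0$ via the ideal-sheaf sequence, and bound $\mathrm{length}(Z)$. You are in fact somewhat more careful than the paper, which only records the inequality $\mathrm{length}(Z) \leq \sum_{j=0}^d\binom{n+j-1}{j}$ (without justifying the super-additivity of $a \mapsto \binom{n+a-1}{n}$) and leaves implicit the reduction to subschemes of exactly the stated length; your explicit enlargement $Z \subset Z'$ by reduced points and the monomial injection proving super-additivity fill both of these small gaps cleanly.
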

\begin{proof}
By definition, $\cL$ is $d$-jet ample if the evaluation map
\[
    H^0(\cL) \lra H^0(\cL/ \fm_1^{k_1} \cdots \fm_l^{k_l}) \cong \bigoplus_{i=1}^l H^0(\cL / \fm_i^{k_i})
\]
is surjective for all distinct closed points $x_1,\ldots,x_l$ with associated maximal ideal sheaves $\fm_1,\ldots,\fm_l$, and all integers $k_i \geq 1$ such that $\sum k_i = d+1$. For a closed point with ideal sheaf $\fm$ and $k \geq 1$ an integer, the subscheme given by the ideal $\fm^k$ has length $\sum_{j=0}^{k-1} \binom{n+j-1}{j}$. Therefore the subscheme $Z$ given by the ideal sheaf $\fm_1^{k_1} \cdots \fm_l^{k_l}$ has length
\[
    l(Z) = \sum_{i = 1}^l \sum_{j = 0}^{k_i-1} \binom{n+j-1}{j} \leq \sum_{j=0}^d \binom{n+j-1}{j}.
\]
Now, if $Z$ is a subscheme with ideal sheaf $\cI_Z$, then surjectivity of $H^0(\cL) \to H^0(\cL \otimes \cO_Z)$ is implied by vanishing of the cohomology group $H^1(\cL \otimes \cI_Z)$ as one sees from the long exact sequence in cohomology associated to the short exact sequence of sheaves $0 \to \cI_Z \to \cO_X \to \cO_Z \to 0$.
\end{proof}

\begin{remark}
The results of \cite{aumonier_h-principle_2022} are stated in terms of the jet ampleness of $\cL$, which is why we use the same phrasing in this paper. But in fact, as we are only concerned with conditions on the first order derivatives of sections, we could settle for the following ad hoc weaker notion: a line bundle $\cL$ is $d$-good if the evaluation map
\[
    H^0(\cL) \lra \bigoplus_{i=1}^l H^0(\cL / \fm_i^2)
\]
is surjective for all distinct closed points $x_1,\ldots,x_l$ with associated maximal ideal sheaves $\fm_1,\ldots,\fm_l$, and $2l \leq d+1$. We claim that the proofs of \cite{aumonier_h-principle_2022} go through to study $\Gammans(\cL)$ with this weaker assumption. However, as the bounds we obtain to estimate $d$-goodness or jet ampleness are not very explicit, we have opted for the stronger assumption of jet ampleness which is more commonly studied.
\end{remark}

\begin{proposition}\label{proposition:arbitrarily-big-jet-ampleness}
Let $X$ be smooth projective complex variety, and $\alpha, \beta \in \NS(X)$ with $\alpha$ arbitrary and $\beta$ ample. Then for any integer $d \geq 1$, there exists an integer $k_0 \in \bN$ such that for all $k \geq k_0$ all line bundles of first Chern class equal to $\alpha + k\beta$ are $d$-jet ample.
\end{proposition}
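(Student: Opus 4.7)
The plan is to combine the cohomological criterion of \cref{lemma:cohomological-criterion} with uniform Serre vanishing over a proper parameter space. Set $L := \sum_{j=0}^d \binom{n+j-1}{j}$ where $n = \dim_\bC X$. By that criterion, it will be enough to find an integer $k_0$ so that for every $k \geq k_0$, every line bundle $\cL$ with $c_1(\cL) = \alpha + k\beta$, and every $0$-dimensional closed subscheme $Z \subset X$ of length at most $L$, one has $H^1(X, \cL \otimes \cI_Z) = 0$.

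To handle the line bundles uniformly, I would fix once and for all an ample line bundle $\cN_0$ on $X$ with $c_1(\cN_0) = \beta$; such a bundle exists by the Nakai--Moishezon criterion, since ampleness is numerical. Tensoring with $\cN_0^{\otimes k}$ yields an isomorphism of tori $\Pic^\alpha(X) \overset{\cong}{\lra} \Pic^{\alpha+k\beta}(X)$, so it suffices to verify the vanishing for bundles of the form $\cP_\alpha|_{[\cL]} \otimes \cN_0^{\otimes k}$ with $[\cL]$ varying in $\Pic^\alpha(X)$. To also handle the subschemes uniformly, I would form the projective scheme $\Hilb^{\leq L}(X) := \bigsqcup_{l=0}^L \Hilb^l(X)$ (each component is projective by Grothendieck), consider the projective parameter space $T := \Pic^\alpha(X) \times \Hilb^{\leq L}(X)$ with projection $\pi \colon T \times X \to T$, and build the coherent sheaf
\[
    \cG := \mathrm{pr}_{1,3}^*\cP_\alpha \otimes \mathrm{pr}_{2,3}^*\cI_{\cZ},
\]
where $\cI_{\cZ}$ is the universal ideal sheaf on the Hilbert side. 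A short Tor computation, exploiting the flatness of $\cO_{\cZ}$ over the Hilbert scheme, shows that $\cG$ is flat over $T$, with fibre $\cP_\alpha|_{[\cL]} \otimes \cI_Z$ at $([\cL], Z)$.

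The key input will then be relative Serre vanishing: since $\cM := \mathrm{pr}_X^*\cN_0$ is $\pi$-relatively ample, there exists an integer $k_0$ with $R^i \pi_*(\cG \otimes \cM^{\otimes k}) = 0$ for every $i > 0$ and $k \geq k_0$. Combining this with the $T$-flatness of $\cG \otimes \cM^{\otimes k}$, a downward induction using cohomology and base change \cite[Theorem~III.12.11]{hartshorne_algebraic_1977} promotes the vanishing to a pointwise statement $H^i(X, \cP_\alpha|_{[\cL]} \otimes \cI_Z \otimes \cN_0^{\otimes k}) = 0$ for every $([\cL], Z) \in T$ and $i > 0$, which is exactly what was needed. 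The main subtlety lies precisely in upgrading Serre vanishing to a statement uniform in both $[\cL]$ and $Z$; this is enabled by packaging the data into a proper family, and it is in this step that the projectivity of the Hilbert scheme of $0$-dimensional subschemes of bounded length is essential.
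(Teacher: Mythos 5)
Your proposal is correct and follows essentially the same strategy as the paper's proof: reduce via the cohomological criterion of \cref{lemma:cohomological-criterion} to a uniform $H^1$-vanishing statement, package the line bundle and subscheme data into a coherent sheaf on the proper family $\Pic^\alpha(X) \times \Hilb \times X$, apply Serre vanishing to get $R^{>0}\pi_* = 0$ for $k \gg 0$, and then use cohomology and base change (via a downward induction) to promote the direct-image vanishing to fibrewise cohomology vanishing. The only cosmetic differences are that you allow lengths $\leq L$ rather than the single length used in the paper (which is harmless, by the extension-of-subschemes argument), and you cite Hartshorne III.12.11 for the base-change induction where the paper works directly with EGA III 7.2.5/7.5.3.
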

\begin{proof}
Let us denote by $\cB$ an ample line bundle with first Chern class $\beta$. Let $M = \sum_{j=0}^d \binom{n+j-1}{j}$ and write $\Hilb_M(X)$ for the Hilbert scheme 0-dimensional subschemes of $X$ of length $M$. We will show that there exists a $k_0 \in \bN$ such that 
\[
    H^1(X, \cI_Z \otimes \cL \otimes \cB^{\otimes k}) = 0 \quad \forall k \geq k_0, \ \forall Z \in \Hilb_M(X), \ \forall \cL \in \Pic^\alpha(X).
\]
As tensoring with $\cB^{\otimes k}$ maps $\Pic^\alpha(X)$ isomorphically onto $\Pic^{\alpha +k\beta}(X)$, the proposition will follow by \cref{lemma:cohomological-criterion}. As $\Hilb_M(X) \times \Pic^\alpha(X)$ is proper (over $\Spec(\bC)$), we can find a finite covering
\[
    \Hilb_M(X) \times \Pic^\alpha(X) = \bigcup_{i=1}^N \Spec(A_i)
\]
by spectra of $\bC$-algebras of finite type. Let us write $X_{A_i} = X \times \Spec(A_i)$ and $p_i \colon X_{A_i} \to X$ for the first projection. Then $p_i^*\cB$ is ample by \stacks{0892}. Let $\cI$ be the universal family on $\Hilb_M(X) \times X$, let $\cP$ be a Poincar\'e sheaf on $\Pic^\alpha(X) \times X$, and denote by $\cI \otimes \cP$ the induced sheaf on $\Hilb_M(X) \times \Pic^\alpha(X) \times X$. Notice that it is a coherent sheaf that is flat over $\Hilb_M(X) \times \Pic^\alpha(X)$. Therefore its restriction to any $X_{A_i}$ is a coherent sheaf, flat over $\Spec(A_i)$. By Serre's coherent vanishing, see \stacks{0B5U}, there exists an $N_i \geq 0$ such that for all $k \geq N_i$ we have
\begin{equation}\label{equation:pushforward-cohomology}
    R^j (p_i)_*(\cI \otimes \cP \otimes p_i^*\cB^{\otimes k}) = 0 \quad \forall j > 0.
\end{equation}
We claim that this implies that
\begin{equation}\label{equation:fibrewise-cohomology}
    H^j(X, \cI_Z \otimes \cL \otimes \cB^{\otimes k}) = 0
\end{equation}
for all $j>0$, $k \geq N_i$, and $(Z, \cL) \in \Spec(A_i) \subset \Hilb_M(X) \times \Pic^\alpha(X)$. Assuming this claim, observe that taking $k_0 = \max N_i$ then proves the proposition. Fix such $k,Z,\cL$, we will prove\footnote{We have learned this argument from \cite[p. 29]{kleiman_picard_2005}.} the claim by downward induction on $j$. Let us write $y = (Z,\cL)$ for brevity. By coherent vanishing above dimension, \cref{equation:fibrewise-cohomology} is true for $j > \dim X$. Suppose we have vanishing for some $j \geq 2$. By \cite[7.5.3]{grothendieck_elements_1963}, \cref{equation:pushforward-cohomology} implies that 
\[
    R^j (p_i)_*(\cI \otimes \cP \otimes p_i^*\cB^{\otimes k} \otimes p_i^*\cF)_y = 0
\]
for any quasi-coherent sheaf $\cF$ on $\Spec(A_i)$. By the long exact sequence of derived functors, we see that the functor
\[
    \cF \longmapsto R^{j-1} (p_i)_*(\cI \otimes \cP \otimes p_i^*\cB^{\otimes k} \otimes p_i^*\cF)_y
\]
is right-exact. Under the correspondence between quasi-coherent sheaves on affine scheme and modules, this can be seen as a functor from $A_i$-modules to abelian groups. We may thus apply \cite[7.2.5]{grothendieck_elements_1963} to deduce a canonical isomorphism
\[
    R^{j-1} (p_i)_*(\cI \otimes \cP \otimes p_i^*\cB^{\otimes k})_y \otimes \cF_y \overset{\cong}{\lra} R^{j-1} (p_i)_*(\cI \otimes \cP \otimes p_i^*\cB^{\otimes k} \otimes p_i^*\cF)_y
\]
The left-hand side vanishes by virtue of \cref{equation:pushforward-cohomology}. Taking $\cF = \kappa(y)$ (the residue field), we obtain \cref{equation:fibrewise-cohomology} for $j-1$.
\end{proof}

\begin{remark}
For surfaces, there is a simpler and more explicit proof using Fujita's conjecture. Indeed, the image of the ample cone under the map $A \mapsto K_X + (n+2)A$ is a cone, and any line bundle having Chern class in that cone is very ample by Reider's theorem. If $\cL$ is a very ample line bundle, the component of the Picard scheme corresponding to $K_X + (n+2)A + (d-1)\cL$ only contains line bundles that are $d$-jet ample.
\end{remark}

\printbibliography

\end{document}